\documentclass[11pt,reqno]{amsart}

\usepackage{esint}
\usepackage{comment}
\usepackage{amsmath}
\usepackage{amsthm}
\usepackage{mathrsfs}
\usepackage{graphicx}
\usepackage{mathtools}
\usepackage{amsfonts}
\usepackage{amssymb}
\usepackage[T1]{fontenc}
\usepackage{hyperref}
\usepackage[margin=1 in]{geometry}
\usepackage{enumerate}
\usepackage{cite}
\usepackage[normalem]{ulem}
\usepackage{tikz}
\usepackage{tikz-cd}
\usepackage{xcolor}
\usetikzlibrary{matrix,arrows,positioning,automata}
  \usepackage{cancel}
\usepackage{dsfont}

  \usepackage{scalerel,stackengine}
\stackMath
\newcommand\reallywidehat[1]{%
\savestack{\tmpbox}{\stretchto{%
  \scaleto{%
    \scalerel*[\widthof{\ensuremath{#1}}]{\kern-.6pt\bigwedge\kern-.6pt}%
    {\rule[-\textheight/2]{1ex}{\textheight}}
  }{\textheight}%
}{0.5ex}}%
\stackon[1pt]{#1}{\tmpbox}%
}

\newcommand{\N}{\mathbb N}

\newcommand{\R}{\mathbb R}
\def\E{\mathbb E}

\newcommand{\bx}{\bm{x}}
\newcommand{\bz}{\bm{z}}
\newcommand{\bX}{\boldsymbol X}

\DeclareMathOperator{\dist}{dist}

\def\XXint#1#2#3{{\setbox0=\hbox{$#1{#2#3}{\int}$}
\vcenter{\hbox{$#2#3$}}\kern-.5\wd0}}

\numberwithin{equation}{section}
\newtheorem{thm}{Theorem}
\newtheorem{lem}[thm]{Lemma}
\newtheorem{cor}[thm]{Corollary}
\newtheorem{prop}[thm]{Proposition}

\theoremstyle{definition}
\newtheorem{defn}[thm]{Definition}
\newtheorem{rmk}[thm]{Remark}
\newtheorem{example}[thm]{Example}

\numberwithin{thm}{section}

\def\smallnegint{\mathop{\int\mkern-13mu
        \raise.5ex\hbox{${\scriptscriptstyle\diagup}$}}\nolimits}
\def\ds{\displaystyle}

\def\bx{{\boldsymbol x}}

\newcommand{\be}{\begin{equation}}
\newcommand{\ee}{\end{equation}}
\newcommand{\bc}{\begin{case}}
\newcommand{\ec}{\end{cases}}
\newcommand{\bs}{\begin{split}}
\newcommand{\es}{\end{split}}

\newcommand{\norm}[1]{\left\Vert#1\right\Vert}

\newcommand{\bm}[1]{\boldsymbol #1}

\renewcommand{\tilde}{\widetilde}
\renewcommand{\hat}{\widehat}

\DeclareMathOperator{\supp}{supp}

\def \be {\begin{equation}}
\def \ee {\end{equation}}

\def \E {\mathbb{E}}

\def \R {\mathbb{R}}

\renewcommand{\tilde}{\widetilde}

\newcommand{\cC}{\mathcal{C}}

\newcommand{\cP}{\mathcal{P}}
\newcommand{\ov}{\overline}
\newcommand{\cL}{\mathcal{L}}
\newcommand{\bd}{\bm d}

\newcommand{\wt}{\widetilde}
\newcommand{\eps}{\epsilon}

\newcommand{\sub}{\cP_{\text{sub}}}

\begin{document}

\title[Uniform in time weak convergence for Fleming-Viot]{Uniform in time weak convergence for a Fleming-Viot particle system with hard killing}

\author[P. Cardaliaguet]{Pierre Cardaliaguet
\address{(P. Cardaliaguet) Universit\'e Paris Dauphine-PSL, Place du Mar\'echal de Lattre de Tassigny, 75016 Paris, France
}\email{cardaliaguet@ceremade.dauphine.fr
}}

\author[M.\ Cirant]{Marco Cirant\address{(M. Cirant) Dipartimento di Matematica ``T.\ Levi-Civita'', Università degli Studi di Padova, Via Trieste 63, 35121 Padova, Italy}
\email{cirant@math.unipd.it}}

\author[J. Jackson]{Joe Jackson\address{(J. Jackson) The University of Chicago, Eckhart Hall, 5734 S University Ave, Chicago, IL 60637, USA}
\email{jsjackson@uchicago.edu}}
 
\author[P.E. Souganidis]{Panagiotis E. Souganidis\address{(P.E. Souganidis) The University of Chicago, Eckhart Hall, 5734 S University Ave, Chicago, IL 60637, USA}
\email{souganidis@uchicago.edu}}

\maketitle

\begin{abstract}
   This paper is concerned with the Fleming-Viot particle system introduced in \cite{burdzy2020}. In this model, $N$ Brownian particles evolve independently in a bounded domain $D$ until one of the particles reaches the boundary. Then, the particle which has hit the boundary instantaneously jumps to the location of one of the other particles, chosen uniformly at random. In \cite{burdzy2020}, Burdzy, Ho{\l}yst and March showed that when $N$ tends to infinity, the empirical measure of the system converges to a solution to the heat equation on $D$ with Dirichlet boundary conditions, renormalized to have total mass $1$. Our main result is a sharp, uniform-in-time, quantitative version of this statement. We employ the method of weak propagation of chaos, which necessitates a careful study of the (backward) Kolmogorov equations associated to the $N$-particle systems, and the infinite-dimensional transport equation whose characteristics are given by renormalized solutions of the Dirichlet heat equation. The proofs are entirely analytical, and most of the technical effort is devoted to building barrier functions which are used to control the singular behavior of the system when most of the particles approach the boundary. 
\end{abstract}

 \setcounter{tocdepth}{1}

\tableofcontents

\section{Introduction}

In this paper we study the Fleming-Viot particle system introduced by Burdzy, Ho\l yst and March in \cite{burdzy2020}. In this model, $N$ Brownian particles evolve independently in a bounded domain $D$ until one of the particles reaches the boundary. Then, the particle which has hit the boundary instantaneously jumps to the location of one of the other particles, chosen uniformly at random. In \cite{burdzy2020}, it was shown that when $N$ tends to infinity, the empirical measure of the system converges to a solution to the heat equation on $D$ with Dirichlet boundary conditions, renormalized to have total mass $1$. 

Our main result is a quantitative, uniform-in-time, weak error estimate for this model. We employ the method of weak propagation of chaos, which necessitates a careful study of the (backward) Kolmogorov equations associated to the $N$-particle systems, and the infinite-dimensional transport equation whose characteristics are given by renormalized solutions of the Dirichlet heat equation. The proofs are entirely analytical, and most of the technical effort is devoted to building barrier functions which are used to control the singular behavior of the system when most of the particle approach the boundary. 

\subsection{The Fleming-Viot particle system}
Let $D \subset \R^d$ be a smooth domain.
In \cite{burdzy2020}, Burdzy, Ho\l yst and March considered a $\ov{D}^N$-valued process $\bX^N_t = (X^{N,1}_t,...,X^{N,N}_t)$, which evolves as follows:
\begin{enumerate}
    \item The system is initialized at $\bX_0^N = \bx^N \in D^N$.
    \item While $\bX_t^N \in D^N$, the particles do not interact, and are driven by independent Brownian motions, i.e. $d X_t^{i} = dB_t^i$, $i = 1,...,N$, where $(B^i)_{i =1,...,N}$ are independent Brownian motions. 
    \item When one of the particles, say $X^{i}_t$, hits the boundary, it immediately jumps to the location of one of the $N-1$ other particles, chosen uniformly at random, and independently of everything else.
\end{enumerate}
The main results of \cite{burdzy2020} show that this particle system is well-defined for all $t \geq 0$, and that its mean field limit is given by $\ov p_t = \frac{p_t}{p_t(D)}$, where $p = (p_t)_{t \geq 0} \in C\big(\R_+ ; \cP(D)\big)$ solves
\begin{align}
     \label{peqn.intro}
    \partial_t p - \frac{1}{2} \Delta p = 0, \quad p\big|_{\R_+ \times \partial D} = 0, \quad p_0 = m.
\end{align}
More precisely, \cite{burdzy2020} shows that the empirical measure
\begin{align}
    m_t^N = m_{\bX_t^N}^N = \frac{1}{N} \sum_{i = 1}^N \delta_{X_t^{N,i}},
\end{align}
converges towards $\ov p_t$, provided that the initial conditions converge:
\begin{align} \label{burdzymeanfield}
    m_{\bx^N}^N \xrightarrow{N \to \infty} m  \implies m_t^N \xrightarrow{N \to \infty} \ov{p}_t \text{ weakly in $\cP(D)$, for all $t \geq 0$.}
\end{align}
The significance of this result is due in part to the fact that, by basic spectral theory, 
\begin{align*}
    \ov p_t \xrightarrow{t \to \infty} \rho \, dx, 
\end{align*}
where $\rho$ is a version of the principal Dirichlet eigenfunction of the operator $- \frac{1}{2} \Delta$ on $D$, normalized so that $\rho(x) > 0$ for $x \in D$, and $\int_D \rho dx = 1$. Thus, \eqref{burdzymeanfield} shows that if we choose $t$ and then $N$ large enough, $\bX_t^N$ provides a ``particle approximation" of the principal eigenfunction $\rho$. Probabilistically, the convergence of $\ov p_t$ to $\rho$ is related to the fact that $\rho$ is the unique quasi-stationary distribution (QSD) for the Brownian motion killed upon exiting $D$; we refer to the survey \cite{meleardvillemonais} or the book \cite{collet} for background on the general theory of killed Markov processes and their QSDs.

In addition, \cite{burdzy2020} shows that for each fixed $N$, the particle system has an invariant measure $P^N \in \cP( D^N)$ such that $\cL(\bX_t) \xrightarrow{t \to \infty} P^N$, and that the large time and large $N$ limits commute; that is, if $\bX_{\infty} \sim P^N$, then
\begin{align} \label{invariant.conv}
 m_{\bX_{\infty}}^N \xrightarrow{N \to \infty} \rho \, dx.
\end{align}
We refer the reader to \cite{grigorescukang} and \cite{lobus} for refinements of the results in \cite{burdzy2020}, and \cite{toughnolen} for a more recent extension to McKean-Vlasov dynamics.

\subsection{Our perspective and main results}
Our main result is a quantitative, uniform-in-time version of \eqref{burdzymeanfield}, which in particular implies a quantitative version of \eqref{invariant.conv}.
Rather than studying directly the asymptotics of the empirical measure $m_t^N$ of the particle system, we are going to fix a nice "test function" $G : \cP(D) \to \R$, and study the function $V^N : \R_+ \times D^N \to \R$ given by
\begin{align} \label{def.VN}
     V^N(t,\bx) = \E\Big[ G\big( m_{\bX_t^N}^N \big) \, |\,  \bX_{0} = \bx \Big], 
\end{align}
where $\bX$ evolves according to the $N$-particle Fleming-Viot dynamics discussed above.
In other words, we are studying the backward Kolmogorov equation associated to the Markov process $\bX$. 

Formally, $V^N$ should satisfy the heat equation in $\R_+ \times D^N$, with "Fleming-Viot boundary conditions" on the lateral boundary. More precisely, we expect $V^N$ to be characterized by the equation
\begin{align} \label{eqn.VN}
    \begin{cases}
        \ds \partial_t V^N(t,\bx) - \frac{1}{2} \sum_{i = 1}^N \Delta_{x^i} V^N(t,\bx) = 0, \quad (t,\bx) \in \R_+ \times D^N,
       \vspace{.1cm} \\
        \ds \quad  V^N(t,\bx) = \frac{1}{N-1} \sum_{j \neq i} V^N\big(t,\bx^{i,j}\big), \quad t \in \R_+, \,\, x^i \in \partial D, \,\, x^j \in  D \text{ for $j \neq i$},
       \vspace{.1cm} \\
        \ds V^N(0,\bx) = G^N(\bx), \,\, \bx \in D^N, 
    \end{cases}
\end{align}
where $G^N(\bx) = G\big(m_{\bx}^N\big)$.
Here we use the notation
\begin{align*}
    \bx^{i,j} = (x^1,...,x^{i-1},x^j,x^{i+1},...,x^N), \text{  for  } \bx \in \ov D^N.
\end{align*}
In other words, $\bx^{i,j}$ is obtained from $\bx$ by replacing the value of the $i^{\text{th}}$ coordinate with the value of the $j^{\text{th}}$ coordinate. 
We explain in Section \ref{sec.vn} how to precisely interpret this equation, and we establish in Theorem \ref{thm.vneqn} its well-posedness. 

We aim to show that for $N$ large and $m \in \cP(D)$,
\begin{align*}
    \int_{D^N} V^N(t,\bx) m^{\otimes N}(d\bx) \approx U(t,m), 
\end{align*}
where $U : \R_+ \times \cP(D) \to \R$ is defined by 
\begin{align} \label{def.U.intro}
     U(t,m) = G\big( \ov p_t^m\big), \,\, \text{ where } \,\, \ov p_t^m = p_t^m / p_t^m(D)
\end{align}
and $p^m$ denotes the solution to \eqref{peqn.intro}. We will show in Proposition \ref{prop.Ueqn} that $U$ satisfies the infinite-dimensional transport equation
     \begin{align} \label{eqn.U}
      \begin{cases} \ds    \partial_t U(t,m) - \frac{1}{2} \int_{D} \Delta_x \frac{\delta U}{\delta m}(t,m,x) m(dx) = 0, \quad (t,m) \in \R_+ \times \cP^c(D), 
     \vspace{.2cm}   \\
        \ds \quad   \frac{\delta U}{\delta m}(t,m,x) = 0, \quad (t,m) \in \R_+ \times \cP^+(\ov D), \quad x \in \partial D, 
     \vspace{.2cm}     \\
       \ds    U(0,m) = G(m), \quad m \in \cP(D).
        \end{cases}
    \end{align}
Here $\frac{\delta U}{\delta m}$ denotes the "linear derivative", and the spaces $\cP(D$)$, \cP^c(D)$, and $\cP^+(\ov D)$ of probability measures will be introduced in Section \ref{sec.prelim}.


Our main result is as follows. The definition of $\cC^{2+\alpha}\big(\cP(\ov D)\big)$ will be given in the next section. Given two sets $A, B \subset \R^d$, $A \Subset B$ means that $\ov{A} \subset B$. 
\begin{thm} \label{thm.conv.finitetime}
    Suppose that $G \in \cC^{2+\alpha}\big( \cP(\ov D)\big)$ for some $\alpha \in (0,1)$, and let $G^N(\bx) = G(m_{\bx}^N)$. Let $V^N$ be the unique classical solution of \eqref{eqn.VN} in the sense of Definition \ref{def.VN.classicalsoln}. Then, for any $m \in \cP(D)$ with $\text{supp}(m) \Subset D$, we have the estimate 
    \begin{align*}
      \sup_{t \in \R_+}  \Big| U(t,m) - \int_{D^N} V^N(t,\bx) m^{\otimes N}(d\bx) \Big| \lesssim 1/N, 
    \end{align*}
    where $U$ is defined by \eqref{def.U.intro}, and the implied constant depends only on $D$, $\| G\|_{\cC^{2}}$, and $\text{supp}(m)$. 
\end{thm}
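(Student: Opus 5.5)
The estimate will follow from the weak propagation of chaos scheme: compare the finite-dimensional projection of $U$ with $V^N$ along the $N$-particle dynamics.

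\emph{The interpolation.} Set
\begin{align*}
U^N(t,\bx) = U\big(t, m_{\bx}^N\big),
\end{align*}
where $U$ is defined by \eqref{def.U.intro}. Since $m^{\otimes N}$ is a fixed initial law, it suffices to bound
\begin{align*}
\Big|\int_{D^N} \big(U(t,m)-U^N(t,\bx)\big)\,m^{\otimes N}(d\bx)\Big| \quad\text{and}\quad \Big|\int_{D^N} \big(U^N-V^N\big)(t,\bx)\,m^{\otimes N}(d\bx)\Big|.
\end{align*}

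\emph{The first term.} This is a static law-of-large-numbers estimate. Expanding $U(t,\cdot)$ to second order in the linear derivative around $m$ gives a bound $C\|U(t,\cdot)\|_{\cC^2}/N$. The first-order term vanishes in expectation, and the second-order term contributes $O(1/N)$ through the diagonal. Uniformity in $t$ then requires bounds on $\frac{\delta U}{\delta m}$ and $\frac{\delta^2 U}{\delta m^2}$ uniform in $t\ge0$. These come from differentiating $m\mapsto \ov p^m_t = p_t^m/\langle p^m_t,1\rangle$. Writing $p^m_t$ through the Dirichlet heat kernel and using $p_t^m \approx e^{-\lambda_1 t}\langle m,\phi_1\rangle \phi_1$, the normalization cancels the exponential decay, so the derivatives stay bounded as $t\to\infty$. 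The constants depend on $\inf_{\supp m}\phi_1>0$, which is where the hypothesis $\supp(m)\Subset D$ enters. The same argument has to hold along the random measures $m^N_{\bx}$ for $\bx$ near $\supp(m)^N$; an exponentially small probability that the empirical measure is atypical is harmless here.

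\emph{The second term.} Compute how $U^N$ fails to solve \eqref{eqn.VN}. In the interior, using \eqref{eqn.U} and the chain rule for empirical projections,
\begin{align*}
\partial_t U^N - \frac12\sum_i \Delta_{x^i}U^N = -\frac{1}{2N^2}\sum_i \mathrm{tr}\,\partial_{xy}\frac{\delta^2 U}{\delta m^2}(t,m^N_{\bx},x^i,x^i) =: R^N,
\end{align*}
so $|R^N|\lesssim 1/N$ up to derivative bounds on $U$. On the boundary $x^i\in\partial D$, replacing $x^i$ by $x^j$ and averaging over $j$ moves the measure by $\frac1N(\delta_{x^j}-\delta_{x^i})$. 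The first-order term is
\begin{align*}
\frac1N\Big(\int \frac{\delta U}{\delta m}\,d m^{N,-i}_{\bx} - \frac{\delta U}{\delta m}(x^i)\Big),
\end{align*}
which vanishes: the second term is zero by the boundary condition in \eqref{eqn.U}, and the first is zero by the normalization of linear derivatives, up to an $O(1/N)$ correction from excluding index $i$. Hence the boundary defect is $O(1/N^2)$.

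By the representation of $V^N$ as a classical solution (Theorem \ref{thm.vneqn}), $W=U^N-V^N$ solves \eqref{eqn.VN} with zero data, source $R^N$, and boundary defects. Duhamel then gives
\begin{align*}
|W(t,\bx)|\le \E_{\bx}\Big[\int_0^t |R^N(s,\bX_s)|\,ds\Big] + \E_{\bx}\Big[\sum_{\text{jumps before }t} O(N^{-2})\Big].
\end{align*}

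\emph{The main obstacle.} This naive bound grows linearly in $t$, and the derivative bounds on $U$ degenerate when $m^N_{\bX_s}$ puts most of its mass near $\partial D$. The bound on $\frac{\delta^2U}{\delta m^2}$ blows up like $\langle m,\phi_1\rangle^{-k}$. I would handle both problems as follows. First, use exponential decay in $t$ of the second derivatives of $U$, after subtracting the limit $G(\rho)$. Since $\ov p^m_t\to\rho$ exponentially fast, $\frac{\delta^2 U}{\delta m^2}(t,m)$ decays like $e^{-(\lambda_2-\lambda_1)t}\langle m,\phi_1\rangle^{-k}$, which makes the time integral converge. Second, construct a barrier for the degenerate factor, e.g. $\Phi(\bx)=\big(\frac1N\sum_i\phi_1(x^i)\big)^{-k}$ adjusted. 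This should be a supersolution of \eqref{eqn.VN} up to controlled errors, giving $\E_{\bx}[\Phi(\bX_s)]\lesssim \Phi(\bx)+C$ uniformly in $s$ and $N$. The jump counts are treated similarly: their expected rate is $O(N)$ times the boundary flux, so the total jump contribution is $O(N^{-1})$ per unit time weighted by the same decaying factor. The barrier construction is the step I expect to be hardest, since $\phi_1$ vanishes on $\partial D$ and the Fleming-Viot jumps must be checked not to destroy the supersolution property. Finally, integrate against $m^{\otimes N}$, using that $\Phi$ is bounded on $\supp(m)^N$, to obtain the uniform $1/N$ bound.
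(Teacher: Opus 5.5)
Your treatment of the first term agrees with the paper, and so does your local analysis of how $U^N$ fails to solve \eqref{eqn.VN}: an interior residual of order $1/N$, a Fleming--Viot defect of order $N^{-2}$, both carrying $e^{-(\lambda_2-\lambda_1)t}$ and a negative power of $\frac1N\sum_i\rho(x^i)$.

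\textbf{The uniform-in-time barrier is missing.} Almost all of the difficulty is in the second term, and there your plan rests on an unproved claim. You assume that some ``adjusted'' version of $\Phi=(\frac1N\sum_i\rho(x^i))^{-k}$ gives $\E_{\bx}[\Phi(\bX_s)]\lesssim\Phi(\bx)+C$ uniformly in $s$ and $N$. The natural candidate does the opposite in the interior. Since $-\frac12\Delta\rho=\lambda_1\rho$ and $z\mapsto z^{-k}$ is convex and decreasing, $\frac12\sum_i\Delta_{x^i}\Phi\ge k\lambda_1\Phi$. So between jumps $\E\,\Phi(\bX_s)$ grows like $e^{k\lambda_1 s}$. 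The only compensating effect is the relative gain of order $\Phi/N$ at each boundary hit (as in Lemma \ref{lem.Phi1N}). That gain lives on $\partial^1(D^N)$ and cannot be turned into a pointwise interior supersolution inequality.

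Your heuristic ``$O(N)$ jumps per unit time'' has the same problem. What must be controlled is the number of jumps weighted by $(\frac1N\sum_i\rho(x^i))^{-2}$ at degenerate configurations, not the typical jump count.

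For $t\ge1$ the paper handles this in two steps. First, the boundary gain of $\Psi_1^N=e^{-(\lambda_2-\lambda_1)t}(\frac1N\sum_i\rho(x^i))^{-7}$ absorbs the jump defect. Second, the resulting negative interior term $-e^{-(\lambda_2-\lambda_1)t}(\frac1N\sum_i\rho(x^i))^{-9}$ is cancelled by a second barrier $\Psi_2^N$. This is a $10$-particle $U$-statistic of $\psi(t,\dist(x^1),\dots,\dist(x^{10}))$, where $\psi$ solves a drifted heat equation on $\R_+^{10}$ with the locally integrable source $e^{-ct}|\bz|^{-9}$ (Proposition \ref{prop.supersolexists}). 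It has three properties you need:
\begin{enumerate}
\item its generator is $\gtrsim e^{-ct}(\frac1N\sum_i\rho(x^i))^{-9}$, by convexity;
\item it does not increase under jumps, because $\partial_{z^i}\psi\le 0$;
\item it is integrable against $m^{\otimes N}$.
\end{enumerate}
Nothing in your proposal plays this role, and without it you do not get uniformity in time.

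\textbf{The small-time boundary singularity is missing.} For $t\le1$, $D_{mm}U(t,m,x,x')$ is not controlled by $(\int\rho\,dm)^{-2}$ alone. The data $\frac{\delta G}{\delta m}$ and $\frac{\delta^2 G}{\delta m^2}$ do not vanish on $\partial D$, so gradients of their Dirichlet heat flow blow up like $t^{-1/2}$ at the boundary (Propositions \ref{prop.ueqn}, \ref{prop.veqn}, \ref{prop.Uderiv.est.shorttime}). Hence the residual carries the extra factor $1+\frac1N\sum_i w^2(t,x^i)$, as in \eqref{UN.eqnerror}, which is of order $1/t$ when particles sit on $\partial D$.

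In a Duhamel formula the residual enters as $R^N(t-s,\bX_s)$, not $R^N(s,\bX_s)$. So this singular regime occurs at the end of the trajectory, where particles can be arbitrarily close to $\partial D$, and the compact support of $m$ does not help. The paper absorbs it with $\Phi_2^N=\frac{t^\beta}{N}\sum_i w(t,x^i)$ and a Young inequality against $(\frac1N\sum_i\rho(x^i))^{-(6+\theta)}$.

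\textbf{The trajectorial route asks for more than the paper proves.} The paper uses the comparison principle rather than a trajectorial representation, which would also need an It\^o formula for $U^N$, and $U^N$ is not smooth up to $\partial(D^N)$. The choice matters. On $[0,1]$ the barrier inequalities are only needed at a maximum point of the penalized difference. There, boundedness of $U$ and $V^N$ forces $\frac{C}{N}(\frac1N\sum_i\rho(x^i))^{-(6+\theta)}\lesssim 1$, and this absorbs the term $\frac1N(\frac1N\sum_i\rho(x^i))^{-(8+\theta)}$ coming from the Laplacian of $\Phi_1^N$. A bound on $\E_{\bx}[\Phi(\bX_s)]$ cannot exploit this truncation. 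Your route would therefore need a genuine supersolution, whereas the paper only needs one on the set where $\Phi^N\lesssim N$.
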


\begin{rmk}
    Throughout the paper, we work at the level of the equations \eqref{eqn.VN} and \eqref{eqn.U}, and in particular our proof of Theorem \ref{thm.conv.finitetime} makes no use of the particle system. This is why we prefer to state the result purely in terms of $V^N$ and $U$. However, by expanding $V^N$ along the dynamic of $\bX$ (in particular, this can be done via a straightforward extension of Proposition 1 in \cite{grigorescukang}), we find that \eqref{def.VN} holds,
    and hence
    \begin{align*}
        \int_{D^N} V^N(t,\bx) m^{\otimes N}(d\bx) = \int_{D^N} \E\Big[ G\big(m_{\bX_t}^N\big) | \bX_0 = \bx \Big] m^{\otimes N}(d\bx) =   \E\Big[ G\big( m_{\bX_t}^N\big) | \bX_0 \sim m^{\otimes N} \Big].
    \end{align*}
    Moreover, by definition $U(t,m) = G\big(\ov p_t^m\big)$. Thus, the main estimate can equivalently be written 
    \begin{align*}
        \sup_{t \in \R_+} \Big| G\big(\ov p_t^m \big) -  \E\Big[ G\big(m_{\bX_t}^N\big) | \bX_0 \sim m^{\otimes N} \Big] \Big| \lesssim 1/N.
    \end{align*}
    This explains why this type of result is sometimes referred to as "weak propagation of chaos", see e.g. \cite{ChassegneuxSzpruchTse, DelarueTse}. 
\end{rmk}

\begin{rmk}
    The rate $1/N$ in Theorem \ref{thm.conv.finitetime} is likely optimal, since it is optimal in the case of weakly interacting particle systems without the Fleming-Viot resampling mechanism, as studied in \cite{ChassegneuxSzpruchTse, DelarueTse}. However, at present we do not have an example confirming optimality in the Fleming-Viot case. The condition that $\supp(m) \Subset D$ is likely overkill, but it seems necessary to impose some condition on $m$ which avoids too much mass being near the boundary, since, as will be discussed below, the derivatives of $U$ blow up when $m$ concentrates near the boundary. 
\end{rmk}

\begin{rmk}
    It is natural to wonder whether Theorem \ref{thm.conv.finitetime} could be generalized by replacing the Brownian motion with a different diffusion process, or even the solution of a McKean-Vlasov SDE as in \cite{toughnolen}. We think that on a finite time horizon, this generalization should be possible. However, the uniform-in-time estimate is more delicate, and likely needs to be handled on a case-by-case basis.
\end{rmk}

Since $m \mapsto \|m - n\|_{H^{-s}}^2$ is $\cC^{2+\alpha}$ for each fixed $n \in \cP(D)$ and $s > d/2 + 2$, we get the following corollary at the level of the particle system.

\begin{cor} \label{cor.particlesystem.hminuss}
    Suppose that $m$ satisfies $\text{supp}(m) \Subset D$, and that $\bX$ denotes the $N$-dimensional Fleming-Viot particle system with initial condition $\bX_0 \sim m^{\otimes N}$. Then, for any $s > d/2 + 2$,
    \begin{align} \label{hminuss.est}
       \sup_{t \in \R_+}  \E \Big[ \norm{ m_t^N - \ov{p}_t^{m} }_{H^{-s}}^2 \Big] \lesssim 1/N,   
    \end{align}
    where $m_t^N = m_{\bX_t}^N = \frac{1}{N}\sum_{i = 1}^N \delta_{X_t^{i}}$, and the implied constants depend on $D$, $s$, and $\text{supp}(m)$. In addition, for any $\theta >0$, we have
     \begin{align} \label{d1.est}
        \sup_{t \in \R_+} \E \Big[\bd_1\big( m_t^N, \ov{p}_t^{m} \big)\Big] \lesssim N^{-1/(d + 4 + \theta)}, 
    \end{align}
    where the implied constant can depend on $\theta$, $D$, and $\text{supp}(m)$. 
\end{cor}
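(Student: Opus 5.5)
The $H^{-s}$ bound \eqref{hminuss.est} will follow from Theorem \ref{thm.conv.finitetime} applied to a test function that depends on the target measure, and the $\bd_1$ bound \eqref{d1.est} from the $H^{-s}$ bound by interpolation.

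\textbf{The $H^{-s}$ bound.} Fix $t$ and set $n = \ov p_t^m$. Take
\[
G_n(\mu) = \|\mu - n\|_{H^{-s}}^2 .
\]
As noted before the corollary, $G_n \in \cC^{2+\alpha}$ when $s > d/2+2$. I will check that $\|G_n\|_{\cC^2}$ is bounded uniformly in $n \in \cP(D)$. The reason is that
\[
G_n(\mu) = \iint k(x,y)\,(\mu-n)(dx)\,(\mu-n)(dy)
\]
for a kernel $k$ that is $C^{4}$ in each variable, since $s>d/2+2$. Hence
\[
\frac{\delta G_n}{\delta m}(\mu,x) = 2\int k(x,y)\,(\mu-n)(dy),
\]
and the second linear derivative is $2k(x,y)$. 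Both are bounded in $C^2$ (and $C^{2+\alpha}$) independently of $n$.

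Now apply Theorem \ref{thm.conv.finitetime} to $G_n$ at the fixed time $t$. Using the identification in the first remark, this gives
\[
\Big| G_n(\ov p_t^m) - \E\big[G_n(m_t^N)\big] \Big| \le C/N .
\]
Since $G_n(\ov p_t^m) = 0$, this reads $\E\|m_t^N - \ov p_t^m\|_{H^{-s}}^2 \le C/N$. The constant $C$ depends only on $D$, $\supp(m)$ and $\|G_n\|_{\cC^2}$, which is uniform in $n$, so $C$ is uniform in $t$. Taking the supremum over $t$ gives \eqref{hminuss.est}.

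The one delicate point is that the test function depends on $t$ through $n$. This is harmless because the constant in the theorem depends only on $\|G\|_{\cC^2}$. One must also check that the $\cC^{2+\alpha}$ norm used in the theorem involves only derivatives in $m$ and $x$ that are controlled by the kernel bounds above.

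\textbf{The $\bd_1$ bound.} Let $f$ be $1$-Lipschitz on $\ov D$ and normalize $f(x_0)=0$, so that $\|f\|_\infty \lesssim 1$. Mollify at scale $\varepsilon$ to obtain $f_\varepsilon$ with
\[
\|f-f_\varepsilon\|_\infty \lesssim \varepsilon, \qquad \|f_\varepsilon\|_{H^s} \lesssim \varepsilon^{1-s-d/2}.
\]
To make sense of $\|f_\varepsilon\|_{H^s}$, extend $f$ to a Lipschitz function on $\R^d$ and use the $\R^d$ definition of $H^{-s}$; if $H^{-s}$ is defined on $D$, use an extension or a smooth cutoff instead. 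Since both measures are probability measures,
\[
\bd_1(\mu,\nu) \lesssim \varepsilon + \varepsilon^{1-s-d/2}\,\|\mu-\nu\|_{H^{-s}}.
\]
Taking expectations and applying Jensen together with \eqref{hminuss.est} gives
\[
\E\big[\bd_1(m_t^N,\ov p_t^m)\big] \lesssim \varepsilon + \varepsilon^{1-s-d/2}\, N^{-1/2}.
\]
Optimize with $\varepsilon = N^{-1/(2(s+d/2))} = N^{-1/(2s+d)}$. Choosing $s = d/2 + 2 + \theta/2$ gives $2s+d = 2d+4+\theta$, so this yields the rate $N^{-1/(2d+4+\theta)}$. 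This is worse than the claimed $N^{-1/(d+4+\theta)}$.

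\textbf{Main obstacle.} The hard step is therefore matching the exponent in \eqref{d1.est}. My first attempt will use a sharper smoothing estimate. Use $\|f_\varepsilon\|_{H^s} \lesssim \varepsilon^{1-s}$ on a bounded domain: the Lipschitz function $f$ is in $H^1$ with bounded norm, and mollification costs $\varepsilon^{-(s-1)}$ in $L^2$-based norms, with no extra $d/2$ loss. Redoing the optimization then gives
\[
\varepsilon + \varepsilon^{1-s}N^{-1/2}, \qquad \varepsilon = N^{-1/(2s)},
\]
hence the rate $N^{-1/(2s)} = N^{-1/(d+4+\theta)}$ for $s = d/2+2+\theta/2$. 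This matches the claimed exponent.
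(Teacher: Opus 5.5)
Your proposal is correct and follows the paper's proof. The $H^{-s}$ bound comes from Theorem \ref{thm.conv.finitetime} applied to $G(\mu)=\|\mu-\ov p_t^m\|_{H^{-s}}^2$, whose $\cC^2$ norm is bounded uniformly in $t$; this is the paper's Lemma \ref{lem.hminus}, where your kernel derivatives only need their $\mu$-averages subtracted to meet the normalization convention, which does not affect the bounds. Your corrected smoothing estimate $\|f_\varepsilon\|_{H^s}\lesssim\varepsilon^{1-s}$ is exactly the interpolation the paper uses for \eqref{d1.est}. The only cosmetic difference is that you fix $\varepsilon=N^{-1/(2s)}$ after taking expectations, whereas the paper chooses $\varepsilon=\|m-n\|_{H^{-s}}^{1/s}$ pathwise and then applies Jensen; both give the rate $N^{-1/(d+4+\theta)}$.
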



Finally, by sending $t \to \infty$ in the above estimates, we obtain the following.

\begin{cor} \label{cor.invariantdist}
   Denote by $P^N \in \cP(D^N)$ the invariant measure for the $N$-particle Fleming-Viot particle system. Then, for $G \in \cC^{2+\alpha}\big(\cP(\ov D)\big)$, we have the estimate
   \begin{align} \label{invariantest1}
      \Big| \int_{D^N} G\big(m_{\bx}^N\big) P^N(d\bx) - G\big( \rho \, dx \big) \Big| \lesssim 1/N, 
   \end{align}
   where the implied constant depends only on $\| G\|_{\cC^{2}}$ and $D$. 
   
   For $s > d/2 + 2$, we have the estimate
    \begin{align} \label{invariantest2}
        \E_{P^N}\Big[ \norm{m_{\bX}^N - \rho \, dx}_{H^{-s}}^2 \Big] = \int_{D^N} \norm{m_{\bx}^N - \rho}_{H^{-s}}^2 dP^N(\bx) \lesssim 1/N, 
    \end{align}
    where the implied constant can depend on $D$ and $s$. 
    
   Finally, for $\theta > 0$, we have the estimate
    \begin{align} \label{invariantest3}
        \E_{P^N}\Big[ \bd_1\big(m_{\bX}^N, \rho \, dx\big) \Big] = \int_{D^N} \bd_1\big(m_{\bx}^N, \rho \, dx\big) dP^N(\bx) \lesssim N^{- 1/(d+4 + \theta)},
    \end{align}
    where the implied constant can depend on $D$, and $\theta$. 
\end{cor}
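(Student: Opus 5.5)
The plan is to derive Corollary \ref{cor.invariantdist} from Theorem \ref{thm.conv.finitetime} and Corollary \ref{cor.particlesystem.hminuss} by sending $t\to\infty$, using two facts: the ergodicity of the $N$-particle system, $\cL(\bX_t)\to P^N$ as $t\to\infty$ (from \cite{burdzy2020}), and the spectral convergence $\ov p_t^m\to\rho\,dx$. The key point is that all estimates above are uniform in $t$, so the $1/N$ bounds survive the limit.

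For \eqref{invariantest1}, fix one initial law $m$ with $\supp(m)\Subset D$, for instance the uniform measure on a fixed ball $B\Subset D$. The implied constant in Theorem \ref{thm.conv.finitetime} then depends only on $D$ and $\|G\|_{\cC^2}$, because $B$ is chosen once in terms of $D$. Theorem \ref{thm.conv.finitetime}, together with the representation \eqref{def.VN} from the remark after it, gives
\[
\Big|G(\ov p_t^m)-\E\big[G(m_{\bX_t}^N)\,\big|\,\bX_0\sim m^{\otimes N}\big]\Big|\le C/N \quad\text{for all } t\ge 0 .
\]
As $t\to\infty$ we have $\ov p_t^m\to\rho\,dx$ weakly, and $G$ is continuous on $\cP(\ov D)$, so $G(\ov p_t^m)\to G(\rho\,dx)$. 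Likewise $\cL(\bX_t)\to P^N$, and $\bx\mapsto G(m^N_{\bx})$ is bounded and continuous on $\ov D^N$, so the expectation converges to $\int G(m^N_{\bx})\,P^N(d\bx)$. The bound $C/N$ is preserved in the limit.

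The step needing the most care is the convergence of $\cL(\bX_t)$ to $P^N$. We need it with the initial law $m^{\otimes N}$ rather than a deterministic starting point, and in a topology that handles bounded continuous functions on $\ov D^N$. I would take it from \cite{burdzy2020}: either the convergence holds there for every initial point, in which case dominated convergence extends it to $m^{\otimes N}$, or it holds in total variation. If only weak convergence of laws on $D^N$ is available, continuity of $G^N$ on $\ov D^N$ is still enough.

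For \eqref{invariantest2}, apply \eqref{invariantest1} to $G_n(\mu)=\|\mu-n\|_{H^{-s}}^2$ with $n=\rho\,dx$. As noted before Corollary \ref{cor.particlesystem.hminuss}, this $G_n$ is $\cC^{2+\alpha}$, and its $\cC^2$ norm depends only on $s$ and $D$. Since $G_n(\rho\,dx)=0$, the bound follows immediately. (Alternatively, pass to the limit directly in \eqref{hminuss.est}, using continuity of $\|\cdot-\cdot\|_{H^{-s}}$ under weak convergence for $s>d/2$.)

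For \eqref{invariantest3}, either pass to the limit in \eqref{d1.est}, using that $\bd_1$ is weakly continuous on the compact set $\cP(\ov D)$, or redo the interpolation used for \eqref{d1.est} starting from \eqref{invariantest2}. That interpolation bounds $\bd_1$ by $\|\cdot\|_{H^{-s}}$ after mollifying Lipschitz test functions at a scale $\epsilon$, which costs $\epsilon+\epsilon^{-(s-1)}\|\cdot\|_{H^{-s}}$. Optimizing $\epsilon$, with $s$ close to $d/2+2$, gives the exponent $1/(d+4+\theta)$. The first route is the quickest.
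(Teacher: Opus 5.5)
Your proposal is correct and matches the paper's proof. You send $t\to\infty$ in Theorem~\ref{thm.conv.finitetime} using $\cL(\bX_t)\to P^N$, $\ov p_t^m\to\rho\,dx$ and continuity of $\bx\mapsto G(m^N_{\bx})$, then obtain \eqref{invariantest2} by applying \eqref{invariantest1} to $G(\mu)=\|\mu-\rho\,dx\|_{H^{-s}}^2$ and \eqref{invariantest3} by the $H^{-s}$-to-$\bd_1$ interpolation; your alternative of passing to the limit in \eqref{d1.est} works equally well. Fixing the initial law $m$ as uniform on a ball $B\Subset D$ is a useful detail the paper leaves implicit, since it is what makes the constant depend only on $D$ and $\|G\|_{\cC^2}$.
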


\subsection{Related literature}


Finite-time quantitative convergence results for Fleming-Viot particle systems have been obtained in significant generality. For example, when applied in our setting, Theorem 2.2 of \cite{villemonais} yields an estimate of the form
\begin{align} \label{est.vill}
   \E\bigg| \int_D \, f \, dm_t^N - \int_D \, f \, d \ov p_t^{m_0^N} \bigg| \lesssim_{t,m_0^N} \frac{\| f\|_{\infty}}{\sqrt{N}}, 
\end{align}
where $f : D \to \R$ is an arbitrary bounded test function, and the implied constant depends in an exponential way on $t$. This estimate was obtained for a much more general class of models, in which the Brownian motions are replaced by independent Markov processes satisfying appropriate conditions. Meanwhile, Theorem 2 of \cite{DelMoralVillemonais} established a uniform-in-time analogue of \eqref{est.vill}, without a stated convergence rate. More recently, \cite{JournelMonmarche} obtains a uniform-in-time result in a related setting, where the Brownian motion is replaced by a Langevin diffusion with a small noise with dynamics of the form 
\begin{align*}
    dX_t = - \nabla U(X_t) dt + \sqrt{2\eps} dB_t. 
\end{align*}
In this work, the function $U$ and the domain $D$ are supposed to satisfy certain technical conditions, and the parameter $\eps$ is required to be sufficiently small; this guarantees that the domain $D$ is "metastable" for the diffusion. Under these conditions, \cite[Theorem 2]{JournelMonmarche} establishes a uniform-in-time analogue of \eqref{est.vill}, but with a non-explicit rate $N^{-\eta_{\eps}}$, for some $\eta_{\eps} > 0$.

Finally, Theorem 4 in the very recent \cite{cox2025linear} establishes, in the setting of the generalized Fleming-Viot-type model introduced in \cite{CoxHortonVillemonais1}, a uniform-in-time $L^2$ version of \eqref{est.vill}. In this result, the underlying diffusion is a Brownian motion with a bounded drift. When applied in our setting, it yields an estimate of the form 
\begin{align} \label{est.vill2}
    \Big\| \int_D \, f \, dm_t^N - \int_D \, f \, d \ov p_t^{m_0^N} \Big\|_{L^2} \lesssim_{m_0^N} \frac{\| f\|_{\infty}}{\sqrt{N}}.
\end{align}
In particular, this implies a version of our Theorem \ref{thm.conv.finitetime}, in the case that $G$ is linear, i.e. $G(m) = \int_D f \, dm$ for some bounded function $f$, and with the rate $\frac{1}{\sqrt{N}}$. Compared to \eqref{est.vill2}, the novelty of our Theorem \ref{thm.conv.finitetime} is that our analytical approach allows us to establish uniform-in-time weak error bounds for non-linear $G$, and to obtain the faster rate $1/N$ when $G$ is smooth enough. These weak error rates in turn imply estimates in genuine metrics, as explained in Corollary \ref{cor.particlesystem.hminuss}.

We note that in addition to the results discussed above concerning "hard killing" (meaning that the particles evolve in a bounded domain, and resampling occurs when one of them reaches the boundary), there are much more precise estimates available in the "soft killing" case. Soft killing means that particles are instead resampled at random times, with rates which depend on the positions of the particles. We refer to e.g. \cite{MoralMiclo, Rousset, JournelRousset2024} for some quantitative results in this setting, and to the recent \cite{JournalMonmarcheSoft} for a probabilistic approach, which establishes sharp, uniform in time convergence results in the Wasserstein distance. We refer also to \cite{CloezNoemie, CloezCorujo, FerrariMaric} for some related results in discrete state spaces, \cite{toughnolen} for a McKean-Vlasov version of the model studied here, and \cite{CDGR2020} for a central limit theorem for a general class of Fleming-Viot particle systems.

\subsection{Proof strategy}

We start by following the method which is known as ``weak propagation of chaos'', and has been developed in the setting of weakly interacting diffusions (without any Fleming-Viot boundary conditions) in \cite{ChassegneuxSzpruchTse, DelarueTse}. We refer also to \cite{BayraktarEkrenZhou, CaoRenTan} for further applications of this idea. The general strategy of projecting the solution of an infinite-dimensional PDE down to finite dimensions in order to obtain mean field convergence has also been used frequently in the theory of mean field games and mean field control, ever since the seminal work \cite{CDLL_2019}. A similar strategy is also developed in \cite{mischlermouhot} in the setting of kinetic theory.

In this work, we begin by projecting $U$ down to finite dimensions, defining 
\begin{align*}
    U^N : \R_+ \times \big( D^N \setminus \partial^N(D^N)\big) \to \R, \quad U^N(t,\bx) = U(t,m_{\bx}^N).
\end{align*}
In order to estimate $V^N - U^N$ (which implies an estimate on $\int_{D^N} V^N d m^{\otimes N} - U$ by standard arguments), we ask the key question:
\begin{align} \label{question}
    \text{By how much does $U^N$ fail to solve the equation \eqref{eqn.VN}?}
\end{align}
On the one hand, using the equation for $U$ and following a standard computation, we get
\begin{align} \label{un.error1}
    \partial_t U^N(t,\bx) - \frac{1}{2} \sum_{i = 1}^N \Delta_{x^i} U^N(t,\bx) =  - \frac{1}{2N^2} \sum_{i = 1}^N  \text{tr}\big(D_{mm} U(t,m_{\bx}^N,x^i,x^i) \big), \quad (t,\bx) \in \R_+ \times D^N.
\end{align}
On the other hand, by using the boundary condition $\frac{\delta U}{\delta m}(t,m,x) = 0$ for $x \in \partial D$, we can show (see the proof of Proposition \ref{prop.UN.properties}) that 
\begin{align} \label{un.boundaryerror1}
    \Big| U^N(t,\bx) - \frac{1}{N-1} \sum_{j \neq i} U^N(t,\bx^{i,j}) \Big| \lesssim \norm{ \frac{\delta^2 U}{\delta m^2} (t,m_{\bx}^N,\cdot,\cdot)}_{\infty} N^{-2}, \quad t \in \R_+, \,\, x^i \in \partial D, \,\, x^j \in D \text{ for } j \neq i.
\end{align}
So, in order to understand the answer to the question \eqref{question}, we need to estimate both $D_{mm} U$ and $\frac{\delta^2 U}{\delta m^2}$. 

In our setting, because the map $(t,m) \mapsto p_t^m$ becomes less regular when more of the mass of $m$ concentrates near the boundary, we cannot expect global estimates on the derivatives of $U$. Nevertheless, in Subsection \ref{subsec.Uderivest}, we obtain precise estimates which explain how the derivatives $D_{mm} U$ and $\frac{\delta^2 U}{\delta m^2}$ can blow up. When combined with \eqref{un.error1} and \eqref{un.boundaryerror1}, this yields estimates of the form 
\begin{align} \label{un.error2}
    \Big| \partial_t &U^N(t,\bx) - \frac{1}{2} \sum_{i = 1}^N \Delta_{x^i} U^N(t,\bx) \Big|
   \nonumber  \\
    &\lesssim \exp( - c t) \frac{1}{N} \Big(\frac{1}{N} \sum_{i = 1}^N \dist(x^i, \partial D) \Big)^{-2} \Big(1 + \frac{1}{N} \sum_{i = 1}^N w^2(t,x^i) 1_{t \leq 1} \Big), \quad (t,\bx) \in \R_+ \times D^N,
\end{align}
and
\begin{align} \label{un.boundaryerror2}
    \Big| U^N(t,\bx) - \frac{1}{N-1} \sum_{j \neq i} U^N(t,\bx^{i,j}) \Big| \lesssim  \exp(-ct) \frac{1}{N^2} \Big(\frac{1}{N} \sum_{i = 1}^N \dist(x^i, \partial D) \Big)^{-2} , \quad  x^i \in \partial D, \,\, x^j \in D \text{ for } j \neq i, 
\end{align}
where $c = \lambda_2 - \lambda_1 > 0$, with $0 < \lambda_1 < \lambda_2$ the first two Dirichlet eigenvalues of $D$, and $w(t,x) : (0,1) \times D \to \R_+$ is a tailor-made barrier function which blows up when $t \to 0$ and $x \to \partial D$; see \eqref{def.w} for the definition, which makes use of the transition kernel for a certain 1-dimensional diffusion with drift. 

We note that the left-hand side of \eqref{un.boundaryerror2} is a priori of size $1/N$ as soon as $U^N$ is e.g. Lipschitz with respect to $\bd_1$, so there is a gain of $1/N$. Thus, we can view the estimates \eqref{un.error2} and \eqref{un.boundaryerror2} as saying that $U^N$ solves \eqref{eqn.VN} up to an error of order $1/N$, at least \textit{locally}. But the estimate blows up when either (i) most of the mass is near the boundary, or (ii) $t$ is close to $0$ and \textit{any} of the mass is near the boundary (this is quantified by the function $w$).

This leads us to the second main novelty of our proofs. To the authors' knowledge, all previous works involving weak propagation of chaos use a "trajectorial approach". However, because our estimates are not uniform, executing this strategy would require a delicate analysis of the particle system, controlling in a quantitative way the amount of time the particle system spends in places where $D_{mm} U$ and $\frac{\delta^2 U}{\delta m^2}$ are large, the number of times that the boundary is reached, etc.

We completely avoid this trajectorial approach; instead, our argument is based on the maximum principle. The initial idea is to try to build a function $\Phi^N$ which is bounded (in some appropriate sense) uniformly in $t$ and $N$, and such that $U^N + \frac{1}{N} \Phi^N$ is a super-solution of the equation \eqref{eqn.VN} satisfied by $V^N$, while $U^N - \frac{1}{N} \Phi^N$ is a subsolution. If we can build such functions, then the comparison principle in Proposition \ref{prop.comparison} gives $$ U^N - \frac{1}{N} \Phi^N \leq V^N \leq U^N + \frac{1}{N} \Phi^N.$$ 
In light of the estimates \eqref{un.error2}, \eqref{un.boundaryerror2}, this means that $\Phi^N$ should satisfy
\begin{align} \label{un.error2.comp}
    \partial_t &\Phi^N(t,\bx) - \frac{1}{2} \sum_{i = 1}^N \Delta_{x^i} \Phi^N(t,\bx) 
   \gtrsim \exp( - c t) \frac{1}{N} \Big(\frac{1}{N} \sum_{i = 1}^N \dist(x^i, \partial D) \Big)^{-2} \Big(1 + \frac{1}{N} \sum_{i = 1}^N w^2(t,x^i) 1_{t \leq 1} \Big),
\end{align}
and
\begin{align} \label{un.boundaryerror2.comp}
    \Phi^N(t,\bx) - \frac{1}{N-1} \sum_{j \neq i} \Phi^N(t,\bx^{i,j})    \gtrsim\exp(-ct) \frac{1}{N^2} \Big(\frac{1}{N} \sum_{i = 1}^N \dist(x^i, \partial D) \Big)^{-2} , \,\, x^i \in \partial D, \,\, x^j \in D \text{ for } j \neq i,
\end{align}
for some large $C$. 

We do not succeed in building functions $\Phi^N$ which are uniformly bounded and satisfy \eqref{un.error2.comp} and \eqref{un.boundaryerror2.comp}. However, we do succeed in building functions $\Phi^N$ which satisfy similar bounds on the set where $\Phi^N$ is bounded by $CN$ for large $C$, which, in light of the boundedness of $U$ and $V^N$, is sufficient.

The main challenge of the paper is thus to build super-solutions to \eqref{eqn.VN} with the correct blow-up profiles. This turns out to be quite challenging. We in fact build several functions, which play different roles. In a first step, in Subsection \ref{subsec.unittime} we obtain convergence on a unit time horizon by working with a function of the form
\begin{align} \label{phi.intro}
    \Phi^N = \Phi_1^N + \Phi_2^N, \quad \Phi_1^N(t,\bx) = \exp\big(9 t\big) \Big(\frac{1}{N} \sum_{i = 1}^N \rho(x^i) \Big)^{-7}, \quad \Phi_2^N(t,\bx) = \frac{t^{\beta}}{N} \sum_{i = 1}^N w(t,x^i), 
\end{align}
with $w$ the same barrier function appearing in \eqref{un.boundaryerror2}, and for an appropriate value of $\beta \in (0,1)$. Then, in a second step, we obtain convergence for all $t \geq 1$, by working instead with the function $ \Psi^N = \Psi_1^N + \Psi_2^N$, with
\begin{align} \label{psi.intro}
  \nonumber   &\Psi_1^N(t,\bx) = \exp\big(- ct \big) \Big(\frac{1}{N} \sum_{i = 1}^N \rho(x^i) \Big)^{-7},
    \\
    &\Psi_2^N(t,\bx) = \frac{1}{N(N-1)...(N-9)} \sum_{\substack{i_1,...,i_{10} = 1,...,N \\ i_1,...,i_{10} \text{ distinct}}} \Psi\big(t,x^{i_1},..x^{i_{10}}\big) + C \int_0^t \exp\big(- c'  s \big)ds, 
\end{align}
and $\Psi : \R_+ \times D^N \to \R$ a barrier function which satisfies certain useful properties, which we build in Proposition \ref{prop.supersolexists}.

\subsection{Outline of the paper}

In Section \ref{sec.prelim}, we fix notation and discuss some preliminaries. In Section \ref{sec.vn}, we establish the well-posedness of the finite-dimensional PDE \eqref{eqn.VN}. In Section \ref{sec.Uprops}, we obtain various estimates on the function $U$, and show that it solves \eqref{eqn.U} in a classical sense. Finally, the appendices collect various facts about the (renormalized) heat flow on $D$, and build the key barrier function $\Psi$ appearing in \eqref{psi.intro}. 

\subsection{Acknowledgments} P. Cardaliaguet was partially supported by the Agence Nationale de la Recherche (ANR), project ANR-22-CE40-0010 COSS. The work was completed during the period he was hosted by the INRIA project team Martingale. M. Cirant is member of the Gruppo Nazionale per l’Analisi Matematica, la
Probabilità e le loro Applicazioni (GNAMPA) of the Istituto Nazionale di Alta Matematica (INdAM).
 J. Jackson was supported by the NSF under Grant No. DMS-2302703. P. E. Souganidis was partially supported by the National Science Foundation grants DMS-2153822 and DMS-2452972.

\section{Preliminaries and notation}  \label{sec.prelim}

We discuss preliminaries and notation related to the domain $D$, spaces of functions, and calculus on spaces of measures on $D$.

\subsection{The domain $D$}

Recall that we work on a bounded domain $D \subset \R^d$ with $C^{\infty}$ boundary $\partial D$. We denote by $\dist(x,\partial D)$ the distance of a point $x \in D$ to the boundary $\partial D$, and for $\eps > 0$, we use the notation 
\begin{align*}
    (\partial D)_{\eps} = \big\{x \in D : \dist(x,\partial D) < \eps \big\} \subset D.
\end{align*}
We denote by $\dist : \ov D \to \R_+$ a $C^{\infty}$ function with the property $\dist(x) > 0$ for $x \in D$, and for some $\eps > 0$,  
\begin{align*}
    \dist(x) = \dist(x,\partial D) \text{ for } x \in (\partial D)_{\eps}.
\end{align*}
The spectrum of the operator $- \frac{1}{2} \Delta$ on $D$ (with homogeneous Dirichlet boundary conditions) is denoted by
\begin{align*}
    0 < \lambda_1 < \lambda_2 \leq \lambda_3 \leq ... \to \infty. 
\end{align*}
We denote by $\rho_1,\rho_2,...$ a corresponding orthonormal basis of $L^2(D)$, i.e. for $n,m \in \N$, we have
\begin{align*}
    \rho_n \in C^{\infty}(\ov D), \quad \rho_n|_{\partial D} = 0, \quad - \frac{1}{2} \Delta \rho_n = \lambda_n \rho_n \text{ in } D, \quad \langle \rho_n, \rho_m \rangle_{L^2(D)} = 1_{n = m}.
\end{align*}
It is known that the first eigenfunction $\rho_1$ is of constant sign in $D$, and so we assume without loss of generality that $\rho_1(x) > 0$ for $x \in D$. We then denote by 
\begin{align*}
    \rho = \frac{\rho_1}{\int_D \rho_1 dx}
\end{align*}
a renormalized version, which we will often view as the density of a probability measure on $D$. We note that by the Hopf lemma, $\dist$ and $\rho$ are comparable, i.e. there is a constant $C > 0$ such that 
\begin{align*}
    \frac{1}{C} \dist(x) \leq \rho(x) \leq C \dist(x), \quad x \in \ov D. 
\end{align*}

\subsection{Spaces of functions and measures on $D$}
We denote by $L^p(D)$ the standard space of $p$-integrable functions on $D$, endowed with the norm $\|\cdot\|_{L^p}$. Given $s > 0$, we denote by $H^s(D)$ the Sobolev space of functions on $D$ with $s$ square-integrable derivatives. More precisely, for $k \in \N$, 
\begin{align*}
    H^k(D) = \big\{f \in L^2(D) : \| f \|_{H^k} < \infty \big\}, \, \text{ where } \norm{f}_{H^k}^2 \coloneqq  \sum_{0 \leq |\alpha| \leq k} \int_D |D^{\alpha} f|^2 dx, 
\end{align*}
and with the sum in the definition of $\|f\|_{H^k}$ taken over the set of multi-indices $\alpha$ of order at most $k$. Then, for $s = k + \theta$, $k \in \N \cup \{0\}$ and $\theta \in (0,1)$, we have 
\begin{align*}
    H^s(D) = \big\{ f\in L^2(D) : \| f \|_{H^s} < \infty \big\}, \quad \norm{f}_{H^s}^2 \coloneqq \|f\|_{H^k}^2 + \sum_{|\alpha| = k} \int_D \int_D \frac{|D^{\alpha} f(x) - D^{\alpha} f(y)|^2}{|x-y|^{d + 2\theta}} dx dy. 
\end{align*}
We denote by ${H^{-s}} = {H^{-s}}(D)$ the topological dual of $H^s$, and denote by $\langle p, q \rangle_{H^{-s}}$ and $\|p\|_{H^{-s}}$ the inner product and norm on $H^{-s}$ inherited from duality with $H^s$. 

Given a metric space $X$, we denote by $\cP(X)$ the space of Borel probability measures on $X$, and $\sub(X)$ the space of non-negative Borel measures of total mass at most $1$. For technical reasons, we will work with various spaces of measures, but all of these spaces can be viewed as subsets of $\sub(\ov D)$, the space of non-negative Borel measures $m$ on $\ov D$ with $0 \leq m( \ov D) \leq 1$. For $m,n \in \sub(\ov D)$, we work with the distance $\bd$ inherited from duality with the space $W^{1,\infty}(\ov D)$ of bounded and Lipschitz functions on $D$: 
\begin{align*}
    \bd(m,n) = \|m - n\|_{(W^{1,\infty})^*} = \sup_{\phi : \ov D \to \R, \, \|\phi\|_{W^{1,\infty}} \leq 1} \int_{\ov D} \phi \, d(m - n), \quad \|\phi\|_{W^{1,\infty}} = \|\phi\|_{\infty} + \|D \phi\|_{\infty}.
\end{align*}
We denote by $\cP^+(\ov D)$ the set of all $m \in \cP(\ov D)$ such that $m(D) > 0$, and by $\cP^c(D)$ the set of all $m \in \cP(D)$ such that $m(K) = 1$ for some $K \Subset D$.

We note that for probability measures (i.e. when we work with $\cP(\ov D)$, $\cP^+(\ov D)$, $\cP^c(D)$,...), the metric $\bd$ is equivalent to the usual $1$-Wasserstein distance $\bd_1$, given by
\begin{align*}
    \bd_1(m,n) = \sup_{\phi \in E} \int \phi \, d(m - n), \quad E \coloneqq \big\{ \phi : \ov D \to \R \, \big| \, \phi \text{ is 1-Lipschitz}\big\}.
\end{align*}

\subsection{Calculus on spaces of measures}

We are going to be dealing specifically with derivatives of functions $\Phi : \cP^+(\ov D) \to \R$ or $\Phi : \cP(\ov D) \to \R$. We say that $\Phi : \cP^+(\ov D) \to \R$ has a continuous linear derivative if there exists a continuous function 
\begin{align*}
    \frac{\delta \Phi}{\delta m} : \cP^+(\ov D) \times \ov D \to \R
\end{align*}
which satisfies 
\begin{align} \label{def.linearderiv}
    \Phi(m') - \Phi(m) = \int_0^1 \int_{\ov D} \frac{\delta \Phi}{\delta m}\big( [m,m']_s, x \big)\, (m' - m)(dx)ds, \quad \forall \,\, m,m' \in \cP^+(\ov D), 
\end{align}
and 
\begin{align} \label{linderiv.normalization}
    \int_{\ov D} \frac{\delta \Phi}{\delta m}(m,x) m(dx) = 0, \quad \forall \, m \in \cP^+(\ov D).
\end{align}
Here and throughout the paper, we use the notation 
\begin{align}
    [m,m']_s = (1-s) m + sm', \quad 0 \leq s \leq 1.
\end{align}
Higher derivatives are defined similarly. In particular, we say that $\Phi$ has a continuous second linear derivative if $m \mapsto \frac{\delta \Phi}{\delta m}(m,x)$ has a linear derivative for each $x$, and the map 
\begin{align*}
    \frac{\delta^2 \Phi}{\delta m^2}  : \cP^+(\ov D) \times \ov D \times \ov D \to \R, \quad \frac{\delta^2 \Phi}{\delta m^2} (m,x,x') = \frac{\delta }{\delta m} \Big[ \frac{\delta \Phi}{\delta m}(\cdot,x) \Big](m,x')
\end{align*}
is jointly continuous.
If $\frac{\delta \Phi}{\delta m}$ exists and $\frac{\delta \Phi}{\delta m}(m,\cdot) \in C^1(\ov D)$ for each fixed $m \in \cP^+(\ov D)$, then we define 
\begin{align*}
    D_m \Phi : \cP^+(\ov D) \times \ov D \to \R^d, \quad D_m \Phi(m,x) = D_x \frac{\delta \Phi}{\delta m}(m,x).
\end{align*}
We say that $\Phi$ is $\cC^1$, written $\Phi \in \cC^1\big(\cP^+(\ov D)\big)$, if $D_m \Phi$ exists and is jointly continuous. Similarly, we say that $\Phi$ is $\cC^{2}$, written $\Phi \in \cC^{2}\big( \cP^+(\ov D)\big)$, if $\Phi$ admits two continuous linear functional derivatives $\frac{\delta \Phi}{\delta m}$ and $\frac{\delta^2 \Phi}{\delta m^2}$, we have $\frac{\delta \Phi}{\delta m}(m,\cdot) \in C^2(\ov D)$, $\frac{\delta^2 \Phi}{\delta m^2}(m,\cdot,\cdot) \in C^2(\ov D \times \ov D)$, and the maps
\begin{align*}
    &D_m \Phi : \cP^+(\ov D) \times D \to R^d, \quad D_x D_m \Phi = D_{xx} \frac{\delta \Phi}{\delta m} : \cP^+(\ov D) \times D \to \R^{d \times d}, \\
    &D_{mm} \Phi :  \cP^+(\ov D) \times \ov D \times \ov D \to \R, \quad  D_{mm} \Phi(m,x,x') \coloneqq D_m \big[ D_m \Phi(\cdot,x) \big](m,x') = D_{x'} D_x \frac{\delta^2 \Phi}{\delta m^2}(m,x,x')
\end{align*}
are jointly continuous. We note that if $\Phi$ is $\cC^{2}$, then in fact we have
\begin{align*}
    D_{mm} \Phi(m,x,x') = D_m \big[ D_m \Phi(\cdot,x) \big](m,x').
\end{align*}
Finally, for $\alpha \in (0,1)$, we say that $\Phi$ is $\cC^{2 + \alpha}$, written $\Phi \in \cC^{2+\alpha}\big(\cP^+(\ov D)\big)$, if $\Phi$ is $\cC^2$ and in addition the derivatives $D_{mm} \Phi$ and $D_x D_m \Phi$ are (globally) $\alpha$-H\"older continuous in their arguments. 

We refer to \cite{CDLL_2019} or \cite{CarmonaDelarue_book_I} for details on the derivatives discussed above and their relationships to each other.
We note that we use very similar definitions for $\cP(\ov D)$ instead of $\cP^+(\ov D)$. Finally, we also use similar definitions and notation if $\Phi$ also depends on an additional finite-dimensional parameter.

\section{The equation for $V^N$} \label{sec.vn}

This section is devoted to the well-posedness of \eqref{eqn.VN}. We will primarily be interested in the case that 
\begin{align*}
    G^N(\bx) = G(m_{\bx}^N), \text{ for some } G : \cP(\ov D) \to \R, 
\end{align*}
but none of the results in this section require this structural condition.

There are two subtleties in studying \eqref{eqn.VN}. First, the non-local lateral boundary condition is only specified on $\partial^1(D^N)$, rather than all of $\partial(D^N)$. This means that for \eqref{eqn.VN} to be well-posed, the values of $V^N$ on $\partial (D^N) \setminus \partial^1(D^N)$ must somehow be irrelevant. This makes sense because the probability that two Brownian particles will reach the boundary at the same time is zero, but it is not clear a priori how to see this from an analytical viewpoint. Second, we do not assume that $G^N$ is compatible with the lateral boundary conditions, i.e. we do \textit{not} assume that 
\begin{align*}
    G^N(\bx) = \frac{1}{N-1} \sum_{j \neq i} G^N(\bx^{i,j}) \text{ for } \bx \in \partial^1(D^N), \, x^i \in \partial D. 
\end{align*}
As a consequence, the solution will not be continuous at the "corner" $\{0\} \times \partial(D^N)$. In light of these points, it is reasonable to work with solutions which are continuous on the domain 
\begin{align*}
      \text{Dom}_N \coloneqq \Big( \R_+ \times \big(D^N \cup \partial^1(D^N)\big) \Big) \bigcup \Big( \{0\} \times D^N \Big). 
\end{align*}
Here is the precise definition we will use.
\begin{defn} \label{def.VN.classicalsoln}
    We say that $V^-$ is a classical subsolution of \eqref{eqn.VN} if $V^- : \text{Dom}_N \to \R$ is continuous, bounded from above, smooth on $\R_+ \times D^N$, and satisfies
    \begin{align} \label{eqn.VN.subsol}
    \begin{cases}
        \ds \partial_t V^-(t,\bx) - \frac{1}{2} \sum_{i = 1}^N \Delta_{x^i} V^-(t,\bx) \leq  0, \quad (t,\bx) \in \R_+ \times D^N,
       \vspace{.1cm} \\
        \ds \quad  V^-(t,\bx) \leq \frac{1}{N-1} \sum_{j \neq i} V^-\big(t,\bx^{i,j}\big), \quad t \in \R_+, \,\, \bx \in \partial^{1,i}(D^N),
       \vspace{.1cm} \\
        \ds V^-(0,\bx) \leq  G^N(\bx), \,\, \bx \in D^N.
    \end{cases}
\end{align}
in a pointwise fashion.
Classical supersolutions are defined analogously. Finally, we say that $V$ is a classical solution if it is both a classical subsolution and a classical supersolution. 
\end{defn}

The main result of this section is as follows.

\begin{thm} \label{thm.vneqn}
    Suppose that $G^N : \ov{D}^N \to \R$ is of class $C^{2+\alpha}(\overline D^N)$. Then there is a unique classical solution to \eqref{eqn.VN}.
\end{thm}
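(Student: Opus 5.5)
We prove uniqueness through a comparison principle and existence by approximating with smooth, compatible problems. We use a barrier that vanishes at the codimension-two part of the boundary, $\partial(D^N)\setminus\partial^1(D^N)$, where two or more coordinates lie on $\partial D$ simultaneously.

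\textbf{Uniqueness (comparison).} Let $V^-$ be a bounded classical subsolution and $V^+$ a bounded classical supersolution, and set $W = V^- - V^+$. We want $W \le 0$. Suppose instead that $\sup W > 0$. The maximum of $W$ on $[0,T]\times \ov D^N$ is not attained in the parabolic interior, since there $W$ is a subsolution. It is not attained at $t=0$, since $W(0,\cdot) \le 0$. If it is attained at a point of $\partial^{1,i}(D^N)$, the boundary inequality $W(t,\bx) \le \frac{1}{N-1}\sum_{j\neq i} W(t,\bx^{i,j})$ forces $W(t,\bx^{i,j})$ to equal the maximum. The point $\bx^{i,j}$ lies in $D^N$ (all coordinates are now interior), so the maximum is attained at an interior point. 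The strong maximum principle then makes $W$ constant and positive on a component, which contradicts $W(0,\cdot)\le 0$ together with continuity at $\{0\}\times D^N$.

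The real difficulty is that $W$ need not attain its maximum, because it is only defined and continuous on $\mathrm{Dom}_N$. Its supremum may be approached near $\partial(D^N)\setminus\partial^1(D^N)$ or near the corner $\{0\}\times\partial(D^N)$. To handle this we perturb:
\begin{align*}
W_\eps = W - \eps t - \eps \sum_{i<k} h(x^i,x^k), \qquad h(x,y) = \phi\big(\dist(x)\big)\,\phi\big(\dist(y)\big),
\end{align*}
where $\phi(r)$ is superharmonic in the relevant sense and blows up as $r\to 0$. In dimension two for the pair $(\dist(x^i),\dist(x^k))$, the codimension-two set has zero capacity, so a function like $-\log\big(\dist(x^i)^2+\dist(x^k)^2\big)$, localised near the boundary, is superharmonic up to a bounded error that the $\eps t$ term absorbs. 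This penalisation pushes the supremum away from the multiple-boundary set. We must choose the barrier so that it still satisfies the nonlocal boundary inequality on $\partial^{1,i}$: replacing $x^i\in\partial D$ by an interior $x^j$ should not decrease the barrier by more than it gains. This can be arranged with a barrier that is finite on $\partial^1$ and whose only singularity is the pairwise one.

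The corner at $t=0$ is handled through continuity on $\{0\}\times D^N$, combined with an additional barrier of heat-type growth near $t=0$ at the boundary. We expect this step, building a barrier compatible with the nonlocal Fleming-Viot condition, to be the main obstacle. With the barrier in hand, the maximum-principle argument above gives $W_\eps\le 0$, and sending $\eps\to 0$ gives $W\le 0$.

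\textbf{Existence.} The approach is to approximate, since $G^N$ need not satisfy the compatibility condition. We consider a penalised \emph{soft killing} problem. For a rate $\kappa_n\to\infty$ concentrated in $(\partial D)_{1/n}$, we solve
\begin{align*}
\partial_t V_n - \tfrac12 \Delta V_n = \sum_i \kappa_n(x^i)\Big(\tfrac{1}{N-1}\sum_{j\neq i} V_n(\bx^{i,j}) - V_n\Big)
\end{align*}
with a reflecting-type or smooth truncated domain condition. This is a linear nonlocal parabolic system, and it is well-posed by a fixed-point argument in $C^{2+\alpha}$. By the maximum principle, $\|V_n\|_\infty \le \|G^N\|_\infty$. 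Interior Schauder estimates give local compactness in $\R_+\times D^N$.

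The remaining work is equicontinuity up to $\partial^1(D^N)$ and at $\{0\}\times D^N$. For this we use local barriers near a point of $\partial^{1,i}$, where only $x^i$ approaches the boundary and the other coordinates stay interior, so that the problem is effectively a one-particle boundary problem with continuous data $\frac{1}{N-1}\sum_{j\neq i}V(\bx^{i,j})$. The barriers are the standard ones for the Dirichlet heat equation on a smooth domain, using $C^{2+\alpha}$ regularity of $G^N$ near $t=0$. Passing to the limit gives a classical solution in the sense of Definition \ref{def.VN.classicalsoln}, and uniqueness then identifies it.
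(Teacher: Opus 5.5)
\textbf{Gap in the comparison step.} You flag the barrier as the main obstacle, and that is exactly where the proof breaks: no admissible barrier is produced, and neither candidate you name works.

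The product $\phi(\dist(x))\phi(\dist(y))$ with $\phi(0^+)=+\infty$ is infinite on all of $\partial^1(D^N)$. Your argument would then never invoke the Fleming--Viot condition, and would give comparison with no lateral condition at all. That is false: take $V^+\equiv 0$ against the heat solution with lateral value $1$ and initial value $0$. So no supersolution of this form exists.

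For $\xi=-\log\big(\dist(x)^2+\dist(x')^2\big)$, harmonicity in $(z,z')=(\dist(x),\dist(x'))$ only removes the second-order part. Near $\partial D$,
\[
-\tfrac12(\Delta_x+\Delta_{x'})\xi=\frac{\dist(x)\Delta\dist(x)+\dist(x')\Delta\dist(x')}{\dist(x)^2+\dist(x')^2},
\]
which is of order $1/r$ with $r^2=\dist(x)^2+\dist(x')^2$. It is negative when $\Delta\dist<0$ (e.g.\ $D$ a ball, $d\ge 2$). This is not a bounded error. The term $\eps t$ cannot absorb it, and neither can a factor $e^{Ct}$, since $\xi\sim\log(1/r)$.

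The missing idea is to compose with a concave power, so that the concavity term beats the curvature drift. The paper takes $g=f^{1/2}$ with $f=(z+z')/(z^2+z'^2)$, which is harmonic in the quadrant. The gain $\frac18 f^{-3/2}|\nabla f|^2$ dominates the drift $\frac14 f^{-1/2}|\nabla f|\,\|\Delta\dist\|_\infty$ up to $Cf^{1/2}$ by Young's inequality. The remainder $-C\xi_1$ is absorbed by the factor $e^{Ct}$ in $\Xi_1^N$. The nonlocal inequality for the pairwise average is then checked algebraically in Lemma \ref{lem.algebra} via \eqref{algebra}.

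Your logarithm can be repaired the same way, e.g.\ $\sqrt{A-\log(\dist(x)^2+\dist(x')^2)}$ with $A$ large. Since it is symmetric and nonincreasing in each of $\dist(x),\dist(x')$, the reduction in Lemma \ref{lem.algebra} gives the Fleming--Viot inequality immediately.

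Your corner barrier at $t=0$ has the same drift problem. The paper uses $e^{C_1t}h^{1/2}+e^{C_2t}$ with $h=1+t^{-1/2}e^{-\dist(x)^2/(2t)}$, shifted in time by $\eta$. It also checks that this barrier is constant and maximal on $\partial D$, so that it satisfies the Fleming--Viot inequality itself (strictly).

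\textbf{Existence.} Your soft-killing approximation is a genuinely different route from the paper's, but its decisive step is only asserted. The paper runs a monotone iteration $u_n=\mathcal F(u_{n-1})$ with $u_0\equiv\inf G^N$. Each $u_n$ solves the heat equation with genuine Dirichlet data $\frac{1}{N-1}\sum_{j\neq i}u_{n-1}(t,\bx^{i,j})$ on $\partial^1(D^N)$. Proposition \ref{prop.heat.comparison} gives monotonicity and bounds, the Fleming--Viot identity passes to the pointwise limit for free, and boundary Schauder estimates near $\partial^1(D^N)$ give continuity there.

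In your scheme the approximants satisfy no boundary relation encoding the resampling, and the condition at $\partial D$ is left unspecified. So the boundary behaviour of the limit rests entirely on a boundary-layer estimate uniform in $n$: continuity up to $\partial^{1,i}(D^N)$ with value $\frac{1}{N-1}\sum_{j\neq i}V(t,\bx^{i,j})$. Barriers for the Dirichlet heat equation do not apply to these approximants. You would need barriers for $\partial_t-\frac12\Delta+\kappa_n(x^i)$ across a layer of width $1/n$, under an explicit growth condition on $\kappa_n$ relative to that width. You would also need uniform equicontinuity of the nonlocal term. This may be doable, but as written it is missing.
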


\begin{proof}
  Existence is proved in Proposition \ref{prop.vn.eu}, and uniqueness is a consequence of the comparison principle in Proposition \ref{prop.comparison}. 
\end{proof}

\subsection{Comparison}

The uniqueness part of Theorem \ref{thm.vneqn} will come from the following comparison principle.

\begin{prop} \label{prop.comparison}
    Let $V^-$ be a classical subsolution of \eqref{eqn.VN} and $V^+$ be a classical supersolution. Then $V^- \leq V^+$ on $\text{Dom}_N$. 
\end{prop}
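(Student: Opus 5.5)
The plan is to pass to the difference $W = V^- - V^+$. Since the equation, the boundary condition and the initial condition in \eqref{eqn.VN.subsol} are all linear, $W$ is a classical subsolution of \eqref{eqn.VN} with $G^N \equiv 0$. It is continuous on $\text{Dom}_N$, bounded from above, and satisfies $W(0,\cdot)\le 0$ on $D^N$. It then suffices to show $W \le 0$ on $[0,T]\times(D^N\cup\partial^1(D^N))$ for every $T$. I would do this with a maximum principle argument, applied to a penalized function
\begin{align*}
W_\eps(t,\bx) = W(t,\bx) - \eps\big( t + \Phi(\bx) + \Theta(t,\bx) \big),
\end{align*}
and then let $\eps \to 0$. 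The penalizations $\Phi$ and $\Theta$ have to achieve three things: make $W_\eps$ a \emph{strict} subsolution in the interior, push the supremum away from the uncontrolled set $\partial(D^N)\setminus\partial^1(D^N)$ and from the corner $\{0\}\times\partial^1(D^N)$, and be compatible with (in fact strictly improve) the nonlocal boundary inequality.

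For the multi-boundary set, which has codimension two, I would use a logarithmic barrier built from pairs. Let $d_i = \dist(x^i)$, with $|\nabla d_i| = 1$ near $\partial D$, and set
\begin{align*}
\Phi(\bx) = \sum_{i<j} \Big( -\log\big(d_i^2 + d_j^2\big) - K \big(d_i^2+d_j^2\big)^{1/2} \Big) + C .
\end{align*}
In the variables $(d_i,d_j)$ the term $-\log r^2$ is harmonic. The errors coming from $\Delta d$ and from the region where $|\nabla d|\neq 1$ are $O(1/r)$, and they are absorbed by $-Kr$, whose two-dimensional Laplacian is $-K/r$; away from the boundary everything is bounded and is absorbed by choosing $C,K$ large, after possibly adding a multiple of $\sum_i \rho(x^i)$, for which $-\frac12\Delta\rho=\lambda_1\rho\ge 0$. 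This gives $-\frac12\sum_i\Delta_{x^i}\Phi \ge 0$ with $\Phi$ bounded below and $\Phi\to+\infty$ on $\partial(D^N)\setminus\partial^1(D^N)$.

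The key structural check is the boundary condition. For $\bx\in\partial^{1,i}(D^N)$ (so $d_i=0$) and $j\ne i$, passing to $\bx^{i,j}$ replaces $d_i=0$ by $d_j>0$. This can only decrease every pair term involving $i$, and the $(i,j)$ term decreases strictly, from $-\log d_j^2$ to $-\log(2d_j^2)$. Hence $\Phi(\bx) > \frac{1}{N-1}\sum_{j\neq i}\Phi(\bx^{i,j})$, with a gap of at least $\log 2/(N-1)$, and the $-Kr$ terms go the same way. For the time corner I would take $\Theta(t,\bx) = \sum_i g(t,d_i)$, where $g$ is a one-dimensional Gaussian-type supersolution that blows up as $(t,d)\to(0,0)$ and is nonincreasing in $d$, for instance a heat kernel centered slightly outside $D$, corrected near the curved boundary. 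Monotonicity in $d$ makes $\Theta$ also compatible with the resampling, since $g(t,0)\ge g(t,d_j)$.

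With these in hand I would run the standard argument. Suppose $M = \sup W_\eps > 0$ on $[0,T]\times(D^N\cup\partial^1(D^N))$, and take a maximizing sequence; by compactness of $[0,T]\times\ov D^N$ it converges. The possible limits are handled as follows.
\begin{enumerate}[(i)]
\item Limits on $\partial(D^N)\setminus\partial^1(D^N)$, or at $\{0\}\times\partial^1(D^N)$, are excluded because $\Phi$ or $\Theta$ blows up there while $W$ is bounded above.
\item Limits with $t=0$ and $\bx\in D^N$ give $W_\eps\le 0$.
\item An interior limit with $t>0$ contradicts $\partial_t W_\eps - \frac12\sum_i\Delta_{x^i} W_\eps \le -\eps < 0$.
\item A limit $\bx\in\partial^{1,i}(D^N)$ with $t>0$: by continuity on $\text{Dom}_N$ and the strict gap above, $M = W_\eps(t,\bx) < \frac{1}{N-1}\sum_{j\neq i} W_\eps(t,\bx^{i,j}) \le M$, a contradiction. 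Here the points $\bx^{i,j}$ lie in $D^N$ since $x^j\in D$.
\end{enumerate}
Thus $W_\eps\le 0$, and letting $\eps\to 0$ yields $V^-\le V^+$.

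I expect the main obstacle to be the construction of the barriers, and especially $\Theta$ near $\{0\}\times\partial^1(D^N)$. There $W$ has no continuity at all, so the blow-up of $\Theta$ must be strong enough to dominate $\sup W$, while $\Theta$ remains a heat supersolution and stays monotone in $d$ on a curved domain. Checking the curvature errors in $\Phi$ uniformly near the corners, where several $d_i$ vanish simultaneously, is the other delicate computation.
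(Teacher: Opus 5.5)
Your architecture is the paper's: penalize $V^- - V^+$ by a pair barrier for $\partial(D^N)\setminus\partial^1(D^N)$ and a Gaussian-type barrier for the time corner, check strict compatibility with resampling, and run the maximum principle. Your logarithmic pair barrier is a valid and simpler alternative to $\xi_1=\sqrt{(z+z')/(z^2+z'^2)}$. Since it is decreasing in each $d_k$, every single resampling $\bx\mapsto\bx^{i,j}$ lowers $\Phi$ by at least $\log 2$, so you avoid the averaging inequality of Lemma \ref{lem.algebra}. The paper needs that lemma because its $g$ is not monotone.

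The genuine gap is step (i) at the corner $\{0\}\times\partial^1(D^N)$: neither of your candidates for $g$ blows up there. A spatially shifted kernel $t^{-1/2}e^{-(d+a)^2/(2t)}$ tends to $0$ uniformly as $t\to 0$. Even the unshifted $t^{-1/2}e^{-d^2/(2t)}$ tends to $0$ along $d=t^{1/4}$. So a maximizing sequence may converge to $(0,\bx)$ with $x^i\in\partial D$. There $W$ is undefined and none of (ii)--(iv) applies. You flagged this as the main obstacle, but the proposal does not resolve it.

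The paper's missing ingredient is a time shift combined with an ordering of limits.
\begin{itemize}
\item The shifted barrier $\Xi^N_{2,\eta}(t,\bx)=\Xi^N_2(t+\eta,\bx)$ is continuous on $[0,\infty)\times\ov D^N$.
\item It is $\geq N^{-1}\eta^{-1/4}$ on $\{0\}\times\partial(D^N)$, hence $\geq \frac12 N^{-1}\eta^{-1/4}$ on a neighborhood of that set.
\item Maximizing sequences lie in $\{\eps(\Xi^N_1+\Xi^N_{2,\eta})\le K\}$, with $K$ independent of $\eps$ and $\eta$. Taking $\eta$ small depending on $\eps$ and $N$ keeps them away from the corner.
\item One then sends $\eta\to 0$ (the shift disappears pointwise on $\text{Dom}_N$) before $\eps\to 0$.
\end{itemize}

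Alternatively, you can make $\Theta$ genuinely blow up by superposing time-shifted kernels, for instance
\[
g(t,d)=\int_0^1 (t+s)^{-1/2}s^{-1/2}e^{-d^2/(2(t+s))}\,ds.
\]
This $g$ solves $\partial_t g=\frac12\partial_{dd}g$, is decreasing in $d\ge 0$, is finite at $t=0$ for $d>0$, and satisfies $g\gtrsim \log\big(1/(t+d^2)\big)$ near $(0,0)$. The curvature correction $e^{C_1t}(1+g)^{1/2}+e^{C_2t}$ from the proof of Lemma \ref{lem.psi.comp} then goes through with $h=1+g$.

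A smaller point: adding $A\sum_i\rho(x^i)$ to absorb the bounded interior errors of $\Phi$ has the wrong sign in the nonlocal condition. It increases by $A\rho(x^j)$ under $\bx\mapsto\bx^{i,j}$, and the needed size of $A$ grows with $K$ and $N$, so it can destroy the $\log 2$ gap. Absorb these errors instead with a term $Lt$ for large $L$, which the resampling condition does not see, or with an $e^{Ct}$ factor as the paper does for $\Xi^N_1$.
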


In order to prove Proposition \ref{prop.comparison}, we will build two useful supersolutions of the equation. We start by defining
\begin{align*}
    f : [0,\infty)^2 \setminus \{(0,0)\} \to \R, \quad f(z,z') = \frac{z + z'}{(z)^2 + (z')^2}.
\end{align*}
Then, $f$ is smooth and harmonic, i.e. 
\begin{align*}
    \partial_{zz} f + \partial_{z'z'} f = 0 \text{ in } \R_+^2.
\end{align*}
We then define 
\begin{align*}
    g : [0,\infty)^2 \setminus \{(0,0)\} \to \R, \quad g(z,z') = f^{1/2}(z,z') = \sqrt{\frac{z + z'}{(z)^2 + (z')^2}}.
\end{align*}
Now, define 
\begin{align*}
    \xi_1 : (\ov D)^2 \setminus \partial^2(D^2) \to \R, \quad \xi_1(x,x') = g\big(\dist(x), \dist(x')\big).
\end{align*}

The main point is that the function $\xi_1$ blows up on $\partial^2(D^2)$ (in the sense that it is ${\rm LSC}(\overline{D^2})$ once we set $\xi_1 = +\infty$ on $\partial^2(D^2)$), and $\partial_{xx} \xi_1 + \partial_{x'x'} \xi_1$ is bounded from above. In particular, we have the following.

\begin{lem} \label{lem.phi.supersol}
    There is a constant $C$ such that the function $\xi_1$ satisfies 
    \begin{align} \label{vderiv.formula}
        - \frac{1}{2} \Delta_{x} \xi_1 - \frac{1}{2} \Delta_{x'} \xi_1 \geq -C \xi_1  \text{ in } D^2.
    \end{align}
\end{lem}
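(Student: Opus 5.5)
We prove \eqref{vderiv.formula} by a direct chain-rule computation that separates a sign-definite leading part from bounded lower-order terms. Write $z = \dist(x)$, $z' = \dist(x')$, and use that $\dist$ is smooth on $\ov D$ and equals the distance to $\partial D$ near the boundary. Hence $|D\dist| = 1$ on $(\partial D)_\eps$, $|D \dist|$ is bounded everywhere, and $|\Delta \dist| \leq C$ on $\ov D$. The chain rule gives
\begin{align*}
\Delta_x \xi_1 + \Delta_{x'} \xi_1 = g_{zz} |D\dist(x)|^2 + g_{z'z'} |D\dist(x')|^2 + g_z \Delta \dist(x) + g_{z'} \Delta\dist(x'),
\end{align*}
with $g$ and its derivatives evaluated at $(z,z')$. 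We must show this is at most $2C g(z,z')$.

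\emph{Step 1: the model computation.} Since $g = f^{1/2}$ and $f$ is harmonic,
\begin{align*}
g_{zz} + g_{z'z'} = \tfrac12 f^{-1/2}(f_{zz} + f_{z'z'}) - \tfrac14 f^{-3/2}(f_z^2 + f_{z'}^2) = -\tfrac14 f^{-3/2}|\nabla f|^2 \leq 0.
\end{align*}
We also record the sizes of the ingredients. With $r = |(z,z')|$ and $z,z' \geq 0$, we have $f \sim 1/r$, $|\nabla f| \lesssim r^{-2}$ and $|D^2 f| \lesssim r^{-3}$. Hence $g \sim r^{-1/2}$, $|\nabla g| \lesssim r^{-3/2}$ and $|D^2 g| \lesssim r^{-5/2}$.

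\emph{Step 2: case split.} Fix $\delta < \eps$.

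Case $z, z' < \delta$. Then both $|D\dist| = 1$, so the second-order part equals $g_{zz} + g_{z'z'} \leq 0$. It remains to bound the first-order terms $g_z\Delta\dist(x) + g_{z'}\Delta\dist(x')$ by $C g$. Here a direct bound gives only $|\nabla g| \lesssim r^{-3/2}$, which is not $\lesssim g \sim r^{-1/2}$. This is the main obstacle, and we resolve it by keeping the negative term rather than discarding it. Compute $|\nabla f|^2$ explicitly. Writing $s = z + z'$ and $q = z^2 + z'^2$, we get $f_z = (q - 2zs)/q^2 = (z'^2 - z^2 - 2zz')/q^2$. Then $|\nabla f|^2 q^4 = (z'^2 - z^2 - 2zz')^2 + (z^2 - z'^2 - 2zz')^2 = 2(z^2 - z'^2)^2 + 8z^2z'^2 = 2q^2$. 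So $|\nabla f|^2 = 2/q^2$, giving
\begin{align*}
-\tfrac14 f^{-3/2}|\nabla f|^2 \sim -r^{3/2} r^{-4} = -c\, r^{-5/2}
\end{align*}
with $c > 0$ uniform. Meanwhile the first-order terms are bounded by $C r^{-3/2}$. Hence the total is at most $-c r^{-5/2} + C r^{-3/2}$, which is $\leq 0$ for $r$ small and bounded by a constant when $r$ is of order one. Since $g \gtrsim r^{-1/2}$ is bounded below by a positive constant on the bounded range of $r$, a constant is $\leq C g$.

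Case $\max(z,z') \geq \delta$. Then $r \geq \delta$, so $g$ is smooth near $(z,z')$ with all derivatives up to order two bounded by constants depending on $\delta$. The quantities $|D\dist|$ and $\Delta\dist$ are bounded, so the whole expression is bounded by a constant. Again $g \geq c_\delta > 0$ on this region (because $r$ is bounded above by $2\,\mathrm{diam} D$), so the constant is dominated by $Cg$.

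Combining the two cases yields \eqref{vderiv.formula}. The only delicate point is the near-corner regime in the first case, where the exact identity $|\nabla f|^2 = 2/q^2$ supplies the strictly negative $r^{-5/2}$ term that absorbs the curvature contribution from $\Delta\dist$.
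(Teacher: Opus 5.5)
Your argument is correct and is essentially the paper's proof. Both use the chain rule, use harmonicity of $f$ to get $g_{zz}+g_{z'z'}=-\frac14 f^{-3/2}|\nabla f|^2$ near the corner, absorb the first-order $\Delta\dist$ terms into this negative term, and bound things trivially where one coordinate stays away from $\partial D$. The only difference is in the absorption step: the paper uses Young's inequality, which leaves a remainder $\frac14\|\Delta\dist\|_\infty^2 f^{1/2}=\frac14\|\Delta\dist\|_\infty^2\,\xi_1$ and so avoids your explicit identity $|\nabla f|^2=2/q^2$ and the scaling argument.

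One small fix is needed. The smooth $\dist$ is only assumed to agree with the distance to $\partial D$ on $(\partial D)_\eps$, so $\delta<\eps$ alone does not give $|D\dist(x)|=1$ whenever $\dist(x)<\delta$. Also take $\delta<\min\{\dist(y):\dist(y,\partial D)\ge\eps\}$, which is positive by compactness. Then $\dist(x)<\delta$ really places $x$ in $(\partial D)_\eps$.
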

\begin{proof}
By explicit computation,
we have 
\begin{align*}
   &\partial_{z} g = \frac{1}{2} f^{-1/2} \partial_{z} f, 
   \qquad  \partial_{zz} g = \frac{1}{2} f^{-1/2} \partial_{zz} f - \frac{1}{4} f^{-3/2} |\partial_{z} f|^2,
\end{align*}
and likewise for $\partial_{z'} g$ and $\partial_{z'z'} g$. As a consequence, we can compute
\begin{align*}
    &\Delta_{x} \xi_1(x,x') = \partial_{zz} g\big(\dist(x), \dist(x')\big) |D \dist (x)|^2 + \partial_{z} g\big(\dist(x),\dist(x')\big) \Delta \dist(x), 
\end{align*}
and similarly for $\Delta_{x'} \xi_1$.
Setting $z = \dist(x)$, $z' = \dist(x')$ for simplicity, we can thus compute
\begin{align*}
    - \frac{1}{2} &\Delta_{x} \xi_1(x,x') - \frac{1}{2} \Delta_{x'} \xi_1(x,x')  = - \frac{1}{2} \partial_{zz} g(z,z') |D\dist(x)|^2  - \frac{1}{2} \partial_{z'z'} g(z,z') |D\dist(x')|^2
    \\
    &\qquad \qquad \qquad \qquad \qquad \qquad \qquad  - \frac{1}{2} \partial_{z} g(z,z') \Delta \dist(x) - \frac{1}{2} \partial_{z'} g(z,z') \Delta \dist(x').
\end{align*}
Plugging in the formulas for $\partial_z g$ and $\partial_{zz} g$, we find
\begin{align} \label{phi.laplacian}
  - \frac{1}{2} \Delta_{x} \xi_1(x,x') & - \frac{1}{2} \Delta_{x'} \xi_1(x,x')  = - \frac{1}{4} f^{-1/2}(z,z')  \Big( \partial_{zz} f(z,z') |D\dist(x)|^2 + \partial_{z'z'} f(z,z') |D\dist(x')|^2 \Big)
  \nonumber   \\
    &\qquad \qquad + \frac{1}{8} f^{-3/2}(z,z') \Big(|\partial_{z}f(z,z')|^2 |D\dist(x)|^2 + |\partial_{z'} f(z,z')|^2 |D\dist(x')|^2 \Big) 
   \nonumber  \\
    &\qquad \qquad - \frac{1}{4} f^{-1/2}(z,z') \Big( \partial_{z} f(z,z') \Delta \dist(x) + \partial_{z'} f(z,z') \Delta \dist(x') \Big).
\end{align}
Next, for some $\eps > 0$, we have $|D\dist(x)| = 1$ for $x \in \big(\partial D\big)_{\eps}$. Together with the fact that $f$ is harmonic, this means \eqref{phi.laplacian} simplifies to 
\begin{align} \label{phi.laplacian.nearboundary}
    - \frac{1}{2} \Delta_{x} \xi_1(x,x') - &\frac{1}{2} \Delta_{x'} \xi_1(x,x') = \frac{1}{8} f^{-3/2}(z,z') \big(|\partial_{z} f(z,z')|^2 + |\partial_{z'}f(z,z')|^2 \big) 
  \nonumber   \\
    & - \frac{1}{4} f^{-1/2}(z,z') \big(\partial_{z} f(z,z') \Delta \dist(x) + \partial_{z'} f(z,z') \Delta \dist(x')\big) =: I - II \text{ in } \big(\partial D\big)_{\eps}^2.
\end{align}
We now estimate the term $II$ via Young's inequality; for $\delta > 0$, we have
\begin{align*}
   |II| &\leq \frac{\|\Delta \dist\|_{\infty}}{4} f^{-1/2}(z,z') \Big( |\partial_z f(z,z')| + |\partial_{z'} f(z,z')| \Big)
    \\
    &\leq \frac{\|\Delta \dist\|_{\infty}}{4} \Big[ \frac{\delta}{2} f^{-3/2}(z,z')  \big(|\partial_z f(z,z')|^2 + |\partial_{z'}f(z,z')|^2 \big) + \frac{1}{\delta} f^{1/2}(z,z') \Big].
\end{align*}
Choosing $\delta = \frac{1}{\|\Delta \dist\|_{\infty}}$, we get
\begin{align*}
    |II| \leq I + \frac{\|\Delta \dist\|_{\infty}^2}{4} f^{1/2}(z,z') = I + \frac{\|\Delta \dist\|_{\infty}^2}{4} \xi_1(x,x'), 
\end{align*}
and so coming back to \eqref{phi.laplacian.nearboundary}, we have
\begin{align} \label{phi.supersol.nearboundary}
     - \frac{1}{2} \Delta_{x} \xi_1(x,x') & - \frac{1}{2} \Delta_{x'} \xi_1(x,x') \geq -\frac{\|\Delta \dist\|_{\infty}^2}{4} \xi_1(x,x') \text{ in } \big(\partial D\big)_{\eps}^2.
\end{align}
On the other hand, $\xi_1$ is smooth and bounded from below and above on $D^2 \setminus \big(\partial D\big)_{\eps}^2$, it is clear that there exists a constant $C' > 0$ such that
\begin{align*}
      - \frac{1}{2} \Delta_{x} \xi_1(x,x') & - \frac{1}{2} \Delta_{x'} \xi_1(x,x') \geq -C' \xi_1(x,x') \text{ in } D^2 \setminus \big(\partial D\big)_{\eps}^2.
\end{align*}
Taking $C = \frac{\|\Delta \dist\|_{\infty}^2}{4} \vee C'$ proves the claim.
\end{proof}

We next define a $\Xi_1^N : \text{Dom}_N \to \R$ via 
\begin{align} \label{def.Xi1N}
    \Xi_1^N(t,\bx) = \frac{1}{N(N-1)} \sum_{i \neq j} \exp(Ct) \xi_1(x^i,x^j), 
\end{align}
where $C$ is as in the statement of Lemma \ref{lem.phi.supersol}. The following lemma will be used to verify that $\Xi_1^N$ is a supersolution of the Fleming-Viot boundary condition.

\begin{lem} \label{lem.algebra}
For $\bz \in [0,\infty)^N$ set
    \begin{align*}
       h(\bz) =  \sum_{k \neq l} g(z^k,z^l).
    \end{align*}
Then, if $z^i = 0$ and $z^j \neq 0$ for $j \neq i$, we have 
\begin{align} \label{h.ineq}
    h(\bz) \geq \frac{1}{N-1} \sum_{j \neq i}  h\big(\bz^{i,j}\big).
\end{align}
\end{lem}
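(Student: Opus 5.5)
The plan is a direct computation: split $h$ according to whether a pair involves the index $i$, observe that the terms not involving $i$ cancel, and reduce \eqref{h.ineq} to an elementary pointwise inequality for $g$.

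\textbf{Step 1: splitting $h(\bz)$.} Write $A=\sum_{k\neq l,\,k,l\neq i} g(z^k,z^l)$ for the pairs not involving $i$. Since $z^i=0$ and $g(0,w)=g(w,0)=w^{-1/2}$ for $w>0$, we get
\begin{align*}
h(\bz)=A+2S, \qquad S \coloneqq \sum_{k\neq i}(z^k)^{-1/2}.
\end{align*}

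\textbf{Step 2: splitting $h(\bz^{i,j})$.} Fix $j\neq i$. In $\bz^{i,j}$ the $i$-th coordinate becomes $z^j$ and all other coordinates are unchanged, so the pairs avoiding $i$ again contribute $A$. The pairs involving $i$ contribute $2\sum_{k\neq i} g(z^j,z^k)$. Using $g(w,w)=\sqrt{2w/(2w^2)}=w^{-1/2}$, this gives
\begin{align*}
h(\bz^{i,j}) = A + 2(z^j)^{-1/2} + 2\sum_{k\neq i,j} g(z^j,z^k).
\end{align*}
Averaging over $j\neq i$, the quantity $A$ cancels on both sides of \eqref{h.ineq}. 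After multiplying by $\frac{N-1}{2}$, the claim becomes
\begin{align*}
\sum_{\substack{j\neq k\\ j,k\neq i}} g(z^j,z^k)\le (N-2)\,S .
\end{align*}

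\textbf{Step 3: reduction to a pointwise bound.} Each index $j\neq i$ appears in exactly $N-2$ ordered pairs as the first entry and in $N-2$ as the second. Hence
\begin{align*}
\sum_{\substack{j\neq k\\ j,k\neq i}} \tfrac12\big((z^j)^{-1/2}+(z^k)^{-1/2}\big)=(N-2)S.
\end{align*}
It therefore suffices to prove the pointwise inequality
\begin{align*}
g(a,b)\le \tfrac12\big(a^{-1/2}+b^{-1/2}\big), \qquad a,b>0.
\end{align*}

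\textbf{Step 4: the pointwise inequality.} This is the only real content of the proof. Set $a=x^2$, $b=y^2$ with $x,y>0$ and square both sides. The inequality becomes
\begin{align*}
4x^2y^2(x^2+y^2)\le (x+y)^2(x^4+y^4).
\end{align*}
Combine three elementary bounds:
\begin{align*}
(x+y)^2\ge 4xy, \qquad x^4+y^4\ge \tfrac12(x^2+y^2)^2, \qquad x^2+y^2\ge 2xy.
\end{align*}
The first two give $(x+y)^2(x^4+y^4)\ge 2xy\,(x^2+y^2)^2$. The third then gives $2xy\,(x^2+y^2)^2\ge 4x^2y^2(x^2+y^2)$, which is the required inequality.

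Equality holds when $a=b$, so the bound cannot be improved. I expect no obstacle beyond the bookkeeping in Step 2, namely correctly tracking the diagonal term $g(z^j,z^j)$.
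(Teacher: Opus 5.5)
Your proposal is correct and follows essentially the same route as the paper. In both, the pairs avoiding index $i$ cancel, the diagonal terms satisfy $g(w,w)=g(w,0)=w^{-1/2}$, and the off-diagonal terms are controlled by the pointwise bound $2g(a,b)\le a^{-1/2}+b^{-1/2}$, which is exactly the paper's inequality \eqref{algebra}; the one difference is that you actually verify this bound (via $(x+y)^2\ge 4xy$, $x^4+y^4\ge \tfrac12(x^2+y^2)^2$ and $x^2+y^2\ge 2xy$), whereas the paper only calls it straightforward algebra.
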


\begin{proof}
    The proof will follow from the identity 
    \begin{align} \label{algebra}
        g(z,0) + g(0,z') = \frac{1}{\sqrt{z}} + \frac{1}{\sqrt{z'}} \geq 2 \sqrt{\frac{z + z'}{z^2 + (z')^2}} = 2 g(z,z'),  \quad z,z' > 0.
    \end{align}
    which can be checked through straightforward algebra.
    To use \eqref{algebra} to verify \eqref{h.ineq}, suppose without loss of generality that $z^1 = 0$, and $z^j > 0$ for $j > 1$. So, our goal is to prove 
    \begin{align} \label{goal}
         h(\bz) \geq \frac{1}{N-1} \sum_{j >1}  h\big(\bz^{1,j}\big).
    \end{align}
    First, we note that
    \begin{align} \label{hz.comp1}
        h(\bz) = 2 \sum_{k > 1} g(z^k,0) + \sum_{\substack{k,l > 1 \\ k \neq l}} g(z^k,z^l).
    \end{align}
    Meanwhile, for $j > 1$, 
    \begin{align*}
       h(\bz^{1,j}) = \sum_{\substack{k,l > 1 \\ k \neq l}} g(z^k,z^l) + 2 \sum_{k > 1} g(z^j,z^k), 
    \end{align*}
    so that 
    \begin{align} \label{hz.comp2}
        \frac{1}{N-1} \sum_{j > 1} h(\bz^{1,j}) = \sum_{\substack{k,l > 1 \\ k \neq l}} g(z^k,z^l) + \frac{2}{N-1} \sum_{j,k > 1} g(z^j,z^k).
    \end{align}
    In light of \eqref{hz.comp1} and \eqref{hz.comp2}, we see that in order to prove \eqref{goal}, it suffices to show that
    \begin{align} \label{goal2}
        \frac{1}{N-1} \sum_{j,k > 1} g(z^j,z^k) \leq \sum_{j > 1} g(z^j,0).
    \end{align}
    To this end, we use \eqref{algebra} to find 
    \begin{align*}
        \frac{1}{N-1} \sum_{j,k > 1} g(z^j,z^k) &= \frac{1}{N-1} \sum_{j > 1} g(z^j,z^j) + \frac{2}{N-1} \sum_{1 < j < k} g(z^j,z^k)
        \\
        &\leq \frac{1}{N-1} \sum_{j > 1} g(z^j,0)  + \frac{1}{N-1} \sum_{1 < j < k} \Big( g(z^j,0) + g(z^k,0) \Big) 
        \\
        &= \frac{1}{N-1} \sum_{j > 1} g(z^j,0)  + \frac{1}{2(N-1)} \sum_{1 < j} \sum_{\substack{k > 1 \\ k \neq j}} \Big( g(z^j,0) + g(z^k,0) \Big)
        \\
        &=  \frac{1}{N-1} \sum_{j > 1} g(z^j,0)  + \frac{(N-2)}{(N-1)} \sum_{1 < j} g(z^j,0) = \sum_{j > 1} g(z^j,0). 
    \end{align*}
    This establishes \eqref{goal2}, and completes the proof.
\end{proof}

We now show that $\Xi_1^N$ is a supersolution of \eqref{eqn.VN}.

\begin{lem} \label{lem.PhiN.comp}
    The function $\Xi_1^N$ defined above satisfies 
    \begin{align} \label{PhiN.1}
        \partial_t \Xi_1^N - \frac{1}{2} \sum_{i = 1}^N \Delta_{x^i} \Xi_1^N \geq 0 \text{ on } \R_+ \times D^N, 
    \end{align}
    in addition to 
    \begin{align} \label{PhiN.2}
        \Xi_1^N(t,\bx) \geq \frac{1}{N-1} \sum_{j \neq i} \Xi_1^N\big(t,\bx^{i,j}\big) \text{ for } t \in \R_+, \, \bx \in \partial^{1,i}(D^N).
    \end{align}
\end{lem}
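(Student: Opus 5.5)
The two properties are proved separately. \eqref{PhiN.1} is a termwise computation using Lemma \ref{lem.phi.supersol}. \eqref{PhiN.2} is a direct application of Lemma \ref{lem.algebra} with $z^k = \dist(x^k)$.

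For \eqref{PhiN.1}, I would note that each summand $\exp(Ct)\xi_1(x^i,x^j)$ depends only on the coordinates $x^i$ and $x^j$. So in the full Laplacian $\sum_k \Delta_{x^k}$ only $\Delta_{x^i}+\Delta_{x^j}$ acts on that summand. For each ordered pair $i\neq j$, Lemma \ref{lem.phi.supersol} then gives
\begin{align*}
\partial_t\big(e^{Ct}\xi_1(x^i,x^j)\big) - \frac12\big(\Delta_{x^i}+\Delta_{x^j}\big)\big(e^{Ct}\xi_1(x^i,x^j)\big) \geq C e^{Ct}\xi_1 - C e^{Ct}\xi_1 = 0.
\end{align*}
This holds on $\R_+\times D^N$, since there $\dist(x^i),\dist(x^j)>0$ and $\xi_1$ is smooth. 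Averaging over pairs with the factor $\frac{1}{N(N-1)}$ yields \eqref{PhiN.1}.

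For \eqref{PhiN.2}, fix $t$ and $\bx\in\partial^{1,i}(D^N)$, so that $x^i\in\partial D$ and $x^j\in D$ for $j\neq i$. Set $z^k=\dist(x^k)$; then $z^i=0$ and $z^j>0$ for $j\neq i$. Since $\dist(x)=\dist(x,\partial D)$ near the boundary, $\dist$ vanishes exactly on $\partial D$.

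Next I would check that the configurations involved are admissible. The replaced configuration $\bx^{i,j}$ has all coordinates in $D$, so $\Xi_1^N(t,\bx^{i,j})$ is finite. It also satisfies $\dist\big((\bx^{i,j})^k\big)=(\bz^{i,j})^k$. On the left side, $\xi_1(x^i,x^k)=g(0,z^k)$ with $z^k>0$, so every term is finite and $\Xi_1^N(t,\bx)$ is well defined on $\partial^1(D^N)$.

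With these identifications we have $\Xi_1^N(t,\bx)=\frac{e^{Ct}}{N(N-1)}h(\bz)$ and $\Xi_1^N(t,\bx^{i,j})=\frac{e^{Ct}}{N(N-1)}h(\bz^{i,j})$. Inequality \eqref{h.ineq} of Lemma \ref{lem.algebra} therefore gives \eqref{PhiN.2} after multiplying by the positive factor $\frac{e^{Ct}}{N(N-1)}$.

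There is no real obstacle here. The only points needing care are the locality of the Laplacian on pair functions and the fact that $\dist$ vanishes exactly on $\partial D$, so that the boundary condition reduces exactly to the hypothesis of Lemma \ref{lem.algebra}.
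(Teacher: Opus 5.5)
Your proposal is correct and follows the paper's proof: \eqref{PhiN.1} comes from the termwise computation that reduces to Lemma \ref{lem.phi.supersol}, and \eqref{PhiN.2} comes from Lemma \ref{lem.algebra} applied with $\bz = (\dist(x^1),\dots,\dist(x^N))$. Your additional checks are details the paper leaves implicit: that the Laplacian acts only on $x^i, x^j$ in each pair term, that all terms are finite on $\partial^{1,i}(D^N)$, and that $\dist$ vanishes exactly on $\partial D$.
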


\begin{proof}
    By explicit computation,
    \begin{align*}
        \partial_t \Xi_1^N - \frac{1}{2} \sum_{i = 1}^N \Delta_{x^i} \Xi_1^N = \frac{\exp(Ct)}{N(N-1)} \sum_{i \neq j} \Big( C \xi_1(x^i,x^j) - \frac{1}{2} \Delta_x \xi_1(x^i,x^j) - \frac{1}{2} \Delta_{x'} \xi_1(x^i,x^j) \Big), 
    \end{align*}
    and so \eqref{PhiN.1} follows from Lemma \ref{lem.phi.supersol}.
    
    Meanwhile, the equality \eqref{PhiN.2} comes from Lemma \ref{lem.algebra}, with $\bz = (\dist(x^1),...,\dist(x^N))$. 
\end{proof}

We have one more barrier function to introduce before we prove uniqueness. We start with $h$, a shifted version of the fundamental solution of the 1-D heat equation, defining
\begin{align*}
    h : \R_+ \times \R \to \R, \quad h(t,z) = 1 + \frac{1}{\sqrt{t}} e^{-\frac{z^2}{2t}}.
\end{align*}
Then, we define
\begin{align*}
  \xi_2 : \R_+ \times D \to \R, \quad   \xi_2(t,x) = \exp(C_1 t) h\big(t,\dist(x)\big)^{1/2} + \exp(C_2 t).
\end{align*}
where $C_1,C_2 > 0$ will be chosen later.

\begin{lem} \label{lem.psi.comp}
    For appropriate values of $C_1$ and $C_2$, the function $\xi_2$ satisfies 
    \begin{align} \label{psi.heat}
         \partial_t \xi_2 - \frac{1}{2} \Delta \xi_2 > 0 \text{ in } \R_+ \times D.
    \end{align}
    Moreover, for each fixed $t > 0$, $\xi_2(t,\cdot)$ is constant on $\partial D$ and achieves its strict global maximum there.
\end{lem}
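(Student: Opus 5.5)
We need to show two things for $\xi_2(t,x)=e^{C_1 t}h(t,\dist(x))^{1/2}+e^{C_2 t}$: that it is a strict supersolution of the heat equation, and that for fixed $t$ it is constant on $\partial D$ with its strict global maximum there. We take the second, elementary, claim first.

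\textbf{Boundary maximum.} On $\partial D$ we have $\dist=0$, so $h(t,0)=1+t^{-1/2}$ and $\xi_2(t,\cdot)$ is constant there. The map $z\mapsto h(t,z)$ is strictly decreasing on $[0,\infty)$, and $\dist>0$ in $D$. Hence $\xi_2(t,x)<\xi_2(t,y)$ for $x\in D$, $y\in\partial D$. Here we use that the smooth function $\dist$ vanishes exactly on $\partial D$ and is positive inside.

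\textbf{Supersolution: the one-dimensional computation.} Write $k=h^{1/2}$ and $z=\dist(x)$. Since $h-1$ solves $\partial_t h=\tfrac12\partial_{zz}h$, we get
\[
\partial_t k-\tfrac12\partial_{zz}k=\tfrac18 h^{-3/2}(\partial_z h)^2\ge0 .
\]
This is the same concavity gain that appeared in Lemma \ref{lem.phi.supersol}. The $x$-derivatives then give
\[
\partial_t\big(e^{C_1t}k\big)-\tfrac12\Delta\big(e^{C_1t}k\big)=e^{C_1t}\Big(C_1k+\tfrac18h^{-3/2}(\partial_zh)^2-\tfrac12\partial_zk\,\Delta\dist+\tfrac12\partial_{zz}k\,(1-|D\dist|^2)\Big).
\]
We split into two regions.

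\textbf{Near the boundary.} In $(\partial D)_\eps$ we have $|D\dist|=1$, so the last term vanishes. By Young's inequality, exactly as for the term $II$ in Lemma \ref{lem.phi.supersol},
\[
\big|\tfrac12\partial_zk\,\Delta\dist\big|=\tfrac14h^{-1/2}|\partial_zh|\,|\Delta\dist|\le \tfrac18h^{-3/2}(\partial_zh)^2+\tfrac18\|\Delta\dist\|_\infty^2h^{1/2}.
\]
Choosing $C_1\ge\|\Delta\dist\|_\infty^2/8+1$ therefore gives a strictly positive sum in $\R_+\times(\partial D)_\eps$.

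\textbf{Away from the boundary (the main obstacle).} On $D\setminus(\partial D)_\eps$ we have $z\ge\eps'>0$ for some $\eps'$. The derivatives $\partial_zh,\partial_{zz}h$ are bounded uniformly in $t>0$ there, since $t^{-a}e^{-z^2/2t}$ is bounded for $z\ge\eps'$. Using also $h\ge1$, the terms $\partial_zk$ and $\partial_{zz}k$ are bounded by a constant $M$ uniformly in $t$. The first part is then bounded below by
\[
e^{C_1t}\big(C_1k-M'\big)\ge -M'e^{C_1t}.
\]
We absorb this error with the second summand, which contributes $C_2e^{C_2t}$. Taking $C_2>C_1$ and $C_2\ge M'+1$ gives $C_2e^{C_2t}>M'e^{C_1t}$ for all $t\ge0$. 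Near the boundary the second summand only adds $C_2e^{C_2t}>0$.

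Summing the two regions yields \eqref{psi.heat}. The only delicate point is the uniform-in-$t$ control of the derivatives of $h$ away from the boundary; the rest is the same Young's inequality argument as before.
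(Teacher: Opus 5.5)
Your proof is correct and follows essentially the same route as the paper. In both, you apply Young's inequality on the collar $(\partial D)_\eps$, where $|D\dist|=1$, to the one-dimensional identity for $h^{1/2}$; you then fix $C_1$ and absorb the uniformly bounded interior error with $C_2 e^{C_2 t}$. Your argument for the strict boundary maximum (strict monotonicity of $h(t,\cdot)$ on $[0,\infty)$ together with $\dist>0$ in $D$) is just a more explicit version of the paper's remark.
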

\begin{proof} Setting $z = \dist(x)$ for simplicity, we compute
\begin{align*}
    &\partial_t \xi_2(t,x) =   \frac{1}{2} e^{C_1 t}  h^{-1/2}(t,z) \partial_t h(t,z)  + C_1 e^{C_1 t} h^{1/2}(t,z) + C_2 \exp(C_2 t),  
    \\
    &\Delta \xi_2(t,x) = \frac{1}{2} e^{C_1 t} h^{-1/2}(t,z) \partial_{zz} h(t,z) |D\dist(x)|^2 
    \\
    &\qquad \qquad \quad   - \frac{1}{4} e^{C_1 t}  h^{-3/2}(t,z) |\partial_z h(t,z)|^2 |D\dist(x)|^2+ \frac{1}{2} e^{C_1 t}  h^{-1/2}(t,z) \partial_z h(t,z) \Delta \dist(x). 
\end{align*}
We now choose $\eps$ small enough that we have $|D\dist(x)|^2 = 1$ in $\big(\partial D\big)_{\eps}$, and thus (omitting the arguments $z$ and $x$ for simplicity)
\begin{align} \label{psi.supersolcomp.boundary}
     \partial_t \xi_2 - \frac{1}{2} \Delta \xi_2 &= C_2 \exp(C_2 t) + C_1 e^{C_1 t} h^{1/2} + \frac{1}{8} e^{C_1 t} h^{-3/2} |\partial_z h|^2 - \frac{1}{4} e^{C_1 t}  h^{-1/2} \partial_z h \Delta \dist 
    \nonumber  \\
     &= C_2\exp\big(C_2 t\big) + h^{-1/2} e^{C_1 t} \Big( C_1  h + \frac{1}{8}  h^{-1} |\partial_z h|^2 - \frac{1}{4} \partial_z h \Delta \dist \Big) \text{ in } \R_+ \times \big(\partial D\big)_{\eps}.
\end{align}
By Young's inequality,
\begin{align*}
    \Big| \frac{1}{4} \partial_z h \Delta \dist \Big| \leq \frac{1}{8} h^{-1} |\partial_z h|^2 + \frac{h}{2} \|\Delta \dist\|_{\infty}^2, 
\end{align*}
and so if $C_1 \geq \frac{1}{2} \|\Delta \dist\|_{\infty}^2$, then we have 
\begin{align*}
    \partial_t \xi_2 - \frac{1}{2} \Delta \xi_2 \geq C_2 > 0 \text{ in } \R_+ \times \big(\partial D\big)_{\eps}.
\end{align*}
Meanwhile, on $\R_+ \times \big(D \setminus \big(\partial D\big)_{\eps})$, the function $(t,x) \mapsto h\big(t,\dist(x)\big)$ is smooth and bounded below, with bounded derivatives of all orders. From this, we deduce that with $C_1$ and $\eps$ fixed, there is a constant $C = C(\eps,C_1)$ such that
\begin{align*}
    \partial_t \xi_2 - \frac{1}{2} \Delta \xi_2 \geq C_2 \exp(C_2 t) - C \exp(C_1 t).
\end{align*}
In particular, by choosing $C_1$ and then $C_2$ large enough, we obtain \eqref{psi.heat}.

Finally, the fact that $\xi_2(t,\cdot)$ is constant on $\partial D$ and achieves its global maximum there is a consequence of the fact that $h(t,\cdot)$ is maximized at $0$. 
\end{proof}

We now define 
\begin{align} \label{def.Xi2N}
  \Xi_2^N : \text{Dom}_N \to \R, \quad   \Xi_2^N(t,\bx) = \frac{1}{N}\sum_{i = 1}^N \xi_2\big(t,x^i\big), 
\end{align}
and note that Lemma \ref{lem.psi.comp} easily implies the following. 

\begin{lem} \label{lem.PsiN.comp}
    For appropriate values of $C_1$ and $C_2$, the function $\Xi_2^N$ defined above satisfies 
    \begin{align} \label{PsiN.1}
        \partial_t \Xi_2^N - \frac{1}{2} \sum_{i = 1}^N \Delta_{x^i} \Xi_2^N > 0 \text{ on } \R_+ \times D^N, 
    \end{align}
    in addition to 
    \begin{align} \label{PsiN.2}
        \Xi_2^N(t,\bx) > \frac{1}{N-1} \sum_{j \neq i} \Xi_2^N\big(t,\bx^{i,j}\big) \text{ in } \R_+ \times \partial^{1,i}(D^N).
    \end{align}
\end{lem}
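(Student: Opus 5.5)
The interior inequality \eqref{PsiN.1} follows directly from the additive structure of $\Xi_2^N$. Since $\Xi_2^N(t,\bx) = \frac1N\sum_i \xi_2(t,x^i)$ and each summand depends only on $(t,x^i)$, we have
\begin{align*}
\partial_t \Xi_2^N - \frac12\sum_{i=1}^N \Delta_{x^i}\Xi_2^N = \frac1N \sum_{i=1}^N \Big(\partial_t \xi_2 - \frac12 \Delta \xi_2\Big)(t,x^i).
\end{align*}
With $C_1, C_2$ chosen as in Lemma \ref{lem.psi.comp}, every term on the right is strictly positive by \eqref{psi.heat}. Hence \eqref{PsiN.1} holds on $\R_+\times D^N$.

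For the boundary inequality \eqref{PsiN.2}, fix $t>0$ and $\bx\in\partial^{1,i}(D^N)$, so that $x^i\in\partial D$ and $x^j\in D$ for $j\neq i$. The configuration $\bx^{i,j}$ differs from $\bx$ only in the $i$-th coordinate, which becomes $x^j$. Therefore
\begin{align*}
\Xi_2^N(t,\bx) - \Xi_2^N(t,\bx^{i,j}) = \frac1N\big(\xi_2(t,x^i) - \xi_2(t,x^j)\big).
\end{align*}
Averaging over $j\neq i$ gives
\begin{align*}
\Xi_2^N(t,\bx) - \frac{1}{N-1}\sum_{j\neq i}\Xi_2^N(t,\bx^{i,j}) = \frac{1}{N(N-1)}\sum_{j\neq i}\big(\xi_2(t,x^i)-\xi_2(t,x^j)\big).
\end{align*}
By Lemma \ref{lem.psi.comp}, $\xi_2(t,\cdot)$ attains its strict global maximum on $\partial D$. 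Since $x^i\in\partial D$ and $x^j\in D$, each term $\xi_2(t,x^i)-\xi_2(t,x^j)$ is strictly positive, which proves \eqref{PsiN.2}.

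Both steps are routine, and the only point that needs care is the word \emph{strict}. The maximum is strict because $z\mapsto h(t,z) = 1+t^{-1/2}e^{-z^2/(2t)}$ is strictly decreasing on $[0,\infty)$, and $\dist(x)>0=\dist(y)$ for $x\in D$, $y\in\partial D$. Thus $\xi_2(t,x)<\xi_2(t,y)$ for all such $x,y$.
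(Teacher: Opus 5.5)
Your proof is correct and is exactly the argument the paper has in mind when it says Lemma \ref{lem.psi.comp} ``easily implies'' this statement. You use the additive structure of $\Xi_2^N$ to reduce \eqref{PsiN.1} to \eqref{psi.heat}. You then use the fact that $\xi_2(t,\cdot)$ attains its strict maximum on $\partial D$, which you also justify directly from the strict monotonicity of $h(t,\cdot)$ on $[0,\infty)$, to get the strict inequality in \eqref{PsiN.2}.
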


We now complete the proof of comparison. 

\begin{proof}[Proof of Proposition \ref{prop.comparison}]
   Fix $\eps, \eta \in (0,1)$, consider the optimization problem 
   \begin{align} \label{meps.comparison}
       M_{\eps, \eta} = \sup_{(t,\bx) \in \text{Dom}_N} \Big\{ V^-(t,\bx) - V^+(t,\bx) - \eps \Big( \Xi_1^N(t,\bx) +  \Xi_{2,\eta}^N(t,\bx) \Big) \Big\}, 
   \end{align}
   where we set 
   \begin{align*}
      \Xi_{2,\eta}^N(t,\bx) = \Xi_2^N(t+\eta, \bx).
   \end{align*}
   We now claim that for $\eta$ small enough (depending on $\eps$), the problem \eqref{meps.comparison} has at least one optimizer $(\hat t, \hat \bx) \in \text{Dom}_N$. To see this, let $(t_N,\bx_N)$ be an optimizing sequence in $\text{Dom}_N$. Since $V^- - V^+$ is bounded from above by assumption, there is a constant $K$ independent of $\eps$ and $\eta$ such that 
   \begin{align} \label{tnbxn}
       (t_N,\bx_N) \in \Big\{ (t,\bx) \in \text{Dom}_N : \Xi_1^N(t,\bx) +\Xi_{2,\eta}^N (t,\bx) \leq K/\eps \Big\}.
   \end{align}
   Now, by construction, both $\Xi_1^N$ and $\Xi_2^N$ are non-negative and grow exponentially in $t$, and thus we can conclude that $(t_N)$ is bounded. In particular, there is some point $(\hat t, \hat \bx) \in [0,\infty) \times \ov{D}^N$ such that, along a subsequence, 
   \begin{align*}
       (t_N, \bx_N) \to (\hat t, \hat \bx).
   \end{align*}
   Next, again by construction, 
   \begin{align*}
       \Xi_1^N(t,\bx) \to +\infty \text{ as } \bx \to \partial(D^N) \setminus \partial^1(D^N), \text{  uniformly in $t$,}
   \end{align*}
   and so we deduce that $\hat \bx \in D^N \cup \partial^1(D^N)$. Finally, we note that 
   \begin{align*}
       \Xi_{2,\eta}^N(0,\bx) \geq N^{-1} \eta^{-1/4} \text{ for } \bx \in \partial(D^N). 
   \end{align*}
   Since $\Xi_{2,\eta}^N$ is globally continuous, we get that 
   \begin{align*}
       \Xi_{2,\eta}^N(t,\bx) \geq \frac{1}{2} N^{-1} \eta^{-1/4} \text{ in a neighborhood of } \{0\} \times \partial(D^N).
   \end{align*}
   In particular, we see from \eqref{tnbxn} that for $\eta$ small enough (depending on $N$ and $\eps$), we have $(\hat t, \hat \bx) \notin \{0\} \times \partial(D^N)$. Combining the above observations, we deduce that $(\hat t, \hat \bx) \in \text{Dom}_N$, and so by the continuity of $V^-$, $V^+$, $\Xi_{2,\eta}^N$ and $\Xi_{1}^N$ on $\text{Dom}_N$, we find that $(\hat t, \hat \bx)$ is an optimizer. 

    In the rest of the argument, we fix $\eps > 0$, and $\eta > 0$ small enough (relative to $\eps$), and we fix some optimizer $(\hat t, \hat \bx) \in \text{Dom}_N$ for \eqref{meps.comparison}. If $\hat t = 0$ and $\hat \bx \in D^N$, then because $V^-(0,\hat \bx) \leq G^N(\hat \bx) \leq V^+(0,\hat \bx)$, we obtain $M_{\eps,\eta} \leq 0$, which is equivalent to the statement
   \begin{align} \label{comparison.comp}
       V^-(t,\bx) - V^+(t,\bx) \leq \eps \Big( \Xi_1^N(t,\bx) + \Xi_{2}^N(t+\eta,\bx)\Big), \quad \text{ for all } (t,\bx) \in \text{Dom}_N.
   \end{align}
   On the other hand, suppose that $\hat t > 0$. First, $\hat \bx \notin \partial(D^N) \setminus \partial^1(D^N)$, since $\Xi_1^N = +\infty$ on $\partial(D^N) \setminus \partial^1(D^N)$. If $\hat \bx \in \partial^1(D^N)$, with $\hat x^i \in \partial D$, then, using Lemmas \ref{lem.PhiN.comp} and \ref{lem.PsiN.comp} and the definition of sub/supersolutions, 
   \begin{align*}
       &V^-(\hat t, \hat \bx) - V^+(\hat t, \hat \bx) - \eps \Xi_1^N(\hat t, \hat \bx) - \eps \Xi_2^N(\hat t + \eta, \hat \bx)
       \\
       &\quad < \frac{1}{N-1} \sum_{j \neq i} \bigg( V^-\big(\hat t, \hat \bx^{i,j}\big) - V^+\big(\hat t, \hat \bx^{i,j}\big) - \eps \Xi_1^N\big(\hat t, \hat \bx^{i,j}\big) - \eps \Xi_2^N\big(\hat t + \eta, \hat \bx^{i,j}\big) \bigg).
   \end{align*}
   This contradicts the optimality of $(\hat t, \hat \bx)$, so we deduce that any optimizer with $\hat t > 0$ must satisfy $\hat \bx \in D^N$. 
   
   Finally, if $\hat t > 0$ and $\hat \bx \in D^N$, we obtain that 
   \begin{align*}
       \partial_t V^- &= \partial_tV^+ + \eps \partial_t \Xi_1^N + \eps \partial_t \Xi_{2,\eta}^N, 
       \\
       \Delta_{x^i} V^- &\leq \Delta_{x^i} V^+ + \eps \Delta_{x^i} \Xi_1^N + \eps \Delta_{x^i} \Xi_{2,\eta}^N, \quad i = 1,...,N
   \end{align*}
   at the point $(\hat t, \hat \bx)$. Thus, again at the point $(\hat t, \hat \bx)$ and using Lemmas \ref{lem.PhiN.comp} and \ref{lem.PsiN.comp}, 
   \begin{align*}
     0 &\geq   \partial_t V^- - \frac{1}{2} \sum_{i = 1}^N \Delta_{x^i}  V^-
     \\
     &\geq \partial_t V^+ - \frac{1}{2} \sum_{i = 1}^N \Delta_{x^i}  V^+ + \eps \Big( \partial_t \Xi_1^N - \frac{1}{2} \sum_{i = 1}^N \Delta_{x^i} \Xi_1^N \Big) + \eps \Big( \partial_t \Xi_{2,\eta}^N - \frac{1}{2} \sum_{i = 1}^N \Delta_{x^i} \Xi_{2,\eta}^N \Big)
     > 0.
   \end{align*}
   This is a contradiction. We conclude that any optimizer $(\hat t, \hat \bx)$ satisfies $\hat t = 0$, which means that \eqref{comparison.comp} holds for every $\eps > 0$ and $\eta$ small enough. Sending $\eta \to 0$ first and then $\eps \to 0$ completes the proof. 
\end{proof}

By a very similar (and simpler) argument, the following maximum principle for the heat equation holds. The result itself should be classical, but since we have not found an explicit statement in the literature, we provide it here below (note that the subsolution is not required to be continuous on the whole parabolic cylinder $[0,\infty)\times\overline{D^N}$, and the sign condition is prescribed only on a strict subset of the lateral boundary).

\begin{prop} \label{prop.heat.comparison}
    Let $W : \text{Dom}_N \to \R$ be a continuous, bounded from above, smooth on $\R_+ \times D^N$ function satisfying
    \begin{align*}
    \begin{cases}
        \ds \partial_t W(t,\bx) - \frac{1}{2} \sum_{i = 1}^N \Delta_{x^i} W(t,\bx) \leq  0, & (t,\bx) \in \R_+ \times D^N,
       \\
        \ds W(t,\bx) \leq 0, & (t, \bx) \in \R_+ \times \partial^{1}(D^N),
       \\
        \ds W(0,\bx) \leq  0& \bx \in D^N.
    \end{cases}
\end{align*}
Then, $W \le 0$ on $\text{Dom}_N$.
\end{prop}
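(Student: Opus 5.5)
We mimic the proof of Proposition \ref{prop.comparison}, with $V^- - V^+$ replaced by $W$. The boundary condition is now a sign condition on $\partial^1(D^N)$ rather than the Fleming-Viot averaging, so the inequality \eqref{PhiN.2} is not needed.

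Fix $\eps,\eta\in(0,1)$ and set
\begin{align*}
M_{\eps,\eta}=\sup_{(t,\bx)\in \text{Dom}_N}\Big\{W(t,\bx)-\eps\big(\Xi_1^N(t,\bx)+\Xi_2^N(t+\eta,\bx)\big)\Big\},
\end{align*}
with $\Xi_1^N$, $\Xi_2^N$ as in \eqref{def.Xi1N} and \eqref{def.Xi2N}. We first show that an optimizer $(\hat t,\hat\bx)\in\text{Dom}_N$ exists, arguing exactly as before.
\begin{itemize}
\item Since $W$ is bounded above, an optimizing sequence stays in a sublevel set $\{\Xi_1^N+\Xi_{2,\eta}^N\le K/\eps\}$.
\item Both $\Xi_1^N$ and $\Xi_2^N$ are nonnegative and grow exponentially in $t$, so the times are bounded and we may extract a limit in $[0,\infty)\times\ov D^N$.
\item Because $\Xi_1^N=+\infty$ on $\partial(D^N)\setminus\partial^1(D^N)$, the limit point lies in $D^N\cup\partial^1(D^N)$.
\item Because $\Xi_2^N(\eta,\cdot)\gtrsim N^{-1}\eta^{-1/4}$ on $\partial(D^N)$ and $\Xi_2^N$ is continuous, for $\eta$ small the limit avoids a neighbourhood of $\{0\}\times\partial(D^N)$.
\end{itemize}
Hence the limit lies in $\text{Dom}_N$, and by continuity of $W$, $\Xi_1^N$ and $\Xi_2^N$ there, it is a maximizer.

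Next we show $M_{\eps,\eta}\le 0$ by examining where $(\hat t,\hat\bx)$ can lie.
\begin{itemize}
\item If $\hat t=0$ and $\hat\bx\in D^N$, then $W(0,\hat\bx)\le0$ and the penalty is nonnegative, so $M_{\eps,\eta}\le0$.
\item If $\hat t>0$ and $\hat\bx\in\partial^1(D^N)$, then $W(\hat t,\hat\bx)\le 0$, and again $M_{\eps,\eta}\le 0$ because $\Xi_1^N,\Xi_2^N\ge0$.
\item If $\hat t>0$ and $\hat\bx\in D^N$, the point is an interior maximum. There $\partial_t W=\eps\,\partial_t(\Xi_1^N+\Xi_{2,\eta}^N)$ and $\Delta_{x^i}W\le\eps\,\Delta_{x^i}(\Xi_1^N+\Xi_{2,\eta}^N)$ for each $i$. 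Combined with \eqref{PhiN.1} and the strict inequality \eqref{PsiN.1}, this gives
\begin{align*}
0\ge \partial_tW-\tfrac12\sum_i\Delta_{x^i}W>0,
\end{align*}
a contradiction.
\end{itemize}
In every possible case $M_{\eps,\eta}\le 0$, that is,
\begin{align*}
W\le\eps\big(\Xi_1^N+\Xi_2^N(\cdot+\eta,\cdot)\big)\quad\text{on }\text{Dom}_N.
\end{align*}
We now let $\eta\to0$ pointwise on $\text{Dom}_N$; this is legitimate because $\Xi_2^N(t+\eta,\bx)\to\Xi_2^N(t,\bx)$, which is finite for $t>0$ or for $\bx\in D^N$. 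Then letting $\eps\to0$ yields $W\le 0$.

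The main obstacle, as before, is the compactness step: ensuring the supremum is attained inside $\text{Dom}_N$ and not at the corner $\{0\}\times\partial(D^N)$ or on the higher-codimension boundary $\partial(D^N)\setminus\partial^1(D^N)$. The barriers $\Xi_2^N(\cdot+\eta)$ and $\Xi_1^N$ handle these two regions respectively, and their properties were already established in Lemmas \ref{lem.phi.supersol}, \ref{lem.PhiN.comp}, \ref{lem.psi.comp} and \ref{lem.PsiN.comp}.
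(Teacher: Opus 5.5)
Your proposal is correct and is essentially the argument the paper intends. The paper gives no separate proof and only remarks that the result follows by a very similar, simpler version of the proof of Proposition~\ref{prop.comparison}; that is what you carry out, using the same penalizations $\Xi_1^N$ and $\Xi_2^N(\cdot+\eta,\cdot)$ and the same compactness step, except that the lateral-boundary case is settled directly by the sign condition $W\le 0$ on $\R_+\times\partial^1(D^N)$ in place of the Fleming--Viot averaging inequality \eqref{PhiN.2}.
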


\subsection{Existence}

\begin{prop}\label{prop.vn.eu}
    Suppose that $G^N : \ov{D}^N \to \R$ is of class $C^{2+\alpha}(\overline D^N)$. Then there exists a classical solution to \eqref{eqn.VN}.
\end{prop}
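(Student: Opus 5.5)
We plan to construct the solution by approximating the nonlocal boundary condition through a sequence of problems with \emph{soft} (rate-based) resampling, and to pass to the limit using the comparison principle (Proposition \ref{prop.comparison}) and the barriers $\Xi_1^N,\Xi_2^N$. Concretely, for $\delta>0$ we let $\chi_\delta : \ov D \to [0,\infty)$ be a smooth killing rate, equal to $\delta^{-2}$ on $(\partial D)_\delta$ and vanishing outside $(\partial D)_{2\delta}$. We then solve the linear nonlocal parabolic system on $\R_+\times \ov D^N$,
\begin{align*}
\partial_t V_\delta - \frac12\sum_i \Delta_{x^i}V_\delta = \sum_i \chi_\delta(x^i)\Big(\frac{1}{N-1}\sum_{j\ne i} V_\delta(t,\bx^{i,j}) - V_\delta(t,\bx)\Big), \quad V_\delta(0,\cdot)=G^N,
\end{align*}
with a homogeneous Neumann condition on $\partial(D^N)$ (or simply in the interior, letting the rate blow up). The right-hand side is a bounded linear operator on $C(\ov D^N)$ with norm at most $2N\delta^{-2}$, so a fixed-point argument around the heat semigroup gives a unique bounded solution. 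Since that operator is a Markov generator, the maximum principle yields $\|V_\delta\|_\infty\le \|G^N\|_\infty$. Interior Schauder estimates on $\R_+\times D^N$, uniform in $\delta$ on compacts $K\Subset D^N$ away from $t=0$ (and up to $t=0$ using $G^N\in C^{2+\alpha}$), are available because $\chi_\delta=0$ on $K$ for small $\delta$. A diagonal argument then gives a subsequence converging locally in $C^{1,2}$ on $(\R_+\times D^N)\cup(\{0\}\times D^N)$ to a bounded $V$ solving the heat equation with initial datum $G^N$.

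The main work, and in our view the main obstacle, is to show that $V$ extends continuously to $\R_+\times\partial^1(D^N)$ and satisfies the Fleming-Viot condition there. We would obtain a modulus of continuity that is uniform in $\delta$ near a point $(t_0,\bx_0)$ with $x_0^i\in\partial D$ and $x_0^j\in D$ for $j\ne i$. The idea is to freeze the other coordinates and view the $x^i$-dependence as a one-particle problem: $x^i$ diffuses, and upon being killed it is replaced by the average of $V_\delta$ at $\bx^{i,j}$, whose arguments lie in the interior where equicontinuity is already known. We would build local barriers of the form
\begin{align*}
\frac{1}{N-1}\sum_{j\ne i}V_\delta(t,\bx^{i,j}) \pm \Big(\omega(r) + C\,\phi(x^i) + \text{time/space penalization}\Big),
\end{align*}
where $\phi$ is the torsion-type function vanishing on $\partial D$, and $\omega(r)$ is the interior modulus on a ball of radius $r$. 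The strong killing rate $\delta^{-2}$ pushes $V_\delta$ toward the resampled average within a layer of width $O(\delta)$. The comparison for the $\delta$-problem on a small cylinder then gives
\begin{align*}
\Big|V_\delta(t,\bx)-\frac{1}{N-1}\sum_{j\neq i}V_\delta(t,\bx^{i,j})\Big|\le \omega(r)+C\dist(x^i)+o_\delta(1),
\end{align*}
uniformly near $(t_0,\bx_0)$. Passing to the limit gives continuity of $V$ up to $\partial^{1,i}(D^N)$ and the boundary identity. Continuity at $\{0\}\times D^N$ follows from $G^N\in C^{2+\alpha}$ and the same local barriers in $t$.

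Finally, $V$ is continuous on $\text{Dom}_N$, bounded, and smooth in $\R_+\times D^N$, and it satisfies \eqref{eqn.VN} pointwise, so it is a classical solution in the sense of Definition \ref{def.VN.classicalsoln}. No behaviour is needed on $\partial(D^N)\setminus\partial^1(D^N)$. If the boundary barrier step proves too delicate, a fallback is a Perron-type construction: take the supremum of classical subsolutions lying below $\|G^N\|_\infty$. This class is nonempty, since $-\|G^N\|_\infty$ and $\|G^N\|_\infty$ are sub- and supersolutions. One would then use the same local barriers to show that the resulting function attains the boundary condition. In that route, Proposition \ref{prop.comparison} and the barriers $\Xi_1^N,\Xi_2^N$ handle the corners and the multi-boundary set.
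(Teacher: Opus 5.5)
Your route differs from the paper's, and as far as I can see it works. The paper stays with the hard-killing problem throughout. It runs a monotone iteration $u_n=\mathcal F(u_{n-1})$ from $u_0\equiv\inf G^N$, where $\mathcal F(v)$ solves the Dirichlet heat problem on $D^N$ with lateral data $\frac{1}{N-1}\sum_{j\neq i}v(t,\bx^{i,j})$ on $\partial^1(D^N)$.
\begin{itemize}
\item Proposition \ref{prop.heat.comparison} gives monotonicity and the bound $\sup G^N$.
\item The Fleming--Viot identity passes to the pointwise limit for free.
\item Interior Schauder estimates give the equation.
\item Continuity up to $\partial^1(D^N)$ follows from boundary Schauder estimates, because each iterate's lateral data is the previous iterate evaluated at interior points.
\end{itemize}
You instead replace the boundary condition by a bounded jump operator. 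This makes the approximating problems elementary: a fixed point around the Neumann semigroup, with an $L^\infty$ bound from the Markov structure. The cost is that both the continuity at $\partial^1(D^N)$ and the boundary identity must come from a local barrier that is uniform in $\delta$. In exchange you get an explicit local modulus, $|V-\frac{1}{N-1}\sum_{j\neq i}V(\cdot,\bx^{i,j})|\lesssim \dist(x^i)$, and a construction that mirrors the soft-to-hard killing limit. The paper's boundary step is off-the-shelf, but it has to solve Dirichlet problems on the nonsmooth set $D^N$ with data prescribed only on $\partial^1(D^N)$.

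Two details of your barrier step need to be set up correctly.

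\textbf{The reflecting face.} For the $\delta$-problem, the face $\{x^i\in\partial D\}$ of your local cylinder is reflecting. Comparison there requires the barrier to have nonnegative outward normal derivative in $x^i$. A torsion function $\phi$ vanishing on $\partial D$ has $\partial_\nu\phi<0$, so it is not admissible. Replace it by a profile $g_\delta(\dist(x^i))$ that is flattened (or slightly reversed) in $(\partial D)_{2\delta}$, so that the whole barrier has nonnegative outward normal derivative. Then recover the supersolution inequality in the layer from the absorption $\chi_\delta\geq\delta^{-2}$. This forces $g_\delta(0)\sim\delta$, which is exactly your $o_\delta(1)$.

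\textbf{The resampled average.} The map $\bx\mapsto\frac{1}{N-1}\sum_{j\neq i}V_\delta(t,\bx^{i,j})$ is not caloric in $\bx$: the $x^j$-Laplacian produces a cross term $\nabla_{x^i}\cdot\nabla_{x^j}V_\delta$ at $\bx^{i,j}$. So do not put it inside the barrier; freeze it instead. Set $a=\frac{1}{N-1}\sum_{j\neq i}V_\delta(t_0,\bx_0^{i,j})$, and let $\omega(r)$ be the oscillation of $(t,\bx)\mapsto\frac{1}{N-1}\sum_{j\neq i}V_\delta(t,\bx^{i,j})$ over the cylinder. This oscillation is uniform in $\delta$ by interior estimates. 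Then $W=V_\delta-a-\omega(r)$ satisfies
\begin{align*}
\partial_t W-\frac12\sum_{k=1}^N\Delta_{x^k}W+\chi_\delta(x^i)\,W\leq 0
\end{align*}
on the cylinder for small $\delta$ (the other rates vanish there), with zero Neumann data in $x^i$. A barrier of the form $K g_\delta(\dist(x^i))+\frac{2M}{r^2}|\bx-\bx_0|^2+\frac{2M}{\tau}(t_0-t)$, with $M=\|G^N\|_\infty$ and $K$ large, then closes the argument. The lower bound is symmetric.
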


\begin{proof} Let $v \in C({\rm Dom}_N)$ and $u := \mathcal F(v)$ be the solution to
\[
\begin{cases}
\partial_t u(t,\bx) - \frac{1}{2} \sum_{i=1}^{N} \Delta_{x^i} u(t,\bx) = 0, 
& (t,\bx) \in \mathbb{R}_+ \times D^N, \\
u(t,\bx) = \frac{1}{N-1} \sum_{j \ne i} v\bigl(t,x^{i,j}\bigr), 
& t \in \mathbb{R}_+, \; \bx \in \partial^1(D^N), \; x^i \in \partial D, \\
u(0,\bx) = G^N(\bx), 
& \bx \in D^N.
\end{cases} 
\]
Note that $u$ is smooth on $(0,\infty) \times D^N$ and continuous on $(0,\infty) \times \partial^1(D^N)$ and $\{0\}\times D^N$ (by a standard barrier argument, since the Cauchy-Dirichlet boundary conditions are continuous at these points). Let $\{u_n\}$ be the sequence defined as follows
\[
u_0 \equiv \inf_{\bx \in D^N} G^N(\bx), \qquad u_{n} = \mathcal F(u_{n-1}) \ \ \forall n \ge 1.
\]
Since $u_1 = \inf G^N$ on $(0,\infty) \times \partial^1(D^N)$ and $u_1 \ge \inf G^N$ on $t = 0$, by the comparison principle of Proposition \ref{prop.heat.comparison} we have $u_1 \ge u_0$ on ${\rm Dom}_N$. Similarly, assuming that $u_{n} \ge u_{n-1}$ in ${\rm Dom}_N$, by definition
\[
u_{n+1}(t,\bx) = \frac{1}{N-1} \sum_{j \ne i} u_{n}\bigl(t,x^{i,j}\bigr) \ge \frac{1}{N-1} \sum_{j \ne i} u_{n-1}\bigl(t,x^{i,j}\bigr) = u_{n}(t,\bx)
\]
on $(0,\infty) \times  \partial^1(D^N)$, and $u_{n+1}=u_{n}$ at $t = 0$, hence $u_{n+1} \ge u_{n}$ on ${\rm Dom}_N$ again by Proposition \ref{prop.heat.comparison}. By induction, we find that $\{u_n\}$ is a monotone nondecreasing sequence. Since the whole sequence is bounded from above by $\sup_{\bx \in D^N} G^N$ (by an analogous induction argument), there exists a function $V^N$ defined on ${\rm Dom}_N$ such that
\[
\lim_{n \to \infty }u_{n}(\bx) = V^N(\bx) \quad \forall \bx \in {\rm Dom}_N.
\]

We now show that $V^N$ satisfies the desired properties. Clearly, $V^N(0, \bx) = G^N(\bx)$ on $D^N$. In addition, pointwise convergence guarantees that $V^N(t,\bx) = \frac{1}{N-1} \sum_{j \ne i} V^N(t,x^{i,j})$ holds on $(0,\infty) \times \partial^1(D^N)$. Let $K \subset D^N$ be any closed set; on $[0, T] \times K$, standard (interior) Schauder estimates guarantee that, up to subsequences, $u_n$ converges to $V^N$ in $C^{2,1}([0,T] \times K)$. This implies that $V^N$ is a classical solution of the heat equation in $(0,\infty) \times D^N$, and it is continuous in $[0,\infty) \times D^N$. 

Finally, let $\bx \in \partial^1(D^N)$ and $B$ be a closed ball centered at $\bx$, such that $B \cap \overline D^N \subset D^N \cup \partial^1(D^N)$.  Note that as $\bx$ varies in $B \cap \partial^1(D^N)$, the components $x^j$ are bounded away from $\partial D$ for every $j \neq i$ (for some $i$ not depending on $\bx$). Since $u_n$ is bounded in $C^{2+\alpha,1+\alpha/2}([0,T] \times K)$, by the identity
\[
u_{n+1}(t,\bx) = \frac{1}{N-1} \sum_{j \ne i} u_{n}\bigl(t,x^{i,j}\bigr)
\]
we have that $u_{n+1}$ is of class $C^{2+\alpha,1+\alpha/2}([\tau,T] \times (B \cap \partial^1(D^N)))$, and therefore the sequence is bounded in $C^{2+\alpha,1+\alpha/2}$ $([\tau, T]\times (\tilde B \cap \overline D^N))$ by (boundary) Schauder estimates, for all $\tau > 0$ and for $\tilde B \subset B$. The uniform convergence (up to subsequences) then shows that $V^N$ is continuous on $(0,\infty) \times \partial^1(D^N)$.

\end{proof}

We close this section with an example which shows that in general, we do not have continuity on all of $\R_+ \times \ov{D}^N$.

\begin{example}
    We work in dimension $d=1$, with $D= (-1,1)$ and $N=2$. We look at the unique classical solution $u$ to 
$$
\left\{\begin{array}{l}
\ds \partial_t u - \frac{1}{2} \partial_{xx} u - \frac{1}{2} \partial_{yy} u = 0 \; {\rm in }\; (0,\infty)\times D^2, \vspace{.2cm}\\
\ds u(t,\pm 1,y) = u(t,y,y), \; u(t,x, \pm 1)= u(t,x,x) \qquad \forall x,y\in D, \; t>0, \vspace{.2cm} \\
\ds u(0,x,y)= x+y \;{\rm in}\; D^2.
\end{array}\right.
$$
We are going to show by contradiction that $u$ cannot be continuous on $(0,\infty)\times (\ov D)^2$. We first claim that, if $u$ is continuous on  $(0,\infty)\times (\ov D)^2$, then
$$
u(t,1,1)=0\qquad \forall t>0.
$$
Indeed, by uniqueness of the solution and symmetry, we have 
$$
u(t,x,y)= u(t,y,x), \qquad u(t,x,y)=-u(t,-x,-y)\qquad \forall x,y \in D, \; t>0.
$$
This implies that $u(t,x,-x)= 0$ for $x\in D$ and thus, by continuity, 
$$
u(t,x,-x)= 0 \qquad \forall x\in [-1,1]. 
$$
So we have, still by the assumed continuity,  
$$
u(t,1,1)= \lim_{x\to 1^-} u(t,x,1)= \lim_{x\to 1^-} u(t,x,x) = \lim_{x\to 1^-} u(t,x,-1)=u(t, 1,-1)=0,
$$
which proves the claim. 

We now build a subsolution $w$ of the equation satisfied by $u$, which is positive at $(t,1,1)$ for any  $t>0$ small, thus finding a contradiction. Let $\phi:[-2,2]\to \R$ be a smooth map such that 
$$
\phi(s)\leq s\; {\rm for }\; s\in [-2,2], \qquad \phi'(2)<0, \; \phi'(-2)>0, \; \phi(2)>0.
$$
For instance, we can choose $\phi$ as a smooth convolution of the map $\tilde \phi$ defined by 
$$
\tilde \phi(s)=s-1/2 \;{\rm for}\; s\in [-2, 3/2], \qquad \tilde \phi(s)= 1- (s-3/2) \; {\rm for}\; s\in [3/2,2].
$$
We set 
$$
w(t,x,y)= \phi(x+y) -A(x-y)^2 - Bt,
$$
for $A,B>0$ to be chosen below. We now check that $w$ is a subsolution to the equation satisfied by $u$. For the initial condition, we note that, for any $(x,y)\in (-1,1)^2$,  
\begin{align*}
w(0,x,y) & \leq  \phi(x+y) \leq x+y=u(0,x,y). 
\end{align*}
We now check the boundary condition. Let $\delta>0$ be such that $\phi'\leq 0$ on $[2-\delta,2]$ and $\phi'\geq 0$ on $[-2,-2+\delta]$. If $y\in[ 1-\delta/2,1]$, then $2\geq 1+y\geq 2y\geq 2-\delta$ and thus, as  $\phi'\leq 0$ on $[2-\delta,2]$, 
\begin{align*}
w(t,1,y) -w(t,y,y)  = \phi(1+y)-A(1-y)^2-Bt -(\phi(2y)-Bt) \leq \phi(1+y)-\phi(2y)  \leq0.
\end{align*}
In the same way, if $y\in [-1, -1+\delta/2]$, we have $-2+\delta \geq 2y \geq -1+y\geq -2$, so that, as  $\phi'\geq 0$ on $[-2,-2+\delta]$,  
$$
w(t,-1,y) -w(t,y,y) \leq \phi(-1+y)-\phi(2y) \leq 0.
$$
On the other hand, if $y\in [-1+\delta/2, 1-\delta/2]$, then (as $1-y\geq \delta/2$) 
$$
w(t,1,y) -w(t,y,y) = \phi(1+y)-\phi(2y) -A(1-y)^2 \leq 2\|\phi\|_\infty -\frac{A\delta^2}{4} \leq 0,
$$
if we choose $A\geq 8\|\phi\|_\infty \delta^{-2}$. In the same way  (as $-1-y\leq -\delta/2$),  
$$
w(t,-1, y) -w(t,y,y) =  \phi(-1+y)-\phi(2y) -A(-1-y)^2 \leq 2\|\phi\|_\infty -\frac{A\delta^2}{4} \leq 0. 
$$
So in any case, 
$$
w(t,\pm 1,y) \leq w(t,y,y) \;{\rm for}\; y\in [-1,1],
$$
which implies by symmetry that 
$$
w(t,x,\pm 1) \leq w(t,x,x) \;{\rm for}\; x\in [-1,1].
$$
Finally, 
$$
\partial_t w - \frac{1}{2} \partial_{xx} w - \frac{1}{2} \partial_{yy} w \leq  -B +\|\phi''\|_\infty +4A \leq 0, 
$$
if we can choose $B$ large enough (depending on  $A$ and $\|\phi''\|_\infty$). Thus  $w$ is a subsolution of the equation satisfied by $u$. By comparison, we obtain $w\leq u$. But $w(t,1,1)= \phi(2)-Bt$, which is positive for $t>0$ small: this contradicts the fact that $u(t,1,1)=0$. 
\end{example}

\section{The function $U$ and its projection}
\label{sec.Uprops}

Throughout this section, we fix a function 
\begin{align*}
    G \in \cC^{2+\alpha}\big( \cP(\ov D) \big),
\end{align*}
and study the function $U(t,m) = G(\ov p_t^m)$, with $\ov p_t$ the re-normalized heat flow. Our aim is to show that $U$ satisfies the equation \eqref{eqn.U}, and to study how the derivatives of $U$ blow up when the $m$ concentrates near the boundary.

We will assume that $G \in \cC^{2+\alpha}\big(\cP(\ov D)\big)$, even if it is not stated explicitly.
We use the notation
\begin{align*}
    \norm{ G }_{\cC^{2}} = \norm{ \frac{\delta G}{\delta m}}_{\infty}  + \norm{\frac{\delta^2 G}{\delta m^2}}_{\infty} + \norm{D_m G }_{\infty} + \norm{D_{mm} G}_{\infty}.
\end{align*}
Throughout this section, we view $G$ as fixed, and we make the following convention regarding constants:
\begin{align*}
    A \lesssim B \text{ means } A \leq CB, \text{ where $C$ depends only on $D$ and on $\norm{ G}_{\cC^{2}}$}.
\end{align*}
Similarly, $A \lesssim_E B$ means $A \leq CB$, where $C$ can depend on the additional parameter $E$, in addition to $D$ and $\norm{ G}_{\cC^{2}}$.

\subsection{Definition and basic properties}

For $m \in \cP(\ov D)$, we denote by $m\big|_D \in \sub(D)$ the restriction of $m$ to $D$, and we denote by $(p_t^m)_{t \geq 0}$ the unique solution in $C\big([0,\infty); \sub\big)$ of
\begin{align} \label{eqn.p}
    \partial_t p = \frac{1}{2} \Delta p \text{ in } \R_+ \times D, \quad p\big|_{(0,\infty) \times \partial D} = 0, \quad p_0 = m\big|_D.
\end{align}
This means that the map $[0,\infty) \ni t \mapsto p_t \in \big(\sub(D), \bd  \big)$ is continuous, $p_0 = m\big|_{D}$, and $p$ is a classical solution to \eqref{eqn.p} in $\R_+ \times D$. We note that the existence and uniqueness of a classical solution on $\R_+ \times D$ which is continuous at time $0$ in a distributional sense is standard; the continuity with respect to the metic $\bd$ is discussed in Appendix \ref{app.flow}. We also use the notation $p_t^x = p_t^{\delta_x}$. For notational simplicity, we set 
\begin{align*}
    \ov{p}^m_t = \frac{p^m_t}{p^m_t(D)} \in \cP(D).
\end{align*}
We define
\begin{align} \label{def.U}
    U : [0,\infty) \times \cP^+(\ov D) \to \R, \quad U(t,m) = G\big(\ov p_t^m\big) = G\Big( \frac{p_t^m}{p_t^m(D)}\Big)
\end{align}

We now explore the properties of the function $U$.

\begin{prop} \label{prop.Uprops}
 The function $U$ is continuous on $\R_+ \times \cP^+(\ov D)$. In addition, it is continuous on $[0,\infty) \times \cP(K)$, for any compact subset $K$ of $D$. Moreover, we have 
    \begin{align*}
         U(t,\cdot) \in \cC^{2}\big( \cP^+(\ov D) \big) \text{ for each $t > 0$,}
    \end{align*}
    with $\frac{\delta U}{\delta m}$ and $\frac{\delta^2 U}{\delta m^2}$ given explicitly by
     \begin{align} \label{U.firstderiv.formula}
        \frac{\delta U}{\delta m}(t,m,x) = \big(p_t^m(D)\big)^{-1} \int_D \frac{\delta G}{\delta m}\big( \ov p_t^m, y \big) p_t^x(dy). 
    \end{align}
    and 
    \begin{align} \label{U.secondderiv.formula}
        \frac{\delta^2 U}{\delta m^2}(t,m,x,x') &= \frac{p_t^m(D) - p_t^{x'}(D)}{(p_t^m(D))^2} \int_{D} \frac{\delta G}{\delta m}\big( \ov p_t^m, y \big) p_t^x(dy)
   \nonumber \\
    &\qquad \qquad  + \big(p_t^m(D)\big)^{-2} \int_{D} \int_{D} \frac{\delta^2 G}{\delta m^2} \big(\ov p_t^m, y,y'\big)  p_t^x(dy) p_t^{x'}(dy').
    \end{align}
    In particular, we have $\frac{\delta U}{\delta m}(t,m,x) = 0$ for $x \in \partial D$. 
\end{prop}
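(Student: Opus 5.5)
The plan is to view $U(t,\cdot)$ as $G$ composed with $m\mapsto \ov p_t^m$, and to use that $m\mapsto p_t^m$ is linear: $p_t^m=\int_{\ov D} p_t^x\,m(dx)$, with $p_t^x=0$ for $x\in\partial D$ (the restriction to $D$ kills boundary mass). For $t>0$ the heat kernel $p_t(x,y)$ is smooth on $\ov D\times\ov D$ and vanishes when $x\in\partial D$. Also $p_t^m(D)>0$ for $m\in\cP^+(\ov D)$, and it is bounded below uniformly on $\bd$-compact sets where $m(D)$ stays away from zero.

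\textbf{Continuity.} For $t>0$, the maps $x\mapsto p_t^x$ (into $L^1$, or into $\sub$ with $\bd$) and $x\mapsto p_t^x(D)$ are Lipschitz. Hence $m\mapsto p_t^m$ is continuous from $\bd$ to $\bd$, and $(t,m)\mapsto p_t^m$ is jointly continuous for $t>0$. Dividing by $p_t^m(D)>0$ and using continuity of $G$ gives continuity of $U$ on $\R_+\times\cP^+(\ov D)$. At $t=0$ on $\cP(K)$ with $K\Subset D$, I would use the continuity of $t\mapsto p_t^m$ in $\bd$ from Appendix \ref{app.flow}, made uniform over $m\in\cP(K)$ by linearity: $\bd(p_t^m,m)\le\sup_{x\in K}\bd(p_t^x,\delta_x)\to0$, because points of $K$ stay away from the boundary. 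Since $p_t^m(D)\to1$, this gives $\ov p_t^m\to m$ uniformly.

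\textbf{First derivative.} Set $q(s)=p_t^{[m,m']_s}=(1-s)p_t^m+sp_t^{m'}$ and $Z(s)=q(s)(D)$. Then $\ov q(s)$ is a smooth curve in measures, and
\[
\frac{d}{ds}\ov q(s)=\frac{p_t^{m'}-p_t^m}{Z}-\frac{Z'}{Z^2}\,q .
\]
By the chain rule for $G$ (from \eqref{def.linearderiv} applied along curves), and because the $q$-term integrates to zero by the normalization \eqref{linderiv.normalization}, we get
\[
\frac{d}{ds}G(\ov q(s))=Z^{-1}\int_D \frac{\delta G}{\delta m}(\ov q(s),y)\,(p_t^{m'}-p_t^m)(dy).
\]
Writing $p_t^{m'}-p_t^m=\int p_t^x\,(m'-m)(dx)$ and applying Fubini identifies $\frac{\delta U}{\delta m}$ with \eqref{U.firstderiv.formula}, up to normalization. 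The normalization \eqref{linderiv.normalization} needs
\[
\int_{\ov D}\frac{\delta U}{\delta m}(t,m,x)\,m(dx)=Z^{-1}\int_D\frac{\delta G}{\delta m}(\ov p_t^m,y)\,p_t^m(dy)=\int_D\frac{\delta G}{\delta m}(\ov p_t^m,y)\,\ov p_t^m(dy)=0,
\]
which holds, so no constant needs subtracting. Vanishing on $\partial D$ follows from $p_t^x=0$ there.

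\textbf{Second derivative and regularity.} I would differentiate \eqref{U.firstderiv.formula} in $m$ for fixed $x$ by the product rule:
\[
\frac{\delta}{\delta m}\big(p_t^m(D)^{-1}\big)(x')=-\frac{p_t^{x'}(D)}{p_t^m(D)^2},
\]
plus the derivative of $m\mapsto\int\frac{\delta G}{\delta m}(\ov p_t^m,y)\,p_t^x(dy)$. The latter is obtained by applying the previous step to $\frac{\delta G}{\delta m}(\cdot,y)$, which gives the $\frac{\delta^2G}{\delta m^2}$ term. The raw result must then be renormalized so that it integrates to zero against $m(dx')$. Subtracting the $m(dx')$-average of $-p_t^{x'}(D)/p_t^m(D)^2$, which is $-1/p_t^m(D)$, turns this coefficient into $\big(p_t^m(D)-p_t^{x'}(D)\big)/p_t^m(D)^2$. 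The $\frac{\delta^2G}{\delta m^2}$ term already integrates to zero by the normalization of $G$. This yields \eqref{U.secondderiv.formula}.

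Spatial $C^2$ regularity in $x,x'$ and joint continuity of $D_m U$, $D_xD_mU$ and $D_{mm}U$ come from smoothness of $x\mapsto p_t(x,\cdot)$ up to $\ov D$ for $t>0$, differentiating under the integral, combined with continuity of $\frac{\delta G}{\delta m}$ and $\frac{\delta^2G}{\delta m^2}$. The main obstacle is bookkeeping rather than depth: checking the chain rule for $G$ along curves of sub-probability-derived measures, and verifying the uniform kernel bounds near $\partial D$ that justify differentiating under the integral.
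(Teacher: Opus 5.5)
Your proposal is correct and follows essentially the same route as the paper. Continuity comes from the linearity and smoothing of $m\mapsto p_t^m$. The first derivative comes from expanding $G$ to first order, using $p_t^{[m,m']_s}=(1-s)p_t^m+s\,p_t^{m'}$ and the normalization of $\frac{\delta G}{\delta m}$. The second derivative comes from differentiating the prefactor $p_t^m(D)^{-1}$ and the integral separately (the paper's terms $I$ and $II$), and regularity comes from smoothness of the Dirichlet heat flow for $t>0$. The differences are only organizational: you apply the chain rule along the whole segment, which gives \eqref{def.linearderiv} directly rather than only the derivative at $\eps=0$, and you compute a raw second derivative and renormalize afterward, whereas the paper writes the normalized form directly using $\int m'(dx')=1$.
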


\begin{rmk} \label{rmk.finitedim.pde}
    By duality, the formulas \eqref{U.firstderiv.formula} and \eqref{U.secondderiv.formula} can alternatively be interpreted as 
    \begin{align} \label{dmU.alt}
        \frac{\delta U}{\delta m}(t,m,x) = (p_t^m(D))^{-1} u^1(t,x), 
    \end{align}
    and 
    \begin{align} \label{dmmU.alt}
        \frac{\delta^2 U}{\delta m^2}(t,m,x,x') = \frac{\big(p_t^m(D) - u^2(t,x')\big)}{(p_t^m(D))^2} u^1(t,x) + \big(p_t^m(D)\big)^{-2} v(t,x,x'), 
    \end{align}
    where $u^1$ and $u^2$ satisfy 
    \begin{align} \label{ui.eqn}
        \partial_s u^i(s,x) - \frac{1}{2} \Delta u^i(s,x) = 0, \quad (s,x) \in \R_+ \times D, \quad u^i\big|_{\R_+ \times \partial D} = 0, 
    \end{align}
    with initial conditions 
    \begin{align} \label{ui.term}
        u^1(0,x) = \frac{\delta G}{\delta m}\big(\ov p_t^m , x\big), \quad u^2(0,x) = 1, 
    \end{align}
    while $v$ satisfies 
    \begin{align} \label{v.eqn}
        \partial_s v(s,x,x') - \frac{1}{2} \Delta_x v(s,x,x') - \frac{1}{2} \Delta_{x'} v(s,x,x') = 0, \quad (s,x,x') \in \R_+ \times D \times D
    \end{align}
    with the initial condition 
    \begin{align}
        v\big|_{\R_+ \times \partial(D \times D)} = 0, \quad v(0,x,x') = \frac{\delta^2 G}{\delta m^2}(\ov{p}_t^m,x,x'). 
    \end{align}

    We emphasize that $u^1$, $u^2$, and $v$ depend implicitly on $t$ and $m$, through their initial conditions, so that the notation $u^{1,t,m}$, $u^{2,t,m}$, $v^{t,m}$ would be more precise, but we prefer to avoid this additional notation.
\end{rmk}

\begin{proof}[Proof of Proposition \ref{prop.Uprops}] 
First, the claimed continuity follows from the continuity properties of $(t,m) \mapsto p_t^m$ from Lemma \ref{lem.ptm}, together with the continuity of $G$.

We now turn to the differentiability of $U(t,\cdot)$. For notational simplicity, we define 
    \begin{align*}
        \Phi(t,m,x) &= \big(p_t^m(D)\big)^{-1} \int_D \frac{\delta G}{\delta m}\big( \ov p_t^m, y \big) p_t^x(dy), 
        \\
        \Psi(t,m,x,x')  &= \frac{p_t^m(D) - p_t^{x'}(D)}{(p_t^m(D))^2} \int_{D} \frac{\delta G}{\delta m}\big( \ov p_t^m, y \big) p_t^x(dy)
   \nonumber \\
    &\qquad \qquad  + \big(p_t^m(D)\big)^{-2} \int_{D} \int_{D} \frac{\delta^2 G}{\delta m^2} \big(\ov p_t^m, y,y'\big)  p_t^x(dy) p_t^{x'}(dy'), 
    \end{align*}
    so our goal is to verify that for $t > 0$ and $m \in \cP^+(\ov D)$, we have the formulas 
    \begin{align*}
        \frac{\delta U}{\delta m} = \Phi, \quad \frac{\delta^2 U}{\delta m^2} = \Psi. 
    \end{align*}
    Fix $t > 0$, $m,m' \in \cP^+(\ov D)$, and define $m^{\eps} = [m,m']_{\eps}$, for $\eps \in (0,1)$. 
    The first step is to recognize that, by the definition of the linear derivative of $G$ and the normalization convention $\int \frac{\delta G}{\delta m}(m,y) m(dy) = 0$, we have
    \begin{align} \label{firstderivcomp.1}
         G\big( \ov{p}_t^{m^{\eps}} \big) - G(\ov{p}_t^m) = \int_D \frac{\delta G}{\delta m}(\ov p_t^m,y) \ov p_t^{m^{\eps}}(dy) + o\big( \bd_1(\ov p_t^{m^{\eps}},\ov p_t^{m})\big)
    \end{align}
    The next step is to recognize that, by the linearity of the map $m \mapsto p_t^m$, we have 
    \begin{align} \label{meps.formula}
       p_t^{m^{\eps}} = (1-\eps) p_t^{m} + \eps p_t^{m'} \implies \ov{p}_t^{m^{\eps}} = \frac{ (1-\eps) p_t^{m} + \eps p_t^{m'}}{ (1-\eps) p_t^{m}(D) + \eps p_t^{m'}(D)}.
    \end{align}
    In light of the normalization convention for $\frac{\delta G}{\delta m}$, this yields
    \begin{align} \label{firstderiv.2}
        \int_D \frac{\delta G}{\delta m}(\ov p_t^m,y) \ov p_t^{m^{\eps}}(dy)
        &= \frac{\eps}{(1-\eps) p_t^m(D) + \eps p_t^{m'}(D)} \int_D \frac{\delta G}{\delta m}\big( \ov p_t^m, y \big) p_t^{m'}(dy)
      \nonumber  \\
        &= \frac{\eps}{p_t^m(D)}  \int_D \frac{\delta G}{\delta m}\big( \ov p_t^m, y \big) p_t^{m'}(dy) + o(\eps).
    \end{align}
    Moreover, one can check from the formula \eqref{meps.formula} that $\bd_1(\ov p_t^{m^{\eps}},\ov p_t^{m}) = O(\eps)$. In light of \eqref{firstderivcomp.1} and \eqref{firstderiv.2}, we use the fact that $p_t^{m'}(dy) = \int_{D} p_t^x(dy) m'(dx)$ to get
    \begin{align} \label{firstderiv.3}
        U(t, m^{\eps}) - U(t,m) = G\big( \ov{p}_t^{m^{\eps}} \big) - G(\ov{p}_t^m) &= \frac{\eps}{p_t^m(D)} \int_D \frac{\delta G}{\delta m}\big( \ov p_t^m, y \big) p_t^{m'}(dy) + o(\eps)
       \nonumber  \\
        &= \frac{\eps}{p_t^m(D)} \int_D \int_D \frac{\delta G}{\delta m}\big( \ov p_t^m, y \big) p_t^{x}(dy) m'(dx) + o(\eps)
       \nonumber  \\
        &= \eps \int_D \Phi(t,m,x) m'(dx) + o(\eps).
    \end{align}
    Moreover, by the normalization condition for $\frac{\delta G}{\delta m}$, we have 
    \begin{align} \label{firstderiv.4}
        \int_D \Phi(t,m,x) m(dx) = \frac{1}{p_t^m(D)} \int_D \int_D \frac{\delta G}{\delta m}(\ov p_t^m,y) p_t^x(dy) m(dx) =  \int_D \int_D \frac{\delta G}{\delta m}(\ov p_t^m,y) \ov{p}_t^m(dy) = 0.
    \end{align}
    Combining \eqref{firstderiv.3} and \eqref{firstderiv.4} shows that $\frac{\delta U}{\delta m} = \Phi$. 

    We now turn to the computation of the second linear derivative. We fix $t > 0$, $m,m' \in \cP^+(\ov D)$ and $x \in D$, and again set $m^{\eps} = [m,m']_{\eps}$, for $\eps \in (0,1)$. We start by computing 
    \begin{align*}
        \frac{\delta U}{\delta m}&(t,m^{\eps},x) - \frac{\delta U}{\delta m}(t,m,x) = \Phi(t,m^{\eps},x) - \Phi(t,m,x) 
        \\
        &= \big(p_t^{m^{\eps}}(D)\big)^{-1} \int_D \frac{\delta G}{\delta m}\big( \ov{p}_t^{m^{\eps}}, y\big) p_t^x(dy) - \big(p_t^{m}(D)\big)^{-1} \int_D \frac{\delta G}{\delta m}\big( \ov{p}_t^{m}, y\big) p_t^x(dy)
        \\
        &= \Big( \big(p_t^{m^{\eps}}(D)\big)^{-1} - \big(p_t^{m}(D) \big)^{-1} \Big) \int_D \frac{\delta G}{\delta m}\big( \ov{p}_t^{m^{\eps}}, y\big) p_t^x(dy)
        \\
        &\qquad \qquad + \big(p_t^m(D)\big)^{-1} \int_D \Big( \frac{\delta G}{\delta m}\big( \ov{p}_t^{m^{\eps}}, y\big) - \frac{\delta G}{\delta m} \big( \ov{p}_t^{m}, y\big)\Big) p_t^x(dy) =: I + II.
    \end{align*}
    Again using the expression $p_t^{m^{\eps}} = (1-\eps) p_t^m + \eps p_t^{m'}$, and the Lipschitz continuity of $\frac{\delta G}{\delta m}(\cdot,y)$, we deduce that 
    \begin{align} \label{I.comp}
        I &=  \eps \frac{ \big( p_t^{m}(D) - p_t^{m'}(D)\big)}{(p_t^m(D))^2}  \int_D \frac{\delta G}{\delta m}\big( \ov{p}_t^{m}, y\big) p_t^x(dy) + o\big( \bd_1(\ov p_t^{m^{\eps}}, \ov p_t^m) \big)
      \nonumber   \\
        &= \eps \frac{ \big( p_t^{m}(D) - p_t^{m'}(D)\big)}{(p_t^m(D))^2}  \int_D \frac{\delta G}{\delta m}\big( \ov{p}_t^{m}, y\big) p_t^x(dy) + o(\eps ). 
    \end{align}
    On the other hand, arguing exactly as in the computation of $\frac{\delta G}{\delta m}$, we have 
    \begin{align*}
        \frac{\delta G}{\delta m}\big( \ov p_t^{m^{\eps}},y \big) -  \frac{\delta G}{\delta m}\big( \ov p_t^{m},y \big) = \frac{\eps}{p_t^m(D)} \int_D \frac{\delta^2 G}{\delta m^2} (\ov p_t^m,y,y') p_t^{m'}(dy') + o(\eps), 
    \end{align*}
    where the constant hidden in the $o(\eps)$ term is uniform in $y$ (but depends on $m$ and $m'$). We deduce that 
    \begin{align} \label{II.comp}
        II = \big(p_t^m(D)\big)^{-2} \int_D \int_D \frac{\delta^2 G}{\delta m^2}\big( \ov p_t^m, y,y'\big) p_t^x(dy) p_t^{m'}(dy')+ o(\eps). 
    \end{align}
    Combining $I$ and $II$, and again using $p_t^{m'}(dy') = \int p_t^{x'}(dy') m(dx')$, we get 
    \begin{align} \label{secondderiv.1}
        \frac{\delta U}{\delta m}(t,m^{\eps},x) -  \frac{\delta U}{\delta m}(t,m,x) = \eps \int_D \Psi(t,m,x,x') m'(dx') + o(\eps). 
    \end{align}
    Finally, we notice that 
    \begin{align*}
        \int p_t^{x'}(D) m(dx') = p_t^m(D), \quad (p_t^m(D))^{-1} \int_D \int_D \frac{\delta^2 G}{\delta m^2} (\ov p_t^m, y,y') p_t^{x'}(dy') = \int_D \frac{\delta^2 G}{\delta m^2} (\ov p_t^m, y,y') \ov{p}_t^m(dy') = 0,  
    \end{align*}
    from which it follows that
    \begin{align} \label{seconderiv.2}
        \int_D \Psi(t,m,x,x') m(dx') = 0.
    \end{align}
    Combining \eqref{secondderiv.1} and \eqref{seconderiv.2} completes the proof that $\frac{\delta^2 U}{\delta m^2} = \Psi$.

    We have now shown that for each $t > 0$, $U(t,\cdot)$ admits two linear derivatives, and it is clear from the explicit expressions that they are continuous on their domains in $(t,m,x)$ and $(t,m,x,x')$. Finally, reinterpreting the expressions in Remark \eqref{rmk.finitedim.pde}, we see that $\frac{\delta U}{\delta m}(t,m,\cdot)$ is $C^2$ for each $(t,m)$, and $\frac{\delta^2 U}{\delta m^2}$ is $C^2$ for each fixed $(t,m)$, and we have the formulas
    \begin{align*}
        D_m U(t,m,x) = D_x \frac{\delta U}{\delta m}(t,m,x) = \big(p_t^m(D)\big)^{-1} Du^{1}(t,x), 
    \end{align*}
    and
    \begin{align*}
        D_{mm} U(t,m,x,x') = D_{x'} D_x \frac{\delta^2 U}{\delta m^2} (t,m,x,x') = \big(p_t^m(D)\big)^{-2} \Big( - Du^1(t,x) \big(Du^2(t,x')\big)^T + D_{x'} D_x v(t,x,x') \Big), 
    \end{align*}
    where $u^i$, $i = 1,2$, and $v$ solve \eqref{ui.eqn} and \eqref{v.eqn}, respectively.
    Using the regularity properties of $G$, one can check that $D_mU$ and $D_{mm} U$ are continuous in $(t,m,x)$ and $(t,m,x,x')$, respectively. In particular, this means that $U(t,\cdot)$ is $\cC^{2}$ for each fixed $t > 0$. 

    Finally, we note that $\frac{\delta U}{\delta m}(t,m,x) = 0$ for $x \in \partial D$ follows form the fact that $p_t^x = 0$ for $t > 0$ and $x \in \partial D$. 
\end{proof}

\subsection{Estimates on the derivatives} \label{subsec.Uderivest}

The next goal will be to estimate the derivatives of $U$. We start with the following.

\begin{prop} \label{prop.uderivs.largetime}
    We have the bounds
    \begin{align} \label{delta2U.mainest}
       \norm{ \frac{\delta^2 U}{\delta m^2}(t,m,\cdot,\cdot) }_{\infty} \lesssim \exp(-ct) \Big( \int_D \rho \, dm \Big)^{-2}, \quad t > 0, \, m \in \cP^+(\ov D)
    \end{align}
    and
    \begin{align} \label{DmmU.mainest}
     \| D_{mm} U(t,m,\cdot,\cdot)\|_{\infty} \lesssim {\exp(-ct)}\Big( \int_D \rho\, dm \Big)^{-2}, \quad t \geq 1, \, m \in \cP^+(\ov D), 
    \end{align}
    where $c = \lambda_2 - \lambda_1$.
\end{prop}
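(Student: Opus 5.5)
We work directly from the explicit formulas \eqref{dmU.alt}–\eqref{dmmU.alt} of Remark \ref{rmk.finitedim.pde}, and the whole argument rests on spectral decompositions of the Dirichlet heat semigroup $P_s$. The key observation is a lower bound on the mass. Since $\rho_1 \lesssim \rho$ pointwise and the heat kernel satisfies $p_t(x,y) \gtrsim e^{-\lambda_1 t}\rho_1(x)\rho_1(y)$ for $t \geq 1$, we get $p_t^m(D) \gtrsim e^{-\lambda_1 t}\int_D \rho\,dm$ for $t\ge1$. For $t\le 1$ we use instead $P_t 1(x) \gtrsim \dist(x) \wedge 1 \gtrsim \rho(x)$, which holds uniformly for $t\le 1$ by a barrier/Hopf argument. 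Hence for all $t>0$,
\begin{align*}
p_t^m(D) \gtrsim e^{-\lambda_1 t}\int_D \rho\, dm.
\end{align*}

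\textbf{Cancellation and the size of the numerators.} The numerators must be $O(e^{-\lambda_2 t})$ rather than $e^{-\lambda_1 t}$, and this gain comes from the normalization. Expand $P_t f = \sum_n e^{-\lambda_n t}\langle f,\rho_n\rangle \rho_n$. Write $f=\frac{\delta G}{\delta m}(\ov p_t^m,\cdot)$, so that $u^1(t,\cdot)=P_tf$.

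We then project out the first mode. The $\rho_1$-component of $P_t f$ is $e^{-\lambda_1 t}\langle f,\rho_1\rangle\rho_1$. Its contribution to $\frac{\delta U}{\delta m}$ is $e^{-\lambda_1 t}\langle f,\rho_1\rangle \rho_1(x)/p_t^m(D)$. This does not vanish, but it is harmless, because $\frac{\delta U}{\delta m}$ is only defined up to the normalization \eqref{linderiv.normalization}. Concretely, we decompose
\begin{align*}
\frac{\delta U}{\delta m}(t,m,x)=\frac{P_t f(x) - \langle f,\rho_1\rangle e^{-\lambda_1 t}\rho_1(x)}{p_t^m(D)} + \frac{e^{-\lambda_1 t}\langle f,\rho_1\rangle}{p_t^m(D)}\big(\rho_1(x) - \textstyle\int\rho_1 d\ov{\ }\!\cdots\big).
\end{align*}
A cleaner route uses the identity $\int P_tf\,dm = p_t^m(D)\int f\,d\ov p_t^m=0$. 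This identity lets us rewrite \eqref{dmmU.alt} in a form where every term carries a factor of the type $P_t g(x) - \frac{P_t1(x)}{p_t^m(D)}\int P_t g\,dm$.

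Next we use the standard estimate $|P_t g(x) - e^{-\lambda_1 t}\langle g,\rho_1\rangle\rho_1(x)| \lesssim e^{-\lambda_2 t}\|g\|_\infty\rho(x)$ for $t\ge1$. Applied to $g$ and to $1$, it shows that each such term equals the rank-one $\rho_1$ part, which cancels exactly, plus an error $O(e^{-\lambda_2 t}\rho(x))$. The denominators contribute at most $(p_t^m(D))^{-2}\lesssim e^{2\lambda_1 t}(\int\rho\,dm)^{-2}$, while the surviving numerators are $O(e^{-\lambda_2 t}\cdot e^{-\lambda_1 t})$, one factor $e^{-\lambda_1 t}$ coming from the second factor in each product. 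The combination gives $e^{-(\lambda_2-\lambda_1)t}$. The same scheme treats $v=(P_t\otimes P_t)\frac{\delta^2G}{\delta m^2}$, where the normalization $\int \frac{\delta^2 G}{\delta m^2}(\ov p^m_t,y,y')\ov p_t^m(dy')=0$ kills the $\rho_1\otimes\rho_1$ and $\rho_1\otimes(\cdot)$ leading modes in the appropriate variable. For $t\le1$ the exponential factor is irrelevant, and we simply use $|P_tg|\le\|g\|_\infty$ together with the mass lower bound, which gives \eqref{delta2U.mainest} directly.

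\textbf{Gradient bounds.} For \eqref{DmmU.mainest} we repeat the argument with $D_x$ applied in each variable. Because $t\ge1$, parabolic smoothing gives $\|D_x(P_t g - e^{-\lambda_1 t}\langle g,\rho_1\rangle\rho_1)\|_\infty\lesssim e^{-\lambda_2 t}\|g\|_\infty$, for example via $P_t = P_{1/2}P_{t-1/2}$ and the $L^2\to C^1$ bound for $P_{1/2}$. The mixed derivative $D_{x'}D_x v$ is handled the same way on $D\times D$.

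\textbf{Main obstacle.} The main obstacle is organizing the cancellation in \eqref{dmmU.alt}. The term $\frac{p_t^m(D)-u^2(t,x')}{(p_t^m(D))^2}u^1(t,x)$ is individually of size only $e^{-\lambda_1 t}$ relative to the target. To fix this, I would first pass to an equivalent representative of $\frac{\delta^2U}{\delta m^2}$, namely one differing from it by functions of $x$ alone or $x'$ alone, which do not affect the sup estimate after renormalization. Equivalently, I would exhibit the $\rho_1$-projections explicitly and verify that they cancel against the normalization identities.
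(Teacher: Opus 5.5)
Your ``cleaner route'' is, in substance, the paper's proof. Note that $P_tg(x)-\frac{P_t1(x)}{p_t^m(D)}\int P_tg\,dm=p_t^x(D)\int_D g\,d(\ov p_t^x-\ov p_t^m)$, so your cancellation factor is exactly the quantity the paper estimates for $t\ge1$. The paper bounds it via $\|\ov p_t^x-\rho\|_{L^2}+\|\ov p_t^m-\rho\|_{L^2}\lesssim e^{-(\lambda_2-\lambda_1)t}$ (Lemma \ref{lem.Ulongtime}); you use the $\rho$-weighted spectral remainder. Both rest on the same eigenfunction bound $|\rho_n|\lesssim\lambda_n^k\rho$. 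The small-time bound, the mass lower bound and the smoothing step for $D_{mm}U$ also match. One simplification: the paper gets $P_t1\ge e^{-\lambda_1t}\rho/\|\rho\|_\infty$ for all $t$ by comparison, so you need no heat kernel lower bound.

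Two statements in your write-up are wrong, however, and should be dropped rather than acted on.

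First, the ``main obstacle'' does not exist. In the first term of \eqref{dmmU.alt}, $u^1(t,x)=P_tf(x)$ with $\int P_tf\,dm=0$, so your own cancellation gives $|u^1(t,x)|\lesssim e^{-\lambda_2t}$. Combine this with $|p_t^m(D)-u^2(t,x')|\lesssim e^{-\lambda_1t}$ and $(p_t^m(D))^{-2}\lesssim e^{2\lambda_1t}(\int_D\rho\,dm)^{-2}$: the term is already $O(e^{-ct}(\int_D\rho\,dm)^{-2})$, which is precisely \eqref{delta2U.comp1}. Changing the representative is therefore unnecessary. It is also not free as you describe it. The estimate \eqref{delta2U.mainest} concerns the normalized second derivative, whose sup norm does change when you add functions of $x$ or of $x'$. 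To transfer a bound back you would have to double-center and use $\int_D\frac{\delta^2U}{\delta m^2}(t,m,x,x')\,m(dx)=-\frac{\delta U}{\delta m}(t,m,x')$, which requires a separate bound on $\frac{\delta U}{\delta m}$. For $D_{mm}U$, the $x$-only piece $u^1/p_t^m(D)$ is killed by $D_{x'}$ automatically, as in the paper.

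Second, in your first route, the $\rho_1$-component $e^{-\lambda_1t}\langle f,\rho_1\rangle\rho_1(x)/p_t^m(D)$ is not harmless ``because $\frac{\delta U}{\delta m}$ is only defined up to the normalization''. It is not constant in $x$, so no choice of normalization removes it. It is harmless because $\int P_tf\,dm=0$ forces $\langle f,\rho_1\rangle=O(e^{-(\lambda_2-\lambda_1)t})$. Relatedly, the $\rho_1$ parts do not cancel exactly. A residual $e^{-\lambda_1t}\langle g,\rho_1\rangle\rho_1(x)\,(p_t^m(D))^{-1}\int_D\big(P_t1-e^{-\lambda_1t}\langle 1,\rho_1\rangle\rho_1\big)\,dm$ survives. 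It is $O(e^{-\lambda_2t}\rho(x))$, which is all you need.
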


\begin{rmk}
    We emphasize that constants hidden by the notation $\lesssim$ can depend on $G$ (or more precisely $\|G\|_{\cC^{2}}$) in this section, as well as the domain $D$, but we do not allow them to depend on $t$. In particular, it is important that the estimates in Proposition \ref{prop.uderivs.largetime} are uniform in $t$. 
\end{rmk}

\begin{proof}
   We first estimate $\frac{\delta^2 U}{\delta m^2}$ for small times, i.e. for $0 < t < 1$. From \eqref{U.secondderiv.formula}, we have
    \begin{align*}
        \Big| \frac{\delta^2 U}{\delta m^2}(t,m,x,x') \Big| &\leq \big(p^m_t(D)\big)^{-2} \bigg( \big(p_t^m(D) + p_t^{x'}(D)\big) \norm{\frac{\delta G}{\delta m}}_{\infty} + p_t^x(D) p_t^{x'}(D) \norm{\frac{\delta^2 G}{\delta m^2}}_{\infty} \bigg).
    \end{align*}
    It remains only to apply Lemma \ref{lem.masslowerbound} to obtain the estimate 
    \begin{align*}
        \Big| \frac{\delta^2 U}{\delta m^2} (t,m,x,x') \Big| \lesssim \Big(\int_D \rho dm \Big)^{-2}, 
    \end{align*}
    which implies that \eqref{delta2U.mainest} holds for $0 < t < 1$.
    We now prove the bound on $\frac{\delta^2 U}{\delta m^2}$ for $t \geq 1$. We will make use of the formula \eqref{U.secondderiv.formula}. We first note that by Lemma \ref{lem.Ulongtime}, for $t \geq 1$ we have 
    \begin{align*}
        \int_D \frac{\delta G}{\delta m}\big(\ov p_t^m, y \big) p_t^x(dy) &= p_t^x(D) \int_D \frac{\delta G}{\delta m}\big(\ov p_t^m, y \big) \ov{p}_t^x(dy)
        \\
        &= p_t^x(D) \int_D \frac{\delta G}{\delta m}\big(\ov p_t^m, y \big) \rho(y) dy + p_t^x(D) O(e^{-(\lambda_2 - \lambda_1)t})
        \\
        &= p_t^x(D) \int_D \frac{\delta G}{\delta m}\big(\ov p_t^m, y \big)\ov p_t^m(dy) + p_t^x(D) O(e^{-(\lambda_2 - \lambda_1)t})
        \\
        &= p_t^x(D) O(e^{-(\lambda_2 - \lambda_1) t}) = O(e^{-\lambda_2 t}),
    \end{align*}
    where we used the convention that $\int_D \frac{\delta G}{\delta m}(\mu,y)\mu(dy) = 0$. 
    Applying Lemma \ref{lem.masslowerbound}, we see that 
    \begin{align} \label{delta2U.comp1}
      \Big|  \frac{p_t^m(D) - p_t^{x'}(D)}{(p_t^m(D))^2} \int_{D} \frac{\delta G}{\delta m}\big( \ov p_t^m, y \big) p_t^x(dy) \Big| \lesssim \Big(\int_D \rho dm \Big)^{-2} \exp\big(-(\lambda_2 - \lambda_1) t \big). 
    \end{align}
    Next, we again use Lemma \ref{lem.Ulongtime} to see that for $t \geq 1$, we have
     \begin{align*}
          \int_{D} \int_{D}& \frac{\delta^2 G}{\delta m^2} \big(\ov p_t^m, y,y'\big)  p_t^x(dy) p_t^{x'}(dy') = p_t^{x'}(D) \int_D \int_D \frac{\delta^2 G}{\delta m^2}\big(\ov p_t^m, y,y'\big)  \ov{p}_t^{x'}(dy') p_t^x(dy)
          \\
          &= p_t^{x'}(D) \int_D \int_D \frac{\delta^2 G}{\delta m^2}\big(\ov p_t^m, y,y'\big) \rho(y') dy' \,  p_t^x(dy)  + p_t^x(D) p_t^{x'}(D) O(e^{-(\lambda_2 - \lambda_1)t})
          \\
          &= p_t^{x'}(D) \int_D \int_D \frac{\delta^2 G}{\delta m^2}\big(\ov p_t^m, y,y'\big) \ov{p}_t^m(dy') p_t^x(dy)  +  p_t^x(D)  p_t^{x'}(D)O(e^{-(\lambda_2 - \lambda_1)t})
          \\
          &=  p_t^x(D)  p_t^{x'}(D) O(e^{-(\lambda_2 - \lambda_1)t}) = O(e^{-(\lambda_1 + \lambda_2)t}),
     \end{align*}
     where we again used the normalization convention for linear derivatives.
     Thus, by Lemma \ref{lem.masslowerbound}, we find that for $t \geq 1$,
     \begin{align} \label{delta2U.comp2}
          \Big| \big(p_t^m(D)\big)^{-2} \int_{D} \int_{D} \frac{\delta^2 G}{\delta m^2} \big(\ov p_t^m, y,y'\big)  p_t^x(dy) p_t^{x'}(dy') \Big| \lesssim \Big(\int \rho dm \Big)^{-2} \exp\big( - (\lambda_2 - \lambda_1) t\big). 
     \end{align}
     By combining \eqref{delta2U.comp1} and \eqref{delta2U.comp2} with \eqref{U.secondderiv.formula}, we obtain the estimate \eqref{delta2U.mainest} for $t \geq 1$. 
     
    For the bound on $D_{mm} U = D_x D_{x'} \frac{\delta^2 U}{\delta m^2}$, we use Remark \ref{rmk.finitedim.pde} to see that 
    \begin{align} \label{dmmu.firststep}
        \big| D_{mm} U(t,m,x,x') \big| \leq \big( p_t^m(D)\big)^{-2}\Big( |Du^1(t,x)||Du^2(t,x')|+ |D_{x'} D_x v(t,x,x')|\Big),
    \end{align}
    where $u^1$, $u^2$, and $v$ are as in Remark \ref{rmk.finitedim.pde}, and in particular satisfy the representation formulae
    \begin{align*}
        u^1(t,x) = \int_D \frac{\delta G}{\delta m}(\ov p_t^m, y) p_t^x(dy), \quad u^2(t,x') = p_t^{x'}(D), \quad v(t,x,x') = \int_D \int_D \frac{\delta^2 G}{\delta m^2}\big(\ov p_t^m,y,y') p_t^x(dy) p_t^{x'}(dy').
    \end{align*}
    The arguments above leading to \eqref{delta2U.mainest} show that
    \begin{align*}
        &\| u^1(t,\cdot)\|_{\infty} \lesssim e^{-\lambda_2 t}, \quad \|u^2(t,\cdot)\|_{\infty} \lesssim e^{-\lambda_1 t}, \quad \| v(t,\cdot,\cdot)\|_{\infty} \lesssim  e^{-(\lambda_1 + \lambda_2) t}, \quad t \in \R_+.
    \end{align*}
   From the smoothing effect of the heat equation, we find 
    \begin{align*}
        \|Du^1(t,\cdot)\|_{\infty} \lesssim &\|u^1(t-1,\cdot)\|_{\infty} \lesssim e^{-\lambda_2 t}, \quad \|Du^2(t,\cdot)\|_{\infty} \lesssim \|u^2(t-1,\cdot)\|_{\infty} \lesssim e^{-\lambda_1 t} 
        \\
        &\|D_{x} D_{x'} v(t,\cdot,\cdot)\|_{\infty} \lesssim \| v(t-1,\cdot)\|_{\infty} \lesssim e^{-(\lambda_1 + \lambda_2) t}, \quad t \geq 1.
    \end{align*}
    By combining this with \eqref{dmmu.firststep} and Lemma \ref{lem.masslowerbound}, we complete the proof of \eqref{DmmU.mainest}.
\end{proof}

In light of Proposition \ref{prop.uderivs.largetime}, to understand the behavior of the second derivatives of $U$, it remains to estimate $D_{mm} U(t,m,x,x')$ in the case that $t \leq 1$. The issue is that $D_{mm} U(t,m,x,x')$ can be large when $t \approx 0$ and either $x$ or $x'$ are near the boundary. To quantify this, it turns out that we need to introduce a barrier function which is built from the transition kernel $\Gamma_r(z,z')$ of the one-dimensional diffusion 
\begin{align*}
    dZ_t = - c_0 \text{sign}(Z_t) dt + dW_t, 
\end{align*}
with $W$ being a one-dimensional Brownian motion, and $c_0 = \|\Delta \dist\|_{\infty}$, $\dist$ being the smooth function on $D$ which agrees with $\dist\big(\cdot, \partial D\big)$ in a neighborhood of the boundary. This transition kernel was computed explicitly in \cite{KaratzasShreve} (see the "Special Case" following Proposition 5.1), and is described by the formula
\begin{align} \label{def.Gamma}
    \Gamma_r(z,z') = \begin{cases}
          \ds   \frac{1}{\sqrt{2\pi r}} \Big\{ \exp\Big[ - \frac{ (z-z' - c_0 r)^2}{2r} \Big] 
         \vspace{.2cm} \\
       \ds    \qquad \qquad  + c_0 \exp(-2c_0 z') \int_{z + z'}^{\infty} \exp\Big[ - \frac{(v - c_0r)^2}{2r}  \Big] dv \Big\}        & z \geq 0, \, z' \geq 0,
           \vspace{.2cm}   \\
          \ds \frac{1}{\sqrt{2\pi r}} \Big\{ \exp\Big[ 2c_0 z - \frac{ (z-z' + c_0 r)^2}{2r} \Big] 
             \vspace{.2cm} \\
         \ds  \qquad \qquad + c_0 \exp(2c_0 z') \int_{z - z'}^{\infty} \exp\Big[ - \frac{(v - c_0r)^2}{2r}  \Big] dv \Big\}        & z \geq 0, \, z' \leq 0,
                        \end{cases}
\end{align}
together with the symmetry 
\begin{align*}
    \Gamma_r(z,z') = \Gamma_r(-z,-z').
\end{align*}
We will primarily use $\Gamma$ through the function
\begin{align*}
    \gamma : \R_+ \times \R \to \R, \quad \gamma(t,z) = \Gamma_{t}(z,0), 
\end{align*}
which is given by the explicit formula 
\begin{align} \label{def.gamma}
    \gamma(t,z) = \frac{1}{\sqrt{2\pi t}} \bigg\{ \exp\Big[ - \frac{\big(|z| - c_0 t\big)^2}{2t} \Big] + c_0 \int_{|z|}^{\infty} \exp\Big[ - \frac{\big(v - c_0 t \big)^2}{2 t}   \Big] dv \bigg\}. 
\end{align}
In other words, $\gamma$ solves
\begin{align*}
    \partial_t \gamma - \frac{1}{2} \partial_{zz} \gamma + c_0 \text{sign}(z) \partial_z \gamma = 0 \text{ in } \R_+ \times \R, \quad \gamma(0,\cdot) = \delta_0.
\end{align*}
Finally, we will define a function 
\begin{align} \label{def.w}
    w :   \R_+ \times \ov D \to \R, \quad w(t,x) = \gamma\big(t,\dist(x) \big) + Ct,
\end{align}
where $C > 0$ is a constant to be determined. 



Our estimate on the derivatives of $U$ is as follows. 
\begin{prop} \label{prop.Uderiv.est.shorttime}
    Suppose that $G \in \cC^{2+\alpha}\big(\cP(\ov D) \big)$, and $w$ is defined by \eqref{def.w}, with $C > 0$ large enough. Then, we have the estimate
    \begin{align*}
        \big| D_{mm} U(t,m,x,x') \big| \lesssim 
            \Big( \int_D \rho \, dm \Big)^{-2}\big( 1 +  w(t,x) \big)\big(1 + w(t,x') \big).\, \quad 0 < t \leq 1, \,\, m \in \cP^+(\ov D), \,\, x,x' \in \ov D.
    \end{align*}
\end{prop}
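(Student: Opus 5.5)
We start from the representation in Remark \ref{rmk.finitedim.pde} and the formula obtained at the end of the proof of Proposition \ref{prop.Uprops}:
\begin{align*}
D_{mm} U(t,m,x,x') = \big(p_t^m(D)\big)^{-2} \Big( - Du^1(t,x) \big(Du^2(t,x')\big)^T + D_{x'} D_x v(t,x,x') \Big).
\end{align*}
By Lemma \ref{lem.masslowerbound} the prefactor is $\lesssim (\int_D \rho\, dm)^{-2}$ for $t \leq 1$. The statement therefore reduces to two purely finite-dimensional gradient bounds, uniform over $(t,m)$ (which enter only through the initial data):
\begin{align*}
|Du(s,x)| \lesssim 1 + w(s,x), \qquad |D_xD_{x'} v(s,x,x')| \lesssim (1+w(s,x))(1+w(s,x')), \qquad 0<s\leq 1.
\end{align*}
Here $u$ solves the Dirichlet heat equation \eqref{ui.eqn} with initial datum in $C^2(\ov D)$, and $v$ solves \eqref{v.eqn} with $C^2$ datum. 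The norms of these data are controlled by $\|G\|_{\cC^2}$. Note that the datum $\frac{\delta G}{\delta m}(\ov p_t^m,\cdot)$, or $1$ for $u^2$, does not vanish on $\partial D$. This is incompatible with the Dirichlet condition, and it is the source of the blow-up of $Du$ near $\{0\}\times\partial D$.

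For the first bound we write $u = u_0 - \phi$. Here $u_0$ is a smooth extension of the initial datum, and $\phi$ handles the boundary mismatch. Near $\partial D$ the tangential derivatives are harmless: they can be bounded via standard Schauder estimates after subtracting the $C^2$ datum. The issue is the normal derivative. We compare $u$ with the function $\pm K\big(1 - \int_0^{\dist(x)} \gamma(s,\cdot)\ldots\big)$, or more directly we differentiate. The quantity $q = \partial_\nu u$, or $|Du|$ in the collar, satisfies a heat-type inequality with drift bounded by $c_0=\|\Delta \dist\|_\infty$ when written in the variable $z=\dist(x)$. This is exactly why $\gamma$ is the kernel of $dZ = -c_0\,\mathrm{sign}(Z)dt + dW$. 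Concretely, we check that $w(t,x)=\gamma(t,\dist(x)) + Ct$ is a supersolution:
\begin{align*}
\partial_t w - \tfrac12 \Delta w \geq 0 \quad \text{in } (0,1]\times D,
\end{align*}
using $\Delta[\gamma(\dist)] = \gamma_{zz} + \gamma_z \Delta \dist$, the sign $\gamma_z \leq 0$ for $z>0$, $|\Delta\dist|\le c_0$, and $C$ large to absorb the region where $\dist$ is not the true distance. We also check that $w$ dominates the boundary data for $Du$. The boundary values of $\partial_\nu u$ are of order $\|u_0\|_\infty/\sqrt t$, which is matched by $\gamma(t,0)\sim t^{-1/2}$; this is obtained from a 1D half-space heat kernel comparison. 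The maximum principle then gives $|Du|\lesssim 1+w$.

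For $v$ we use the product structure. The operator is $\tfrac12(\Delta_x+\Delta_{x'})$ with Dirichlet data on $\partial(D\times D)$, so the kernel factorizes as $p_s^x\otimes p_s^{x'}$, and
\begin{align*}
D_xD_{x'}v(s,x,x') = \iint \frac{\delta^2 G}{\delta m^2}(\ov p_t^m,y,y')\, D_x p_s^x(dy)\, D_{x'}p_s^{x'}(dy').
\end{align*}
Applying the one-variable bound in $x$ for each fixed $y'$, with datum $y\mapsto \frac{\delta^2 G}{\delta m^2}(\cdot,y,y')$, uniformly $C^2$ in $y$, shows that $D_x \int \frac{\delta^2 G}{\delta m^2}(\cdot,y,y')p_s^x(dy)$ is a $C^2$-in-$y'$ function bounded by $(1+w(s,x))$ together with its $y'$-derivatives; here $D_{mm}G$ and the $C^2$ regularity of $\frac{\delta^2 G}{\delta m^2}$ are used. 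Applying the bound again in $x'$ then yields the product estimate.

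The main obstacle is the one-variable gradient barrier. We must verify the supersolution property of $w$ on the collar, where the term $\gamma_z\Delta\dist$ has to be controlled precisely by the drift $-c_0\,\mathrm{sign}(z)\gamma_z$ in the equation for $\gamma$, and we must justify comparison despite the singularity at $t=0$. For the latter we plan to use the Hölder compatibility away from $\partial D$ and the fact that $\gamma(t,\dist(x))\to\infty$ only on $\{0\}\times\partial D$.
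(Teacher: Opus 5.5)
Your reduction is the paper's. You use the formula $D_{mm}U=(p_t^m(D))^{-2}\big(-Du^1(Du^2)^T+D_{x'}D_xv\big)$ and the bound $(p_t^m(D))^{-2}\lesssim(\int_D\rho\,dm)^{-2}$ for $t\le1$ from Lemma \ref{lem.masslowerbound}. You then need two gradient estimates, which are exactly Propositions \ref{prop.ueqn} and \ref{prop.veqn}.

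For $u$ your mechanism also matches the paper. Constant-direction derivatives $D_\nu u$ solve the heat equation exactly, $1+w$ is a supersolution (Lemma \ref{lem.w}), and the $t^{-1/2}$ smoothing bound on the lateral boundary is matched by $\gamma(t,0)$. The splitting $u=u_0-\phi$ and the drift in an equation for $\partial_\nu u$ are unnecessary; the drift $c_0$ only enters when you check that $w$ is a supersolution. The Schauder step ``after subtracting the datum'' is also unnecessary, and it does not remove the incompatibility at $\{0\}\times\partial D$ anyway.

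For $v$ you take a genuinely different and lighter route. The paper runs a comparison argument on $D\times D$ with product supersolutions $u^\eps(t,x')(1+w^\eps(t,x))$, then a second comparison for the mixed derivative, and this needs Lemma \ref{lem.veqn}. You instead factorize the Dirichlet kernel and apply the one-variable bound twice, which is valid. Set $h(s,x,y')=\int_D g(y,y')\,p_s^x(dy)$ and $F=D_xh(s,x,\cdot)$. Applying the one-variable bound to $y\mapsto g(y,y')$ and to $y\mapsto D_{y'}g(y,y')$ gives $\|F\|_\infty+\|D_{y'}F\|_\infty\lesssim\|g\|_{C^2}(1+w(s,x))$. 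One more application in $x'$ then gives the product bound. Note that $F$ is only $C^1$ in $y'$, not $C^2$ as you claim; $C^2$ would need third derivatives of $\frac{\delta^2G}{\delta m^2}$. So your one-variable lemma must be stated and proved for $C^1$ data, as in Proposition \ref{prop.ueqn}.

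The genuine gap is the justification of the maximum principle for $D_\nu u-K(1+w)$ near the corner $\{0\}\times\partial D$. This step is the real content of Proposition \ref{prop.ueqn}. Your plan relies on $\gamma(t,\dist(x))$ blowing up there, but it does not blow up uniformly: as $t\to0$ with $\dist(x)/\sqrt t\to\infty$, $w(t,x)\to0$. Meanwhile $Du$ is itself unbounded near the corner. The available a priori bound $|Du|\lesssim t^{-1/2}$ does not give $\limsup\,(D_\nu u-K(1+w))\le0$ at $\{0\}\times\partial D$. Verifying that condition would need an a priori bound on $Du$ near the corner that is essentially the estimate itself, with the sharp Gaussian decay in $\dist(x)/\sqrt t$. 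So neither Proposition \ref{prop.heat.comparison}, which requires boundedness from above, nor a ``barrier blows up on the bad set'' argument applies.

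The missing idea is the paper's regularization, which removes the corner altogether:
\begin{enumerate}
\item Replace $f$ by $f^\eps=f\kappa^\eps$, which vanishes for $\dist(x)\le\sqrt\eps/2$ and satisfies $|Df^\eps|\lesssim1+\eps^{-1/2}1_{\dist(x)\le\sqrt\eps}$. Then $u^\eps$ is smooth up to the parabolic boundary with $\|Du^\eps\|_\infty\lesssim\eps^{-1/2}$.
\item Compare $D_\nu u^\eps$ with $C(1+w^\eps)$, where $w^\eps(t,x)=w(t+\eps,x)$. This barrier is bounded and continuous, it dominates $t^{-1/2}\wedge\eps^{-1/2}$ on the lateral boundary, and it dominates $1+\eps^{-1/2}1_{\dist(x)\le\sqrt\eps}$ at $t=0$. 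The layer width $\sqrt\eps$ is calibrated to the time shift $\eps$.
\item Let $\eps\to0$.
\end{enumerate}
Your $v$-argument depends on the corner only through the one-variable lemma, so once that lemma is proved this way (for $C^1$ data), the rest of your proof goes through.
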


\begin{proof} This is a matter of combining the representation of $\frac{\delta^2 U}{\delta m^2}$ from Remark \ref{rmk.finitedim.pde} with the estimates from Propositions \ref{prop.ueqn} and \ref{prop.veqn}.
\end{proof}

\subsection{The equation for $U$}

The goal of this subsection is to prove that $U$ satisfies an equation. 

\begin{prop} \label{prop.Ueqn}
   We have 
    $U \in \cC^{1,2}\big(\R_+ \times \cP^c(D) \big)$, and satisfies \eqref{eqn.U} pointwise. 
\end{prop}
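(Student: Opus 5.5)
The plan is to compute $\partial_t U$ directly from the chain rule along the flow $s \mapsto \ov p_s^m$, and then to identify the result with $\frac12\int_D \Delta_x \frac{\delta U}{\delta m}(t,m,x)\,m(dx)$ by using the representation \eqref{U.firstderiv.formula} and duality for the Dirichlet heat semigroup. The boundary condition in \eqref{eqn.U} is already contained in Proposition \ref{prop.Uprops}. The initial condition $U(0,m) = G(m)$ holds by definition, since $p_0^m = m$ when $m \in \cP(D)$. Continuity at $t = 0$ on $\cP^c(D)$ follows from the continuity of $U$ on $[0,\infty)\times\cP(K)$ for compact $K \subset D$, also from Proposition \ref{prop.Uprops}.

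\textbf{Time derivative.} Fix $m \in \cP^c(D)$, say with $m(K) = 1$ for some $K \Subset D$, and fix $t > 0$. For $h$ small, write $U(t+h,m) - U(t,m) = G(\ov p_{t+h}^m) - G(\ov p_t^m)$. Apply \eqref{def.linearderiv} to $G$ along the segment $[\ov p_t^m, \ov p_{t+h}^m]_s$. Using the continuity of $\frac{\delta G}{\delta m}$ and the fact that $\ov p_{t+h}^m - \ov p_t^m = O(h)$ in $\bd$ (for $t>0$ the density is smooth), we get
\[
\partial_t U(t,m) = \int_D \frac{\delta G}{\delta m}(\ov p_t^m,y)\, \partial_t \ov p_t^m(dy).
\]
Next, $\partial_t \ov p_t = \frac{1}{p_t(D)}\frac12\Delta p_t - \frac{\partial_t p_t(D)}{p_t(D)^2}\, p_t$. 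By the normalization \eqref{linderiv.normalization}, the second term integrates to zero against $\frac{\delta G}{\delta m}(\ov p_t^m,\cdot)$. This leaves
\[
\partial_t U(t,m) = \frac{1}{2p_t^m(D)}\int_D \frac{\delta G}{\delta m}(\ov p_t^m,y)\,\Delta p_t^m(y)\,dy .
\]

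\textbf{Spatial side.} By Remark \ref{rmk.finitedim.pde}, $\frac{\delta U}{\delta m}(t,m,x) = p_t^m(D)^{-1}u^1(t,x)$, where $u^1$ solves the Dirichlet heat equation \eqref{ui.eqn} with initial datum $\phi := \frac{\delta G}{\delta m}(\ov p_t^m,\cdot)$. This gives
\[
\int_D \Delta_x \tfrac{\delta U}{\delta m}(t,m,x)\, m(dx) = p_t^m(D)^{-1}\int_D \Delta u^1(t,x)\, m(dx).
\]
Write $u^1(t,x) = \int_D \phi(y)\, p_t^x(dy) = \int_D \phi(y)\, \Gamma^D_t(x,y)\,dy$, where $\Gamma^D_t$ is the symmetric Dirichlet heat kernel. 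For $t>0$ we have $\Delta_x \Gamma^D_t(x,y) = \Delta_y\Gamma^D_t(x,y)$, and integrating in $m(dx)$ gives $\int_D \Delta u^1\,dm = \int_D \phi\,\Delta p_t^m\,dy$. Comparing with the formula for $\partial_t U$ yields \eqref{eqn.U}. Differentiating under the integral is justified because $\phi$ is bounded and $\Gamma^D_t$ is smooth with bounded derivatives on $[t/2,2t]\times\ov D\times\ov D$. We note in passing that $\phi$ need not vanish on $\partial D$; no integration by parts against $\phi$ is needed, since the kernel symmetry does the work.

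\textbf{Regularity and main obstacle.} It remains to show $U \in \cC^{1,2}(\R_+ \times \cP^c(D))$. The $\cC^2$ regularity in $m$ is given by Proposition \ref{prop.Uprops}. For the $t$-derivative, the formula above shows $\partial_t U = \frac12\int \Delta_x\frac{\delta U}{\delta m}\,dm$. Joint continuity in $(t,m)$ then follows from the continuity of $t\mapsto \ov p_t^m$ (Lemma \ref{lem.ptm}), the continuity of $\frac{\delta G}{\delta m}$, and the smoothness of the kernel for $t>0$. I expect the main obstacle to be making the chain rule step rigorous uniformly near a given $(t,m)$. Concretely, one must check that the remainder $o(\bd(\ov p_{t+h}^m,\ov p_t^m))$ is controlled, and that $p_t^m(D)$ stays bounded below locally, which holds by Lemma \ref{lem.masslowerbound} since $\int\rho\,dm>0$. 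Because $t>0$ is fixed, interior-in-time heat kernel smoothing should suffice, and the compact support of $m$ is used only for continuity up to $t=0$.
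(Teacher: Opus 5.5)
Your proof is correct, but it reaches \eqref{eqn.U} by a different route than the paper.

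The paper never differentiates $G(\ov p^m_t)$ at a positive time. Its steps are:
\begin{enumerate}
\item It uses the flow identity $U(t+h,m)=U(t,\ov p^m_h)$ to turn the time increment into an increment of $U(t,\cdot)$ in the measure variable, from $m$ to $\ov p^m_h$.
\item It expands $U(t,\cdot)$ to second order around $m$ (Lemma \ref{lem.timederiv2}). The quadratic remainder is controlled through the representation of $\frac{\delta^2 U}{\delta m^2}$ and the short-time bound $\|\ov p^m_h-m\|_{(W^{2,\infty}_{0,\mathrm{tr}})^*}=O(h)$ from Lemma \ref{lem.ptm}.
\item It tests the weak form of the heat equation for $p^m$ on $[0,h]$ against $\frac{\delta U}{\delta m}(t,m,\cdot)$, which produces $\frac{h}{2}\int_D\Delta_x\frac{\delta U}{\delta m}(t,m,x)\,m(dx)$.
\end{enumerate}
In this characteristics-type derivation the Laplacian lands on $\frac{\delta U}{\delta m}$ automatically. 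The price is a second-order expansion and a short-time estimate at $h=0$, which is exactly where $\supp(m)\Subset D$ is used.

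You instead stay at a fixed $t>0$, where $p^m$ is smooth up to $\partial D$, and you need only first-order differentiability of $G$. You transfer the Laplacian from $p^m_t$ to $u^1$ through $\Delta_x\Gamma^D_t=\Delta_y\Gamma^D_t$. Equivalently, you differentiate in time the duality $\int_D\phi\,dp^m_s=\int_D u^1(s,\cdot)\,dm$ with $\phi$ frozen. This gives a two-sided derivative directly and, as you note, does not need the compact support of $m$ at positive times.

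Two points should be made explicit. First, the chain-rule limit uses $L^1(D)$ convergence of the difference quotients of $\ov p^m$, which holds because $p^m$ is smooth on $[t/2,2t]\times\ov D$. Second, continuity of $\partial_t U$ uses the joint continuity of $(t,m)\mapsto\ov p^m_t$ from Lemma \ref{lem.ptm}, not just continuity in $t$.
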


The following lemma will be used when we compute $\partial_t U$.

\begin{lem} \label{lem.timederiv2}
For any fixed $t > 0$, $m \in \cP^c(D)$ and $h > 0$ small enough we have 
\begin{align*}
    U(t,\ov p_h^m) - U(t,m) = \int_D \frac{\delta U}{\delta m}(t,m,x) \big( \ov{p}_h^m(dx) - m(dx) \big) + o(h).
\end{align*}
\end{lem}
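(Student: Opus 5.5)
We will obtain the expansion from the definition \eqref{def.linearderiv} of the linear derivative together with a second-order control. Set $m_h = \ov p_h^m$ and $m^s = [m,m_h]_s$. Since $m \in \cP^c(D)$ and $h > 0$ is small, $m_h \in \cP(D)$ and every $m^s$ belongs to $\cP^+(\ov D)$. Moreover,
\[
\int_D \rho \, dm^s \geq \min\Big(\int_D \rho\, dm, \int_D \rho \, d\ov p_h^m\Big) \geq c_m > 0,
\]
uniformly in $s\in[0,1]$ and small $h$. This uses that $\int_D \rho\, d p_h^m = e^{-\lambda_1 h}\int_D\rho\,dm$ and $p_h^m(D) \le 1$. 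Proposition \ref{prop.Uprops} gives $U(t,\cdot) \in \cC^2(\cP^+(\ov D))$. Applying \eqref{def.linearderiv} twice (once to $U$, once to $\frac{\delta U}{\delta m}(t,\cdot,x)$), we get
\[
U(t,m_h) - U(t,m) - \int_D \frac{\delta U}{\delta m}(t,m,x)(m_h - m)(dx) = \int_0^1\!\!\int_0^1 s \iint \frac{\delta^2 U}{\delta m^2}(t,m^{s\sigma},x,x')\,(m_h-m)(dx)(m_h-m)(dx')\,d\sigma\,ds .
\]
It therefore suffices to show that the double integral on the right is $o(h)$.

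To bound it, we use the $\cC^2$ regularity of $\frac{\delta^2 U}{\delta m^2}(t,\mu,\cdot,\cdot)$ for fixed $t>0$. The representation \eqref{dmmU.alt} together with parabolic smoothing (the functions $u^1,u^2,v$ are evaluated at positive time $t$) shows that $\frac{\delta^2 U}{\delta m^2}(t,\mu,\cdot,\cdot)$ is Lipschitz (indeed $C^2$) in $(x,x')$. The Lipschitz norm is bounded uniformly over $\mu$ in the set $\{\int\rho\,d\mu\ge c_m\}$, using Lemma \ref{lem.masslowerbound} for the factors $(p_t^\mu(D))^{-1}$. Duality in each variable separately then gives
\[
\Big|\iint \frac{\delta^2 U}{\delta m^2}(t,\mu,x,x')(m_h-m)^{\otimes 2}(dx,dx')\Big| \lesssim_{t,m} \bd(m_h,m)^2 .
\]
Finally, $\bd(\ov p_h^m,m) = O(\sqrt h)$ is not enough, because it only gives $O(h)$. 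We need the finer bound $\bd(\ov p_h^m, m) = O(h)$ when $\supp(m) \Subset D$. We prove it by testing against $\phi \in W^{1,\infty}$. It is simplest to first mollify $\phi$ to $\phi_\delta$, for which $|\int \phi_\delta\, d(p_h^m - m)| \le \frac h2 \|\Delta \phi_\delta\|_\infty \lesssim h/\delta$ once the support stays away from $\partial D$. We also have $p_h^m(D) = 1 - O(e^{-c/h})$. These give only $O(h/\delta + \delta)$, i.e. $O(\sqrt h)$.

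So instead we use the second-order structure more cleverly. The second-order term equals $\frac12\iint \frac{\delta^2U}{\delta m^2}\,d(m_h-m)^{\otimes2}$ up to higher-order terms, and, using the $C^2$ smoothness in $x$ and $x'$, it is controlled by $\|(m_h-m)\|_{(C^2)^*}^2$. For $\phi\in C^2$, $|\int\phi\,d(p_h^m-m)|\le \frac h2\|\Delta\phi\|_\infty + O(e^{-c/h})$ by the heat equation and the fact that $p_h^m$ is exponentially small near $\partial D$ when $\supp m\Subset D$. Hence $\|m_h-m\|_{(C^2)^*} = O(h)$, and the remainder is $O(h^2)=o(h)$.

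The main obstacle is this last point. One must check that $\frac{\delta^2U}{\delta m^2}(t,\mu,\cdot,\cdot)$ has $C^2$ bounds (for fixed $t>0$) that are uniform along the segment $m^{s\sigma}$, since then the pairing with $(m_h-m)^{\otimes 2}$ is bounded by products of $(C^2)^*$ norms. Here we again use Remark \ref{rmk.finitedim.pde}: $D_xD_{x'}v$ and $D^2 u^i$ at time $t>0$ are bounded by $\|G\|_{\cC^2}$ via interior and boundary Schauder smoothing, uniformly in $\mu$, and the prefactors are controlled by the lower bound $c_m$.
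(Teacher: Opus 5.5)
Your final argument is correct and is essentially the paper's proof. Both run a second-order expansion along the segment $[m,\ov p_h^m]$ with $\int\rho\,d\mu$ bounded below. Both use the representation \eqref{dmmU.alt}, which is smooth in $(x,x')$ for fixed $t>0$ uniformly in $\mu$ via Lemma \ref{lem.masslowerbound}. Both rest on the key bound $\|\ov p_h^m-m\|_{(W^{2,\infty})^*}=O(h)$ for $m\in\cP^c(D)$.

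The differences are minor. You pair both factors in $(C^2)^*$, which needs $D_x^2D_{x'}^2v$ bounded (available by parabolic smoothing at time $t$) and gives $O(h^2)$. The paper instead first drops the $x$-only term using zero total mass, then bounds the $v$-term by $O(h)\cdot\bd_1(\ov p_h^m,m)=o(h)$. You should also delete your abandoned detour asserting that $\bd(\ov p_h^m,m)=O(h)$ is needed: that bound is false for Dirac masses, where it is of order $\sqrt h$, and it plays no role in your final argument.
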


\begin{proof}
A second order Taylor expansion of $U$ gives 
\begin{align*}
    U(t,\ov p_h^m) - U(t,m) &- \int_D \frac{\delta U}{\delta m}(t,m,x) (\ov p_h^m - m)(dx) 
    \\
    &= \int_0^1 \int_0^1 \int_D \int_D s \bigg[ \frac{\delta^2 U}{\delta m^2}\Big(t, \big[ m, [m,\ov p_h^m]_s\big]_r,x,x' \Big) \bigg] (\ov p_h^m - m)(dx') (\ov p_h^m - m)(dx) ds dr.
\end{align*}
For all $h$ small enough, we will have $\int \rho \, d\ov{p}_h^m \geq \frac{1}{2} \int \rho dm$, and so by convexity  $ \int \rho \, d\big[ m, [m,\ov p_h^m]_s\big]_r \geq \frac{1}{2} \int \rho dm$ for each $s$ and $r$.
Thus, it suffices to prove the estimate
\begin{align*}
  \int_D \int_D \frac{\delta^2 U}{\delta m^2}\big(t,\mu,x,x'\big) (\ov p_h^m - m)(dx') (\ov p_h^m - m)(dx) = o(h), 
\end{align*}
uniformly over $\mu \in \cP(D)$ such that $\int \rho d\mu \geq \delta \coloneqq \frac{1}{2} \int \rho dm$.  
By Proposition \ref{prop.Uprops} and Remark \ref{rmk.finitedim.pde}, we have 
   \begin{align*}
       \frac{\delta^2 U}{\delta m^2}\big(t,\mu,x,x'\big) = K(x) + \big(p_t^{\mu}(D)\big)^{-2} \Big[ -u^1(t,x) u^2(t,x')+ v(t,x,x') \Big],
   \end{align*}
   where $u^1,u^2$, $v$ are as described in Remark \ref{rmk.finitedim.pde} (but with $\mu$ replacing $m$) and $K(x) = \frac{u^1(t,x)}{p_t^{\mu}(D)}$. In particular, since $\int K(x)(\ov p_h^m - m)(dx')=0 $, 
   \begin{align*}
       \int_{ D} &\int_D \frac{\delta^2 U}{\delta m^2}\big(t,\mu,x,x'\big)(\ov p_h^m - m)(dx') (\ov p_h^m - m)(dx)
       \\
       &= \big(p_t^{\mu}(D)\big)^{-2} \int_{D} \int_D \Big[ -u^1(t,x) u^2(t,x')+ v(t,x,x') \Big] (\ov p_h^m - m)(dx') (\ov p_h^m - m)(dx)
      \\
      &\lesssim_t  \Big( \int \rho \, d\mu \Big)^{-2} \bigg( \|u^1(t,\cdot)\|_{W^{2,\infty}} \|u^2(t,\cdot)\|_{W^{2,\infty}} \|\ov p_h^m - m\|_{(W_{0,\text{tr}}^{2,\infty})^*}^2 
      \\
      &\qquad \qquad \qquad \qquad \qquad   +\|\ov p_h^m -m \|_{(W_{0,\text{tr}}^{2,\infty})^*} \times  \norm{x \mapsto \int_D v(t,x,x') (\ov p_h^m  - m)(dx')}_{W^{2,\infty}} \bigg)
      \\
      &\lesssim_t  \Big( \int \rho \, d\mu \Big)^{-2} \Big( h^2 + h \sup_{x \in D} \Big| \int_D {D_x^2} v(t,x,x') (\ov p_h^m  - m)(dx') \Big| \Big)
      \\
      &\lesssim_t \Big( \int \rho \, d\mu \Big)^{-2} \Big( h^2 + h \bd_1(\ov p_h^m ,m) \|D_{x'} D_{xx} v(t,\cdot,\cdot)\|_{\infty} \Big) = o(h).
   \end{align*}
   In the above string of inequalities, we used the notation $(W_{0,\text{tr}}^{2,\infty})^*$ for the dual of the space 
   $$
   W_{0,\text{tr}}^{2,\infty} = \big\{ f \in W^{2,\infty} : f|_{\partial D} = 0, 
   $$
   so that 
   \begin{align*}
       \| \mu - \nu\|_{(W_{0,\text{tr}}^{2,\infty})^*} = \sup \Big\{ \int_D \phi \, d(\mu - \nu) \, : \, \|\phi\|_{W^{2,\infty}} \leq 1, \,\, \phi\big|_{\partial D} = 0 \Big\}. 
   \end{align*}
  We also used repeatedly the smoothing effects of the equations solved by $u^1$, $u^2$, and $v$, as well as the estimate $\| \ov p_h^m - m\|_{(W^{2,\infty})^*} = O(h)$ (which needs $m \in \cP^c(D)$), Lemma \ref{lem.masslowerbound} and the continuity of $h \mapsto \ov{p}_h^m$ from Lemma \ref{lem.ptm}.
\end{proof}

\begin{proof}[Proof of Proposition \ref{prop.Ueqn}]
Our goal is to show that for $t > 0$ and $m \in \cP^c(D)$, $\partial_t U(t,m)$ exists, and is given by the formula
\begin{align} \label{dtu.formula}
    \partial_t U(t,m) = \frac{1}{2} \int_D \Delta_x \frac{\delta U}{\delta m}(t,m,x) m(dx).
\end{align}
To prove this, we begin by noting that $\ov{p}^m_t$ satisfies the "flow property"
\begin{align*}
    \ov{p}^m_{t+h} = \frac{p_{t+h}^m}{p_{t+h}^m(D)} = \frac{p_h^{p_t^m}}{p_h^{p_t^m}(D)} = \frac{p_t^m(D)}{p_t^m(D)} \frac{p_h^{p_t^m}}{p_h^{p_t^m}(D)} = \frac{p_h^{\ov{p}_t^m}}{p_h^{\ov{p}_t^m}(D)} = \ov{p}^{\ov p_t^m}_h, 
\end{align*}
and hence 
\begin{align*}
    U(t,\ov{p}^m_h) = G\Big( \ov{p}^{\ov p_h^m}_t\Big) = G\big( \ov{p}^m_{t+h}\big) = U\big( t + h, m \big).
\end{align*}
Thus, 
\begin{align*}
    U(t + h, m) - U(t,m)= U\big(t, \ov p_h^m\big) - U(t, m).
\end{align*}
Now, using Lemma \ref{lem.timederiv2}, we have
\begin{align}
  \label{timederivcomp}  U\big(t, \ov p_h^m\big) - U(t, m) &=  \int_D \frac{\delta U}{\delta m}\big(t, m, x\big) \big(\ov{p}_h^m(dx) - m(dx) \big) + o(h)
  \nonumber   \\
    &= \big(p_h^m(D)\big)^{-1}  \int_D \frac{\delta U}{\delta m}\big(t, m, x \big) \big({p_h^m(dx) - m(dx)}\big) + o\big( h \big), 
\end{align}
where we used the normalization convention $\int \frac{\delta U}{\delta m}(t,m,x) m(dx) = 0$ to pull out the term $(p_h^m(D))^{-1}$. 
Now, by Lemma \ref{lem.ptm} we have
\begin{align} \label{timderiv3}
    p_h^m(D) = 1 - O(h) \implies \big(p_h^m(D)\big)^{-1} = 1 + O(h), 
\end{align}
Meanwhile, using the weak formulation of the equation satisfied by $p^m$, together with Lemma \ref{lem.ptm}, we get
\begin{align*}
    \int_D \frac{\delta U}{\delta m}(t,m,x) \big({p_h^m(dx) - m(dx)}\big) &= \frac{1}{2}\int_0^h \int_D \Delta_x \frac{\delta U}{\delta m}(t,m,x) p_s^m(dx) ds.
\end{align*}
We now recall that from Remark \ref{rmk.finitedim.pde}, we have
\begin{align*}
    \frac{\delta U}{\delta m}(t,m,x) = (p_t^m(D))^{-1} u(t,x), 
\end{align*}
where $u = u^{t,m}$ satisfies 
\begin{align*}
    \partial_s u - \frac{1}{2} \Delta u = 0 \text{ in } \R_+ \times D, \quad u\big|_{\R_+ \times \partial D} = 0, \quad u(0,x) = \frac{\delta G}{\delta m}(\ov p_t^m,\cdot). 
\end{align*}
Now, since the domain $D$ is smooth, $u$ is smooth for positive times, so that $\Delta_x \frac{\delta U}{\delta m}(t,m,\cdot)$ is smooth, and so
\begin{align*}
    \int_D \Delta_x \frac{\delta U}{\delta m}(t,m,x) p_s^m(dx) ds \xrightarrow{ s \to 0} \int_D \Delta_x \frac{\delta U}{\delta m}(t,m,x) m(dx), 
\end{align*}
by the continuity of $s \mapsto p_s^m$. We deduce that
\begin{align} \label{timederiv4}
    \int_0^h \int_D \Delta_x \frac{\delta U}{\delta m}(t,m,x) p_s^m(dx) ds = h\int_D \Delta_x \frac{\delta U}{\delta m}(t,m,x) m(dx) + o(h). 
\end{align}
  Plugging \eqref{timderiv3} and \eqref{timederiv4} into \eqref{timederivcomp}, we have 
   \begin{align*}
       U(t+h,m) - U(t,m) = U\big(t, \ov p_h^m\big) - U(t,m) = \frac{h}{2} \int_D \Delta_x \frac{\delta U}{\delta m}(t,m,x) m(dx) + o(h).
   \end{align*}
   We have now shown that for each $t > 0$ and $m \in \cP^c(D)$, $\partial_t U$ exists and satisfies the formula \eqref{dtu.formula}. From here, it is straightforward to check from the formula for $\frac{\delta U}{\delta m}$ and the continuity of $(t,m) \mapsto \ov p_t^m$ (from Lemma \ref{lem.ptm}) that $\partial_t U$ is continuous on $\R_+ \times \cP^c(D)$. Combined with Proposition \ref{prop.Uprops}, this completes the proof. 
\end{proof}

\subsection{The properties of $U^N$}

In Propositions \ref{prop.Uprops}, \ref{prop.Uderiv.est.shorttime}, and \ref{prop.uderivs.largetime} we proved various properties of the function $U$. Now we would like to translate these into properties of the functions 
\begin{align*}
    U^N : [0,\infty) \times \big( \ov{D}^N \setminus \partial^N(D^N)\big) \to \R, \quad U^N(t,\bx) = U\big(t,m_{\bx}^N\big).
\end{align*}

The main result of this section is as follows.

\begin{prop} \label{prop.UN.properties}
    The function $U^N$ is continuous on the set
    \begin{align}
       \Big( \R_+ \times \big( \ov{D}^N \setminus \partial^N(D^N) \big) \Big) \cup \Big( [0,\infty) \times D^N \Big). 
    \end{align}
    Moreover, it is $C^{1,2}$ on $\R_+ \times D^N$, and satisfies the estimates 
    \begin{align} \label{UN.eqnerror}
        \Big| \partial_t U^N(t,\bx) - \frac{1}{2} \sum_{i = 1}^N \Delta_{x^i} U^N(t,\bx) \Big| \lesssim e^{-ct} \frac{1}{N} \Big( \frac{1}{N} \sum_{i = 1}^N \rho(x^i) \Big)^{-2} \Big( 1 + \frac{1}{N} \sum_{i = 1}^N w^2(t,x^i) 1_{t \leq 1} \Big)
    \end{align}
    for all $(t,\bx) \in \R_+ \times D^N$, and 
    \begin{align} \label{UN.boundaryerror}
        \Big| U^N(t,\bx) - \frac{1}{N-1} \sum_{j \neq i} U^N\big(t,\bx^{i,j}\big) \Big| \lesssim  e^{-ct}\frac{1}{N^2} \Big( \frac{1}{N} \sum_{j = 1}^N \rho(x^j) \Big)^{-2}
    \end{align}
    for $t \in \R_+$ and $\bx \in \partial^{1,i}(D^N)$, with $c = \lambda_2 - \lambda_1 > 0$. 
\end{prop}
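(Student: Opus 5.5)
Continuity of $U^N$ follows from Proposition \ref{prop.Uprops}. On $\R_+ \times (\ov D^N \setminus \partial^N(D^N))$ the measure $m_{\bx}^N$ lies in $\cP^+(\ov D)$, and $\bx \mapsto m_{\bx}^N$ is continuous for $\bd$. On $[0,\infty)\times D^N$ we use the continuity of $U$ on $[0,\infty)\times\cP(K)$, where $K$ is a compact set containing all the $x^i$ for $\bx$ in a small neighborhood. For $C^{1,2}$ regularity on $\R_+\times D^N$ we have $m_{\bx}^N \in \cP^c(D)$, so Proposition \ref{prop.Ueqn} provides $\partial_t U$. The standard chain rule for empirical projections gives
\[
\partial_{x^i} U^N = \tfrac1N D_m U(t,m_{\bx}^N,x^i),
\]
\[
\Delta_{x^i} U^N = \tfrac1N \mathrm{div}_x D_mU(t,m_{\bx}^N,x^i) + \tfrac1{N^2}\mathrm{tr}\, D_{mm}U(t,m_{\bx}^N,x^i,x^i).
\]
Summing over $i$ and using $\frac1N\sum_i \Delta_x\frac{\delta U}{\delta m}(x^i)=\int\Delta_x\frac{\delta U}{\delta m}\,dm_{\bx}^N$ together with \eqref{eqn.U}, the first-order terms cancel against $\partial_tU$. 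What remains is the identity \eqref{un.error1}.

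To estimate the right side of \eqref{un.error1} we split into two time regimes. For $t\ge1$, \eqref{DmmU.mainest} gives $|D_{mm}U|\lesssim e^{-ct}(\frac1N\sum\rho(x^i))^{-2}$, so the error is at most $\frac1N$ times this. For $t\le1$, Proposition \ref{prop.Uderiv.est.shorttime} gives $|D_{mm}U(t,m,x^i,x^i)|\lesssim(\int\rho\,dm)^{-2}(1+w(t,x^i))^2$. Averaging over $i$ and using $(1+w)^2\le 2(1+w^2)$ yields \eqref{UN.eqnerror}, since $e^{-ct}\simeq 1$ for $t\le1$.

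For the boundary estimate \eqref{UN.boundaryerror}, fix $\bx\in\partial^{1,i}(D^N)$, so $x^i\in\partial D$ and the other $x^j\in D$. Set $m=m_{\bx}^N$ and $m_j = m_{\bx^{i,j}}^N = m+\frac1N(\delta_{x^j}-\delta_{x^i})$. Then
\[
\tfrac1{N-1}\sum_{j\ne i}m_j = m+\tfrac1N(\nu-\delta_{x^i}),
\qquad \nu := \tfrac1{N-1}\sum_{j\ne i}\delta_{x^j}.
\]
We Taylor expand each $U(t,m_j)-U(t,m)$ to second order using \eqref{def.linearderiv} twice. The first-order term is $\frac1N\big(\frac{\delta U}{\delta m}(t,m,x^j)-\frac{\delta U}{\delta m}(t,m,x^i)\big)$. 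Since $\frac{\delta U}{\delta m}(t,m,x^i)=0$ for $x^i\in\partial D$, averaging over $j\neq i$ gives
\[
\tfrac1{N(N-1)}\sum_{j\ne i}\tfrac{\delta U}{\delta m}(t,m,x^j) = \tfrac1{N-1}\int\tfrac{\delta U}{\delta m}(t,m,x)\,m(dx) - \tfrac{1}{N(N-1)}\tfrac{\delta U}{\delta m}(t,m,x^i) = 0,
\]
by the normalization \eqref{linderiv.normalization}. Thus the first-order term vanishes exactly.

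The second-order remainders are bounded by $\frac{C}{N^2}\sup\|\frac{\delta^2U}{\delta m^2}(t,\mu,\cdot,\cdot)\|_\infty$ over $\mu$ on the segments $[m,m_j]$. We then apply \eqref{delta2U.mainest}. The one point to check is that $\int\rho\,d\mu\gtrsim\int\rho\,dm$ along these segments. This holds because $\rho(x^i)=0$ and $\rho\ge0$, so $\int\rho\,dm_j\ge\int\rho\,dm$, and hence by convexity every $\mu\in[m,m_j]$ satisfies $\int\rho\,d\mu\ge\int\rho\,dm$. This gives the $e^{-ct}N^{-2}(\frac1N\sum\rho(x^j))^{-2}$ bound.

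The main technical obstacle is justifying the chain rule and the Taylor expansions rigorously where $U$ is only known to be $\cC^2$ on $\cP^+(\ov D)$ for $t>0$. The Taylor expansion requires $\mu\in\cP^+(\ov D)$ along the segments; this holds because $N\ge2$ and the $x^j$ ($j\neq i$) lie in $D$. The chain rule requires joint continuity of $D_mU$ and $D_{mm}U$, which is provided by Proposition \ref{prop.Uprops}.
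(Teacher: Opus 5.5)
Your proposal is correct and follows essentially the same route as the paper: you derive the chain-rule identity \eqref{un.error1} and bound it with Propositions \ref{prop.uderivs.largetime} and \ref{prop.Uderiv.est.shorttime}; for \eqref{UN.boundaryerror} the first-order term cancels exactly (boundary vanishing of $\frac{\delta U}{\delta m}$ plus the normalization \eqref{linderiv.normalization}), and the $N^{-2}\|\frac{\delta^2 U}{\delta m^2}\|_\infty$ remainder is controlled via \eqref{delta2U.mainest} and the monotonicity of $\int \rho \, d\mu$ along the segments. The only difference is cosmetic: you do a single second-order Taylor expansion around $m_{\bx}^N$, whereas the paper first integrates along $[m_{\bx^{i,j}}^N, m_{\bx}^N]$, drops the $x^i$ term, and then adds and subtracts $\frac{\delta U}{\delta m}(t,m_{\bx}^N,x^j)$ before expanding again.
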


\begin{proof}
    The continuity properties of $U^N$ follow from Proposition \ref{prop.Uprops}. Next, we recall that the $\cC^{1,2}$ smoothness of $U$ on $\R_+ \times \cP^c(D)$ implies the $C^{1,2}$ smoothness of $U^N$ on $\R_+ \times D^N$, with the explicit relationships
    \begin{align*}
        & D_{x^i} U^N(t,\bx) = \frac{1}{N} D_m U\big(t,m_{\bx}^N,x^i\big) \\ & D_{x^jx^i} U^N(t,\bx) = \frac{1}{N} D_x D_m U \big(t,m_{\bx}^N,x^i\big)1_{i = j} + \frac{1}{N^2} D_{mm} U\big(t,m_{\bx}^N,x^i,x^j\big)
    \end{align*}
    between the derivatives of $U^N$ and the derivatives of $U$; see e.g. \cite[Proposition 5.91]{CarmonaDelarue_book_I}.
    Moreover, it follows from these formulas and the equation \eqref{eqn.U} that on $\R_+ \times D^N$, $U^N$ satisfies
    \begin{align*}
        \partial_t U^N(t,\bx) - \frac{1}{2} \sum_{i = 1}^N \Delta_{x^i} U^N(t,\bx) =  - \frac{1}{2N^2} \sum_{i = 1}^N \text{tr}\big(  D_{mm} U(t,m_{\bx}^N,x^i,x^i)\big), 
    \end{align*}
    and so the bound \eqref{UN.eqnerror} follows from Propositions \ref{prop.uderivs.largetime} and \ref{prop.Uderiv.est.shorttime}. 

    It remains to verify the estimate \eqref{UN.boundaryerror}. For this, we assume that $x^i \in \partial D$, $\bx \notin \partial^N(D^N)$. We use the $\cC^{2}$ regularity of $U(t,\cdot)$ on $\cP^+(\ov D)$ to compute
    \begin{align*}
        U^N(t, \bx) &- \frac{1}{N-1} \sum_{j \neq i} U^N\big(t,\bx^{i,j} \big) = \frac{1}{N-1} \sum_{j \neq i} \Big( U\big(t,m_{\bx}^N\big) - U\big(t,m_{\bx^{i,j}}^N) \Big)
        \\
        &= \frac{1}{N-1} \sum_{j \neq i} \int_0^1 \frac{\delta U}{\delta m}\Big(t, \big[m_{\bx^{i,j}}^N, m_{\bx}^N\big]_s, y \Big) \big(m_{\bx}^N - m_{\bx^{i,j}}^N\big)(dy) ds
        \\
        &= \frac{1}{N-1} \sum_{j \neq i} \int_0^1 \frac{\delta U}{\delta m}\Big(t, \big[m_{\bx^{i,j}}^N, m_{\bx}^N\big]_s, y \Big) \frac{1}{N}\big( \delta_{x^i} - \delta_{x^j}\big)(dy) ds 
        \\ 
        &= - \frac{1}{N(N-1)} \sum_{j \neq i} \int_0^1 \frac{\delta U}{\delta m} \Big(t, \big[ m_{\bx^{i,j}}^N,m_{\bx}^N\big]_s, x^j \Big) ds,
    \end{align*}
   where in the last line we used the fact that $x^i \in \partial D$ and $\frac{\delta U}{\delta m}(t,m,\cdot)$ vanishes on $\partial D$ for each $t > 0$. Now, we add and subtract to write 
    \begin{align*}
        \frac{1}{N(N-1)} & \sum_{j \neq i} \int_0^1 \frac{\delta U}{\delta m} \Big(t, \big[ m_{\bx^{i,j}}^N,m_{\bx}^N\big]_s, x^j \Big) ds =   \frac{1}{N(N-1)} \sum_{j \neq i} \frac{\delta U}{\delta m} \big(t, m_{\bx}^N, x^j \big) 
        \\
        &\qquad  +   \frac{1}{N(N-1)} \sum_{j \neq i} \int_0^1 \Big[ \frac{\delta U}{\delta m} \Big(t, \big[ m_{\bx^{i,j}}^N,m_{\bx}^N\big]_s, x^j \Big) -  \frac{\delta U}{\delta m} \Big(t, m_{\bx}^N, x^j \Big) \Big] ds
        \\
        &= \frac{1}{N(N-1)} \sum_{j \neq i} \int_0^1 \Big[ \frac{\delta U}{\delta m} \Big(t, \big[ m_{\bx^{i,j}}^N,m_{\bx}^N\big]_s, x^j \Big) -  \frac{\delta U}{\delta m} \big(t, m_{\bx}^N, x^j \big) \Big] ds, 
    \end{align*}
    where the last equality used the fact that 
    \begin{align*}
        \frac{1}{N-1} \sum_{j \neq i} \frac{\delta U}{\delta m}\big(t,m_{\bx}^N,x^j\big) = \frac{N}{N-1} \int_D \frac{\delta U}{\delta m}\big(t,m_{\bx}^N,y\big) m_{\bx}^N(dy)=0, 
    \end{align*}
    thanks to the fact that $\frac{\delta U}{\delta m}(t,m,x) = 0$ for $x \in \partial D$ and the normalization $\int \frac{\delta U}{\delta m}(t,m,x) m(dx) = 0$. Finally, we perform another expansion to get 
    \begin{align*}
        \frac{1}{N(N-1)} &\sum_{j \neq i} \int_0^1 \Big[ \frac{\delta U}{\delta m} \Big(t, \big[ m_{\bx^{i,j}}^N,m_{\bx}^N\big]_s, x^j \Big) -  \frac{\delta U}{\delta m} \big(t, m_{\bx}^N, x^j \big) \Big] ds
        \\
        &= \frac{1}{N(N-1)} \sum_{j \neq i} \int_0^1 \int_0^1 \frac{\delta^2 U}{\delta m^2} \Big(t, \Big[ m_{\bx}^N, \big[ m_{\bx^{i,j}}^N, m_{\bx}^N \big]_s \Big]_r, x^j, y \Big) d\big(\big[m_{\bx^{i,j}}^N, m_{\bx}^N \big]_s  - m_{\bx}^N \Big)(y) dr
        \\
        &= \frac{1}{N^2(N-1)} \sum_{j \neq i} \int_0^1 \int_0^1 (1-s)\frac{\delta^2 U}{\delta m^2} \Big(t, \Big[ m_{\bx}^N, \big[ m_{\bx^{i,j}}^N, m_{\bx}^N \big]_s \Big]_r, x^j, y \Big) d\big(\delta_{x^j} -  \delta_{x^i} \big)(y) dr, 
    \end{align*}
    and so combining the preceding computations gives 
    \begin{align*}
        \Big| U^N(t,\bx) - \frac{1}{N-1} \sum_{j \neq i} U^N(t,\bx^{i,j}) \Big| 
        \leq \frac{1}{N^2(N-1)} \sum_{i \neq j} \int_0^1 \int_0^1 \Big\| \frac{\delta^2 U}{\delta m^2} \Big(t, \Big[ m_{\bx}^N, \big[ m_{\bx^{i,j}}^N, m_{\bx}^N \big]_s \Big]_r, \cdot, \cdot \Big) \Big\|_{\infty} dr ds. 
    \end{align*}
    Next, noticing that $x^i \in \partial D$ implies that
    \begin{align*}
        \int \rho d\Big[ m_{\bx}^N, \big[ m_{\bx^{i,j}}^N, m_{\bx}^N \big]_s \Big]_r(y) \geq \int \rho dm_{\bx}^N, \quad \text{ for all } 0 \leq r,s \leq 1, 
    \end{align*}
    we see that by applying the upper bound on $\frac{\delta^2 U}{\delta m^2}$ from Proposition \ref{prop.uderivs.largetime}, we obtain
    \begin{align*}
          \Big| U^N(t,\bx) - \frac{1}{N-1} \sum_{j \neq i} U^N(t,\bx^{i,j}) \Big| \lesssim e^{-ct} \frac{1}{N^2} \Big(\frac{1}{N} \sum_k \rho(x^k) \Big)^{-2}, 
    \end{align*}
    as desired. 
\end{proof}

\section{Convergence}

\label{sec.conv}

This Section contains the proof of our main convergence result, Theorem \ref{thm.conv.finitetime}. The Section is split into three parts; first, we estimate $U^N - V^N$ on a unit time interval, then we obtain a uniform-in-time estimate on $U^N - V^N$, and finally we infer an estimate on $U - \int_{D^N} V^N dm^{\otimes N}$.

\subsection{Pointwise convergence on a unit time interval} \label{subsec.unittime}

The goal of this subsection is to prove the following (recall that $w(t,x) = \gamma\big(t,\dist(x) \big) + Ct$, $C>0$, and $\gamma$ is defined in \eqref{def.gamma}).

\begin{prop} \label{prop.unittime}
  For any $\theta > 0$, we have
    \begin{align}\label{eq.estunit}
        \Big| U\big(t,m_{\bx}^N\big) - V^N(t,\bx) \Big| \lesssim_{\theta} N^{-1}  \bigg[ \Big( \frac{1}{N} \sum_{i = 1}^N \rho(x^i) \Big)^{-(6 + \theta)} + \Big(\frac{1}{N} \sum_{i = 1}^N w(t,x^i)\Big)\bigg] 
    \end{align}
    for all $t \in (0,1]$, $\bx \in \ov D^N \setminus \partial^N(D^N)$. 
\end{prop}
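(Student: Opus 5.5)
We compare $U^N$ and $V^N$ through the comparison principle of Proposition \ref{prop.comparison}. The barrier is the one announced in \eqref{phi.intro}: $\Phi^N = \Phi_1^N + \Phi_2^N$ with
\begin{align*}
\Phi_1^N(t,\bx) = e^{At}\Big(\frac1N\sum_{i=1}^N \rho(x^i)\Big)^{-q}, \qquad \Phi_2^N(t,\bx) = \frac{t^\beta}{N}\sum_{i=1}^N w(t,x^i).
\end{align*}
Here $q$ is large and $\beta\in(0,1)$. We aim to show that $U^N + \frac{K}{N}\Phi^N$ is a supersolution of \eqref{eqn.VN} on the relevant region, and $U^N - \frac{K}{N}\Phi^N$ a subsolution. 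This gives $|U^N - V^N|\le \frac KN \Phi^N$. Two issues remain: $\Phi^N$ only controls the errors where it is $\lesssim N$, and the exponent $q=7$ must be lowered to $6+\theta$.

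\textbf{Step 1: interior inequality for $\Phi_1^N$.} Write $M=\frac1N\sum_i\rho(x^i)$ and use $-\frac12\Delta\rho=\lambda_1\rho$. Then
\begin{align*}
-\frac12\sum_i\Delta_{x^i}M^{-q}=-q\lambda_1 M^{-q}-\frac{q(q+1)}{2N^2}M^{-q-2}\sum_i|D\rho(x^i)|^2 .
\end{align*}
The last term is negative, of size $\lesssim \frac{q^2}{N}M^{-q-2}$. It is absorbed by $(A-q\lambda_1)M^{-q}$ only if $M^{-2}\lesssim N$, i.e. where $\Phi_1^N\lesssim N^{q/2}$. This is the source of the restriction to the set where the barrier is $\lesssim N$. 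On the region $M^{-2}\gtrsim N$, the claimed bound $N^{-1}M^{-(6+\theta)}$ already exceeds $2\sup|G|\ge|U^N-V^N|$ once $6+\theta\ge 2$. So we apply comparison only on the open set $\{M>\kappa N^{-1/2}\}$. On its boundary, the barrier inequality holds trivially by the boundedness of $U$ and $V^N$.

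On that set, $\Phi_1^N$ gives a positive interior drift $\gtrsim M^{-q}$. This dominates the error $\frac1N M^{-2}$ from \eqref{UN.eqnerror}, since $M\le C$. It also handles the boundary condition. If $x^i\in\partial D$, replacing $x^i$ by $x^j$ increases $M$ by $\rho(x^j)/N$, and averaging over $j$ gives an average increase of $\approx M/N$. By convexity, $M^{-q}-\frac1{N-1}\sum_{j}(M+\rho(x^j)/N)^{-q}\gtrsim \frac{q}{N}M^{-q}$ up to errors $\frac{q^2}{N^2}M^{-q}$. This dominates $N^{-2}M^{-2}$ from \eqref{UN.boundaryerror} after multiplication by $\frac KN$.

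\textbf{Step 2: the $w^2$ term, and the main obstacle.} The remaining interior error is $\frac1N M^{-2}\frac1N\sum_i w^2(t,x^i)$. I would prove, as a lemma on $\gamma$, the pointwise bound $(\partial_t-\frac12\Delta)w\ge -C'$ on $\{\dist<\epsilon\}$; this is where the choice $c_0=\|\Delta\dist\|_\infty$ and a large constant $C$ in \eqref{def.w} enter. Then
\begin{align*}
(\partial_t-\tfrac12\Delta)(t^\beta w)=\beta t^{\beta-1}w+t^\beta(\partial_t-\tfrac12\Delta)w .
\end{align*}
The term $\beta t^{\beta-1}w$ is positive but only linear in $w$, whereas the error is quadratic. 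This is the hardest point. I would use the Gaussian structure of $\gamma$: $w\lesssim t^{-1/2}$, so $w^2\lesssim t^{-1/2}w$. The error $M^{-2}t^{-1/2}w$ must then be split using Young's inequality between the two barrier pieces: on $\{M^{-2}\le t^{\beta-1/2}\}$ it is absorbed by $t^{\beta-1}w$; otherwise it is bounded by a power of $M^{-1}$ absorbed by $\Phi_1^N$ with a slightly larger exponent. That step costs the $\theta$ and forces $q=6+\theta$ after optimizing $\beta$.

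For the boundary condition of $\Phi_2^N$, $w(t,\cdot)$ is maximal on $\partial D$, so replacing $x^i$ by $x^j$ decreases $\Phi_2^N$. Hence $\Phi_2^N$ is itself a supersolution of the Fleming–Viot condition.

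\textbf{Step 3: initial time and conclusion.} At $t=0$ we have $U^N(0,\cdot)=G^N=V^N(0,\cdot)$ on $D^N$, so the initial inequality is trivial. Continuity near $\{0\}\times\partial(D^N)$ is handled exactly as in Proposition \ref{prop.comparison}, with the time-shift $\eta$ and the $\Xi_1^N,\Xi_2^N$ penalizations. Comparison then yields \eqref{eq.estunit} with $\Phi_2^N\le\frac1N\sum_i w$, since $t\le1$.
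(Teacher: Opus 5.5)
Your proposal is correct and follows essentially the paper's proof. It uses the same barriers $\Phi_1^N=e^{At}\big(\frac1N\sum_i\rho(x^i)\big)^{-(6+\theta)}$ (the paper takes $A=(6+2\theta)\lambda_1$) and $\Phi_2^N=\frac{t^\beta}{N}\sum_i w(t,x^i)$, the same penalized maximum-principle argument with $\Xi_1^N,\Xi_{2,\eta}^N$, and the same exponent relation $\theta=8\beta/(1-2\beta)$, which is equivalent to the paper's $3-2\beta=2\frac{6+\theta}{4+\theta}$.

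The remaining differences are cosmetic:
\begin{itemize}
\item The paper localizes to the set where the barrier is $O(N)$ implicitly, using $\epsilon_1\Phi_1^N\lesssim 1$ at the optimizer. You instead use an explicit jump-invariant region $\{M>\kappa N^{-1/2}\}$.
\item The paper absorbs the $w^2$ error by Young's inequality with exponents $\frac{6+\theta}{2},\frac{3-2\beta}{2}$ followed by Jensen. You use a case split based on $w\lesssim t^{-1/2}$.
\item Your convexity remainder is actually of order $\frac{q^2}{N^2}M^{-q-1}$, not $\frac{q^2}{N^2}M^{-q}$. This is harmless on your region, or you can avoid it with the paper's tangent-line bound at $M+\rho(x^j)/N\le 2M$.
\end{itemize}
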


In order to prove Proposition \ref{prop.unittime}, we are going to build appropriate penalizations. In particular, we will work with two functions
\begin{align*}
   \Phi_1^N, \, \Phi_2^N : \text{Dom}_N \to \R_+ 
\end{align*}
which will be used in the proof of convergence. These two functions, divided by $N$, behave as the two terms appearing on the right-hand side of \eqref{eq.estunit}. The strategy will be to show that $U-V^N - C(\Phi_1^N + \Phi_2^N)/N$ is a subsolution of \eqref{eqn.VN.subsol} for some large $C$, so that the comparison principle applies. Unfortunately, we cannot directly use Proposition \ref{prop.comparison}, since the subsolution property is false on the whole parabolic domain. In fact, we show that it holds on the set of interest $\{U-V^N - C(\Phi_1^N + \Phi_2^N)/N > 0\}$, by exploiting the boundedness of $U, V^N$.

Let $\theta >0$. We begin with $\Phi_1^N$, which we will define by
\begin{align} \label{def.Phi1}
    \Phi_1^N(t,\bx) = 
    \exp\big( (6 + 2\theta) \lambda_1 t \big) \Big(\frac{1}{N} \sum_{i = 1}^N \rho(x^i) \Big)^{-(6 + \theta)}.
\end{align}

The following lemma summarizes the key properties of $\Phi_1^N$. 

\begin{lem} \label{lem.Phi1N}
    For $(t,\bx) \in [0,\infty) \times D^N$, we have 
    \begin{align} \label{phi1.supersol}
        \partial_t \Phi_1^N - \frac{1}{2} \sum_{i = 1}^N \Delta_{x^i} \Phi_1^N &\geq \theta \lambda_1 \exp\big( (6 + 2\theta) \lambda_1 t\big) \Big(\frac{1}{N} \sum_{i = 1}^N \rho(x^i) \Big)^{-(6 + \theta)}
        \nonumber \\
        &\quad - \frac{\|D\rho\|_{\infty}^2 (6 + \theta)(7 + \theta)}{2N} \exp\big( (6 + 2\theta) \lambda_1 t \big) \Big(\frac{1}{N} \sum_{i = 1}^N \rho(x^i) \Big)^{-(8 + \theta)} 
    \end{align}
    In addition, for $t \in \R_+$ and $\bx \in \big(\ov D\big)^N \setminus \partial^N \big(D^N\big)$ with $x^i \in \partial D$, we have
    \begin{align} \label{phi1.boundary}
    \Phi_1^N(t,\bx) \geq \frac{1}{N-1} \sum_{j \neq i} \Phi_1^N\big(t,\bx^{i,j}\big) + \frac{(6 + \theta)}{2^{(7 + \theta)} (N-1)} \exp\big( (6 + 2\theta) \lambda_1 t \big)  \Big(\frac{1}{N} \sum_{j = 1}^N \rho(x^j) \Big)^{-(6 + \theta)}.
\end{align}
\end{lem}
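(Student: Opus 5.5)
Both claims come from direct computation with $\Phi_1^N = e^{at} S^{-k}$, where $a = (6+2\theta)\lambda_1$, $k = 6+\theta$ and $S = S(\bx) = \frac1N\sum_i \rho(x^i)$.

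\emph{The interior inequality \eqref{phi1.supersol}.} Since $D_{x^i} S = \frac1N D\rho(x^i)$ and $-\frac12\Delta\rho = \lambda_1\rho$, the chain rule gives
\begin{align*}
\Delta_{x^i} S^{-k} = -k S^{-k-1}\tfrac1N \Delta\rho(x^i) + k(k+1) S^{-k-2}\tfrac1{N^2}|D\rho(x^i)|^2 .
\end{align*}
Summing over $i$ and using $\frac1N\sum_i\Delta\rho(x^i) = -2\lambda_1 S$, we get
\begin{align*}
-\tfrac12\sum_i \Delta_{x^i} S^{-k} = -k\lambda_1 S^{-k} - \tfrac{k(k+1)}{2N} S^{-k-2}\,\frac1N\sum_i |D\rho(x^i)|^2 .
\end{align*}
We bound $\frac1N\sum_i|D\rho(x^i)|^2 \le \|D\rho\|_\infty^2$. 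The time derivative contributes $a\,e^{at}S^{-k}$, and $a - k\lambda_1 = \theta\lambda_1$. This gives exactly \eqref{phi1.supersol}, with the constant $\frac{(6+\theta)(7+\theta)}{2N}\|D\rho\|_\infty^2$.

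\emph{The boundary inequality \eqref{phi1.boundary}.} Suppose $x^i\in\partial D$, so $\rho(x^i)=0$. Then $S(\bx^{i,j}) = S + \frac1N\rho(x^j)$. Write $r_j = \rho(x^j)/N$, so that $\sum_{j\neq i} r_j = S$. The function $s\mapsto s^{-k}$ is convex, so
\begin{align*}
(S+r_j)^{-k} \le S^{-k} - k (S+r_j)^{-k-1} r_j .
\end{align*}
Here $r_j\le \|\rho\|_\infty/N$. Averaging over $j\neq i$ gives
\begin{align*}
\frac1{N-1}\sum_{j\ne i}(S+r_j)^{-k} \le S^{-k} - \frac{k}{N-1}\sum_{j\neq i} (S+r_j)^{-k-1} r_j .
\end{align*}
To bound the last sum from below, I would use $r_j \le S$ for every $j$, since $S$ is the sum of all the $r_j$. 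Hence $S + r_j \le 2S$ and $(S+r_j)^{-k-1} \ge 2^{-(k+1)} S^{-k-1}$. This gives
\begin{align*}
\sum_{j\ne i}(S+r_j)^{-k-1} r_j \ge 2^{-(k+1)} S^{-k-1}\cdot S = 2^{-(7+\theta)} S^{-k}.
\end{align*}
Multiplying through by $e^{at}$ yields \eqref{phi1.boundary} with the stated constant $\frac{6+\theta}{2^{7+\theta}(N-1)}$.

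The main subtlety, though mild, is this last lower bound. The naive route would estimate $S+r_j$ using $r_j\lesssim 1/N$, which does not give a clean constant. The observation that each $r_j\le S$ produces exactly the $2^{-(7+\theta)}$ factor appearing in the statement. The case where $\bx$ has other boundary coordinates is harmless, because $\bx\notin\partial^N(D^N)$ guarantees $S>0$. Everything else is routine chain-rule bookkeeping.
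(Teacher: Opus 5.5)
Your proposal is correct and follows the paper's proof essentially step for step. The interior inequality is the same chain-rule computation using $-\frac12\Delta\rho=\lambda_1\rho$ and $\frac1N\sum_i|D\rho(x^i)|^2\le\|D\rho\|_\infty^2$; for the boundary inequality the paper likewise uses the tangent-line convexity bound for $s\mapsto s^{-(6+\theta)}$ at $S+r_j$, averages over $j\neq i$, and bounds $S+r_j\le 2S$ to obtain the factor $2^{-(7+\theta)}$.
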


\begin{proof}
    We start with \eqref{phi1.supersol}. We compute 
    \begin{align*}
        \partial_t \Phi_1^N(t,\bx) &= (6 + 2\theta) \lambda_1 \exp\big((6 + 2\theta) \lambda_1 t\big) \Big( \frac{1}{N} \sum_{i = 1}^N \rho(x^i) \Big)^{-(6 + \theta)}, 
        \\
         D_{x^i} \Phi_1^N(t,\bx) &= - \frac{(6 + \theta)}{N} \exp\big( (6 + 2\theta) \lambda_1t \big) \Big(\frac{1}{N} \sum_{j = 1}^N \rho(x^j) \Big)^{-(7 + \theta)} D\rho(x^i), 
        \\
       \Delta_{x^i} \Phi_1^N(t,\bx) &= - \frac{(6 + \theta)}{N} \exp\big( (6 + 2\theta) \lambda_1 t \big) \Big(\frac{1}{N} \sum_{j = 1}^N \rho(x^j) \Big)^{-(7 + \theta)} \Delta \rho(x^i) 
        \\
        &\qquad +\frac{(6 + \theta)(7 + \theta)}{N^2}\exp\big((6  + 2\theta) \lambda_1 t \big) \Big(\frac{1}{N} \sum_{j = 1}^N \rho(x^j) \Big)^{-(8 + \theta)} |D \rho(x^i)|^2
        \\
        &=  \frac{2(6 + \theta) \lambda_1}{N} \exp\big( (6 + 2\theta) \lambda_1 t \big) \Big(\frac{1}{N} \sum_{j = 1}^N \rho(x^j) \Big)^{-(7 + \theta)} \rho(x^i) 
        \\
        &\qquad +\frac{(6 + \theta)(7 + \theta)}{N^2}\exp\big((6  + 2\theta) \lambda_1 t \big) \Big(\frac{1}{N} \sum_{j = 1}^N \rho(x^j) \Big)^{-(8 + \theta)} |D \rho(x^i)|^2.
    \end{align*}
   Summing over $i$, we get
    \begin{align*}
        &\partial_t \Phi_1^N - \frac{1}{2} \sum_{i = 1}^N \Delta_{x^i} \Phi^N_1  = \theta \lambda_1 \exp\big( (6 + 2\theta) \lambda_1 t\big) \Big(\frac{1}{N} \sum_{i = 1}^N \rho(x^i) \Big)^{-(6 + \theta)} 
        \\
        &\qquad \qquad - \frac{(6 + \theta)(7 + \theta)}{2N} \exp\big( (6 + 2\theta) \lambda_1 t \big) \Big(\frac{1}{N} \sum_{i = 1}^N \rho(x^i) \Big)^{-(8 + \theta)} \frac{1}{N} \sum_{i = 1}^N |D\rho(x^i)|^2
    \end{align*}
    and \eqref{phi1.supersol} follows. 

    Now we suppose that $\bx \in \ov{D}^N \setminus \partial^N(D^N)$ and that $x^i \in \partial D$, and we aim to prove the estimate \eqref{phi1.boundary}. Suppose first that $\bm{z} = (z^1,...,z^N) \in [0,\infty)^N$, and that $z^i = 0$. Then, for each $j \neq i$, we use the convexity of $w \mapsto w^{-(6 + \theta)}$ to get
    \begin{align*}
        \big( \frac{1}{N} \sum_{k = 1}^N z^k \big)^{-(6 + \theta)}  &= \big(\frac{1}{N} \sum_{k \neq i} z^k \big)^{-(6 + \theta)} 
        \\
        &\geq \big( \frac{1}{N} \sum_{k \neq i} z^k + \frac{1}{N} z^j \big)^{-(6 + \theta)}
        + (6 + \theta) \big(\frac{1}{N} \sum_{k \neq i} z^k + \frac{1}{N} z^j \big)^{-(7 + \theta)} \frac{1}{N} z^j.
    \end{align*}
    Averaging over $j \neq i$, we obtain 
    \begin{align*}
        \big(\frac{1}{N} \sum_{k = 1}^N z^k\big)^{-(6 + \theta)} &\geq \frac{1}{N-1} \sum_{j \neq i} \big(\frac{1}{N} \sum_{k \neq i} z^k + \frac{1}{N} z^j \big)^{-(6 + \theta)}
        \\
        &\quad + \frac{(6 + \theta)}{N-1} \sum_{j \neq i} \big(\frac{1}{N} \sum_{k \neq i} z^k + \frac{1}{N} z^j \big)^{-(7 + \theta)} \frac{1}{N} z^j.
    \end{align*}
    Since 
    \begin{align*}
        \big(\frac{1}{N} \sum_{k \neq i} z^k + \frac{1}{N} z^j\big)^{-(7 + \theta)} \geq \big( \frac{2}{N} \sum_{k = 1}^N z^k \big)^{-(7 + \theta)} = 2^{-(7 + \theta)} \big(\sum_{k = 1}^N z^k \big)^{-(7 + \theta)}, 
    \end{align*}
    this yields
    \begin{align*}
        \big(\frac{1}{N} \sum_{k = 1}^N z^k\big)^{-(6 + \theta)} \geq \frac{1}{N-1} \sum_{j \neq i} \big(\frac{1}{N} \sum_{k \neq i} z^k + \frac{1}{N} z^j \big)^{-(6 + \theta)} +  \frac{(6 + \theta)}{2^{(7 + \theta)} (N-1)}\big( \frac{1}{N}\sum_{k = 1}^N z^k\big)^{-(6 + \theta)}.
    \end{align*}
    Applying this result with $z^k = \rho(x^k)$ gives \eqref{phi1.boundary}.
\end{proof}

To define $\Phi_2^N$, we are going to start by introducing the parameter $\beta > 0$, which is determined from the parameter $\theta \in (0,1)$ via 
\begin{align*}
    3 - 2 \beta = 2\Big(\frac{6 + \theta}{4+ \theta}\Big). 
\end{align*}
The choice of $\beta$ is made so that 
\begin{align*}
  p =  \frac{3 - 2 \beta}{2}, \quad q = \frac{6 + \theta}{2} \quad \text{  are conjugate exponents, i.e. } \quad  \frac{1}{p} + \frac{1}{q} = 1.
\end{align*}
We then define 
\begin{align} \label{def.Phi2}
    \Phi_2^N(t,\bx) = \begin{cases} \ds \frac{t^{\beta}}{N} \sum_{i = 1}^N w(t,x^i) & t > 0,
    \\
    0 & t = 0
    \end{cases},
\end{align}
where $w$ is as defined in \eqref{def.w}. We will need the following fact about $w$.

\begin{lem} \label{lem.w}
    The function $w$ is continuous on $\R_+ \times \ov D$, and $C^{1,2}$ on $\R_+ \times D$. Moreover, the following properties hold:
    \begin{enumerate}        \item If $C$ is large enough, then
        \begin{align} \label{w.supersol}
        \partial_t w - \frac{1}{2} \Delta w \geq 0 \text{ on } \R_+ \times D
    \end{align}
    holds in a classical sense.
        \item For each $t > 0$, $w(t,\cdot)$ is constant on $\partial D$ and achieves its global maximum there.
    \end{enumerate}
\end{lem}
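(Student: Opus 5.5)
Throughout, $z = \dist(x)$. The plan is to reduce everything to one-dimensional facts about $\gamma$ via the chain rule, exactly as in the proofs of Lemmas \ref{lem.phi.supersol} and \ref{lem.psi.comp}.

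\emph{Continuity and regularity.} From \eqref{def.gamma}, $\gamma$ is smooth on $\R_+ \times \R$ away from $z = 0$ and is an even function of $z$. Near $z=0$ we have $|z| = \dist(x) \geq 0$ on $\ov D$, so on $\R_+\times \ov D$ only the branch $z \geq 0$ matters. That branch, $z\mapsto \gamma(t,z)$ for $z\ge0$, extends smoothly up to $z=0$: it is a Gaussian plus an integral of a Gaussian with lower limit $z$. Composing with the smooth function $\dist$ then gives continuity of $w$ on $\R_+\times\ov D$ and $C^{1,2}$ regularity on $\R_+\times D$. Indeed, in $D$ we have $\dist(x)>0$, so the kink of $|z|$ at $0$ is never seen in the interior.

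\emph{Property (2).} Differentiate \eqref{def.gamma} for $z>0$:
\[
\partial_z \gamma = \frac{1}{\sqrt{2\pi t}}\Big\{-\frac{z-c_0t}{t}e^{-(z-c_0t)^2/2t} - c_0 e^{-(z-c_0t)^2/2t}\Big\} = -\frac{z}{t\sqrt{2\pi t}}e^{-(z-c_0t)^2/2t} \le 0 .
\]
Hence $\gamma(t,\cdot)$ is nonincreasing on $[0,\infty)$ and strictly decreasing on $(0,\infty)$. Consequently $w(t,\cdot)$ equals $\gamma(t,0)+Ct$ on $\partial D$, and it is maximized where $\dist$ is minimal. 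For this to hold globally we need $\dist(x) > 0$ in $D$, which is part of the definition of $\dist$.

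\emph{Property (1).} Compute, for $x\in D$,
\[
\partial_t w - \tfrac12\Delta w = C + \partial_t\gamma - \tfrac12 \partial_{zz}\gamma\,|D\dist|^2 - \tfrac12 \partial_z\gamma\,\Delta \dist .
\]
In the collar $(\partial D)_\eps$ we have $|D\dist|=1$. Using the PDE for $\gamma$ on $z>0$, namely $\partial_t\gamma - \tfrac12\partial_{zz}\gamma = -c_0\partial_z\gamma$, this becomes
\[
C - \partial_z\gamma\big(c_0 + \tfrac12\Delta\dist\big).
\]
Since $\partial_z\gamma\le 0$ and $c_0+\tfrac12\Delta\dist \ge c_0 - \tfrac12 c_0 \ge 0$ (recall $c_0=\|\Delta\dist\|_\infty$), this quantity is at least $C\ge0$. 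This is precisely why the drift $-c_0\,\mathrm{sign}$ was built into $\gamma$: it absorbs the curvature term $\Delta\dist$.

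Away from the collar, $\dist(x)\ge \eps' > 0$, and this is the step requiring care. There $\gamma(t,z)$ and all its derivatives are bounded uniformly for $t\in\R_+$ and $z\geq \eps'$. As $t\to0$, the factors $t^{-k}e^{-(z-c_0t)^2/2t}$ and the Gaussian tail are super-exponentially small. For large $t$ the kernel stays bounded: the first term is bounded by $t^{-1/2}$, and the second term is bounded by $c_0$. Hence $|\partial_t\gamma|+|\partial_{zz}\gamma|+|\partial_z\gamma|\le K(\eps',D)$ there. Choosing $C \ge K(1+\|D\dist\|_\infty^2+\|\Delta\dist\|_\infty)$ gives $\partial_t w-\tfrac12\Delta w\ge0$ on the rest of $D$ as well.

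The main obstacle I expect is this uniform derivative bound. It requires carefully checking that the time derivatives of the Gaussian-tail term stay bounded as $t\to\infty$, with polynomial factors in $t$ controlled by the decay $e^{-(z-c_0t)^2/2t}$; if that uniformity failed, one could restrict to $t\le 1$, which is all Proposition \ref{prop.unittime} uses.
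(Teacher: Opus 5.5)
Your proposal is correct and follows essentially the paper's argument. The ingredients are the same: the chain rule; in the collar where $|D\dist|=1$, the equation for $\gamma$ together with $\partial_z\gamma\le 0$, so that the drift $c_0=\|\Delta \dist\|_\infty$ absorbs the $\Delta\dist$ term; a large constant $C$ away from the collar; and monotonicity of $\gamma(t,\cdot)$ for property (2). Your explicit formula $\partial_z\gamma=-\frac{z}{t\sqrt{2\pi t}}e^{-(z-c_0t)^2/2t}$ is correct. So is your check that the interior derivative bounds hold uniformly for all $t>0$ (the paper only asserts them on $(0,1]$), which makes your argument cover the statement on all of $\R_+\times D$.
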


\begin{proof}
  Recalling that $w(t,x) = \gamma\big(t,\dist(x)\big) + Ct$, we compute 
  \begin{align*}
      \partial_t w(t,x) - \frac{1}{2} \Delta w(t,x) = \partial_t \gamma\big(t,\dist(x)\big) - \frac{1}{2} \partial_{zz} \gamma\big(t,\dist(x)\big) |D\dist(x)|^2 - \frac{1}{2} \partial_z \gamma\big(t,\dist(x)\big) \Delta \dist(x) + C.
  \end{align*}
  For some $\eps > 0$, $|D\dist(x)|^2 = 1$ in $(\partial D)_{\eps}$, and so using the equation for $\gamma$,
  \begin{align*}
      \partial_t w - \frac{1}{2} \Delta w &= \partial_t \gamma\big(t,\dist(x)\big) - \frac{1}{2} \partial_{zz} \gamma \big(t,\dist(x)\big) - \frac{1}{2} \partial_z \gamma\big(t,\dist(x)\big) \Delta \dist(x) 
      \\
      &= - \|\Delta \dist\|_{\infty} \partial_z \gamma\big(t,\dist(x)\big) - \frac{1}{2} \partial_z \gamma\big(t,\dist(x)\big) \Delta \dist(x) 
      \\
      &= \|\Delta \dist\|_{\infty} |\partial_z \gamma\big(t,\dist(x)\big)|  - \frac{1}{2} \partial_z \gamma\big(t,\dist(x)\big) \Delta \dist(x) \geq 0 \text{ on } (\partial D)_{\eps},
  \end{align*}
  where we used the fact that $\partial_z \gamma < 0$ for $z > 0$, as is checked from the formula \eqref{def.gamma}. On the other hand, the derivatives of $\gamma$ are bounded on on $(0,1] \times (\eps, \infty)$, so if we choose $C$ large enough, we find that \eqref{w.supersol} also holds on $(0,1] \times \Big( D \setminus \big(\partial D\big)_{\eps} \Big)$.

  Property (2) follows from the fact that $\gamma(t,\cdot)$ is maximized at $0$. 
\end{proof}

The key properties of $\Phi_2^N$ are as follows. 
\begin{lem} \label{lem.Phi2N}
    The function $\Phi_2^N$ defined by \eqref{def.Phi2} has the following properties:
    \begin{enumerate}
        \item $\Phi_2^N$ is continuous on $\R_+ \times \ov{D}^N$, lower semi-continuous on $\text{Dom}_N$, and smooth on $\R_+ \times D^N$.
        \item We have 
        \begin{align} \label{Phi2N.supersol}
        \partial_t \Phi_2^N(t,\bx) - \frac{1}{2} \sum_{i = 1}^N \Delta_{x^i}\Phi_2^N(t,\bx) \gtrsim_{\beta} \frac{1}{N} \sum_{i = 1}^N w^{3 - 2 \beta} (t,x^i) \text{ on } (0,1) \times D^N, 
        \end{align}
        \item for $t \in [0,1]$ and $\bx \in \ov D^N$ with $x^i \in \partial D$, we have
   \begin{align} \label{Phi2N.boundary}
       \Phi_2^N(t,\bx) \geq \frac{1}{N-1} \sum_{j \neq i} \Phi_2^N\big(t,\bx^{i,j}\big).
    \end{align}
    \end{enumerate}
\end{lem}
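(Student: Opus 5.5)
The plan is to verify the three properties directly from the definition \eqref{def.Phi2} and Lemma \ref{lem.w}. The only quantitative input is the elementary bound $w(t,x) \lesssim t^{-1/2}$ for $t \in (0,1]$, which links the factor $t^{\beta-1}$ to a power of $w$.

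\emph{Regularity (1).} Formula \eqref{def.gamma} shows that $\gamma$ is continuous on $\R_+ \times \R$ and smooth on $\R_+ \times (0,\infty)$. Since $\dist$ is smooth and positive in $D$, the function $w(t,x) = \gamma(t,\dist(x)) + Ct$ is continuous on $\R_+ \times \ov D$ and smooth on $\R_+ \times D$; the kink of $\gamma$ at $z=0$ is only reached on $\partial D$. Hence $\Phi_2^N$ is continuous on $\R_+ \times \ov D^N$ and smooth on $\R_+ \times D^N$. For lower semicontinuity on $\text{Dom}_N$, the only new points are those with $t = 0$. There $\Phi_2^N(0,\cdot) = 0$, while $\Phi_2^N \geq 0$ everywhere because $\gamma > 0$. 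Therefore $\liminf \Phi_2^N \geq 0 = \Phi_2^N(0,\bx)$.

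\emph{Supersolution bound (2).} On $(0,1) \times D^N$ we compute
\begin{align*}
\partial_t \Phi_2^N - \frac12 \sum_{i=1}^N \Delta_{x^i} \Phi_2^N = \frac{t^{\beta}}{N}\sum_{i=1}^N \Big(\partial_t w - \frac12 \Delta w\Big)(t,x^i) + \frac{\beta t^{\beta - 1}}{N} \sum_{i=1}^N w(t,x^i).
\end{align*}
By \eqref{w.supersol} the first sum is nonnegative, so it suffices to show $t^{\beta-1} w \gtrsim w^{3-2\beta}$, that is, $w^{2-2\beta} \lesssim t^{-(1-\beta)}$, that is, $w \lesssim t^{-1/2}$. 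We get this from \eqref{def.gamma}. The first exponential is at most $1$. In the second term,
\begin{align*}
c_0\int_{|z|}^\infty e^{-(v-c_0t)^2/(2t)}\,dv \leq c_0\sqrt{2\pi t}.
\end{align*}
Together these give $\gamma(t,z) \leq (2\pi t)^{-1/2} + c_0$. Adding $Ct \leq C$ for $t \leq 1$ yields $w(t,x) \lesssim t^{-1/2}$. We also need $w$ bounded below by a positive constant, which holds since $\gamma > 0$ and $\beta < 1$. Then
\begin{align*}
\beta t^{\beta-1} w = \beta\, (t^{-1/2})^{2-2\beta}\, w \gtrsim_\beta w^{2-2\beta}\, w = w^{3-2\beta}.
\end{align*}
Summing over $i$ gives \eqref{Phi2N.supersol}.

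\emph{Boundary inequality (3).} Fix $x^i \in \partial D$ and $j \neq i$. The configurations $\bx$ and $\bx^{i,j}$ differ only in the $i$-th slot, so
\begin{align*}
\Phi_2^N(t,\bx) - \Phi_2^N(t,\bx^{i,j}) = \frac{t^\beta}{N}\big(w(t,x^i) - w(t,x^j)\big).
\end{align*}
By Lemma \ref{lem.w}(2), $w(t,\cdot)$ attains its global maximum on $\partial D$, so this difference is nonnegative; at $t=0$ both sides vanish. Averaging over $j \neq i$ gives \eqref{Phi2N.boundary}.

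The only step with real content is (2), namely the bound $w \lesssim t^{-1/2}$ uniformly on $(0,1] \times \ov D$ and its matching with the exponent $3-2\beta$. The rest is bookkeeping.
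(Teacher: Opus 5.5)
Your proof is correct and follows the paper's argument essentially line by line. You use the supersolution property of $w$ from Lemma~\ref{lem.w}, then the bound $\|w(t,\cdot)\|_\infty \lesssim t^{-1/2}$ on $(0,1]$ to trade $\beta t^{\beta-1}$ for $w^{2-2\beta}$, and finally the maximality of $w(t,\cdot)$ on $\partial D$ to get \eqref{Phi2N.boundary}. One remark should be removed: your claim that $w$ is bounded below by a positive constant is false, since for fixed $x \in D$ we have $w(t,x) \to 0$ as $t \downarrow 0$. It is also never used, because $0 \le w \lesssim t^{-1/2}$ alone already gives $t^{\beta-1} w \gtrsim w^{3-2\beta}$.
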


\begin{proof}
Property (1) follows directly from the continuity of $w$.  

To verify Property (2), we compute 
\begin{align*}
   \Big( \partial_t - \frac{1}{2} \sum_{i = 1}^N \Delta_{x^i} \Big) \Phi_2^N &= \beta t^{\beta-1} \frac{1}{N} \sum_{i = 1}^N w(t,x^i) + \frac{t^{\beta}}{N} \sum_{i = 1}^N \Big(\partial_t w^i(t,x^i) - \frac{1}{2} \Delta w^i(t,x^i) \Big) 
   \\
   &\gtrsim \beta \| w(t,\cdot)\|_{\infty}^{2 - 2\beta} \frac{1}{N} \sum_{i = 1}^N w(t,x^i)
   \\
   &\gtrsim_{\beta} \frac{1}{N} \sum_{i = 1}^N w^{3-2\beta}(t,x^i), 
\end{align*}
where we used the fact that $w$ is a supersolution of the heat equation on $D$ (Lemma \ref{lem.w}) and the fact that $\| w(t,\cdot)\|_{\infty} = \gamma(t,0) + Ct = \frac{1}{\sqrt{2 \pi t}} + O(1)$ as $t \to 0$. 
    
    Finally, property (3) follows from the fact that $w(t,\cdot)$ is maximized on $\partial D$, as discussed in Lemma \ref{lem.w}.
\end{proof}

\begin{proof}[Proof of Proposition \ref{prop.unittime}]
For $\eps_1,\eps_2, \delta, \eta \in (0,1)$, consider the optimization problem 
\begin{align} \label{def.Meps}
    M_{\eps_1,\eps_2, \delta, \eta} &= \sup_{(t,\bx) \in \text{Dom}_N \cap \big([0,1) \times \ov{D}^N]\big)} \Big\{ U^N(t,\bx) - V^N(t,\bx) - \eps_1 \Phi_1^N(t,\bx) - \eps_2 \Phi_2^N(t,\bx) 
   \nonumber \\
    &\qquad \qquad \qquad \qquad \qquad \qquad \qquad \qquad \qquad   - \delta \Big( \Xi_1^N(t,\bx) + \Xi_{2,\eta}^N(t,\bx) + \frac{ 1 }{1-t}\Big) \Big\},
\end{align}
where $\Xi_{2,\eta}^N(t,\bx) = \Xi_2^N(t+ \eta, \bx)$, and $\Xi_1^N$, $\Xi_2^N$ are as defined in \eqref{def.Xi1N} and \eqref{def.Xi2N}. 

Arguing as in the proof of Proposition \ref{prop.comparison}, one can check that for each fixed $\eps_1,\eps_2,\delta$, and for any $\eta > 0$ small enough (depending on $\eps_1,\eps_2,\delta$), the problem admits at least one optimizer $(\hat t, \hat \bx)$ in $\text{Dom}_N \cap \Big([0,1) \times \ov D^N\Big)$. Throughout the following, we assume that $\eta$ is small enough and fix such an optimizer. 
   \newline \newline 
   \noindent \textit{Step 1: Avoiding the lateral boundary.} Now, we will argue that if $\eps_1$ is not too small, then in fact $\hat \bx \in D^N$. Suppose that for some $i \in \{1,...,N\}$, $\hat{x}^i \in \partial D$. Then, by the optimality of $(\hat{t},\hat{\bx})$, we know that for each $j \neq i$, we have 
   \begin{align*}
       &U^N(\hat t, \hat \bx) - V^N(\hat t,\hat \bx) - \eps_1 \Phi^N_1(\hat t,\hat \bx) - \eps_2 \Phi^N_2(\hat t,\hat \bx) - \delta \Big( \Xi_1^N(\hat t,\hat \bx) + \Xi_{2,\eta}^N(\hat t,\hat \bx)\Big)
       \\
       &\quad \geq U^N(\hat t, \hat \bx^{i,j}) - V^N(\hat t,\hat \bx^{i,j}) - \eps_1 \Phi^N_1(\hat t,\hat \bx^{i,j}) - \eps_2 \Phi^N_2(\hat t,\hat \bx^{i,j}) - \delta \Big( \Xi_1^N(\hat t,\hat \bx^{i,j}) + \Xi_{2,\eta}^N(\hat t,\hat \bx^{i,j})\Big) , 
   \end{align*}
   and hence 
   \begin{align*}
       &U^N(\hat t, \hat \bx) - V^N(\hat t,\hat \bx) - \eps_1 \Phi^N_1(\hat t,\hat \bx) - \eps_2 \Phi^N_2(\hat t,\hat \bx) - \delta \Big( \Xi_1^N(\hat t,\hat \bx) + \Xi_{2,\eta}^N(\hat t,\hat \bx)\Big) 
       \\
       &\quad \geq \frac{1}{N-1} \sum_{j \neq i} \Big\{ U^N(\hat t, \hat \bx^{i,j}) - V^N(\hat t,\hat \bx^{i,j}) - \eps_1 \Phi^N_1(\hat t,\hat \bx^{i,j}) - \eps_2 \Phi^N_2(\hat t,\hat \bx^{i,j}) - \delta \Big( \Xi_1^N(\hat t,\hat \bx^{i,j}) + \Xi_{2,\eta}^N(\hat t,\hat \bx^{i,j})\Big) \Big\}.
   \end{align*}
   Rearranging, and using the Fleming-Viot boundary condition for $V^N$, we find that 
   \begin{align} \label{boundary.cont0}
       0 &= V^N(\hat t, \hat \bx) - \frac{1}{N-1} \sum_{j \neq i} V^N(\hat{t}, \hat{\bx}^{i,j})
     \nonumber  \\
       &\leq U^N(\hat t, \hat \bx) - \frac{1}{N-1} \sum_{j \neq i} U^N(\hat{t}, \hat{\bx}^{i,j}) - \sum_{m = 1}^2 \eps_m \Big( \Phi_m^N(\hat t, \hat \bx) - \frac{1}{N-1} \sum_{j \neq i} \Phi_m^N(\hat{t}, \hat{\bx}^{i,j}) \Big)
       \\
    \nonumber    &\qquad \qquad - \delta \Big( \Xi_1^N(\hat t, \hat \bx) - \frac{1}{N-1} \sum_{j \neq i} \Xi_1^N (\hat{t}, \hat{\bx}^{i,j}) + \Xi_{2,\eta}^N(\hat t, \hat \bx) - \frac{1}{N-1} \sum_{j \neq i} \Xi_{2,\eta}^N (\hat{t}, \hat{\bx}^{i,j}) \Big)
   \end{align}
   By Lemmas \ref{lem.Phi1N}, \ref{lem.Phi2N}, \ref{lem.PhiN.comp} and \ref{lem.PsiN.comp} we have 
   \begin{align*}
        &\Phi_1^N(\hat t, \hat \bx) - \frac{1}{N-1} \sum_{j \neq i} \Phi_1^N(\hat{t}, \hat{\bx}^{i,j}) \geq   \frac{(6 + \theta)}{2^{(7 + \theta)}N} \exp\big((6 + 2\theta) \lambda_1 \hat t \big) \Big(\frac{1}{N} \sum_{j = 1}^N \rho(\hat x^j) \Big)^{-(6 + \theta)},
        \\
        & \Phi_2^N(\hat t, \hat \bx) - \frac{1}{N-1} \sum_{j \neq i} \Phi_2^N(\hat{t}, \hat{\bx}^{i,j}) \geq 0, 
        \\
        &\Xi_{1}^N(\hat t, \hat \bx) - \frac{1}{N-1} \sum_{j \neq i} \Xi_{1}^N(\hat{t}, \hat{\bx}^{i,j}) \geq 0,
        \\
        &\Xi_{2,\theta}^N(\hat t, \hat \bx) - \frac{1}{N-1} \sum_{j \neq i} \Xi_{2,\theta}^N(\hat{t}, \hat{\bx}^{i,j}) \geq 0.
   \end{align*}
   Thus, \eqref{boundary.cont0} implies that
   \begin{align*}
      \frac{\eps_1}{N} &\Big(\frac{1}{N} \sum_{j = 1}^N \rho(\hat x^j) \Big)^{-2} \lesssim \frac{\eps_1 (6+ \theta) }{N 2^{(7 + \theta)}} \exp\big((6 + 2\theta) \lambda_1 \hat t \big) \Big(\frac{1}{N} \sum_{j = 1}^N \rho(\hat x^j) \Big)^{-(6 + \theta)} 
      \\
      &\qquad \qquad \leq U^N(\hat t, \hat \bx) - \frac{1}{N-1} \sum_{j \neq i} U^N(\hat{t}, \hat{\bx}^{i,j})
      \lesssim \frac{1}{N^2} \Big(\frac{1}{N} \sum_{j = 1}^N \rho(\hat x^j) \Big)^{-2}, 
   \end{align*}
   where the last line uses Proposition \ref{prop.UN.properties}. 
   We reach a contradiction if $\eps_1 \geq \frac{C}{N}$ for some universal constant $C$. In other words, we have found a constant $C$ such that:
   \begin{align} \label{boundary.implication}
   \text{ if $\eps_1 \geq C/N$, and $(\hat t, \hat \bx)$ is any optimizer for \eqref{def.Meps} with $\hat t > 0$, then $\hat \bx \in D^N$}
   \end{align}
   \noindent 
    \textit{Step 2: Using the equation.} Suppose now that we have a maximum $(\hat t, \hat \bx) \in (0,1) \times  D^N$. From the first and second order conditions for optimality, we deduce that
    \begin{align*}
        \partial_t U^N(\hat t, \hat \bx) &\geq \partial_t V^N(\hat t, \hat \bx) + \sum_{m = 1}^2 \eps_m \partial_t \Phi_m^N(\hat t, \hat \bx) +  \delta \Big( \partial_t \Xi_1^N(\hat t, \hat \bx) + \partial_t \Xi_{2,\eta}^N(\hat t, \hat \bx) \Big), 
        \\
        \Delta_{x^i} U^N(\hat t, \hat \bx) &\leq \Delta_{x^i} V^N(\hat t, \hat \bx) + \sum_{m = 1}^2 \eps_m \Delta_{x^i}  \Phi_m^N(\hat t, \hat \bx) +  \delta \Big( \Delta_{x^i} \Xi_1^N(\hat t, \hat \bx) + \Delta_{x^i}  \Xi_{2,\eta}^N(\hat t, \hat \bx) \Big)
    \end{align*}
    and thus 
    \begin{align*}
        0 &= \partial_t V^N(\hat t, \hat \bx) - \frac{1}{2} \sum_{i = 1}^N \Delta_{x^i} V^N(\hat t, \hat \bx) 
        \\
        &\leq \partial_t U^N(\hat t, \hat \bx) - \frac{1}{2} \sum_{i = 1}^N \Delta_{x^i} U^N(\hat t, \hat \bx) - \sum_{m = 1}^2 \eps_m \Big( \partial_t \Phi_m^N(\hat t, \hat \bx) - \frac{1}{2} \sum_{i = 1}^N \Delta_{x^i} \Phi_m^N(\hat t, \hat \bx) \Big) 
        \\
        &\qquad \qquad - \delta \Big( \partial_t \Xi_1^N(\hat t, \hat \bx) - \frac{1}{2} \sum_{i = 1}^N \Delta_{x^i} \Xi_1^N(\hat t, \hat \bx) + \partial_t \Xi_2^N(\hat t + \eta, \hat \bx) - \frac{1}{2} \sum_{i = 1}^N \Delta_{x^i} \Xi_2^N(\hat t + \eta, \hat \bx) \Big).
    \end{align*}
    Applying Proposition \ref{prop.UN.properties} and Lemmas \ref{lem.Phi1N}, \ref{lem.Phi2N}, \ref{lem.PsiN.comp} and \ref{lem.PhiN.comp}, we find that there is a constant $C$ such that
    \begin{align} \label{contradiction.comp}
      \eps_1 \Big(\frac{1}{N}& \sum_{i = 1}^N \rho(\hat x^i) \Big)^{-(6 + \theta)} + \frac{\eps_2}{N} \sum_{i = 1}^N w^{3 - 2 \beta}(\hat t,\hat x^i)
   \nonumber    \\
      &\lesssim_{\theta} \bigg\{ \frac{1}{N} \Big( \frac{1}{N} \sum_{i = 1}^N \rho(\hat x^i) \Big)^{-2} \Big( 1 + \frac{1}{N} \sum_{i = 1}^N w^2(\hat t,\hat x^i) \Big) 
       + \frac{\eps_1}{N} \Big( \frac{1}{N} \sum_{i = 1}^N \rho(\hat x^i) \Big)^{-(8 + \theta)} \bigg\}.
    \end{align}
    Now notice that by the boundedness of $M_{\bm{\eps}}$, we have 
    \begin{align} \label{penaltybound}
        \eps_1 \Big(\frac{1}{N} \sum_{i = 1}^N \rho(\hat x^i) \Big)^{-(6 + \theta)} \lesssim 1.
    \end{align}
    In addition, applying Young's inequality to the first term with conjugate exponents $\frac{6 + \theta}{2}$ and $\frac{3 - 2\beta}{2}$ gives
    \begin{align} \label{youngs.ineq}
        \Big( \frac{1}{N} \sum_{i = 1}^N \rho(x^i) \Big)^{-2} \Big(\frac{1}{N} \sum_{i = 1}^N w^2(t,x^i) \Big) &\leq \Big(\frac{1}{N} \sum_{i = 1}^N \rho(x^i) \Big)^{-(6 + \theta)} + \Big(\frac{1}{N} \sum_{i = 1}^N w^{2}(t,x^i) \Big)^{\frac{3 - 2\beta}{2}}
    \nonumber     \\
        &\leq \Big(\frac{1}{N} \sum_{i = 1}^N \rho(x^i) \Big)^{-(6 + \theta)} + \frac{1}{N} \sum_{i = 1}^N w^{3 - 2 \beta}(t,x^i), 
    \end{align}
    with the last line coming from Jensen's inequality. Combining \eqref{contradiction.comp}, \eqref{penaltybound}, and \eqref{youngs.ineq}, we deduce that
    \begin{align*}
      \eps_1 \Big(\frac{1}{N}& \sum_{i = 1}^N \rho(\hat x^i) \Big)^{- (6 + \theta)} + \frac{\eps_2}{N} \sum_{i = 1}^N w^{3 - 2 \beta}(\hat t,\hat x^i)
      \\
      &\lesssim_{\theta} \frac{1}{N} \bigg\{ \Big(\frac{1}{N} \sum_{i = 1}^N \rho(\hat x^i) \Big)^{-(6 + \theta)} + \frac{1}{N} \sum_{i = 1}^N w^{3 - 2 \beta}(\hat t,\hat x^i)   \bigg\}.
    \end{align*}
    We deduce that there is a constant $C = C(\theta)$ such that if
    \begin{align*}
       \eps_1 \geq C/N, \quad \eps_2 \geq C/N, 
    \end{align*}
    then there is a contradiction and so we cannot have a maximum with $\hat t >0$ and $\hat \bx \in D^N$. 
    \newline \newline 
    \noindent \textit{Step 3: Conclusion.} Combining Steps 1 and 2, we find a $C$ depending on $\theta$ such that if
    \begin{align*}
        \eps_1 = \eps_2 = \frac{C}{N}, 
    \end{align*}
   then for any $\delta > 0$, and for any $\eta > 0$ small enough (depending on $N$ and $\delta$), there is at least one optimizer $(\hat t, \hat \bx)$ for \eqref{def.Meps}, and any such optimizer must satisfy $\hat t = 0$, $\bx \in D^N$. As a consequence, for every $\delta > 0$, and any $\eta$ small enough (depending on $N$ and $\delta$), we have $M_{CN^{-1},CN^{-1},\delta, \eta} \leq 0$, and hence the estimate
    \begin{align*}
        U^N(t,\bx) - V^N(t,\bx) \leq CN^{-1} \Big( \Phi_1^N(t,\bx) + \Phi_2^N(t,\bx) \Big) + \delta \Big( \Xi_1^N(t,\bx) + \Xi_{2,\eta}^N + \frac{1}{1-t} \Big) 
    \end{align*}
    holds. Sending first $\eta$ and then $\delta$ to zero, and using the form of $\Phi_1^N$ and $\Phi_2^N$, we obtain
    \begin{align*}
          U^N(t,\bx) - V^N(t,\bx) &\lesssim_{\theta} N^{-1} \bigg( \Big(\frac{1}{N} \sum_{i = 1}^N \rho(x^i) \Big)^{-(6 + \theta)} +  \frac{t^{\beta}}{N} \sum_{i = 1}^N w(t,x^i) \bigg) \quad (t,\bx) \in (0,1) \times ( D)^N.
    \end{align*}
    The proof of the matching bound on $V^N - U^N$ is almost identical and is omitted. 
    
\end{proof}

\subsection{Uniform in time pointwise convergence} \label{subsec.uniformintime}

In the previous subsection we showed (taking $\theta = 1$ for concreteness) the bound
\begin{align*}
    \Big| U\big(t,m_{\bx}^N\big) - V^N(t,\bx) \Big| \lesssim N^{-1} \Big[ \Big(\frac{1}{N} \sum_{i = 1}^N \rho(x^i) \Big)^{-7} + \Big(\frac{1}{N} \sum_{i = 1}^N w(t,x^i) \Big) \Big].
\end{align*}
The goal of this section is to prove an analogous bound for all $t \geq 1$. The weighting will be different, but the rate $N^{-1}$ will be the same. 

\begin{prop} \label{prop.uniformintime.pointwise}
    We have the estimate 
    \begin{align*}
        \Big| U\big(t,m_{\bx}^N\big) - V^N(t,\bx) \Big| \lesssim N^{-1} \Big[ \frac{1}{N(N-1)...(N-9)} \sum_{\substack{i_1,...,i_{10} = 1,...,N \\ i_1,...,i_{10} \text{distinct}}} \Big(\sum_{n = 1}^{10} \rho(x^{i_n}) \Big)^{-9} \Big], \quad t \geq 1, \quad \bx \in D^N
    \end{align*}
    for all $N \geq 10$.
\end{prop}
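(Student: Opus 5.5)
We run the same penalization/comparison argument as in the proof of Proposition \ref{prop.unittime}, now on the time interval $[1,\infty)$, with the barrier $\Psi^N=\Psi_1^N+\Psi_2^N$ from \eqref{psi.intro} in place of $\Phi^N_1+\Phi^N_2$. Since $t\ge 1$, the function $w$ no longer appears in the error estimates \eqref{UN.eqnerror}, \eqref{UN.boundaryerror}. Both interior and boundary errors are then of size $e^{-ct}N^{-1}(\frac1N\sum\rho(x^i))^{-2}$, resp.\ $e^{-ct}N^{-2}(\cdot)^{-2}$. Restarting at $t=1$, Proposition \ref{prop.unittime} with $\theta=1$ gives initial data
\[
|U^N(1,\bx)-V^N(1,\bx)|\lesssim N^{-1}\Big[\big(\tfrac1N\textstyle\sum\rho(x^i)\big)^{-7}+1\Big],
\]
because $w(1,\cdot)$ is bounded. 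The target weight, a $U$-statistic of order $10$ of $(\sum_{n\le10}\rho(x^{i_n}))^{-9}$, dominates $(\frac1N\sum\rho)^{-7}$ up to constants: by Jensen on the convex map $s\mapsto s^{-9}$, averaging over $10$-tuples gives at least $(\frac{10}{N}\sum\rho)^{-9}\gtrsim(\frac1N\sum\rho)^{-7}$, since $\rho$ is bounded. So the initial comparison at $t=1$ is fine.

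The key steps are as follows. First, take $\Psi_1^N=e^{-ct}(\frac1N\sum\rho(x^i))^{-7}$. The computation of Lemma \ref{lem.Phi1N}, now with rate $-c$ instead of $+(6+2\theta)\lambda_1$, gives
\[
(\partial_t-\tfrac12\Delta)\Psi_1^N\ge (7\lambda_1-c)e^{-ct}(\cdot)^{-7}-\tfrac{C}{N}e^{-ct}(\cdot)^{-9},
\]
together with the strict boundary gain $\gtrsim N^{-1}e^{-ct}(\cdot)^{-7}$. Since $c=\lambda_2-\lambda_1$ may exceed $7\lambda_1$, the constant in the exponent may need adjusting. If $7\lambda_1-c\le0$, I would use $e^{-c''t}$ with $c''<\min(c,7\lambda_1)$; the error terms still carry $e^{-ct}$. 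The boundary gain dominates the error \eqref{UN.boundaryerror} whenever $\eps_1\gtrsim 1/N$, exactly as in Step 1 of Proposition \ref{prop.unittime}. Second, $\Psi_2^N$ must absorb the bad term $N^{-1}e^{-c''t}(\frac1N\sum\rho)^{-9}$ produced by $\Psi_1^N$, and also the interior error. For this I invoke the barrier $\Psi$ from Proposition \ref{prop.supersolexists}, which I expect to be a function on $\R_+\times D^{10}$ comparable to $(\sum_{n}\rho(x_n))^{-9}$. It is a supersolution of the $10$-particle heat equation with right-hand side $\gtrsim e^{-c't}(\sum\rho)^{-9}-C e^{-c't}$, and it satisfies the Fleming-Viot boundary inequality when one coordinate hits $\partial D$. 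The constant $-Ce^{-c't}$ is compensated by the additive term $C\int_0^te^{-c's}ds$, which is bounded uniformly in time. Symmetrizing over distinct $10$-tuples preserves both the supersolution property and the boundary inequality. The latter holds because replacing $x^i$ by $x^j$ changes only the tuples containing $i$, and the resampling average is handled tuple by tuple; this needs $N\ge10$. Finally, Jensen shows $\frac1N(\frac1N\sum\rho)^{-9}\lesssim$ the $U$-statistic, with room to spare.

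Then I would run the maximization of $U^N-V^N-\eps_1\Psi_1^N-\eps_2\Psi_2^N-\delta(\Xi_1^N+\Xi^N_{2,\eta})$ over $[1,\infty)\times$Dom, with $\Xi_{2,\eta}^N$ and exponential growth handling the tails as in Proposition \ref{prop.comparison}. In Step 1 the maximizer is excluded from the lateral boundary via the boundary gain of $\Psi_1^N$. In Step 2 it is excluded from the interior using the supersolution inequalities, where the interior error $N^{-1}e^{-ct}(\cdot)^{-2}\le N^{-1}e^{-ct}(\cdot)^{-7}$ is absorbed by $\Psi_1^N$. This works on the set where the penalty is $\lesssim 1$, using the boundedness of $U$ and $V^N$. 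With $\eps_1=\eps_2=C/N$, letting $\delta,\eta\to0$ gives the claimed bound; the reverse inequality is symmetric.

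The main obstacle is the uniformity in time of $\Psi_2^N$. Its source term must decay like $e^{-c't}$ while still controlling a singularity of order $-9$ near the boundary, and its boundary inequality must survive symmetrization. This is exactly what Proposition \ref{prop.supersolexists} must provide, and I would first check whether its stated properties match the ones used above. If the exponents $9$ and $10$ do not close, I would first try raising the number of particles in the tuple, since with more particles the probability that all of them lie near the boundary is smaller.
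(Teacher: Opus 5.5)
Your overall scheme is the paper's. You restart at $t=1$ using Proposition \ref{prop.unittime} with $w(1,\cdot)$ bounded, use $\Psi_1^N$ for its boundary gain and $\Psi_2^N$ as the symmetrized barrier of Proposition \ref{prop.supersolexists} plus $C\int_0^t e^{-c's}ds$, and then exclude lateral and interior maximizers. However, your computation for $\Psi_1^N$ has the wrong sign on the eigenvalue term, and this breaks Step 2 as you describe it. Write $S=\frac1N\sum_i\rho(x^i)$. Since $\frac12\sum_i\Delta_{x^i}S=-\lambda_1 S$, one gets
\begin{align*}
\Big(\partial_t-\frac12\sum_{i=1}^N\Delta_{x^i}\Big)\big(e^{-ct}S^{-7}\big)=-(c+7\lambda_1)e^{-ct}S^{-7}-\frac{28}{N}e^{-ct}S^{-9}\,\frac1N\sum_{i=1}^N|D\rho(x^i)|^2 ,
\end{align*}
and not $(7\lambda_1-c)e^{-ct}S^{-7}-\dots$. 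Under the Dirichlet flow $S$ decays, so $S^{-7}$ grows at rate $7\lambda_1$. This is exactly why Lemma \ref{lem.Phi1N} needs the growth rate $(6+2\theta)\lambda_1>(6+\theta)\lambda_1$.

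Consequences:
\begin{itemize}
\item $\Psi_1^N$ is a strict subsolution for every decaying exponential weight, so your case split on the sign of $7\lambda_1-c$ is moot.
\item Your claim that the interior error $N^{-1}e^{-ct}S^{-2}$ is absorbed by $\Psi_1^N$ is false.
\item $\Psi_2^N$ is not there merely to absorb an $O(1/N)$ gradient correction. It must absorb the full defect of $\eps_1\Psi_1^N$, which is of size $\eps_1 e^{-ct}S^{-7}$ with no extra $1/N$.
\end{itemize}

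The repair is what the paper does in Lemma \ref{lem.Psi1N}: record only $(\partial_t-\frac12\sum_i\Delta_{x^i})\Psi_1^N\gtrsim -e^{-ct}S^{-9}$, and use $\Psi_1^N$ only in the boundary step and in the comparison at $t=1$. In the interior, both the $U^N$ error ($\lesssim N^{-1}e^{-ct}S^{-9}$) and the $\Psi_1^N$ defect ($\lesssim\eps_1e^{-ct}S^{-9}$) are absorbed by $\eps_2\Psi_2^N$. Its source is $\gtrsim e^{-ct}S^{-9}$ by Lemma \ref{lem.Psi2N} together with Jensen.

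This fixes the order in which constants are chosen. First take $\eps_1=C/N$, dictated by the boundary step and the initial comparison. Then take $\eps_2=C'\eps_1$ with $C'$ large. Your choice $\eps_1=\eps_2=C/N$ need not close.

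It also rules out your fallback of slowing $\Psi_1^N$ to $e^{-c''t}$ with $c''<c$. The defect of $\Psi_1^N$ would then decay like $e^{-c''t}$, slower than the $e^{-ct}$ source of $\Psi_2^N$, and Step 2 would fail as $t\to\infty$ unless $\psi$ were rebuilt at rate $c''$. If you keep the same rate $c=\lambda_2-\lambda_1$ in both barriers, the argument closes exactly as in the paper.
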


We again will work with two penalizations, which we will call $\Psi_1^N$, $\Psi_2^N$. The function $\Psi_1^N$ will be defined by 
\begin{align*}
    \Psi_1^N(t,\bx) = \exp\big( - (\lambda_2 - \lambda_1 ) t \big) \Big(\frac{1}{N} \sum_{i = 1}^N \rho(x^i) \Big)^{-7}
\end{align*}

The key properties of $\Psi_1^N$ are as follows. 

\begin{lem} \label{lem.Psi1N}
 For $(t,\bx) \in [0,\infty) \times D^N$, we have 
 \begin{align} \label{Psi1N.bound1}
     \partial_t \Psi_1^N - \frac{1}{2} \sum_{i = 1}^N \Delta_{x^i} \Psi_1^N \gtrsim - \exp\big( - (\lambda_2 - \lambda_1) t \big) \Big(\frac{1}{N} \sum_{i = 1}^N \rho(x^i) \Big)^{-9}. 
 \end{align}
 In addition, for $t \in \R_+$, $\bx \in \partial^{1,i}(D^N)$, we have 
 \begin{align} \label{Psi1N.bound2}
     \Psi_1^N(t,\bx) \geq \frac{1}{N-1} \sum_{j \neq i} \Psi_1^N\big(t,\bx^{i,j}\big) + \frac{7}{2^{8}(N-1)} \exp\big( - (\lambda_2 - \lambda_1) t \big) \Big(\frac{1}{N} \sum_{i = 1}^N \rho(x^i) \Big)^{-7}.
 \end{align}
\end{lem}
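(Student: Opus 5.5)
The proof will follow the computation in Lemma \ref{lem.Phi1N} almost verbatim, with exponent $7$ in place of $6+\theta$ and time factor $\exp(-ct)$, $c = \lambda_2-\lambda_1$, in place of $\exp((6+2\theta)\lambda_1 t)$. Write $S = \frac1N\sum_{j}\rho(x^j)$, so that $\Psi_1^N = e^{-ct}S^{-7}$.

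\textbf{Interior inequality \eqref{Psi1N.bound1}.} We differentiate directly. The time derivative gives $\partial_t\Psi_1^N = -c\,e^{-ct}S^{-7}$. For the spatial part we use $-\frac12\Delta\rho = \lambda_1\rho$, exactly as in Lemma \ref{lem.Phi1N}:
\begin{align*}
-\frac12\sum_{i=1}^N\Delta_{x^i}\Psi_1^N = -7\lambda_1 e^{-ct}S^{-7} - \frac{56}{2N}\,e^{-ct}S^{-9}\,\frac1N\sum_{i=1}^N|D\rho(x^i)|^2 .
\end{align*}
Every term is then negative, and each is bounded below by a multiple of $-e^{-ct}S^{-9}$. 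For the first two terms this uses $S\le\|\rho\|_\infty$, so that $S^{-7}\le \|\rho\|_\infty^2 S^{-9}$. The gradient term carries an extra $1/N\le 1$ and $|D\rho|\le\|D\rho\|_\infty$. Summing these bounds gives \eqref{Psi1N.bound1}, with an implied constant depending only on $D$ through $\lambda_1$, $\lambda_2$, $\|\rho\|_\infty$ and $\|D\rho\|_\infty$.

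\textbf{Boundary inequality \eqref{Psi1N.bound2}.} Here the time factor is a common positive multiplier, so it suffices to prove the inequality for $S^{-7}$. We reuse the convexity argument from the proof of \eqref{phi1.boundary} with exponent $7$. Set $z^k=\rho(x^k)$ with $z^i=0$. For each $j\neq i$, convexity of $s\mapsto s^{-7}$ gives
\begin{align*}
\Big(\frac1N\sum_{k\ne i}z^k\Big)^{-7}\ge\Big(\frac1N\sum_{k\ne i}z^k+\frac{z^j}N\Big)^{-7}+7\Big(\frac1N\sum_{k\ne i}z^k+\frac{z^j}N\Big)^{-8}\frac{z^j}{N}.
\end{align*}
Since $\rho(x^i)=0$, the left side equals $S^{-7}$, and the first term on the right is exactly $S(\bx^{i,j})^{-7}$. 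For the remainder term, note that $\frac1N\sum_{k\ne i}z^k+\frac{z^j}{N}\le 2S$. Averaging over $j\neq i$, the remainder is then at least
\begin{align*}
\frac{7}{2^8}\,S^{-8}\,\frac{1}{N-1}\sum_{j\ne i}\frac{z^j}{N} = \frac{7}{2^8}\,\frac{N}{N-1}\,S^{-8}\,S \;\ge\; \frac{7}{2^8(N-1)}\,S^{-7}.
\end{align*}
Multiplying through by $e^{-ct}$ yields \eqref{Psi1N.bound2}.

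No step is a real obstacle. The only care needed is the bookkeeping of the $2^{-8}$ factor and the normalization of the averaged remainder, which mirrors \eqref{phi1.boundary}. The final inequality even holds without the $1/(N-1)$ factor in front of the remainder.
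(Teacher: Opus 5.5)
Your approach is the same as the paper's. You differentiate directly, using $-\frac12\Delta\rho=\lambda_1\rho$ and $S\le\|\rho\|_\infty$, and then rerun the convexity argument of Lemma \ref{lem.Phi1N} with exponent $7$. The interior computation is correct as written.

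There is an arithmetic slip in the boundary part. Since $z^i=0$, you have $\sum_{j\neq i} z^j/N=S$, so
\[
\frac{7}{2^8}\,S^{-8}\,\frac{1}{N-1}\sum_{j\neq i}\frac{z^j}{N}=\frac{7}{2^8(N-1)}\,S^{-7},
\]
not $\frac{7}{2^8}\frac{N}{N-1}S^{-7}$. The lemma still follows, because your final ``$\ge$'' is actually an equality.

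Your closing remark, that the inequality holds without the $1/(N-1)$ factor, is false. Take $\rho(x^j)=a>0$ for all $j\neq i$. Then $S=\frac{N-1}{N}a$ and $S(\bx^{i,j})=a$. The gap is
\[
S^{-7}-\frac{1}{N-1}\sum_{j\neq i}S(\bx^{i,j})^{-7}=a^{-7}\Big(\Big(\frac{N}{N-1}\Big)^{7}-1\Big)\approx\frac{7}{N}\,S^{-7},
\]
which is not bounded below by a fixed multiple of $S^{-7}$ as $N\to\infty$. The gain at the lateral boundary is genuinely of order $1/N$. This is why, in Step 1 of Proposition \ref{prop.uniformintime.pointwise}, it can only absorb the $N^{-2}$ boundary error of $U^N$ when $\eps_1\ge C/N$.
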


\begin{proof}
By explicit computation,  
\begin{align*}
     \partial_t \Psi_1^N  =  -(\lambda_2 - \lambda_1)  \exp\big( - (\lambda_2 - \lambda_1) t \big) \Big(\frac{1}{N} \sum_{i = 1}^N \rho(x^i) \Big)^{-7} \gtrsim -   \exp\big( - (\lambda_2 - \lambda_1) t \big)\Big(\frac{1}{N} \sum_{i = 1}^N \rho(x^i) \Big)^{-9}, 
\end{align*}
and
\begin{align*}
    - \frac{1}{2} \sum_{i = 1}^N \Delta_{x^i} \Psi_1^N &= - \exp\big(- (\lambda_2 - \lambda_1)t \big) \bigg[  7 \lambda_1 \Big(\frac{1}{N} \sum_{i = 1}^N \rho(x^i) \Big)^{-7} + \frac{28}{N} \Big(\frac{1}{N} \sum_{i = 1}^N \rho(x^i) \Big)^{-9} \Big(\frac{1}{N} \sum_{i = 1}^N |D\rho(x^i)|^2 \Big) \bigg]
    \\
    &\qquad \qquad \gtrsim -  \exp\big( - (\lambda_2 - \lambda_1) t \big)  \Big(\frac{1}{N} \sum_{i = 1}^N \rho(x^i) \Big)^{-9}.
\end{align*}
The bound \eqref{Psi1N.bound1} follows. The bound \eqref{Psi1N.bound2} follows exactly as in the proof of Lemma \ref{lem.Phi1N}, and so is omitted. 
\end{proof}

To define $\Psi_2^N$, we are going to apply Proposition \ref{prop.supersolexists} with parameters 
\begin{align*}
    k = 10, \quad c = \lambda_2 - \lambda_1, \quad M = \text{diam}(D),
\end{align*}
and with $\delta$ small enough that $|D\dist(x)| = 1$ on $\big(\partial D\big)_{\delta}$. Thus, we obtain a function 
\begin{align*}
    \psi : \R_+ \times \R_+^{10} \to \R 
\end{align*}
satisfying properties (1)-(5) of Proposition \ref{prop.supersolexists}. We then define a function 
\begin{align*}
    \Psi : \R_+ \times D^{10} \to \R
\end{align*}
via 
\begin{align*}
    \Psi(t,\bx) = \psi\big(t, \dist(x^1),...,\dist(x^{10}) \big). 
\end{align*}
Finally, for $N \geq 10$, we set
\begin{align*}
    \Psi_2^N(t,\bx) = \frac{1}{N(N-1)...(N-9)} \sum_{\substack{i_1,...,i_{10} = 1,...,N \\ i_1,...,i_{10} \text{ distinct}}} \Psi\big(t,x^{i_1},..x^{i_{10}}\big) + C \int_0^t \exp\big(- c'  s \big)ds , 
\end{align*}
where $c'$ is as in the statement of Proposition \ref{prop.supersolexists}, i.e. $0 < c' < \frac{c_0^2}{2} \wedge (\lambda_2 - \lambda_1)$. The key properties of $\Psi_2^N$ are as follows. 

\begin{lem} \label{lem.Psi2N}
    If $C$ is chosen large enough, then for $(t,\bx) \in [0,\infty) \times D^N$, we have 
    \begin{align} \label{psi2n.bound1}
        \partial_t \Psi_2^N - \frac{1}{2} \sum_{i = 1}^N \Delta_{x^i} \Psi_2^N \gtrsim \exp\big( - (\lambda_2 - \lambda_1) t \big) \Big(\frac{1}{N} \sum_{ i= 1}^N \rho(x^i) \Big)^{-9}. 
    \end{align}
    Moreover, for $t > 0$ and $\bx \in \partial^{1,i}(D^N)$, 
    \begin{align} \label{psi2n.bound2}
        \Psi_2^N(t,\bx) \geq \frac{1}{N-1} \sum_{j \neq i} \Psi_2^N\big(t,\bx^{i,j}\big). 
    \end{align}
\end{lem}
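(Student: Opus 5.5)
The proof rests on computing the operator applied to a symmetrized $k$-particle function, and on checking the boundary inequality termwise. Proposition \ref{prop.supersolexists} is not visible here, so I assume its properties (1)--(5) are roughly as follows.
\begin{enumerate}
\item[(a)] $\psi$ is a supersolution of the $10$-dimensional operator $\partial_t - \frac12\sum_n \partial_{z_nz_n} + c_0 \sum_n \mathrm{sign}$-type drift terms, with a forcing lower bound
\[
\partial_t\psi - \tfrac12\sum_n\partial_{z_nz_n}\psi - \tfrac{c_0}{2}\sum_n|\partial_{z_n}\psi| \gtrsim e^{-ct}\Big(\sum_n z_n\Big)^{-9} - Ce^{-c't}.
\]
\item[(b)] $\psi$ is nonincreasing in each $z_n$ (so $\partial_{z_n}\psi\le 0$).
\item[(c)] $\psi$ is symmetric, bounded by $M$-dependent quantities away from the origin, and smooth on the relevant region.
\end{enumerate}

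\textbf{Interior inequality.} First, $\Psi(t,\cdot)$ is the pullback of $\psi$ by $\dist$. The computation is the same as in Lemma \ref{lem.w}: on $(\partial D)_\delta$ we have $|D\dist|=1$, so
\[
\Delta_{x^n}\Psi = \partial_{z_nz_n}\psi + \partial_{z_n}\psi\,\Delta\dist ,
\]
and the term $-\tfrac12\partial_{z_n}\psi\,\Delta\dist$ is bounded below by $-\tfrac{c_0}{2}|\partial_{z_n}\psi|$. This is absorbed by the drift term built into (a). Away from $(\partial D)_\delta$, the derivatives of $\dist$ are bounded and $\psi$ has bounded derivatives there, which produces a bounded error times $e^{-c't}$. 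That error is dominated by the term $\partial_t\big(C\int_0^t e^{-c's}ds\big)=Ce^{-c't}$ once $C$ is large; this is exactly why that term was added.

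Averaging over ordered $10$-tuples of distinct indices, the operator $\partial_t-\frac12\sum_{i=1}^N\Delta_{x^i}$ acting on $\Psi(t,x^{i_1},\dots,x^{i_{10}})$ only sees the coordinates $i_1,\dots,i_{10}$. We therefore get
\[
\partial_t\Psi_2^N-\tfrac12\sum_i\Delta_{x^i}\Psi_2^N \gtrsim e^{-ct}\,\frac{1}{N(N-1)\cdots(N-9)}\sum_{\text{distinct}}\Big(\sum_{n}\dist(x^{i_n})\Big)^{-9}.
\]
To pass to \eqref{psi2n.bound1}, apply Jensen to the convex map $s\mapsto s^{-9}$. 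The average over distinct tuples of $\frac{1}{10}\sum_n\rho(x^{i_n})$ equals $\frac1N\sum_i\rho(x^i)$, so the average of $\big(\sum_n\rho(x^{i_n})\big)^{-9}$ is at least $10^{-9}\big(\frac1N\sum\rho(x^i)\big)^{-9}$. Finally, use $\rho\asymp\dist$.

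\textbf{Boundary inequality.} For $x^i\in\partial D$, consider a tuple not containing $i$: replacing $x^i$ by $x^j$ leaves it unchanged. For a tuple containing $i$ at slot $n$, the replacement moves $z_n$ from $0$ to $\dist(x^j)>0$. By monotonicity (b) this cannot increase $\psi$, unless index $j$ is also in the tuple. In that case the replaced tuple has a repeated point, which is not of the original form, so the termwise argument breaks down and a counting argument is needed. Writing $\bx^{i,j}$ produces duplicated coordinates $x^j$ twice. I would compare both sides via the identity
\[
\frac{1}{N-1}\sum_{j\ne i}\Psi_2^N(t,\bx^{i,j}) = \text{average over tuples of } \Psi \text{ evaluated at } \bx \text{ with slot } i \text{ resampled}.
\]
Tuples with $j$ among the other nine slots appear with weight $O(1/N)$ relative to the rest. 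I expect Proposition \ref{prop.supersolexists} to include a property of the form $\psi(t,0,z_2,\dots)\ge \psi(t,z_2,z_2,\dots)$, i.e. ``zero dominates any duplicate'', analogous to \eqref{algebra} in Lemma \ref{lem.algebra}. With that property, the argument of Lemma \ref{lem.algebra} goes through: expand both sides and match the duplicate terms against terms with a zero coordinate. This combinatorial matching is the step I expect to be the main obstacle, and it is where the specific structure of $\psi$ (symmetry plus that inequality) must be used. The additive term $C\int_0^te^{-c's}ds$ cancels on both sides.
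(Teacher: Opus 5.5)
Your treatment of \eqref{psi2n.bound1} follows the paper. You pull $\psi$ back through $\dist$. When all ten points lie in $(\partial D)_\delta$, you use $|D\dist|=1$, $\partial_{z^n}\psi\le 0$ and $c_0=\|\Delta\dist\|_\infty$ to absorb $-\frac12\partial_{z^n}\psi\,\Delta\dist(x^n)$ into the drift term of property (2) of Proposition \ref{prop.supersolexists}. When some point is at distance at least $\delta$ from $\partial D$, property (5) bounds everything by $O(e^{-c't})$, which you compensate with $C\int_0^t e^{-c's}ds$, and you finish with Jensen. One detail you leave implicit: on that second region you still need a \emph{positive} lower bound of the right form. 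It holds because there $\sum_n\rho(x^{i_n})\gtrsim\delta$ and $c'<\lambda_2-\lambda_1$, so $e^{-c't}\gtrsim e^{-(\lambda_2-\lambda_1)t}\big(\sum_n\rho(x^{i_n})\big)^{-9}$.

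The boundary inequality \eqref{psi2n.bound2}, however, is left unproved, and the obstacle you identify does not exist. The sum defining $\Psi_2^N$ runs over distinct \emph{indices}, not distinct points. For $\bm{y}=\bx^{i,j}$, each term is simply $\psi\big(t,\dist(y^{i_1}),\dots,\dist(y^{i_{10}})\big)$, a value of $\psi$ at a point of $[0,\infty)^{10}\setminus\{\bm 0\}$. It differs from the corresponding term for $\bx$ only in the slot $n$ with $i_n=i$ (if there is one), where the entry $\dist(x^i)=0$ is replaced by $\dist(x^j)>0$. Coordinatewise monotonicity of $\psi$ (property (4), extended by continuity to $z^n=0$) holds at every point of the orthant, whether or not another coordinate equals $\dist(x^j)$. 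The ``zero dominates a duplicate'' inequality you hope to find in Proposition \ref{prop.supersolexists} is literally a special case of the monotonicity you already assumed in (b). Hence every term can only decrease, so $\Psi_2^N(t,\bx^{i,j})\le\Psi_2^N(t,\bx)$ for \emph{each} $j\neq i$ (the additive term $C\int_0^t e^{-c's}ds$ is the same on both sides). Averaging over $j$ gives \eqref{psi2n.bound2}, which is exactly the paper's argument. No combinatorial matching in the style of Lemma \ref{lem.algebra} is needed. Your weight-$O(1/N)$ heuristic for the tuples containing $j$ could at best give an approximate inequality, not the exact one stated.
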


\begin{proof}
    We begin by establishing some properties of the function $\Psi : \R_+ \times D^{10} \to \R$ defined above. First, we compute
    \begin{align*}
        \partial_t \Psi(t,\bx) &= \partial_t \psi\big(t, \dist(x^1),...,\dist(x^{10}) \big), 
        \\
        \Delta_{x^i} \Psi(t,\bx) &= \partial_{z^iz^i}\psi\big(t, \dist(x^1), ... \dist(x^{10}) \big) |D\dist(x^i)|^2 + \partial_{z^i} \psi\big(t,\dist(x^1),...,\dist(x^{10})\big) \Delta \dist(x^i),
    \end{align*}
    and thus 
    \begin{align*}
        &\partial_t \Psi - \frac{1}{2} \sum_{i = 1}^{10} \Delta_{x^i} \Psi 
        \\
        &\quad = \partial_t \psi\big(t,\dist(x^1),...\dist(x^{10}) \big) - \frac{1}{2} \sum_{i = 1}^{10} \partial_{z^iz^i} \psi\big(t,\dist(x^1),...,\dist(x^{10})\big) |D\dist(x^i)|^2 
        \\
        &\qquad \qquad - \frac{1}{2} \sum_{i = 1}^{10} \partial_{z^i} \psi\big(t, \dist(x^1),...,\dist(x^{10})\big) \Delta \dist(x^{i}) 
        \\
        &\quad \geq  \partial_t \psi\big(t,\dist(x^1),...\dist(x^{10}) \big) 
        \\
        &\qquad \qquad - \frac{1}{2} \sum_{i = 1}^{10} \partial_{z^iz^i} \psi\big(t,\dist(x^1),...,\dist(x^{10})\big) |D\dist(x^i)|^2  - \frac{c_0}{2} \sum_{i = 1}^{10} \big| \partial_{z^i} \psi\big(t, \dist(x^1),...,\dist(x^{10})\big)\big|
    \end{align*}
    In particular, using property (2) from Proposition \ref{prop.supersolexists} $\psi$, the fact that $\partial_{z^i} \psi \leq 0$ for $\bz \in \R_+^{10}$, and the fact that $|D\dist(x^i)|^2 = 1$ for $x^i \in \big(\partial D\big)_{\delta}$, we find that
     \begin{align*}
        &\partial_t \Psi - \frac{1}{2}\sum_{i = 1}^{10} \Delta_{x^i} \Psi \gtrsim e^{-ct} \Big(\sum_{i = 1}^{10} \rho(x^i) \Big)^{-9} \text{  in  }  \R_+ \times \big(\partial D\big)_{\delta}^{10}.
    \end{align*}
    Meanwhile, by property (5) in Proposition \ref{prop.supersolexists}, we find that 
    \begin{align*}
       \partial_t \Psi - \frac{1}{2} \sum_{i = 1}^{10} \Delta_{x^i} \Psi \gtrsim - \exp(- c' t) \text{ in } \R_+ \times \big( D^{10} \setminus  \big(\partial D\big)_{\delta}^{10} \big). 
    \end{align*}
    As a consequence, for large enough $C$, the function 
    \begin{align*}
       \wt{\Psi} : \R_+ \times D^{10} \to \R, \quad  \wt{\Psi}(t,\bx) = \Psi(t,\bx) + C \int_0^t \exp( - c' s) ds
    \end{align*}
    satisfies 
         \begin{align*}
       \partial_t \wt{\Psi} - \frac{1}{2} \sum_{i = 1}^{10} \Delta_{x^i} \wt{\Psi} &\gtrsim \exp(-c' t) + \exp\big(- (\lambda_2 - \lambda_1) t \big) \Big( \sum_{i = 1}^{10} \rho(x^i) \Big)^{-9} 1_{\bx \in \big(\partial D\big)_{\delta}^{10}}
       \\
       & \gtrsim \exp\big(- (\lambda_2 - \lambda_1) t \big) \Big(\sum_{i = 1}^{10} \rho(x^i) \Big)^{-9} \text{ in } \R_+ \times D^{10}, 
    \end{align*}
    where in the last line we used the fact that $c' < \lambda_2 - \lambda_1$. It follows that 
    \begin{align*}
    \Psi_2^N(t,\bx) = \frac{1}{N(N-1)...(N-9)} \sum_{\substack{i_1,...,i_{10} = 1,...,N \\ i_1,...,i_{10} \text{ distinct}}} \wt{\Psi}\big(t,x^{i_1},..x^{i_{10}}\big)
\end{align*}
satisfies 
\begin{align*}
    \partial_t \Psi_2^N - \frac{1}{2} \sum_i \Delta_{x^i} \Psi_2^N &\gtrsim \exp\big( - (\lambda_2 - \lambda_1)t \big) \frac{1}{N(N-1)...(N-9)} \sum_{\substack{i_1,...,i_{10} = 1,...,N \\ i_1,...,i_{10} \text{ distinct}}} \Big( \rho(x^{i_1}) + ... + \rho_(x^{i_{10}}) \Big)^{-9}
    \\
    &\gtrsim \exp\big( - (\lambda_2 - \lambda_1)t \big) \Big( \frac{1}{N} \sum_{i = 1}^N \rho(x^i) \Big)^{-9},
\end{align*}
with the last line using the convexity of $z \mapsto z^{-9}$ and the fact that
\begin{align*}
    \frac{10}{N} \sum_{i = 1}^n \rho(x^i) = \frac{1}{N(N-1)...(N-9)} \sum_{\substack{i_1,...,i_{10} = 1,...,N \\ i_1,...,i_{10} \text{ distinct}}} \Big( \rho(x^{i_1}) + ... + \rho(x^{i_{10}}) \Big)
\end{align*}
This establishes \eqref{psi2n.bound1}. Finally, the fact that $\partial_{z^i} \psi \leq 0$ for $\bz \in \R_+^{10}$ implies that for $\bx \in \partial_{1,i}(D^N)$, we have $\Psi_2^N(t,\bx) \geq \Psi_2^N(t,\bx^{i,j})$ for \textit{each} $j \neq i$, which obviously implies \eqref{psi2n.bound2}. This completes the proof. 
\end{proof}

\begin{proof}[Proof of Proposition \ref{prop.uniformintime.pointwise}]
        For $\eps_1,\eps_2 \in (0,1)$, consider the optimization problem 
        \begin{align*}
            M_{\eps_1,\eps_2} = \sup_{(t,\bx) \in [1,\infty) \times \ov{D}^N} \Big\{ U^N(t,\bx) - V^N(t,\bx) - \eps_1 \Psi_1^N(t,\bx) - \eps_2 \Psi_2^N(t,\bx) \Big\}. 
        \end{align*}
       We are going to argue for simplicity as if we can find an optimizer $(\hat t, \hat \bx)$ for any $\eps_1,\eps_2 > 0$. To guarantee the existence of an optimizer, we can use the same strategy as in the proof of Proposition \ref{prop.unittime}, looking instead at
       \begin{align*}
            M_{\eps_1,\eps_2, \delta, \eta} = \sup_{(t,\bx) \in [1,\infty) \times \ov{D}^N} \Big\{ U^N(t,\bx) - V^N(t,\bx) - \eps_1 \Psi_1^N(t,\bx) - \eps_2 \Psi_2^N(t,\bx) - \delta \Big( \Xi_1^N(t,\bx) + \Xi_{2,\eta}^N(t,\bx) 
            +t \Big) \Big\}, 
        \end{align*}
        and sending first $\eta \to 0$ and then $\delta \to 0$ for each fixed $\eps_1,\eps_2$. Since this technical points is handled exactly as in the proof of Proposition \ref{prop.unittime}, we ignore it here, and assume we can find an optimizer for the problem defining $M_{\eps_1,\eps_2}$.        
        \newline \newline 
        \textit{Step 1: Avoiding the lateral boundary.} 
        Suppose first that $t > 1$, and that for some $i \in \{1,...,N\}$, $\hat{x}^i \in \partial D$. Then, arguing as in the proof of Proposition \ref{prop.unittime}, we find that 
   \begin{align} 
       0 &= V^N(\hat t, \hat \bx) - \frac{1}{N-1} \sum_{j \neq i} V^N(\hat{t}, \hat{\bx}^{i,j})
     \nonumber  \\
       &\leq U^N(\hat t, \hat \bx) - \frac{1}{N-1} \sum_{j \neq i} U^N(\hat{t}, \hat{\bx}^{i,j}) - \sum_{m = 1}^2 \eps_m \Big( \Psi_m^N(\hat t, \hat \bx) - \frac{1}{N-1} \sum_{j \neq i} \Psi_m^N(\hat{t}, \hat{\bx}^{i,j}) \Big)
   \end{align}
   By Lemmas \ref{lem.Psi1N}, \ref{lem.Psi2N}, we have 
   \begin{align*}
        &\Psi_1^N(\hat t, \hat \bx) - \frac{1}{N-1} \sum_{j \neq i} \Psi_1^N(\hat{t}, \hat{\bx}^{i,j}) \geq  \exp\big(-(\lambda_2 - \lambda_1) t \big) \frac{7}{2^{8} N}  \Big(\frac{1}{N} \sum_{j = 1}^N \rho(\hat x^j) \Big)^{-7},
        \\
        & \Psi_2^N(\hat t, \hat \bx) - \frac{1}{N-1} \sum_{j \neq i} \Psi_2^N(\hat{t}, \hat{\bx}^{i,j}) \geq 0.
   \end{align*}
   Thus, we obtain
   \begin{align*}
      \frac{\eps_1}{N}& \exp\big( - (\lambda_2 - \lambda_1) t \big) \Big(\frac{1}{N} \sum_{j = 1}^N \rho(\hat x^j) \Big)^{-2} \lesssim  \frac{\eps_1}{N} \exp\big( - (\lambda_2 - \lambda_1) t \big) \Big(\frac{1}{N} \sum_{j = 1}^N \rho(\hat x^j) \Big)^{-7}
      \\
      &\qquad \qquad \lesssim U^N(\hat t, \hat \bx) - \frac{1}{N-1} \sum_{j \neq i} U^N(\hat{t}, \hat{\bx}^{i,j})
      \lesssim \frac{1}{N^2} \exp\big(- (\lambda_2 - \lambda_1) t \big) \Big(\frac{1}{N} \sum_{j = 1}^N \rho(\hat x^j) \Big)^{-2}, 
   \end{align*}
   where the last line uses Proposition \ref{prop.UN.properties}. We deduce that if $\eps_1 = C/N$ for some large constant $C$, then any optimizer $(\hat t, \hat \bx)$ with $\hat t > 1$ must satisfy $\hat \bx \in D^N$. 
   \newline \newline 
   \textit{Step 2: Using the equation.} Now we suppose that we have an optimizer $(\hat t, \hat \bx) \in (1,\infty) \times D^N$. Arguing as in the proof of Proposition \ref{prop.unittime}, we get
    \begin{align*}
        0 &=  \partial_t V^N(\hat t, \hat \bx) - \frac{1}{2} \sum_{i = 1}^N \Delta_{x^i} V^N(\hat t, \hat \bx) 
        \\
        &\leq  \partial_t U^N(\hat t, \hat \bx) - \frac{1}{2} \sum_{i = 1}^N \Delta_{x^i} U^N(\hat t, \hat \bx) - \sum_{m = 1}^2 \eps_m \Big( \partial_t \Psi_m^N(\hat t, \hat \bx) - \frac{1}{2} \sum_{i = 1}^N \Delta_{x^i} \Psi_m^N(\hat t, \hat \bx) \Big).
    \end{align*}
    Applying Proposition \ref{prop.UN.properties} and Lemmas \ref{lem.Psi1N}, \ref{lem.Psi2N}, we find the estimate 
    \begin{align*}
     \eps_2 &\exp\big(- (\lambda_2 - \lambda_1)t \big) \Big(\frac{1}{N} \sum_{i = 1}^N \rho(x^i) \Big)^{-9} \lesssim 
        \eps_2 \Big( \partial_t \Psi_2^N(\hat t, \hat \bx) - \frac{1}{2} \sum_{i = 1}^N \Delta_{x^i} \Psi_2^N(\hat t, \hat \bx) \Big) 
        \\
        &\leq  \partial_t U^N(\hat t, \hat \bx) - \frac{1}{2} \sum_{i = 1}^N \Delta_{x^i} U^N(\hat t, \hat \bx) - \eps_1 \Big( \partial_t \Psi_1^N(\hat t, \hat \bx) - \frac{1}{2} \sum_{i = 1}^N \Delta_{x^i} \Psi_1^N(\hat t, \hat \bx) \Big)
        \\
        &\lesssim \frac{1}{N} \exp\big(- (\lambda_2 - \lambda_1) t \big) \Big(\frac{1}{N} \sum_{i = 1}^N \rho(x^i) \Big)^{-2} + \eps_1 \exp\big(- (\lambda_2 - \lambda_1) t \big) \Big(\frac{1}{N} \sum_{i = 1}^N \rho(x^i) \Big)^{-9}
        \\
        &\lesssim \Big(\frac{1}{N} + \eps_1 \Big)   \exp\big(- (\lambda_2 - \lambda_1) t \big) \Big(\frac{1}{N} \sum_{i = 1}^N \rho(x^i) \Big)^{-9}. 
    \end{align*}
    If we choose $\eps_1 = C/N$ and then $\eps_2 = C'\eps_1$ for $C$ and $C'$ large enough, then we reach a contradiction, which means that there cannot be an optimizer in $(1,\infty) \times D^N$. 
    \newline \newline 
    \noindent \textit{Step 3: Conclusion.} Combining Steps 1 and 2, we see that if $C$ and $C'$ are large enough, and we set
    \begin{align*}
        \eps_1 = \frac{C}{N}, \quad \eps_2 = C'\eps_1 = \frac{CC'}{N},
    \end{align*}
    then any optimizer $(\hat t, \hat \bx)$ for the problem defining $M_{\eps_1,\eps_2}$ must satisfy $\hat t = 1$. Moreover, by Proposition \ref{prop.unittime}, we see that by choosing $C$ larger if necessary, we can guarantee that 
    \begin{align*}
        \sup_{\bx \in D^N} \Big\{ U^N(1,\bx) - V^N(1,\bx) - \frac{C}{N} \Psi_1^N(1,\bx) \Big\} \leq 0.
    \end{align*}
    Thus, we find that
    \begin{align*}
        \sup_{(t,\bx) \in [1,\infty) \times D^N} & \Big\{  U^N - V^N - \frac{C}{N} \Psi_1^N - \frac{CC'}{N} \Psi_2^N \Big\} 
        \\
        &=  \sup_{\bx \in D^N} \Big\{ U^N(1,\bx) - V^N(1,\bx) - \frac{C}{N} \Psi_1^N(1,\bx) - \frac{CC'}{N} \Psi_2^N(1,\bx) \Big\}
        \\
        &\leq  \sup_{\bx \in D^N} \Big\{ U^N(1,\bx) - V^N(1,\bx) - \frac{C}{N} \Psi_1^N(1,\bx) \Big\} \leq 0, 
    \end{align*}
    or in other words
    \begin{align*}
        U^N(t,\bx) - V^N(t,\bx) \leq \frac{C}{N} \Psi_1^N + \frac{CC'}{N} \Psi_2^N, \quad \text{for all } (t,\bx) \in [1,\infty) \times D^N.
    \end{align*}
   Using the upper bounds on $\Psi_1^N$ and $\Psi_2^N$ completes the proof.  
\end{proof}

\subsection{Uniform in time weak propagation of chaos}

We now complete the proof of Theorem \ref{thm.conv.finitetime}. 

\begin{proof}[Proof of Theorem \ref{thm.conv.finitetime}]
    By the triangle inequality, 
    \begin{align*}
        \Big|U(t,m) & - \int_{D^N} V^N(t,\bx) m^{\otimes N}(d\bx) \Big| 
        \\
        & \leq  \Big| U(t,m) - \int_{D^N} U\big(t,m_{\bx}^N\big) m^{\otimes N}(d\bx) \Big| + \int_{D^N} \Big| U^N(t,\bx) - V^N(t,\bx) \Big| m^{\otimes N}(d\bx) \coloneqq I + II
    \end{align*}
    To estimate $I$, we mimic the arguments of \cite{ChassegneuxSzpruchTse}. In particular, for any $\bx \in D^N$, we write 
    \begin{align*}
         U\big(t,m_{\bx}^N \big) - U(t,m) = \int_0^1 \frac{\delta U}{\delta m} \Big(t, [m, m_{\bx}^N]_s ,y \Big) d\big(m_{\bx}^N - m\big), 
    \end{align*}
    so that 
    \begin{align*}
        &\int_{D^N} U^N(t,\bx) m^{\otimes N}(d\bx) - U(t,m) 
        \\
        &\quad = \E\big[ U(t, m_{\bm{\xi}}^N) - U(t,m) \big]
        \\
        &\quad = \int_0^1 \bigg( \E\Big[\frac{\delta U}{\delta m}\Big(t, [m, m_{\bm{\xi}}^N]_s, \xi^1 \Big) \Big] - \E\Big[\frac{\delta U}{\delta m}\Big(t, [m, m_{\bm{\xi}}^N]_s, \wt{\xi} \Big) \Big] \bigg)ds
        \\
        &\quad = \int_0^1 \bigg( \E\Big[\frac{\delta U}{\delta m}\Big(t, [m, m_{\bm{\xi}}^N]_s, \xi^1 \Big) \Big] - \E\Big[\frac{\delta U}{\delta m}\Big(t, [m, m_{\wt{\bm{\xi}}}]_s, \xi^1 \Big) \Big] \bigg)ds
        \\
        &\quad = \frac{1}{N} \int_0^1 \int_0^1 s \bigg( \E\Big[ \frac{\delta^2 U}{\delta m^2}\Big(t, \Big[ [m, m_{\wt{\bm{\xi}}}^N]_s, [m, m_{{\bm{\xi}}}^N]_s  \Big]_r, \xi^1, \xi^1\Big) \Big]
        \\
        &\qquad \qquad \qquad \qquad -  \E\Big[ \frac{\delta^2 U}{\delta^2 m}\Big(t, \Big[ [m, m_{\wt{\bm{\xi}}}^N]_s, [m, m_{{\bm{\xi}}}^N]_s  \Big]_r, \xi^1, \wt{\xi} \Big) \Big]\bigg) ds dr
    \end{align*}
    where $\wt{\xi}, \xi^1,...,\xi^N$ are independent with law $m$, $\bm{\xi} = (\xi^1,...,\xi^N)$ and $\wt{\bm{\xi}} = (\wt{\xi},\xi^2,...,\xi^N)$. Note that by convexity of $z \mapsto z^{-2}$, 
    \begin{align*}
          \bigg(  \int_D \rho d \Big[ [m, m_{\wt{\bm{\xi}}}^N]_s, [m, m_{{\bm{\xi}}}^N]_s  \Big]_r \bigg)^{-2} \leq \text{max} \bigg\{ \Big(\int_D \rho \, d m \Big)^{-2}, \Big(\int_D \rho \, dm_{{\bm{\xi}}}^N\Big)^{-2}, \Big(\int_D \rho \, dm_{{\wt{\bm{\xi}}}}^N\Big)^{-2}  \bigg\}, 
    \end{align*}
    so that 
    \begin{align*}
        \E\bigg[  \bigg(  \int_D \rho d \Big[ [m, m_{\wt{\bm{\xi}}}^N]_s, [m, m_{{\bm{\xi}}}^N]_s  \Big]_r \bigg)^{-2} \bigg] \lesssim \Big(\int_D \rho dm \Big)^{-2} + \E\bigg[ \Big(\int_D \rho \, dm_{{\bm{\xi}}}^N\Big)^{-2} \bigg].
    \end{align*}
    In light of the estimate
    \begin{align*}
        \Big|\frac{\delta^2 U}{\delta m^2} (t,m,x,x')\Big| \lesssim \Big( \int \rho dm\Big)^{-2}, 
    \end{align*}
    we deduce that 
    \begin{align*}
        &\Big| \int_{D^N} U^N(t,\bx) m^{\otimes N}(d\bx) - U(t,m) \Big| \lesssim \frac{1}{N} \bigg( 1 + \E\bigg[ \Big(\int_D \rho \, dm_{{\bm{\xi}}}^N\Big)^{-2} \bigg]\bigg) 
        \\
        &\qquad \qquad = \frac{1}{N} \bigg(1 + \int_{D^N} \Big(\frac{1}{N} \sum_{i = 1}^N \rho(x^i) \Big)^{-2} \bigg) dm^{\otimes N}(d\bx).
    \end{align*}
    The last integral is clearly bounded because $\supp(m) \Subset D$, so we get $I \lesssim 1/N$.

    We now turn to estimating $II$. We apply Propositions \ref{prop.unittime} and \ref{prop.uniformintime.pointwise} to get 
    \begin{align*}
        II \lesssim N^{-1} \bigg[  \int_{D^{10}} \Big( \sum_{i = 1}^{10} \rho(x^i) \Big)^{-9} dm^{\otimes 10} + 1_{t \leq 1} \int_D w(t,x) dm(x) \bigg].
    \end{align*}
    Again, the integrals are clearly bounded because $\supp(m) \Subset D$ by assumption, so this shows that $II \lesssim N^{-1}$ and completes the proof.
\end{proof}

We now explain some implications of Theorem \ref{thm.conv.finitetime} for the Fleming-Viot particle system explained in the introduction. To this end, we fix a measure $m \in \cP(D)$, and we denote by $\bX = (X^1,...,X^N)$ a solution to the Fleming-Viot particle system with $\cL(\bX_0) \sim m^{\otimes N}$.

\begin{lem} \label{lem.hminus}
    For any $s > d/2 + 2$, and any $n \in \cP(D)$, the function $G(m) = \frac{1}{2} \norm{ m - n }_{H^{-s}}^2$ lies in $\cC^{2 + \alpha}\big(\cP(\ov D) \big)$ for some $\alpha \in (0,1)$. The first and second linear derivatives are given explicitly by
    \begin{align}
       & \frac{\delta G}{\delta m}(m,x) = (m - n)^*(x) - \langle m, m - n \rangle_{H^{-s}},
      \nonumber  \vspace{.4cm}  \\
      &  \frac{\delta^2 G}{\delta m^2}(m,x,x') =(\delta_{x'})^*(x) - m^*(x) + n^*(x') - 2m^*(x') + 2 \|m\|_{H^{-s}}^2 - \langle m,n \rangle_{H^{-s}}.
    \end{align}
   Moreover, $\frac{\delta G}{\delta m}$, $\frac{\delta^2 G}{\delta m^2}$, $D_m G$, $D_{mm} G$ are bounded in all their arguments, independently of $n$. 
\end{lem}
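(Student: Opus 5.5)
We will work in the Hilbert space $H^{-s}$ through the Riesz isomorphism: for $\mu \in H^{-s}$ write $\mu^* \in H^s$ for the unique element with $\int \phi \, d\mu = \langle \phi, \mu^*\rangle_{H^s}$ for every $\phi \in H^s$. Then $\langle \mu,\nu\rangle_{H^{-s}} = \int \nu^* d\mu$. Every finite measure on $\ov D$ lies in $H^{-s}$ because $s > d/2$ gives $H^s \hookrightarrow C(\ov D)$. Writing $K(x,x') = \langle \delta_x, \delta_{x'}\rangle_{H^{-s}} = (\delta_{x'})^*(x)$ for the reproducing kernel, we have $\mu^*(x) = \int K(x,x')\mu(dx')$.

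\textbf{Computing the derivatives.} Since $G$ is a quadratic polynomial in $m$, the derivatives can be read off exactly. For $m,m'$ we have
\begin{align*}
G(m') - G(m) = \int (m-n)^* \, d(m'-m) + \tfrac12 \|m'-m\|_{H^{-s}}^2 ,
\end{align*}
and this gives $\frac{\delta G}{\delta m}(m,x) = (m-n)^*(x)$ up to an additive constant. We choose the constant $-\int (m-n)^* dm = -\langle m, m-n\rangle_{H^{-s}}$ so that the normalization \eqref{linderiv.normalization} holds, and we check \eqref{def.linearderiv} by integrating along $[m,m']_s$. We then differentiate $m \mapsto (m-n)^*(x) - \langle m,m-n\rangle_{H^{-s}}$ in the same way. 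The first term contributes $(\delta_{x'})^*(x) = K(x,x')$. The second term is quadratic, so it contributes $-(2m-n)^*(x')$. Subtracting the $m(dx')$-average then yields the stated formula; the exact bookkeeping of constants only has to match \eqref{linderiv.normalization}.

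\textbf{Regularity and bounds.} Everything reduces to smoothness of $K$. The key estimate is
\begin{align*}
\sup_{|\alpha|,|\beta| \le 2} \big|\partial_x^\alpha \partial_{x'}^\beta K(x,x')\big| \le \sup_{|\alpha| \le 2} \|\partial^\alpha \delta_x\|_{H^{-s}}^2 ,
\end{align*}
which follows from $\partial_x^\alpha\partial_{x'}^\beta K = \langle \partial^\alpha\delta_x, \partial^\beta \delta_{x'}\rangle_{H^{-s}}$ and Cauchy--Schwarz. The right-hand side is finite because $\phi \mapsto \partial^\alpha\phi(x)$ is bounded on $H^s$ whenever $s > d/2 + 2$, by the Sobolev embedding $H^s \hookrightarrow C^{2,\alpha}(\ov D)$ for some $\alpha \in (0,1)$ with $s - d/2 > 2 + \alpha$.

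The same embedding gives Hölder continuity of the maps $x \mapsto \partial^\alpha \delta_x \in H^{-s}$ for $|\alpha| \le 2$, with exponent $\alpha$. Hence $K \in C^{2+\alpha}$ in each variable, with the mixed second derivatives Hölder.

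All the pieces $\mu^*(x) = \int K(x,y)\mu(dy)$ for $\mu \in \{m,n\}$ are averages of $K$ against probability measures, so they inherit these bounds uniformly in $m$ and $n$. It follows that $\frac{\delta G}{\delta m}$, $\frac{\delta^2 G}{\delta m^2}$, $D_mG = D_x(m-n)^*$, $D_xD_mG$ and $D_{mm}G = D_xD_{x'}K$ are bounded independently of $n$.

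Hölder continuity in the measure variable is in $\bd_1$. For instance,
\begin{align*}
D_x m^*(x) - D_x \tilde m^*(x) = \int D_xK(x,y)\,(m-\tilde m)(dy),
\end{align*}
and $y \mapsto D_xK(x,y)$ is Lipschitz uniformly in $x$, since it is $C^1$ in $y$. So this difference is bounded by $C\bd_1(m,\tilde m)$.

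\textbf{Main obstacle.} The main care point is the pointwise Sobolev bound for second derivatives of Dirac masses up to the boundary of $D$. This needs an extension operator from $H^s(D)$ to $H^s(\R^d)$, which exists for smooth bounded $D$, together with the Hölder version of the embedding to obtain the $\alpha$-regularity of the top derivatives $D_{mm}G$ and $D_xD_mG$.
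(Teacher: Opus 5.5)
Your proposal is correct and follows essentially the same route as the paper. The derivative formulas come from the Riesz/Hilbert structure of $H^{-s}$, with the additive constants fixed by subtracting the $m$-average. All boundedness and H\"older/Lipschitz properties reduce to the embedding $H^s \hookrightarrow C^{2,\alpha}(\ov D)$ together with $\|\mu-\nu\|_{H^{-s}} \lesssim \bd(\mu,\nu)$. Your kernel $K(x,x') = (\delta_{x'})^*(x)$ is just the dual form of the paper's estimate $\|D^2(m-m')^*\|_\infty \lesssim \|m-m'\|_{H^{-s}}$; its small advantage is that it makes the regularity of $D_{mm}G = D_xD_{x'}K$ explicit, which the paper leaves as ``similar''.
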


\begin{proof}
    Obviously, $G$ extends in a natural way to all $q \in {H^{-s}}$, i.e. $G(q) = \frac{1}{2} \| q - n\|_{H^{-s}}^2$. Moreover, $G \in \cC^1(H^{-s})$, with the Fr\'echet derivative given by 
    \begin{align*}
        D_{H^{-s}} G(q)(\cdot) = (q - n)^* \in H^s(\ov D). 
    \end{align*}
    Therefore, for any $m,m' \in \cP(\ov D) \subset H^{-s}$, 
    \begin{align*}
        G(m') - G(m) &= \int_0^1 \langle D_{H^{-s}} G\big( [m',m]_s\big), m' - m \rangle_{s,-s} ds = \int_0^1 \langle \big( [m',m]_s - n\big)^*, (m' - m) \rangle_{s,-s}
         \\
         &= \int_0^1 \int_{\ov D} \big( [m,m']_s - n\big)^*(x) (m' - m)(dx). 
    \end{align*}
    Together with the fact that 
    \begin{align*}
        \int_{\ov D} (m - n)^*(x) m(dx) = \langle (m - n)^*, m \rangle_{s,-s} = \langle (m-n), m \rangle_{H^{-s}}, 
    \end{align*}
    this establishes the formula for $\frac{\delta G}{\delta m}$. Next, we fix $x$, and fix $m,m' \in \cP(\ov D)$, and set $m^{\eps} = [m,m']_{\eps}$. We write
    \begin{align*}
        (m^{\eps} - n)^*(x) - (m - n)^*(x) = \eps (m' - m)^*(x) = \eps \int_{\ov D} \big( \delta_{x'} - m\big)^*(x) m'(dx'),  
    \end{align*}
    and 
    \begin{align*}
        \langle m^{\eps}, m^{\eps}& - n \rangle_{H^{-s}} - \langle m, m - n \rangle_{H^{-s}} = 2 \eps \langle m' - m, m \rangle_{H^{-s}} - \eps \langle m' - m, n \rangle_{H^{-s}} + o(\eps)
        \\
        &= 2 \eps \langle m',m \rangle_{H^{-s}} -\eps  \langle m', n \rangle_{H^{-s}} - 2 \eps \|m\|_{H^{-s}}^2 + \eps \langle m,n \rangle_{H^{-s}} + o(\eps)
        \\
        &=  \eps \int_{\ov D} \Big(  2m^*(x') - n^*(x') - 2 \|m\|_{H^{-s}}^2 + \langle m,n \rangle_{H^{-s}} \Big)  m'(dx') + o(\eps).
    \end{align*}
    Combining the previous two computations, we find that 
    \begin{align*}
       & \frac{\delta G}{\delta m}(m^{\eps}, x) -   \frac{\delta G}{\delta m}(m, x) 
        \\
       &\quad  = \int_{\ov D} \Big[ \big(\delta_{x'} - m\big)^*(x) + n^*(x') - 2m^*(x') + 2 \|m\|_{H^{-s}}^2 - \langle m,n \rangle_{H^{-s}} \Big] m'(dx') + o(\eps).
    \end{align*}
    Together with the fact that 
    \begin{align*}
        \int_{\ov D} \Big[ \big( \delta_{x'} - m\big)^*(x') + n^*(x') - 2 m^*(x') \Big] m(dx') = \langle n, m \rangle_{H^{-s}} - 2 \|m\|_{H^{-s}}^2, 
    \end{align*}
    this establishes the formula for the second derivative.

    Now, to complete the proof, we must show that $D_x D_m G$ and $D_{mm} G$ exist and are H\"older continuous, and $\frac{\delta G}{\delta m}$, $\frac{\delta^2 G}{\delta m^2}$, $D_m G$, $D_x D_m G$ and $D_{mm} G$ are each bounded, independently of $n$. This is a straightforward but tedious consequence of Sobolev embedding, so we explain only the boundedness and H\"older continuity of $D_x D_m G = D_{xx} \frac{\delta^2 G}{\delta m^2}$, and omit the rest of the argument, which follows along similar lines. First, note that because $H^s$ embeds into $L^{\infty}$ and $m$ and $n$ are probability measures, $m-n$ is bounded in $H^{-s}$ by a constant independent of $m$ and $n$. As a consequence, $(m-n)^*$ is bounded in $H^s$ by a constant independent of $m$ and $n$. Since $s > d/2 + 2$, $H^s$ embeds into $C^{2+\alpha}(\ov D)$ for some $\alpha \in (0,1)$, and so
    \begin{align*}
        D_m G (m,x) = D_x \frac{\delta G}{\delta m}(m,x) = D (m -n)^*(x), \quad D_x D_m G(m,x) = D_{xx} \frac{\delta G}{\delta m}(m,x) = D^2 (m-n)^*(x)
    \end{align*}
    exist, and $\| D_m G(m,\cdot)\|_{C^{\alpha}(\ov D)} + \|D_x D_m G(m,\cdot)\|_{C^{\alpha}(\ov D)}$ is bounded by a constant independent of $m$ and $n$. Moreover, since $s > d/2 + 2$,  
    \begin{align*}
        \big\| D_x D_m G(m,\cdot) & - D_x D_m G(m',\cdot) \big\|_{\infty} = \norm{ D^2 (m - n)^*(\cdot) - D^2 (m' - n)^*(\cdot) }_{\infty} \\
        &\leq \norm{ (m-m')^*}_{H^s} = \norm{m-m'}_{H^{-s}}
        \\
        &\lesssim \| m - m' \|_{H^{-s}}
        \lesssim \bd(m,m'), 
    \end{align*}
    so $D_x D_m G(\cdot, x)$ is Lipschitz, uniformly in $n$ and $x$. 
\end{proof}

\begin{proof}[Proof of Corollary \ref{cor.particlesystem.hminuss}]
  To prove \eqref{hminuss.est}, fix $m \in \cP(D)$ with $\supp(m) \Subset D$ and $t \geq 0$. Let $U$ be as defined in \eqref{def.U} with
  \begin{align*}
      G(n) = \frac{1}{2} \|n - \ov p_t^m\|_{H^{-s}}^2, 
  \end{align*}
  and $V^N$ be the unique solution to \eqref{eqn.VN} with $G^N(\bx) = G(m_{\bx}^N)$. 
  
 By Lemma \ref{lem.hminus}, $G$ is in $\cC^{2 + \alpha}\big(\cP(\ov D)\big)$, and $\| G\|_{\cC^2}$ is bounded independently of $t$ and $m$. Moreover, by design, $U(t,m) = 0$, so the result follows from Theorem \ref{thm.conv.finitetime}.

  To prove \eqref{d1.est}, notice that for any $s > 1$, we have the bound 
  \begin{align} \label{comp.d1hminuss}
      \bd_1(m,n) \lesssim \| m - n\|_{H^{-s}}^{1/s}. 
  \end{align}
  To see this, fix a $1$-Lipschitz function $\phi$, set $\phi_{\eps} = \rho_{\eps} * \phi$, where $(\rho_{\eps})_{\eps > 0}$ is a standard approximation to the identity with $\text{diam}(\text{supp}(\rho_{\eps})) \lesssim \eps$. Then 
  \begin{align*}
     \Big| \int_D \phi d(m-n) \Big| &\leq \Big|\int_D \phi_{\eps} d(m-n) \Big| + \Big| \int_{D} (\phi_{\eps} - \phi) d(m - n) \Big| 
     \\
     &\leq \|\phi_{\eps}\|_{H^s} \|m - n\|_{H^{-s}} + 2 \|\phi_{\eps} - \phi\|_{\infty}
     \\
     &\lesssim \eps^{-(s-1)} \|m - n\|_{H^{-s}} + \eps, 
  \end{align*}
  so choosing $\eps = \|m - n\|_{H^{-s}}^{1/s}$ and taking a supremum over 1-Lipschitz functions $\phi$ gives \eqref{comp.d1hminuss}. 
  
  We can thus take $s = d/2 + 2 + \theta/2$, and apply \eqref{hminuss.est} to estimate 
    \begin{align*}
        \E \Big[\bd_1\big(m_{t}^N, \ov{p}_t^{m}\big)\Big] &\lesssim \E\Big[\| m_{t}^N - \ov{p}_t^{m} \|_{H^{-s}}^{1/s}\Big] \leq \Big( \E\Big[\| m_{t}^N - \ov p_t^{m} \|_{H^{-s}}^2\Big] \Big)^{1/(2s)} \lesssim N^{-1/(2s)}.
    \end{align*}
    this completes the proof.
\end{proof}



\begin{proof}[Proof of Corollary \ref{cor.invariantdist}]
    We send $t \to \infty$ in Theorem \ref{thm.conv.finitetime} and use the fact that $\cL(\bX_t) \to P^N$ weakly and $\ov{p}_t^{m} \to \rho \, dx$ weakly, together with the continuity of $\bx \mapsto G(m_{\bx}^N)$.
     This proves \eqref{invariantest1}. The estimates \eqref{invariantest2} and \eqref{invariantest3} follow from \eqref{invariantest1} as in the proof of Corollary \ref{cor.particlesystem.hminuss}.
\end{proof}

\appendix 

\section{Continuity and long time behavior of (normalized) heat flow}\label{app.flow}

We discuss in this appendix some properties of $(p_t^m)_{t \geq 0}$, which is the unique solution of
\begin{align} \label{eqn.pappe}
    \partial_t p = \frac{1}{2} \Delta p \text{ in } \R_+ \times D, \quad p\big|_{(0,\infty) \times \partial D} = 0, \quad p_0 = m\big|_D,
\end{align}
and of the normalized flow $\ov{p}^m_t = (p^m_t(D))^{-1} p^m_t$. It is standard to represent $p_t^m$ as follows :
\begin{equation}\label{eqn.repr}
p_t^m(x) = \sum_{n =1}^{\infty} \exp(- \lambda_n t) \langle m\big|_D, \rho_n \rangle  \rho_n(x) = \sum_{n =1}^{\infty} \exp(- \lambda_n t) \langle m, \rho_n \rangle  \rho_n(x) ,
\end{equation}
that $p_t^m$ is smooth for $t > 0$, and that $p_t^m$ converges to $m$ as $t \to 0$ in the sense of distributions (that is, against smooth test functions with compact support). By a density argument, such convergence holds actually against $W^1_0$ test functions, and since $m(K^c)$ can be made arbitrarily small by a suitable choice of $K \Subset D$, a localization procedure shows that continuity at $t=0$ holds with respect to the metric $\bd$.

\begin{lem}
    \label{lem.masslowerbound} 
    There are constants $\eps_0, C_0, C_1, C_2 > 0$ with the following property: for any $m \in \cP^+(\ov D)$, we have 
    \begin{align}
        C_0e^{-\lambda_1 t} \Big(\int_D \rho dm\Big) \le p_t^m(D) \le C_1   e^{-\lambda_1 t} \quad \forall t>0,
    \end{align}
    and
     \begin{align*}
        \int_D \rho \, d \ov{p}_1^m \geq \eps_0, \quad \| \ov{p}_1^m \|_{L^2(D)} \leq C_2.
    \end{align*}
\end{lem}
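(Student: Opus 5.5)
We will use the spectral representation \eqref{eqn.repr} to get the upper bounds, and a lower barrier argument built from the first eigenfunction for the lower bounds. For the upper bound on the mass, we write $p_t^m(D) = \int_D q(t,x)\, m(dx)$ with $q(t,x) = p_t^x(D)$. The function $q$ solves the Dirichlet heat equation with initial datum $1$, so $0 \le q \le 1$. For $t \ge 1$ the eigenfunction expansion gives
\begin{align*}
\|q(t,\cdot)\|_\infty \lesssim \sum_n e^{-\lambda_n t}|\langle 1,\rho_n\rangle|\,\|\rho_n\|_\infty \lesssim e^{-\lambda_1 t},
\end{align*}
using Weyl asymptotics together with $\|\rho_n\|_\infty \lesssim \lambda_n^{d/4}$ to make the series summable. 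For $t\le 1$ we simply use $q\le 1\le e^{\lambda_1}e^{-\lambda_1 t}$. This yields $p_t^m(D)\le C_1 e^{-\lambda_1 t}$.

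For the lower bound we use duality. Since $\rho_1$ is an eigenfunction, $\int_D \rho_1\, dp_t^m = e^{-\lambda_1 t}\int_D \rho_1\, dm$; here points of $\partial D$ contribute nothing because $\rho_1$ vanishes there. As $\rho_1\le \|\rho_1\|_\infty$, we get $p_t^m(D)\ge \|\rho_1\|_\infty^{-1}\int_D\rho_1\,dp_t^m = C_0 e^{-\lambda_1 t}\int_D\rho\,dm$. This step is immediate.

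For $\int_D\rho\,d\ov p_1^m\ge\eps_0$, the same identity gives $\int\rho\,dp_1^m = e^{-\lambda_1}\int\rho\,dm$, but dividing by $p_1^m(D)$ only produces a bound proportional to $\int \rho\, dm$, which is not uniform in $m$. We therefore need the boundary Harnack / intrinsic ultracontractivity property: for $x\in D$, $p_1^x(y)\le C\rho(x)\rho(y)$, and also $p_1^x(y)\ge c\rho(x)\rho(y)$. For smooth bounded domains this follows from the two-sided heat kernel bounds $p_t(x,y)\asymp(1\wedge \frac{\dist(x)}{\sqrt t})(1\wedge\frac{\dist(y)}{\sqrt t})t^{-d/2}e^{-c|x-y|^2/t}$. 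Alternatively, one can argue via the expansion: $p_1^x(y)=\sum_n e^{-\lambda_n}\rho_n(x)\rho_n(y)$ with $|\rho_n(x)|\lesssim \lambda_n^{a}\dist(x)$ by boundary gradient estimates, which gives the upper bound. The lower bound is the harder part, and here we would cite the Davies--Simon intrinsic ultracontractivity result. Given these bounds, for $m\in\cP^+(\ov D)$ we have
\begin{align*}
\ov p_1^m(y)=\frac{\int p_1^x(y)\,m(dx)}{\int p_1^x(D)\,m(dx)} \in \Big[\tfrac{c\rho(y)}{C\int\rho},\,\tfrac{C\rho(y)}{c\int\rho}\Big],
\end{align*}
since the factor $\int\rho\,dm$ cancels. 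This gives both $\int\rho\,d\ov p_1^m\ge c'\int\rho^2=:\eps_0$ and $\|\ov p_1^m\|_{L^2}\le C_2$.

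The main obstacle is the two-sided bound $p_1^x(y)\asymp\rho(x)\rho(y)$, in particular the lower bound. I would first try to quote it from standard references, namely intrinsic ultracontractivity for $C^{1,1}$ domains, rather than reprove it.
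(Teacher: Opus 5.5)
Your proof is correct. For the two mass bounds it follows the paper in substance. The lower bound is the same first-eigenfunction argument: the paper compares $x\mapsto p_t^x(D)$ with $e^{-\lambda_1 t}\rho/\|\rho\|_\infty$ by the maximum principle, which is dual to your identity $\int\rho_1\,dp_t^m=e^{-\lambda_1 t}\int\rho_1\,dm$. The upper bound is a spectral expansion in both cases: you expand $P_t1$ in sup norm, while the paper expands $p_t^m$ in $L^2$.

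You diverge in the last part. There you invoke the two-sided bound $p_1^x(y)\asymp\rho(x)\rho(y)$ and single out its lower half (intrinsic ultracontractivity) as the main obstacle. That lower half is never needed. The numerator $\int\rho\,dp_1^m=e^{-\lambda_1}\int\rho\,dm$ is exact. In the $L^2$ estimate, the denominator is handled by the lower mass bound $p_1^m(D)\geq C_0e^{-\lambda_1}\int\rho\,dm$, which you have already proved. The only missing ingredient is the upper bound
\[
p_1^m(D)\le|D|^{1/2}\|p_1^m\|_{L^2}\lesssim\int\rho\,dm .
\]
The paper derives this from the eigenfunction expansion together with $|\rho_n(x)|\lesssim\lambda_n^k\dist(x)\lesssim\lambda_n^k\rho(x)$, i.e.\ the boundary gradient bound plus the Hopf lemma. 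This is essentially the ``alternative'' you sketch for the upper half of the kernel bound. With it, $\int\rho\,d\ov p_1^m\gtrsim 1$ and $\|\ov p_1^m\|_{L^2}\lesssim 1$ follow immediately, with no heat-kernel lower bound.

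Your route yields a stronger statement, namely pointwise comparability $\ov p_1^m\asymp\rho$ uniformly in $m$. It pays for this by citing Davies--Simon or Zhang-type boundary heat kernel estimates. The paper's route is self-contained, modulo polynomial eigenfunction bounds and the Hopf lemma.
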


\begin{proof}
    Notice that, by duality, 
    \begin{align*}
        p^m_t(D) = \int_D u(t,x) m(dx), 
    \end{align*}
    where $u$ solves 
    \begin{align*}
        \partial_t u = \frac{1}{2} \Delta u \text{ in } \R_+ \times D, \quad u\big|_{\R_+ \times \partial D} = 0, \quad u(0,x) = 1.
    \end{align*}
    Meanwhile, $\frac{1}{\|\rho\|_{\infty}} e^{-\lambda_1 t} \rho(x)$ satisfies the same equation with a smaller initial condition, hence the exponential lower bound on $p_t^m(D)$ follows by the comparison principle.
    
    For the upper bound, note first that by \eqref{eqn.repr} we have
   \begin{align*}
       \| p_t^m \|_{L^2}^2 = \sum_{n = 1}^{\infty} e^{-2\lambda_n t} |\langle \rho_n, m \rangle |^2. 
   \end{align*}
   Recall the estimate $\| \rho_n \|_{\infty} \lesssim \lambda_n^k$, where $k = (d+1)/4$ (see e.g. \cite{Grieser07012002, Arnaudon2018GradientEO} and the references therein), which extends to $\| D \rho_n \|_{\infty} \lesssim \lambda_n^k$ by standard elliptic regularity. Since $\partial_\nu \rho$ is negative, and bounded away from zero on $\partial D$, the pointwise estimate $| \rho_n(x)| \lesssim \lambda_n^k \rho(x)$ holds. Therefore, for $t \ge 1$,
   \begin{align}\label{eqn.l2b}
       \| p_t^m \|_{L^2}^2 \lesssim \sum_{n = 1}^{\infty} e^{-2\lambda_n t} \lambda_n^{2k}\left(\int_D \rho dm\right)^2 \le e^{-2\lambda_1 t} \left(\int_D \rho dm\right)^2 \left (1 + \sum_{n = 1}^{\infty} e^{-2(\lambda_n-\lambda_1)}\lambda_n^{2k}  \right). 
   \end{align}
   Note that the series appearing in the last line converges ($\lambda_n$ has polynomial growth by Weyl's law), hence by H\"older's inequality we get the exponential upper bound on $p_t^m(D)$ for all $t \ge 1$ (it also holds $p_t^m(D) \le 1$ for all $t$).

   To get the bound from below for $\int_D \rho \, d \ov{p}_1^m$, we first have by \eqref{eqn.repr} that
   \[
   \langle p_1^m,\rho\rangle = e^{-\lambda_1} \langle m,\rho\rangle,
   \]
   and by \eqref{eqn.l2b} evaluated at $t=1$ we obtain
   \begin{align*}
       \langle \ov p_1^m, \rho \rangle = \frac{\langle p_1^m, \rho \rangle}{p_1^m(D)}  \ge \frac{e^{-\lambda_1} \langle m,\rho\rangle}{\| p_t^m \|_{L^2} |D|^{1/2}} \gtrsim 1. 
   \end{align*}
   Furthermore, $\|\ov{p}_1^m \|_{L^2}$ is estimated again by \eqref{eqn.l2b} evaluated at $t=1$ and the lower bound on $p_1^m(D)$.
\end{proof}

\begin{lem} \label{lem.ptm}
  The maps
 \begin{align} \label{continuity1}
     (t,m) \mapsto p_t^m(D) \in \R_+, \qquad (t,m) \mapsto \ov{p}_t^m \in \cP(D)
 \end{align}
 are continuous on $(0,\infty) \times \cP^+(\ov D)$. Moreover, for any $K \Subset D$, they are continuous on $[0, \infty) \times \cP(K)$. 
 
 Finally, for any $m \in \cP^c(D)$, there is a constant $C$ such that 
 \begin{align} \label{continuity3}
     0 \leq 1 - p_h^m(D) \leq Ch, \quad \|m - \ov p_h^m\|_{(W^{2,\infty}_{0,\text{tr}})^*} \leq Ch, \quad \forall \,\, 0 \leq h \leq 1.
 \end{align}

\end{lem}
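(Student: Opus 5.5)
The continuity claims and the bounds \eqref{continuity3} will all be read off the duality representation $p_t^m(D)=\int_D u(t,x)\,m(dx)$ and, more generally, $\int_D \phi\, dp_t^m = \int_{\ov D} (P_t\phi)(x)\, m(dx)$. Here $P_t\phi$ is the Dirichlet heat semigroup applied to $\phi$, extended by zero on $\partial D$. This extension is consistent with the convention $p_0 = m|_D$ and with $p_t^x = 0$ for $x\in\partial D$.

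\emph{Continuity on $(0,\infty)\times\cP^+(\ov D)$.} For $t>0$ and any bounded $\phi$, the function $P_t\phi$ is smooth on $\ov D$ and vanishes on $\partial D$. Moreover, $(t,x)\mapsto P_t\phi(x)$ is continuous on $(0,\infty)\times\ov D$, and for $\|\phi\|_{W^{1,\infty}}\le 1$ it is equi-Lipschitz in $x$, locally uniformly in $t>0$, by parabolic regularity. Weak convergence $m_k\to m$ in $\cP(\ov D)$, together with $t_k\to t>0$, therefore gives $\int P_{t_k}\phi\, dm_k\to\int P_t\phi\, dm$ uniformly over the class $\|\phi\|_{W^{1,\infty}}\le1$. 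So $p_{t_k}^{m_k}\to p_t^m$ in $\bd$, and in particular $p_{t_k}^{m_k}(D)\to p_t^m(D)$. The limit $p_t^m(D)$ is positive by Lemma \ref{lem.masslowerbound}, because $m\in\cP^+(\ov D)$ gives $\int\rho\,dm>0$. Dividing then yields continuity of $\ov p_t^m$ in $\bd$; since the measures are probabilities, this is equivalent to continuity in $\bd_1$.

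\emph{Continuity up to $t=0$ on $\cP(K)$.} Here the obstacle is that $P_t\phi\to\phi$ as $t\to0$ holds only locally uniformly inside $D$, not uniformly on $\ov D$. Choose $K\Subset K'\Subset D$. By the maximum principle, $\sup_{\|\phi\|_{W^{1,\infty}}\le1}\|P_t\phi-\phi\|_{L^\infty(K')}\to0$ as $t\to0$; one can see this with barriers, or from $|P_t\phi(x)-\phi(x)|\le \E|\phi(x+B_{t\wedge\tau})-\phi(x)| + 2\,\mathbb P(\tau\le t)$, which is $\lesssim \sqrt t + e^{-c/t}$ uniformly for $x\in K'$. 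The same estimate together with the continuity of $P_t\phi$ in $(t,x)$ handles $m_k\to m$ in $\cP(K)$ with $t_k\to 0$. It gives $p_{t_k}^{m_k}\to m$ in $\bd$ and $p_{t_k}^{m_k}(D)\to1$, hence $\ov p_{t_k}^{m_k}\to m$.

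\emph{The bounds \eqref{continuity3}.} Fix $m\in\cP^c(D)$ with support in $K$, and take $\phi\in W^{2,\infty}$ with $\phi|_{\partial D}=0$ and $\|\phi\|_{W^{2,\infty}}\le1$. I would write
\[
\int\phi\,d(p_h^m-m)=\frac12\int_0^h\int_D\Delta\phi\,dp_s^m\,ds .
\]
This follows by testing the equation against $\phi$ and integrating by parts; the boundary terms vanish since both $p_s$ and $\phi$ vanish on $\partial D$ (justify via $W^{2,\infty}\cap W^{1,\infty}_0$ and the weak formulation). Since $p_s^m$ has mass $\le1$, this quantity is at most $h/2$. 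For the mass, I would take $u$ the solution with $u(0)=1$ and compare it with a barrier: on $K$, $1-u(h,x)\le \mathbb P(\tau_x\le h)\lesssim_K h$. This gives $0\le 1-p_h^m(D)\le Ch$, the lower bound $0\le$ coming from the maximum principle $u\le1$. Finally, $\ov p_h^m - m = (p_h^m-m)/p_h^m(D) + m\,(1/p_h^m(D)-1)$, and both pieces are $O(h)$ for $h\le1$ in the dual norm, since $p_h^m(D)\ge C_0e^{-\lambda_1}\int\rho\,dm>0$. I expect the uniform small-time estimates near $K$, which rely on the separation $\dist(K,\partial D)>0$, to be the only delicate point.
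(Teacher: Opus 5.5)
Your proposal is correct. The first part matches the paper: duality with the Dirichlet semigroup, parabolic smoothing, then Lemma~\ref{lem.masslowerbound} to divide by the mass. The paper only packages this as the quantitative estimate $\bd(p_t^{m'},p_t^m)\lesssim t^{-1/2}\bd(m,m')$ and combines it with continuity in $t$ for fixed $m$. The final $(W^{2,\infty}_{0,\text{tr}})^*$ bound is also the same as the paper's: the identity $\int\phi\,d(p_h^m-m)=\frac12\int_0^h\int_D\Delta\phi\,dp_s^m\,ds$ plus the splitting of $\ov p_h^m-m$.

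You genuinely differ in the small-time analysis on $\cP(K)$.
\begin{itemize}
\item The paper stays purely analytic. It fixes a cutoff $\kappa\in C_c^\infty(D)$ with $\kappa=1$ on $K$, and uses the weak formulation to get $1-p_h^m(D)\le\int\kappa\,d(m-p_h^m)\le h\|\kappa\|_{W^{2,\infty}}$. Splitting $f=f\kappa+f(1-\kappa)$, it also gets $\sup_{\|f\|_{W^{2,\infty}}\le1}\int f\,d(p_h^m-m)\le Ch$ for $f$ not necessarily vanishing on $\partial D$. It then passes to $\bd$ through the interpolation $\bd\lesssim\big(\|\cdot\|_{(W^{2,\infty})^*}\big)^{1/2}$, obtaining $\bd(p_{t'}^{m'},m)\lesssim\sqrt{t'}+\bd(m',m)$.
\item You instead bound $P_t\phi-\phi$ directly on $K'$ for Lipschitz $\phi$, via the killed Brownian motion and the exit-time tail. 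This gives the same $\sqrt t$ rate without the interpolation step, and even an exponentially small mass defect $1-p_h^m(D)\lesssim e^{-c/h}$.
\end{itemize}
The price of your route is importing the probabilistic representation, or writing out the barrier you allude to. The paper's cutoff argument is self-contained, fits its analytic viewpoint, and reuses the same weak-formulation identity that yields the final bound.
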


\begin{proof}
 
   Let us begin by verifying the continuity of the maps in \eqref{continuity1} on $(0,\infty) \times \cP^+(\ov D)$. First, note that for any $t > 0$, $m,m' \in \cP(\ov D)$, and any $f$ with $\|f\|_{1,\infty} < \infty$, by duality we have
   \begin{align*}
       \int_D f \, d\big(p_t^{m'} - p_t^{m}\big) = \int_D u(t,\cdot) \, d(m'-m), 
   \end{align*}
   where $u$ solves 
   \begin{align*}
       \partial_t u - \frac{1}{2} \Delta u = 0, \quad u\big|_{\R_+ \times \partial D} = 0, \quad u(0,\cdot) = f. 
   \end{align*}
   By the smoothing properties of the heat equation, we have 
   $\| u(t, \cdot)\|_{1,\infty} \lesssim \frac{1}{\sqrt{t}} \|f\|_{\infty} \leq \frac{1}{\sqrt{t}} \|f\|_{1,\infty}$, and so we obtain the (uniform continuity) estimate 
   \begin{align*}
       \bd(p_t^{m'}, p_t^{m}) \lesssim \frac{1}{\sqrt{t}} \bd(m,m'). 
   \end{align*}
   Together with the continuity of $\R_+ \ni t \mapsto \ov{p}_t^m \in \sub(D)$ for each fixed $m$, this implies the continuity of $(t,m) \mapsto p_t^m$ on $\R_+ \times \cP^+(\ov D)$. Since the function $f(x) = 1$ is bounded and Lipschitz, we deduce that the function $(t,m) \mapsto p_t^m(D) \in \R$ is continuous on $\R_+ \times \cP^+(\ov D)$. Since for all $a, b > 0$ and $m,n \in \cP_{\rm sub}(\overline D)$ one has $\bd(am, bn) \leq a \bd(m, n) + |a-b|$,
   we deduce that 
   \begin{align*}
       \bd\big(\ov p_{t'}^{m'}, \ov p_t^m \big) \leq   \frac{1}{p_{t'}^{m'}(D)} \bd(p_{t'}^{m'}, p_t^m) + \Big| \frac{1}{p_{t'}^{m'}(D)} - \frac{1}{p_t^{m}(D)} \Big|, 
   \end{align*}
   and so the continuity of $(t,m) \mapsto \ov p_t^m$ on $\R_+ \times \cP^+(\ov D)$ follows from the continuity of $(t,m) \mapsto p_t^m$ and $(t,m) \mapsto p_t^m(D)$ on the same domain, together with Lemma \ref{lem.masslowerbound} above, which ensures that if $m' \to m$ in $\cP^+(\ov D)$ and $t' \to t$, then $p_{t'}^{m'}(D)$ is bounded from below. 

   We now fix a compact set $K$, and address the continuity of the maps on $[0,\infty) \times \cP(K)$. Along the way, we will also obtain the estimate $1 - p_h^m(D) \leq Ch$. First, let us fix a compact set $K'$ such that $K \Subset (K')^o \Subset D$, where $(K')^o$ denotes the interior of $K'$. Then, we can find a smooth function $\kappa \in C_c^{\infty}(D)$ such that 
   \begin{align*}
       0 \leq \kappa(x) \leq 1 \text{ for $x \in D$,} \quad \kappa(x) = 1 \text{ for } x \in K, \quad \kappa(x) = 0 \text{ for } x \in D \setminus K'. 
   \end{align*}
   Then, by the definition of weak solution, for any $m \in \cP(K)$ we have
   \begin{align} \label{phmDest}
       1 - p_h^m(D) \leq \int_D \kappa \, d(m - p_h^m) = - \int_0^h \int_D\frac{1}{2} \Delta \kappa \, dp_t^m dt \leq h \|\kappa\|_{W^{2,\infty}}, 
   \end{align}
   which proves the first estimate in \eqref{continuity3}. Now, for a test function $f \in W^{2,\infty}$, and any $m \in \cP(K)$, we can estimate
   \begin{align*}
       \int_D f \, d(p_h^m - m) &= \int_D f \kappa d\big(p_h^m - m\big) + \int_D f \big(1 - \kappa\big) \, d\big(p_h^m - m\big)
       \\
       &= \frac{1}{2} \int_0^h \int_D \Delta(f \kappa) dp_t^m dt + \int_D f\big(1 - \kappa\big) dp_h^m
       \\
       &\lesssim_K \|f\|_{W^{2,\infty}} h + \|f\|_{\infty} \big(1 - \int_D \kappa dp_h^m\big) \lesssim_K \|f\|_{W^{2,\infty}} h.
   \end{align*}
   In other words, we have proven that for any compact set $K \Subset D$, there is a constant $C > 0$ such that, for any $m \in \cP(K)$ and any $h > 0$, 
   \begin{align*}
       \sup_{\|f \|_{W^{2,\infty}} \leq 1} \int_{D} f \, d(p_h^m - m) \leq Ch. 
   \end{align*}
   Note that in the above estimate, we do not require $f$ to vanish on the boundary. 
   
   Moreover, one can check that $$\bd(m,n) \lesssim  \Big(\sup_{\|f \|_{W^{2,\infty}} \leq 1} \int_{D} f \, d(m - n) \Big)^{1/2}.$$
   Thus, 
   \begin{align*}
       \bd\big(p_{t'}^{m'}, m\big) \leq \bd\big( p_{t'}^{m'}, m' \big) + \bd(m',m\big) \lesssim \sqrt{t'} + \bd(m',m) \to 0, 
   \end{align*}
   as $t'\to 0$ and $\bd(m',m) \to 0$, i.e. $(t,m) \mapsto p_t^m$ is continuous at $(0,m)$. Together with the already established continuity on $\R_+ \times \cP^+(\ov D)$, this implies continuity on $[0,\infty) \times \cP(K)$. The continuity of $p_t^m(D)$ and $\ov{p}_t^m(D)$ on $[0,\infty) \times \cP(K)$ is proved as before, and we omit the details. 

   Finally, to prove the estimate on $\| m - \ov p_h^m\|_{(W^{2,\infty}_{0, \text{tr}})^*}$ we start by observing that
   \begin{align*}
       \| m - p_h^m \|_{(W^{2,\infty}_{0,\text{tr}})^*} \leq Ch.
   \end{align*}
   Indeed, this follows from the formula (which has already been used above)
   \begin{align*}
       \int_D f \, d(p_h^m - p) = \frac{1}{2} \int_0^h \int_{D} \Delta f \, dp_t^m dt 
   \end{align*}
   for any $f \in W_{0,\text{tr}}^{2,\infty}$. 
   Thus, for any $f \in W_{0,\text{tr}}^{2,\infty}$ and $m \in \cP^c(D)$, we have
   \begin{align*}
       \int_D f \, d\big( \ov p_h^m - m \big) &= \Big(\frac{1}{p_h^m(D)} - 1 \Big) \int_D f dp_h^m + \int_D f \, d\big( p_h^m - m \big) 
       \\
       &\leq \|f\|_{\infty} \Big|1 - \frac{1}{p_h^m(D)}\Big| + \|f\|_{W^{2,\infty}} \|p_h^m - m\|_{(W^{2,\infty}_0)^*} = O(h), 
   \end{align*}
   where to conclude we use the estimate \eqref{phmDest}.
\end{proof}

\begin{lem} \label{lem.Ulongtime}
    There is a constant $C$ such that for all $t \geq 1$ and all $m \in \cP^+(\ov D)$, we have
    \begin{align}
       \label{ptm.conv}
       \norm{ p_t^m - e^{-\lambda_1 t} \langle m, \rho_1 \rangle \rho_1}_{L^2(D)} \leq C \exp\big(- \lambda_2 t \big), 
    \end{align}
    as well as
    \begin{align} \label{ovptm.conv}
        \norm{ \ov{p}_t^m - \rho }_{L^2(D)} \leq C \exp\big( - (\lambda_2 - \lambda_1) t \big).
    \end{align}
\end{lem}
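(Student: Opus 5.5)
Two estimates are needed, and both come from the spectral representation \eqref{eqn.repr} together with the bound $\|\rho_n\|_\infty \lesssim \lambda_n^k\,\rho$ (with $k=(d+1)/4$) used in the proof of Lemma \ref{lem.masslowerbound}.

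\textbf{First estimate \eqref{ptm.conv}.} Subtracting the $n=1$ term from \eqref{eqn.repr} and using orthonormality gives
\begin{align*}
\norm{p_t^m - e^{-\lambda_1 t}\langle m,\rho_1\rangle \rho_1}_{L^2}^2 = \sum_{n\ge 2} e^{-2\lambda_n t}|\langle m,\rho_n\rangle|^2 .
\end{align*}
Since $m$ is a probability measure, $|\langle m,\rho_n\rangle| \le \|\rho_n\|_\infty \lesssim \lambda_n^{k}$. For $t\ge 1$ we then bound
\begin{align*}
\sum_{n\ge2} e^{-2\lambda_n t}\lambda_n^{2k} \le e^{-2\lambda_2 t}\sum_{n\ge 2} e^{-2(\lambda_n-\lambda_2)}\lambda_n^{2k}.
\end{align*}
This factorization uses $e^{-2(\lambda_n-\lambda_2)t}\le e^{-2(\lambda_n-\lambda_2)}$, valid because $t\ge1$. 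By Weyl's law the remaining series converges. This gives \eqref{ptm.conv} with a constant independent of $m$.

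\textbf{Second estimate \eqref{ovptm.conv}.} Set $a=\langle m,\rho_1\rangle$, and write $p_t^m = e^{-\lambda_1 t}a\rho_1 + r_t$ with $\|r_t\|_{L^2}\le Ce^{-\lambda_2 t}$. Since $\rho = \rho_1/\int\rho_1$, we have $e^{-\lambda_1 t}a\rho_1 = M_t\,\rho$, where $M_t := e^{-\lambda_1 t}a\int_D\rho_1$. Integrating the decomposition gives $p_t^m(D) = M_t + r_t(D)$, with $|r_t(D)|\le |D|^{1/2}\|r_t\|_{L^2}$. Hence
\begin{align*}
\ov p_t^m - \rho = \frac{M_t\rho + r_t}{p_t^m(D)} - \rho = \frac{r_t - r_t(D)\,\rho}{p_t^m(D)},
\end{align*}
so that $\|\ov p_t^m-\rho\|_{L^2}\lesssim e^{-\lambda_2 t}/p_t^m(D)$.

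\textbf{Main obstacle: uniformity in $m$.} The denominator $p_t^m(D)$ is only bounded below by $C_0e^{-\lambda_1 t}\int\rho\,dm$ (Lemma \ref{lem.masslowerbound}), and this degenerates when $m$ is concentrated near $\partial D$. I will avoid the degeneracy by putting the smallness of $m$ into the numerator as well. Using the sharper pointwise bound $|\rho_n|\lesssim\lambda_n^k\rho$, we get $|\langle m,\rho_n\rangle|\lesssim \lambda_n^k\int\rho\,dm$. Redoing the first step with this bound gives
\begin{align*}
\|r_t\|_{L^2}\lesssim e^{-\lambda_2 t}\int\rho\,dm .
\end{align*}
Combining this with the lower bound $p_t^m(D)\ge C_0e^{-\lambda_1 t}\int\rho\,dm$, the factor $\int\rho\,dm$ cancels, and we obtain $\|\ov p_t^m-\rho\|_{L^2}\lesssim e^{-(\lambda_2-\lambda_1)t}$ uniformly over $m\in\cP^+(\ov D)$ and $t\ge1$.

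Mass on $\partial D$ causes no trouble. Every $\rho_n$ vanishes on $\partial D$, so $\langle m,\rho_n\rangle = \langle m|_D,\rho_n\rangle$, and all the bounds above apply unchanged.
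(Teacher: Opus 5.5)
Your proof is correct, but it takes a different route from the paper's. The paper first reduces, via the flow property $\ov p_t^m = \ov p_{t-1}^{\ov p_1^m}$ and the second half of Lemma \ref{lem.masslowerbound}, to measures with $\int_D \rho\,dm \geq \eps_0$ and $\|m\|_{L^2} \leq C_2$; this is the terse ``we may assume'' at the start of its proof. After that reduction it needs only Bessel's inequality, $\sum_{n\geq 2} e^{-2\lambda_n t}|\langle m,\rho_n\rangle|^2 \leq e^{-2\lambda_2 t}\|m\|_{L^2}^2$. It then splits $\ov p_t^m-\rho$ into two terms whose denominators, $p_t^m(D)$ and $e^{-\lambda_1 t}\langle m,\rho_1\rangle\int_D\rho_1$, are bounded below thanks to $\int\rho\,dm\geq\eps_0$.

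You instead work directly with the raw measure. You bound each Fourier coefficient through the pointwise estimate $|\rho_n|\lesssim\lambda_n^k\rho$, and you let the factor $\int_D\rho\,dm$ cancel between $\|r_t\|_{L^2}$ and the lower bound on $p_t^m(D)$. Your identity $\ov p_t^m-\rho = (r_t - r_t(D)\rho)/p_t^m(D)$ is also a bit cleaner than the paper's $I+II$ split, since it needs a lower bound on $p_t^m(D)$ alone.

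The trade-off is this. The paper confines the eigenfunction asymptotics to a single use, at time $1$ inside Lemma \ref{lem.masslowerbound}, and is otherwise pure Hilbert-space theory. Your version is self-contained and avoids the implicit time shift. It also yields the sharper relative bound $\|r_t\|_{L^2}\lesssim e^{-\lambda_2 t}\int_D\rho\,dm$. Both proofs ultimately rest on the same estimate $|\rho_n|\lesssim\lambda_n^k\rho$. In your argument only its polynomial growth in $\lambda_n$ matters, through convergence of $\sum_n e^{-2(\lambda_n-\lambda_2)}\lambda_n^{2k}$ by Weyl's law, so the precise value of $k$ is irrelevant.
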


\begin{proof} In light of Lemma \ref{lem.masslowerbound}, we may assume that 
\begin{align*}
    \int_D \rho \, dm \geq \eps_0, \quad \|m\|_{L^2} \leq C_0,
\end{align*}    
for some universal constants $\eps_0$ and $C_0$. We thus have
\begin{align*}
    \norm{p_t^m - e^{-\lambda_1 t} \langle m, \rho_1 \rangle \rho_1 }_{L^2}^2 = \sum_{n = 2}^{\infty} e^{-2 \lambda_n t} |\langle m, \rho_n \rangle|^2 \lesssim e^{-2 \lambda_2 t} \|m\|_{L^2}^2 \lesssim e^{-2 \lambda_2 t}, 
\end{align*}
which proves \eqref{ptm.conv}.
Next, we note that
\begin{align*}
    \big| p_t^m(D) - e^{-\lambda_1 t} \langle m, \rho_1 \rangle \rho_1(D) \big| \lesssim e^{- \lambda_2 t}.
\end{align*}
We now write 
\begin{align*}
    \ov{p}_t^m - \rho = \frac{p_t^m}{p_t^m(D)} - \rho = p_t^m \Big( \frac{1}{p_t^m(D)} -\frac{1}{ e^{-\lambda_1 t} \langle m, \rho_1 \rangle \rho_1(D) } \Big) + \frac{p_t^m - e^{-\lambda_1 t} \langle m, \rho_1 \rangle \rho_1}{e^{-\lambda_1 t} \langle m, \rho_1 \rangle \rho_1(D)} = : I + II.
\end{align*}
We estimate 
\begin{align*}
    \|I\|_{L^2} &= \|p_t^m\|_{L^2} \Big| \frac{1}{p_t^m(D)} - \frac{1}{ e^{-\lambda_1 t} \langle m, \rho_1 \rangle \rho_1(D) } \Big| 
    \\
    &= \|p_t^m\|_{L^2} \Big| \frac{e^{-\lambda_1 t} \langle m, \rho_1 \rangle \rho_1(D) - p_t^m(D)}{p_t^m(D) e^{-\lambda_1 t} \langle m, \rho_1 \rangle \rho_1(D)} \Big|
    \\
    &\lesssim e^{- \lambda_1 t} \frac{e^{- \lambda_2 t}}{e^{-2\lambda_1 t}} = e^{-(\lambda_2 - \lambda_1)t}, 
\end{align*}
where the last upper bound used the fact that we assume $\int \rho dm \geq \eps_0$ and $\| m \|_{L^2} \leq C_0$ for some universal constants $\eps_0$ and $C_0$, and the lower bound on $p_t^m(D)$ comes from Lemma \ref{lem.masslowerbound}. 

Meanwhile, similar reasoning also shows that $\|II\|_{L^2} \lesssim e^{- (\lambda_2 - \lambda_1)t}$, completing the proof of \eqref{ovptm.conv}.
\end{proof}

\section{Some estimates for the heat equation on $D$ and $D \times D$}

In this section, we prove some auxiliary estimates on the equations 
\begin{align} \label{eqn.u}
    \partial_t u - \frac{1}{2} \Delta u = 0 \text{ in } \R_+ \times D \quad u\big|_{\R_+ \times \partial D} = 0, \quad u(0,x) = f(x), 
\end{align}
and 
\begin{align} \label{eqn.v}
    \partial_t v - \frac{1}{2} \Delta_x v - \frac{1}{2} \Delta_{x'} v = 0 \text{ in } \R_+ \times D^2, \quad v\big|_{\R_+ \times \partial(D^2)} = 0, \quad v(0,x,x') = g(x,x').
\end{align}
The initial conditions $f : \ov D \to \R$, $g : (\ov D)^2 \to \R$ are assumed to be smooth, but \textit{not} compatible with the lateral boundary condition, in the sense that we do not have $f = 0$ on $\partial D$ or $g = 0$ on $\partial (D^2)$. By parabolic regularity, $u$ and $v$ are smooth on $\text{Dom}_1$ and $\text{Dom}_2$ respectively. 

\begin{prop} \label{prop.ueqn}
Suppose that $f \in C^1(\ov D)$. Then \eqref{eqn.u} has a unique bounded weak solution which is smooth on $\R_+ \times D$, and satisfies the estimate 
\begin{align*}
    |Du(t,x)| \lesssim (\|f \|_{\infty} + \| Df\|_{\infty}\big) \big(1 + w(t,x)\big), \quad 0 < t \leq 1, \quad x \in D,
\end{align*}
where $w$ is as defined in \eqref{def.w}, with $C$ large enough that the conclusions of Lemma \ref{lem.w} hold.
\end{prop}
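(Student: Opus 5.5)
Existence, uniqueness and interior smoothness of a bounded weak solution are standard: write $u(t,x)=\int_D f\,dp_t^x$, or use the eigenfunction expansion, and get uniqueness from Proposition \ref{prop.heat.comparison} with $N=1$. The real content is the gradient bound. We split $u$ into a compatible part and a part carrying the incompatible boundary datum.

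Fix a cutoff $\chi(x)=\chi_0(\dist(x))$ with $\chi_0=1$ near $0$ and supported in $[0,\eps)$, where $|D\dist|=1$ on $(\partial D)_\eps$. Write $f = f(1-\chi) + f\chi$. The first piece vanishes near $\partial D$ and is $C^1$, so by the maximum principle applied to difference quotients in the interior and tangential directions, together with a linear barrier $C\dist$ for the normal derivative, its solution has gradient bounded by $C(\|f\|_\infty+\|Df\|_\infty)$ uniformly on $(0,1]$. For the second piece, near the boundary, we compare $u$ with $f$ itself: set $\tilde u = u - f\chi$. Then $\tilde u$ solves $\partial_t\tilde u-\frac12\Delta\tilde u = \frac12\Delta(f\chi)$ in the distributional sense. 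This is awkward because $f$ is only $C^1$, so we first mollify $f$ and pass to the limit at the end, keeping only $\|f\|_\infty+\|Df\|_\infty$ in the constants. The lateral datum is $-f$ on $\partial D$ and the initial datum is $0$, so the singular part reduces to the solution $z$ of the heat equation with zero initial data and boundary data equal to the constant $1$, multiplied by $f|_{\partial D}$ frozen along normals.

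The key step is the bound $|D z(t,x)|\lesssim 1+w(t,x)$. We prove it by a barrier argument in the normal variable. In $(\partial D)_\eps$, the function $1-z$ is $0$ on $\partial D$ and $1$ at $t=0$. Comparing $1-z$ from above and below with one-dimensional profiles $\Theta(t,\dist(x))$, where $\Theta$ solves the drifted equation $\partial_t\Theta=\frac12\partial_{zz}\Theta \mp c_0\partial_z\Theta$ on the half line with $\Theta(t,0)=0$ and $\Theta(0,\cdot)=1$, gives $|1-z|\le \Theta^{\pm}$. The drift $\pm c_0$ absorbs $\frac12\partial_z\Theta\,\Delta\dist$, exactly as in Lemma \ref{lem.w}. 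Since $\Theta(t,0)=0$, this yields $|\partial_\nu z|$ on $\partial D$ bounded by $\partial_z\Theta^{\pm}(t,0)$. This is comparable to $\gamma(t,0)$, because $\partial_z\Theta$ solves the same drifted equation with a Dirac-type source, which is essentially how $\gamma=\Gamma_t(\cdot,0)$ is defined.

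To go from boundary values to interior values, we use that each derivative $\partial_{x_k}z$, after correcting for the variation of the normal field, is a subsolution of $\partial_t-\frac12\Delta$ up to lower-order terms controlled by $|Dz|$. By Lemma \ref{lem.w}, $w$ is a supersolution that is maximal on $\partial D$. Comparing $|Dz|$ with $K(1+w)$ on $(0,1]\times(\partial D)_\eps$ therefore gives the bound. On the inner boundary $\dist=\eps$ we use interior parabolic estimates, and near $t=0$ away from $\partial D$ the gradient is $0$ while $w\ge0$.

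The main obstacle is this last comparison. $|Dz|$ is not smooth, and the tangential and normal components interact through the curvature terms. The plan is to run the comparison on $|Dz|^2+\kappa z^2$, or on each $\partial_{x_k}z$ separately, with $C$ in $w$ taken large to absorb the commutator terms. We would check the boundary inequality $|Dz|\le K\gamma(t,0)$ via the one-dimensional barriers $\Theta^{\pm}$.
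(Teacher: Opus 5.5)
Your overall mechanism is the right one: a boundary gradient bound of size $t^{-1/2}$, then comparison with $w$, which is a supersolution maximal on $\partial D$. But the proposal has a genuine gap at the corner $\{0\}\times\partial D$, which is exactly where the difficulty lies. The function $z$ has incompatible data (boundary value $1$, initial value $0$). So it is discontinuous at $\{0\}\times\partial D$ and $Dz$ is unbounded near it, and a priori you only know $|Dz(t,\cdot)|\lesssim t^{-1/2}$. Proposition \ref{prop.heat.comparison}, or any maximum principle of this type, requires $\partial_{x_k}z-K(1+w)$ to be bounded above near that corner. That is essentially the estimate you are trying to prove, so your last step is circular. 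Noting that $Dz(0,\cdot)=0$ away from $\partial D$ does not help, because the problem is the joint limit $t\to0$, $x\to\partial D$.

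The missing idea is a regularization that restores compatibility at a scale matched to a time shift of the barrier. The paper replaces $f$ by $f\kappa^{\eps}$, where $\kappa^{\eps}$ vanishes on $\{\dist\le\sqrt{\eps}/2\}$. Then $u^{\eps}$ is regular up to the corner, with $\|Du^{\eps}\|_\infty\lesssim\eps^{-1/2}$ and $\|Du^{\eps}(t,\cdot)\|_\infty\lesssim t^{-1/2}$. It compares $D_\nu u^{\eps}$ with $C(1+w(t+\eps,\cdot))$, which is $\gtrsim t^{-1/2}\wedge\eps^{-1/2}$ on $\partial D$ and $\gtrsim\eps^{-1/2}$ on the $\sqrt{\eps}$-collar at $t=0$. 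It then lets $\eps\to0$.

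The obstacle you single out instead is not a real one. For a fixed vector $\nu\in\R^d$, $D_\nu u$ solves the heat equation exactly, since constant-direction derivatives commute with $\Delta$. There are no curvature or commutator terms, so you do not need $|Dz|^2+\kappa z^2$ or a normal/tangential splitting. The boundary bound also needs no one-dimensional barriers $\Theta^{\pm}$: it is the standard smoothing estimate $\|Du(t,\cdot)\|_\infty\lesssim t^{-1/2}\|f\|_\infty$. (You also did not check the comparison with $\Theta^{\pm}$ on the inner boundary of your collar, where $\Theta<1$.)

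Finally, the splitting $f=f(1-\chi)+f\chi$ and the reduction to $z$ times $f|_{\partial D}$ frozen along normals is not justified with $C^1$ data. The function $u-f\chi$ has source $\tfrac12\Delta(f\chi)$, and the freezing error involves the Laplacian of the frozen extension; both are second derivatives of $f$. Mollifying only moves the dependence onto $\|D^2f_\delta\|_\infty\sim\delta^{-1}\|Df\|_\infty$, which blows up, contrary to your claim that only $\|f\|_\infty+\|Df\|_\infty$ enters. Since the smoothing estimate already controls $Du$ on the lateral boundary, you can drop the splitting and apply the regularized comparison directly to $D_\nu u$.
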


Before we begin the proof, we introduce some notation which will be used throughout this section. First, we let $(\kappa^{\eps})_{0 < \eps < 1}$ be a family of functions such that 
\begin{align*}
    \kappa^{\eps} : D \to [0,1], \quad \kappa^{\eps}(x) = 1 \text{ if } \dist(x) \geq \sqrt{\eps}, \quad \kappa^{\eps}(x) = 0 \text{ if } \dist(x) \leq \frac{\sqrt{\eps}}{2}, \quad |D \kappa^{\eps}(x)| \lesssim \big( 1 + \frac{1}{\sqrt{\eps}} 1_{\dist(x) \leq \sqrt{\eps}} \big).
\end{align*}
We also define 
 \begin{align*}
     w^{\eps} : [0,\infty) \times \ov D \to \R, \quad    w^{\eps}(t,x) = w(t+ \eps, x) = \gamma\big(t + \eps, \dist(x) \big) + C(t + \eps).
\end{align*}
We note for later use that for $x \in \partial D$ and $ 0 \leq t \leq 1$, 
\begin{align} \label{weps.lateral}
        w^{\eps}(t,x) \geq \gamma(t + \eps, 0) \geq \frac{1}{\sqrt{2\pi(t+ \eps)}} \exp\Big[ - \frac{c_0^2}{2} t \Big] \gtrsim \frac{1}{\sqrt{t+ \eps}} \gtrsim \frac{1}{\sqrt{t}} \wedge \frac{1}{\sqrt{\eps}}, 
    \end{align}
while at time $t = 0$, one easily checks that 
\begin{align*}
    \dist(x) \leq \sqrt{\eps} \implies w^{\eps}(0,x) = \gamma(\eps, x) \geq \frac{1}{\sqrt{2\pi \eps}} \exp\Big[ - \frac{\dist(x)^2}{2\eps} \Big] \gtrsim \frac{1}{\sqrt{\eps}}.
\end{align*}
As a consequence, 
\begin{align} \label{weps.time0}
    1 + w^{\eps}(0,x) \gtrsim 1 +  \frac{1}{\sqrt{\eps}} 1_{\dist(x) \leq \sqrt{\eps}}.
\end{align}
We now turn to the proof of Proposition \ref{prop.ueqn}. 

\begin{proof}
    Without loss of generality we assume that $\|f\|_{\infty} + \|Df\|_{\infty} \leq 1$.  For $0 < \eps < 1$, define 
    \begin{align*}
        f^{\eps}(x) = f(x) \kappa^{\eps}(x).
    \end{align*}
    We note that
    \begin{align*}
        f^{\eps}(x) = f(x) \text{ for } \dist(x) \geq \sqrt{\eps}, \quad f^{\eps}(x) = 0 \text{ for } x \in \partial D, \quad |D f^{\eps}(x)| \lesssim \big(1 + \frac{1}{\sqrt{\eps}} 1_{\dist(x) \leq \sqrt{\eps}} \big).
    \end{align*}
    Then let $u^{\eps}$ denote the solution to \eqref{eqn.u}, but with $f$ replaced with $f^{\eps}$. Then $u^{\eps}$ is globally smooth, and satisfies 
    \begin{align*}
        \| D u^{\eps} \|_{\infty} \lesssim  \| f^{\eps}\|_{\infty} + \|Df^{\eps}\|_{\infty} \lesssim \frac{1}{\sqrt{\eps}}, 
    \end{align*}
    and in addition by the smoothing effect of the heat equation, 
    \begin{align*}
        \| Du^{\eps}(t,\cdot)\|_{\infty} \lesssim \frac{1}{\sqrt{t}} \|f^{\eps}\|_{\infty} \lesssim \frac{1}{\sqrt{t}}. 
    \end{align*}
    For $\eps > 0$, the function $w^{\eps}$ introduced above is continuous on all of $[0,\infty) \times \ov D$, and by Lemma \ref{lem.w}, satisfies 
    \begin{align*}
        \partial_t w^{\eps} - \frac{1}{2} \Delta w^{\eps} \geq 0
    \end{align*}
    on $\R_+ \times D$.
    Putting these facts together, and also making use of the initial condition $u^{\eps}(0,x) = f^{\eps}(x)$, we see that for any unit vector $\nu$, the function $D_{\nu} u^{\eps} = \langle \nu, Du^{\eps} \rangle$ is a solution to the heat equation on $D$, which satisfies
    \begin{align*}
         |D_{\nu} u^{\eps}(t,x)|  \lesssim \frac{1}{\sqrt{\eps}} \wedge \frac{1}{\sqrt{t}} \lesssim 1 + w^{\eps}(t,x) \quad t > 0, \, x \in \partial D, 
        \\
        |D_{\nu} u^{\eps}(0,x)| = |D_{\nu} f^{\eps}(x)| \lesssim 1 + \frac{1}{\sqrt{\eps}} 1_{\dist(x) \leq \sqrt{\eps}} \lesssim 1 + w^{\eps}(t,x), \quad x \in D.
    \end{align*}
    We deduce that for a large enough constant $C$ (independent of $\eps$ and $\nu$), the function $C( 1 + w^{\eps})$ is a supersolution of the heat equation and $C(1 + w^{\eps}) \geq D_{\nu} u^{\eps}$ on both the lateral and initial boundaries. By the comparison principle, we get 
    \begin{align*}
        D_{\nu} u^{\eps}(t,x) \leq C(1 + w^{\eps}(t,x)). 
    \end{align*}
    Sending $\eps \to 0$ gives the result, since $D_{\nu} u^{\eps} \to D_{\nu} u$ pointwise on $\R_+ \times D$. 
\end{proof}

We now turn our attention to the equation \eqref{eqn.v}. We need the following technical lemma, which will allow us to justify certain computations when we prove Proposition \ref{prop.veqn} below.

\begin{lem} \label{lem.veqn}
    Suppose that $g \in C_c^{\infty}(D \times D)$. Then the unique weak solution of \eqref{eqn.v} lies in $C^{\infty}\big([0,\infty) \times \ov{D \times D}\big)$, i.e. it is smooth up to the entire parabolic boundary. 
\end{lem}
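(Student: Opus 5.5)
The plan is to exploit the fact that compactly supported initial data are compatible with the Dirichlet condition to every order. There is only one real difficulty: $D\times D$ is not smooth, since it has the corner set $\partial D\times\partial D$. I would therefore avoid treating \eqref{eqn.v} as a parabolic problem on a nonsmooth domain, and instead reduce to products of one-factor heat semigroups on the smooth domain $D$.

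Let $P_t$ denote the Dirichlet heat semigroup of $\frac12\Delta$ on $D$. The semigroup for \eqref{eqn.v} on $D\times D$ is $P_t\otimes P_t$, i.e. $P_t$ acting in $x$ composed with $P_t$ acting in $x'$. These two operators commute, and the unique weak solution is therefore
\begin{align*}
v(t,\cdot,\cdot) = (P_t\otimes P_t)\, g.
\end{align*}
I would justify this through the eigenfunction expansion $v(t,x,x')=\sum_{n,m} e^{-(\lambda_n+\lambda_m)t}\langle g,\rho_n\otimes\rho_m\rangle\rho_n(x)\rho_m(x')$, together with uniqueness of weak solutions.

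The first step is coefficient decay. Since $g\in C_c^\infty(D\times D)$, every power of $(-\Delta_x)^a(-\Delta_{x'})^b$ applied to $g$ is again in $C_c^\infty$. It therefore satisfies all Dirichlet compatibility conditions, and integration by parts gives
\begin{align*}
|\langle g,\rho_n\otimes\rho_m\rangle| \lesssim_{k} (\lambda_n\lambda_m)^{-k}
\end{align*}
for every $k$.

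The second step combines this with the bounds $\|\rho_n\|_{C^j(\ov D)}\lesssim_j \lambda_n^{k_j}$, which follow from the sup bound $\|\rho_n\|_\infty\lesssim\lambda_n^{(d+1)/4}$ already used in Lemma \ref{lem.masslowerbound} plus elliptic regularity on the smooth domain $D$. Using also Weyl's law, the series and all its termwise $t$, $x$, $x'$ derivatives converge uniformly on $[0,\infty)\times\ov D\times\ov D$, including at $t=0$. Hence $v\in C^\infty([0,\infty)\times\ov{D\times D})$. Its value at $t=0$ is the expansion of $g$, which is $g$ itself. Each term vanishes when $x\in\partial D$ or $x'\in\partial D$, so the boundary condition holds as well.

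The main obstacle is the derivative bound on $\rho_n$ up to the boundary, with polynomial growth in $\lambda_n$. I would obtain it by iterating the elliptic estimates $\|\rho_n\|_{H^{2j}}\lesssim \|\Delta^j\rho_n\|_{L^2}+\|\rho_n\|_{L^2}\lesssim \lambda_n^j$ for the Dirichlet Laplacian on the smooth domain $D$. Here $\Delta^j\rho_n=(-2\lambda_n)^j\rho_n$ vanishes on $\partial D$, so the estimates iterate without boundary terms. Sobolev embedding then gives $\|\rho_n\|_{C^j}\lesssim\lambda_n^{j/2+d/4+1}$. This polynomial loss is harmless against the arbitrary polynomial decay of the coefficients. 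Note that the corner set of $D\times D$ never enters, because all regularity is proved in each factor separately.
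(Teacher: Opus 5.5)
Your proposal is correct and follows essentially the same route as the paper. Both arguments expand $v$ in the product eigenbasis $\rho_n(x)\rho_m(x')$. Both use $g\in C_c^\infty(D\times D)$ to get coefficient decay faster than any power of the eigenvalues. Both combine this with polynomial-in-$\lambda_n$ bounds on all derivatives of $\rho_n$ up to $\partial D$, so that every termwise derivative converges uniformly on $[0,\infty)\times\overline{D}\times\overline{D}$, including at $t=0$.

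The differences are cosmetic. You state the decay in the factorized form $(\lambda_n\lambda_m)^{-k}$, while the paper writes $(\lambda_n+\lambda_m)^{-M}$, obtained from $(-\frac12\Delta)^M g\in L^2$. You get the eigenfunction bounds from iterated $H^{2j}$ elliptic estimates plus Sobolev embedding, whereas the paper bootstraps from a sup-norm bound.
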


\begin{proof}
    For $j,k \in \N$, define
    \begin{align*}
        \lambda_{j,k} = \lambda_j + \lambda_k, \quad \rho_{j,k}(x,x') = \rho_j(x) \rho_k(x'), \quad c_{j,k} = \big \langle \rho_{j,k}, g \big \rangle_{L^2(D \times D)}.
    \end{align*}
    Then we have the representation (note that $\{\rho_{j,k}\}$ is an orthonormal basis of $L^2(D\times D)$)
    \begin{align} \label{rep.v}
        v(t,x,x') = \sum_{j,k = 1}^{\infty} c_{j,k}  e^{-\lambda_{j,k} t} \rho_{j,k}(x,x'). 
    \end{align}
    Since $D$ is smooth, we have $\rho_j \in C^{\infty}(\ov D)$ for each $j$, and moreover for any multi-index $\alpha$, the $\| D^{\alpha} \rho_{j} \|_{\infty}$ grows at most polynomially in $\lambda_j$ (this can be seen from starting with the estimate $\| \rho_{j} \|_{L^{\infty}(D)} \lesssim \lambda_j^{(d-1)/2}$ from \cite{Grieser07012002}, and then bootstrapping). As a consequence, for any multi-indices $\alpha, \alpha'$, the quantity $\| D_x^{\alpha} D_{x'}^{\alpha'} \rho_{j,k}\|_{L^{\infty}(D \times D)}$ grows at most polynomially in $\lambda_{j,k}$. It follows easily that $v \in C^{\infty}\big((0,\infty) \times (\ov D \times \ov D) \big)$. 

    It remains to check smoothness at time $0$. For this, we use the following spectral argument. First, notice that because $g \in C_c^{\infty}\big( D \times D\big)$, 
    \begin{align*}
        \sum_{j,k = 1}^{\infty} c_{j,k} \lambda_{j,k}^M \rho_{j,k} = \sum_{j,k = 1}^{\infty} \langle \rho_{j,k}, \Big(-\frac{1}{2}\Delta\Big)^M g \big \rangle_{L^2(D \times D)} \rho_{j,k} = \Big(-\frac{1}{2}\Delta\Big)^M g \in L^2(D \times D), 
    \end{align*}
    and so 
    \begin{align*}
        \sum_{j,k = 1}^{\infty} c_{j,k}^2 \lambda_{j,k}^{2M} = \norm{ \Big(- \frac{1}{2} \Delta \Big)^M g }_{L^2} < \infty. 
    \end{align*}
    In particular, we have 
    \begin{align*}
        c_{j,k} \lesssim_M \lambda_{j,k}^{-M}, \quad \forall \,\, M \in \N. 
    \end{align*}
    That is, $c_{j,k}$ decays faster than any polynomial in $\lambda_{j,k}$. Combined with the polynomial growth of the derivatives of $\rho_{j,k}$ discussed above and the representation formula \eqref{rep.v}, this implies that for any multi-indices $\alpha, \alpha'$, 
    \begin{align*}
        D_{x}^{\alpha} D_{x'}^{\alpha'} v(t,\cdot,\cdot) \xrightarrow{ t \downarrow 0} D_x^{\alpha} D_{x'}^{\alpha'} g \quad \text{in } L^{\infty}(\ov D \times \ov D). 
    \end{align*}
    It follows that $v \in C^{\infty}\big([0,\infty) \times (\ov D \times \ov D) \big)$, as claimed. 
\end{proof}

\begin{prop} \label{prop.veqn}
Suppose that $g \in C^2(\ov D \times \ov D)$. Then \eqref{eqn.v} has a unique bounded weak solution which is smooth on $\R_+ \times D \times D$, and satisfies the estimate 
\begin{align*}
    |D_x D_{x'} v(t,x,x')| \lesssim \|g\|_{C^2} \big(1 + w(t,x)\big)\big(1 + w(t,x')\big), \quad 0 < t \leq 1, \quad x,x' \in D.
\end{align*}
\end{prop}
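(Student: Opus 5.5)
We will mimic the proof of Proposition \ref{prop.ueqn}, now in the product domain $D\times D$, and use the product barrier $(1+w(t,x))(1+w(t,x'))$. By linearity and scaling we assume $\|g\|_{C^2}\le 1$. Existence, uniqueness and interior smoothness of the bounded weak solution are standard (spectral representation \eqref{rep.v}, as in Lemma \ref{lem.veqn}). So the task is the mixed derivative bound.

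\emph{Step 1: cutoff approximation.} Set $g^\eps(x,x') = g(x,x')\kappa^\eps(x)\kappa^\eps(x')$ and mollify slightly so that $g^\eps\in C_c^\infty(D\times D)$. By Lemma \ref{lem.veqn}, the solution $v^\eps$ is smooth up to the whole parabolic boundary. The construction gives
\begin{align*}
|D_xD_{x'}g^\eps(x,x')| \lesssim \big(1+\eps^{-1/2}1_{\dist(x)\le\sqrt\eps}\big)\big(1+\eps^{-1/2}1_{\dist(x')\le\sqrt\eps}\big) \lesssim (1+w^\eps(0,x))(1+w^\eps(0,x')),
\end{align*}
using $|Dg|,|D^2g|\le 1$, the bound on $D\kappa^\eps$, and \eqref{weps.time0}. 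For unit vectors $\nu,\nu'$, the function $z^\eps = D_\nu^x D_{\nu'}^{x'} v^\eps$ solves $\partial_t z - \frac12\Delta_x z-\frac12\Delta_{x'}z=0$ in $\R_+\times D^2$. The barrier $W^\eps(t,x,x') = (1+w^\eps(t,x))(1+w^\eps(t,x'))$ is a supersolution of this equation, because it is a product of functions of separate variables, each a nonnegative supersolution by Lemma \ref{lem.w}.

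\emph{Step 2: lateral boundary control (main obstacle).} The key step is to show $|z^\eps|\lesssim W^\eps$ on $\R_+\times\partial(D^2)$. Take $x\in\partial D$, $x'\in D$. Here $W^\eps \gtrsim (t^{-1/2}\wedge\eps^{-1/2})(1+w^\eps(t,x'))$ by \eqref{weps.lateral}, so it suffices to prove
\begin{align*}
|D_xD_{x'}v^\eps(t,x,x')|\lesssim (t^{-1/2}\wedge \eps^{-1/2})(1+w^\eps(t,x')).
\end{align*}
To get this, I will iterate the one-variable estimate. Write $v^\eps(t,\cdot,x')$ via the semigroup in $x$ and the semigroup in $x'$, which commute: $v^\eps(t) = P_t^{x}P_t^{x'}g^\eps$. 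First fix $y$ and set $h(t,\cdot,x') = P^{x'}_t g^\eps(y,\cdot)$ differentiated in $x'$. Applying Proposition \ref{prop.ueqn} (its proof with $w^\eps$, since $g^\eps$ vanishes near the boundary) to $x'\mapsto g^\eps(y,x')$ and to its $y$-derivative bounds $|D_{x'}P^{x'}_tD_y^k g^\eps(y,\cdot)(x')|\lesssim \|D^k_yg^\eps\|\,(1+w^\eps(t,x'))$ for $k=0,1$. Then apply the $x$-semigroup: the map $y\mapsto D_{x'}P^{x'}_tg^\eps(y,x')$ is bounded by $C(1+w^\eps(t,x'))$ with gradient $\lesssim \eps^{-1/2}(1+w^\eps(t,x'))$. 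The smoothing bound $\|DP_t f\|_\infty\lesssim t^{-1/2}\|f\|_\infty$ together with $\|DP_tf\|_\infty\lesssim\|f\|_{W^{1,\infty}}$ then gives the factor $t^{-1/2}\wedge\eps^{-1/2}$. The delicate point is that the Lipschitz constant in $y$ of $g^\eps$ costs $\eps^{-1/2}$, which is exactly what $w^\eps$ absorbs. The symmetric case $x'\in\partial D$ is identical.

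\emph{Step 3: comparison and limit.} With Steps 1 and 2, $C W^\eps \pm z^\eps$ is a supersolution that is nonnegative on the parabolic boundary. It is continuous on $[0,\infty)\times\ov{D}^2$ since $\eps>0$. The classical maximum principle then gives $|z^\eps|\le CW^\eps$. Finally, $g^\eps\to g$ boundedly and pointwise in $D^2$, so $v^\eps\to v$ locally smoothly in $\R_+\times D^2$ by interior parabolic estimates. Letting $\eps\to0$, with $w^\eps\to w$ pointwise, yields the claim.
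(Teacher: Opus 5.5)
Your proposal is correct. Its overall architecture matches the paper's proof: the cutoff $g^\eps(x,x')=\kappa^\eps(x)\kappa^\eps(x')g(x,x')$, smoothness up to the boundary from Lemma \ref{lem.veqn}, the product supersolution $(1+w^\eps(t,x))(1+w^\eps(t,x'))$, comparison for $D_{x;\nu}D_{x';\nu'}v^\eps$, and finally $\eps\to0$. The genuine difference is how you control the mixed derivative on the lateral boundary.

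\textbf{The paper's route.} It uses a cascade of comparison arguments.
\begin{enumerate}
\item It shows $|v^\eps|\le C u^\eps(t,x)u^\eps(t,x')$, where $u^\eps$ is the Dirichlet heat flow of $\kappa^\eps$.
\item From this it reads off $|D_xv^\eps|\lesssim u^\eps(t,x')(1+w^\eps(t,x))$ on $\{x\in\partial D\}$, while $D_xv^\eps=0$ on $\{x'\in\partial D\}$.
\item It propagates this bound to all of $D^2$ by comparison with the supersolution $u^\eps(t,x')(1+w^\eps(t,x))$.
\item Since $D_xv^\eps(t,x,\cdot)$ and $u^\eps(t,\cdot)$ both vanish on $\partial D$, it concludes $|D_{x'}D_xv^\eps|\lesssim|Du^\eps(t,x')|(1+w^\eps(t,x))$ for $x'\in\partial D$.
\end{enumerate}

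\textbf{Your route.} You use the exact tensor factorization of the Dirichlet semigroup on $D\times D$ to write $D_xD_{x'}v^\eps(t,\cdot,x')=DP_t[h(t,\cdot,x')]$. This gives in one shot the global bound $|D_xD_{x'}v^\eps|\lesssim(t^{-1/2}\wedge\eps^{-1/2})(1+w^\eps(t,x'))$. In effect, it is the lateral-boundary estimate from the proof of Proposition \ref{prop.ueqn} in the variable $x$, tensored with that proposition's full conclusion in $x'$ (applied with parameter $y$, which needs $D_yD_{x'}g$ bounded).

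\textbf{Comparison.} Your argument is shorter and avoids the auxiliary barrier $u^\eps\otimes u^\eps$ and the extraction of normal derivatives from vanishing barriers. The paper's stays entirely within the maximum principle once Proposition \ref{prop.ueqn} is available. It also produces along the way the estimate $|D_xv^\eps|\lesssim u^\eps(t,x')(1+w^\eps(t,x))$, whose bound vanishes for $x'\in\partial D$.

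\textbf{Two points to make explicit.}
\begin{itemize}
\item The estimate $\|DP_tf\|_\infty\lesssim\|f\|_{W^{1,\infty}}$ fails for Dirichlet data that do not vanish on $\partial D$. You may use it here only because $h(t,y,x')=0$ whenever $\dist(y)\le\sqrt\eps/2$, thanks to the factor $\kappa^\eps(y)$; say so.
\item Rather than mollifying after the cutoff, assume $g$ smooth by approximation, as the paper does. Then $g^\eps$ keeps the exact form $\kappa^\eps(x')\times(\cdots)$ you rely on when invoking the proof of Proposition \ref{prop.ueqn} in $x'$.
\end{itemize}
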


\begin{proof}
  We assume without loss of generality that $g \in C^{\infty}(\ov D \times \ov D)$. We begin by defining 
  \begin{align*}
      g^{\eps}(x,x') = \kappa^{\eps}(x) \kappa^{\eps}(x') g(x,x'), 
  \end{align*}
  and let $v^{\eps}$ solve the same equation as $v$, but with $g^{\eps}$ replacing $g$. By Lemma \ref{lem.veqn}, $v^{\eps} \in C^{\infty}\big([0,\infty) \times \ov{D \times D} \big)$, which allows us to justify the computations below.
  
  Notice that, from the Dirichlet boundary conditions for $v^{\eps}$, we get
\begin{align} \label{Dxv.lateral1}
   x' \in \partial D \implies  D_x v^{\eps}(t,x,x') = 0.
\end{align}
Now, let $u^{\eps}$ denote the solution of 
\begin{align*}
    \partial_t u^{\eps} - \frac{1}{2} \Delta u^{\eps} = 0 \text{ in } \R_+ \times D, \quad u^{\eps}\big|_{\R_+ \times \partial D} = 0, \quad u^{\eps}(0,x) = \kappa^{\eps}(x).
\end{align*}
Then, the comparison principle implies that there is a constant $C$ independent of $\eps$ such that
\begin{align*}
 - C u^{\eps}(t,x) u^{\eps}(t,x') \leq  v^{\eps}(t,x,x') \leq Cu^{\eps}(t,x) u^{\eps}(t,x').
\end{align*}
As a consequence, and recalling that the proof of Proposition \ref{prop.ueqn} showed that $|Du^{\eps}(t,x)| \lesssim 1 + w^{\eps}(t,x)$, we have
\begin{align} \label{Dxv.lateral2}
   | D_x v^{\eps}(t,x,x') | \leq u^{\eps}(t,x') \big(1 + w^{\eps}(t,x)\big) \text{ for } x \in \partial D.
\end{align}
In addition, we know that 
\begin{align} \label{Dxv.terminal1}
  \nonumber   D_x v^{\eps}(0,x,x')  &= D \kappa^{\eps}(x) \kappa^{\eps}(x') g(x,x') + \kappa^{\eps}(x) \kappa^{\eps}(x') D_x g(x,x') 
  \\
  &\lesssim \kappa^{\eps}(x') \big(1 + \frac{1}{\sqrt{\eps}} 1_{\dist(x) \leq \sqrt{\eps}} \big) \lesssim \kappa^{\eps}(x') \big(1 + w^{\eps}(0,x)\big).
\end{align}
Combining \eqref{Dxv.lateral1}, \eqref{Dxv.lateral2}, \eqref{Dxv.terminal1}, we see that for any unit vector $\nu$, $D_{x;\nu} v^{\eps} = \langle \nu, D_x v^{\eps} \rangle$ is a bounded function which is smooth on the domain 
\begin{align*}
     \Big( [0,\infty) \times \ov D \Big) \setminus \Big(\{0\} \times \partial D \Big),
\end{align*}
satisfies
\begin{align*}
    \partial_t D_{x; \nu} v^{\eps} - \frac{1}{2} \Delta_x  D_{x; \nu} v^{\eps} - \frac{1}{2} \Delta_{x'}  D_{x; \nu} v^{\eps} = 0 \text{ in } \R_+ \times D^2, 
\end{align*}
as well as 
\begin{align*}
D_{x; \nu} v^{\eps} \lesssim u^{\eps}(t,x') \big(1 + w^{\eps}(t,x) \big) \text{ on } \Big(\R_+ \times \partial(D^2) \Big) \cup \Big(\{0\} \times D^2 \Big).
\end{align*}
Meanwhile, one can check that if $u^1$ and $u^2$ are non-negative supersolutions to the heat equation on $D$, then 
\begin{align*}
    (t,x,x') \mapsto u^1(t,x) u^2(t,x')
\end{align*}
is a supersolution to the heat equation on $D \times D$, so that in particular $u^{\eps}(t,x') \big(1 + w^{\eps}(t,x) \big)$ is a supersolution. By the comparison principle, we thus obtain
\begin{align*}
    D_{x,\nu} v^{\eps}(t,x,x') \lesssim u^{\eps}(t,x') \big(1 + w^{\eps}(t,x) \big).
\end{align*}
As a consequence, we find that
\begin{align*}
    |D_{x'} D_x v^{\eps}(t,x,x')| \lesssim |D u^{\eps}(t,x')|  \big(1 + w(t,x)\big) \lesssim \big(1 + w^{\eps}(t,x) \big)\big(  1 +  w^{\eps}(t,x' \big) \text{ if } x' \in \partial D.
\end{align*}
Reversing the roles of $x$ and $x'$ gives the same bound if $x \in \partial D$. Finally, one can easily check that
\begin{align*}
   | D_{x'} D_x v^{\eps}(0,x,x') | &= | D_{x'} D_x \big ( \kappa^{\eps}(x) \kappa^{\eps}(x') g(x,x') \big) |
    \\
    &\lesssim \big(1 + \frac{1}{\sqrt{\eps}} 1_{\dist(x) \leq \sqrt{\eps}} \big) \big(1 + \frac{1}{\sqrt{\eps}} 1_{\dist(x') \leq \sqrt{\eps}} \big) \lesssim \big(1 + w^{\eps}(0,x)\big)\big(1 + w^{\eps}(0,x')\big).
\end{align*}
In particular, for any pair of unit vectors $\nu,\nu'$, the scalar function $D_{x'; \nu'} D_{x; \nu} v$ is smooth on
\begin{align*}
   \Big( [0,\infty) \times \ov D \times \ov D \Big) \setminus \Big(\{0\} \times \partial ( D \times D) \Big),
\end{align*}
which satisfies 
\begin{align*}
    \partial_t \big( D_{x'; \nu'} D_{x; \nu} v^{\eps}\big) - \frac{1}{2}  \Delta_x\big( D_{x'; \nu'}  D_{x; \nu} v^{\eps} \big) - \frac{1}{2} \Delta_{x'}  \big( D_{x'; \nu'}D_{x; \nu}  v^{\eps}\big) = 0 \text{ in } \R_+ \times D^2, 
\end{align*}
as well as 
\begin{align*}
D_{x'; \nu'} D_{x; \nu} v^{\eps} \lesssim \big(1 + w^{\eps}(t,x')\big) \big(1 + w^{\eps}(t,x) \big) \text{ on } \Big(\R_+ \times \partial(D^2) \Big) \cup \{0\} \times D^2.
\end{align*}
Arguing as above, we propagate this bound to the interior via the comparison principle to find that 
\begin{align*}
    |D_x D_{x'} v^{\eps}| \lesssim \big(1 + w^{\eps}(t,x)\big) \big(1 + w^{\eps}(t,x') \big), 
\end{align*}
and then send $\eps \to 0$ to complete the proof.
\end{proof}

\section{Building the supersolution $\phi$}

In this section, we fix $k \in \N$ and $c,c_0, \delta, M > 0$. We are going to build the barrier function which played a key role in the uniform-in-time convergence result in Subsection \ref{subsec.uniformintime}. In particular, our goal is to prove the following statement.

\begin{prop} \label{prop.supersolexists}
    There exists a function 
    \begin{align*}
        \psi : \R_+ \times \R_+^k \to \R
    \end{align*}
    such that 
    \begin{enumerate}
        \item $\psi$ is smooth on $\R_+ \times \R_+^k$, and extends continuously to the domain 
        \begin{align*}
           \bigg( \R_+ \times \Big( [0,\infty)^k \setminus \{\bm 0\}\Big) \bigg) \bigcup \Big( \{0\} \times \R_+^k \Big), 
        \end{align*}
        \item $\psi(t,\bz)$ satisfies 
        \begin{align*}
            \partial_t \psi - \frac{1}{2} \sum_{i = 1}^{k} \partial_{z^iz^i} \psi + c_0 \sum_{i = 1}^k \partial_{z^i} \psi \gtrsim e^{-ct} |\bz|^{-(k-1)} \text{ on } \R_+ \times \R_+^k
        \end{align*}
        \item $\psi$ satisfies the bounds
        \begin{align} \label{psi.upperbound}
            0 \leq \psi(t,\bz) \lesssim |\bz|^{-(k-1)}  + 1, 
        \end{align}
        \item for each $i = 1,...,k$, we have 
        \begin{align*}
            \partial_{z^i} \psi \leq 0 \text{ on } \R_+ \times \R_+^k, 
        \end{align*}
        \item for any $c' < \frac{c_0^2}{2} \wedge c$, and any $i = 1,...,k$, we have the estimate
        \begin{align*}
            | \partial_t \psi| + |\partial_{z^i} \psi| + |\partial_{z^iz^i} \psi | \lesssim e^{- c' t} \text{ on } U \coloneqq \Big\{ (t,\bz) \in \R_+ \times (0,M)^k : z^j > \delta \text{ for some } j \Big\}.
        \end{align*}
    \end{enumerate}

\begin{proof}
The argument will be broken up into steps. First we will define a candidate $\psi$, then we will verify each of the properties (1) - (5) in turn, with some of the more tedious computations deferred to a sequence of lemmas which are stated and proved after the main line of the proof.
\newline \newline 
\textit{Construction of $\psi$.} We denote by $f$ a function
\begin{align*}
    f : \R^k \setminus \{\bm 0\} \to \R 
\end{align*}
such that 
\begin{itemize}
    \item $f(\bz) = |\bz|^{-(k-1)}$ for $\bz \in (-\delta/4, \delta/4)^{k}$, 
    \item $f(\bz) = 0$ if $|z^i| \geq \delta/2$ for some $i = 1,....,k$,
    \item $f$ is non-negative, smooth on $\R^k \setminus \{\bm{0}\}$, and satisfies the symmetry condition 
    \begin{align*}
        f(\bz) = f\big(|z^1|,...,|z^k|\big), 
    \end{align*}
    \item for each $i = 1,...,k$, 
    \begin{align*}
        \partial_{z^i} f \leq 0 \text{ on } {\{z^i > 0\}}.
    \end{align*}
\end{itemize}
To construct $f$, one can take $|\bz|^{-(k-1)} \prod_{i = 1}^k \xi( z^i)$, where $\xi$ is an appropriate cut-off function. Now, let $P_t$ denote the semi-group associated with the transition kernel $\Gamma$ defined in \eqref{def.Gamma}, and let $P_t^{\otimes k}$ denote its $k$-fold tensor product, so that 
\begin{align*}
    P_t^{\otimes k} g (\bz) = \int_{\R^k} \Gamma_t^{\otimes k}(\bz,\ov \bz) g(\ov \bz) d\ov{\bz}, \quad \Gamma_t^{\otimes k}(\bz,\ov \bz) = \prod_{i = 1}^k \Gamma_t(z^i, \ov z^i). 
\end{align*}
Now define $\psi : \R_+ \times \R^k \to \R$ via 
\begin{align*}
    \psi(t,\bz) = \int_0^t e^{-c(t-s)} \Big( P_s^{\otimes k} f \Big)(\bz) ds + \int_0^t e^{-c(t-s)} ds
\end{align*}
Note that this is the Duhamel representation for the PDE 
\begin{align} \label{psi.duhamel}
 \ds  \partial_t \psi - \frac{1}{2} \sum_{i = 1}^k \partial_{z^iz^i} \psi + c_0 \sum_{i = 1}^k \text{sign}(z^i) \partial_{z^i} \psi = e^{-ct} \big(1 + f(\bz) \big) \text{ in } \R_+ \times \R^k, \quad \psi(0,\bz) = 0.
\end{align}
Property (2) is clear from the PDE \eqref{psi.duhamel}, and the simple observation that $|\bz|^{-(k-1)} \lesssim 1 + f(\bz)$. Property (1) follows from local regularity for the PDE \eqref{psi.duhamel}. We next verify Properties (3), (4), and (5). 
\newline \newline 
\textit{Property (3).} Since $f$ is non-negative, it is clear from the definition that $\psi$ is also non-negative. For $t \geq 1$, we can estimate 
\begin{align*}
    \big|P_t^{\otimes k} f(\bz) \big| = \bigg| \int_{\R^k} \Gamma_t^{\otimes k}(\bz,\bz') f(\bz') \bigg| \leq \norm{\Gamma_t}_{L^{\infty}(\R \times \R)}^k \| f \|_{L^1(\R^k)} \lesssim 1. 
\end{align*}
Meanwhile, one can check using the explicit formula for $\Gamma$ (or alternatively appeal to the well-known result of Aronson \cite{Aronson1968}), we see that there are positive constants $C, \sigma$ such that 
\begin{align*}
    \Gamma_t(z,z') \leq C \ov{\Gamma}^{\sigma}_t(z,z'), \quad 0 < t < 1, \quad z,z' \in \R
\end{align*}
where 
\begin{align*}
    \ov{\Gamma}^{\sigma}_t(z,z') = \frac{1}{\sqrt{2 \pi \sigma^2 t}} \exp\Big( \frac{-|z-z'|^2}{2 \sigma^2 t} \Big)
\end{align*}
is the transition kernel associated to the process $\sigma B$, with $B$ a standard Brownian motion. Thus, we have
\begin{align*}
  |P_t^{\otimes k}f(\bz)| &\lesssim \int_{\R^k} \frac{1}{t^{k/2}} \exp\Big( \frac{-|\bz-\bz'|^2}{2 \sigma^2 t} \Big)|\bz'|^{-(k-1)} d \bz' \\
  & = \int_{|\bz'| \le |\bz|/2} \frac{1}{t^{k/2}} \exp\Big( \frac{-|\bz-\bz'|^2}{2 \sigma^2 t} \Big)|\bz'|^{-(k-1)}  + \int_{|\bz'| \ge |\bz|/2} \frac{1}{t^{k/2}} \exp\Big( \frac{-|\bz-\bz'|^2}{2 \sigma^2 t} \Big)|\bz'|^{-(k-1)}  \\
  & \lesssim \frac{1}{t^{k/2}} \exp\Big( \frac{-|\bz|^2}{8 \sigma^2 t} \Big) \int_{|\bz'| \le |\bz|/2}  |\bz'|^{-(k-1)} d \bz' + |\bz|^{-(k-1)} \\
  & \lesssim \left(\frac{|\bz|}{t^{1/2}}\right)^k \exp\Big( \frac{-|\bz|^2}{2 \sigma^2 t} \Big) |\bz|^{-(k-1)} + |\bz|^{-(k-1)}  \lesssim |\bz|^{-(k-1)}, \quad 0 < t < 1, \quad \bz \in \R^k.
\end{align*}
We have thus verified that 
\begin{align*}
    P_tf(\bz) \lesssim |\bz|^{-(k-1)} + 1, \quad 0< t < \infty, \quad \bz \in \R^k , 
\end{align*}
and so we find that 
\begin{align*}
    \psi(t,\bz) = \int_0^t \exp\big(-c(t-s)\big) \big(P_s f(\bz) + 1\big) ds \lesssim  (|\bz|^{-(k-1)} +1) \int_0^t \exp\big(-c(t-s)\big) ds \lesssim |\bz|^{-(k-1)} + 1.
\end{align*}
\newline \newline 
\textit{Property (4).} We consider smooth sequences $f_n$ and $s_n$ that pointwise converge to $f$ and $\text{sign}(\cdot)$ respectively. We take $f_n$ to satisfy the same symmetry and monotonicity properties of $f$, and we take $s_n$ to be odd, smooth, and increasing. Let $\psi_n$ be the (smooth) solution to the Cauchy problem
\begin{align} \label{psi.duhamel.approx}
 \ds  \partial_t \psi_n - \frac{1}{2} \sum_{i = 1}^k \partial_{z^iz^i} \psi_n + c_0 \sum_{i = 1}^k s_n(z^i) \partial_{z^i} \psi_n = e^{-ct} \big(1 + f_n(\bz) \big) \text{ in } \R_+ \times \R^k, \quad \psi_n(0,\bz) = 0.
\end{align}

By the symmetry of $\psi_n$, we see that for each $i = 1,...,k$, the partial derivative $\psi_i = \psi_{n,i} \coloneqq \partial_{z^i} \psi_n$ satisfies the Dirichlet problem 
\begin{align*}
    \begin{cases} 
   \ds  \partial_t \psi_i - \frac{1}{2} \sum_{j = 1}^k \partial_{z^jz^j} \psi_i + c_0 \sum_{j = 1}^k s_n(z^j) \partial_{z^j} \psi_i + c_0  s'_n(z^i) \psi_i = e^{-ct} \partial_{z^i} f_n(\bz) \quad (t,\bz) \in \R_+ \times \{z^i > 0\}, 
    \\
   \ds  \psi_i(0,\bz) = 0, \quad \bz \in \{z^i \ge 0\}, \quad \psi_i(t,\bz) = 0, \quad \bz \in \{z^i = 0\}. 
    \end{cases}
\end{align*}
Since $\partial_{z^i} f_n(\bz) \leq 0$ for $z^i > 0$, it follows from the maximum principle that $\partial_{z^i} \psi_n \leq 0$ for $z^i > 0$. Passing to the limit $n \to \infty$, Property (4) follows. 
\newline \newline 
\textit{Property (5).} By combining lemmas \ref{lem.Ptf} and \ref{lem.Ptf2}, we find the estimate
\begin{align*}
    \big|\partial_{z^i} \big( P_t^{\otimes k} f \big) \big| + \big|\partial_{z^iz^i} \big( P_t^{\otimes k} f \big) \big| \lesssim \exp\big( - \frac{c_0^2 t}{2} \big), \quad (t,\bz) \in U. 
\end{align*}
In particular, the bound for $0 < t < 1$ comes from Lemma \ref{lem.Ptf2} and the bound for $t \geq 1$ comes from Lemma \ref{lem.Ptf}. 
Thus, for $(t,\bz) \in U$, 
\begin{align*}
   \big| \partial_{z^i} \psi(t,\bz) \big| &=  \bigg| \partial_{z^i} \bigg( \int_0^t e^{-c (t-s)} P_s f(\bz) ds \bigg) \bigg| = \bigg| \int_0^t e^{-c(t-s)} \partial_{z^i} P_s f(\bz) ds \bigg| 
   \\
   &\lesssim \int_0^t e^{-c(t-s)} e^{-\frac{c_0^2 s}{2}} ds  \leq \int_0^t \exp \Big( -(c \wedge c_0^2/2) t \Big) ds = t \exp \Big( -(c \wedge c_0^2/2) t \Big) \lesssim \exp(-c' t). 
\end{align*}
The bound on $\partial_{z^iz^i} \psi$ is similar, and then the bound on $\partial_t \psi$ comes from the equation for $\psi$. 
\end{proof}
\end{prop}

\begin{lem} \label{lem.Gammaderivs}
    Let $\Gamma$ be as in \eqref{def.Gamma}. For each fixed $z' \neq 0$, $(t,\bz) \mapsto \Gamma_t(\bz,\bz')$ is smooth on $\R_+ \times \R_+$. Moreover, for any $M > 0$, we have derivatives satisfying 
    \begin{align*}
     \sup_{ z \in (0,M), \, z' \in (-M,M) \setminus \{0\}}   \big| \partial_t \Gamma_t(z,z') \big| + \big| \partial_z \Gamma_t(z,z') \big| + \big| \partial_{zz} \Gamma_t(z,z') \big| \lesssim \exp\big(- \frac{c_0^2}{2} t\big), \quad t \geq 1.
    \end{align*}
\end{lem}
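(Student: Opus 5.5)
The plan is to work directly from the explicit formula \eqref{def.Gamma}, together with the symmetry $\Gamma_t(z,z')=\Gamma_t(-z,-z')$. Since $z\in(0,M)$, only the two branches with $z\ge0$ are needed: $z'\in(0,M)$ and $z'\in(-M,0)$. On each branch, and for fixed $z'\neq0$, the map $(t,z)\mapsto\Gamma_t(z,z')$ is a finite combination of three kinds of terms: Gaussian factors of the form $t^{-1/2}\exp[\pm 2c_0 z-(z-z'\mp c_0t)^2/(2t)]$, the constant prefactors $c_0e^{\mp 2c_0z'}$, and integrals $\int_{a}^\infty \exp[-(v-c_0t)^2/(2t)]\,dv$ whose lower limit $a=z\pm z'$ is affine in $z$. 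Each of these is smooth for $t>0$, so smoothness on $\R_+\times\R_+$ is immediate. Differentiating in $z$ only removes the integral (fundamental theorem of calculus) or multiplies a Gaussian term by a polynomial in $(z,z',t,1/t)$. Differentiating in $t$ likewise produces polynomial multiples of Gaussians, including the integrand evaluated at the lower limit, plus the $t$-derivative of the integrand inside the integral.

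The key step is to extract the exponential rate. For $|z|,|z'|\le M$ and $t\ge1$, expanding the square gives
\[
\exp\Big[-\frac{(z-z'\mp c_0t)^2}{2t}\Big]=\exp\Big[-\frac{c_0^2t}{2}\pm c_0(z-z')-\frac{(z-z')^2}{2t}\Big]\lesssim_M e^{-c_0^2t/2}.
\]
The same bound holds for the term carrying the extra factor $e^{2c_0z}$, since that factor is bounded on $(0,M)$. All polynomial prefactors in $z,z'$ and $1/t$ are bounded for $t\ge1$. The factors of $t$ that come from $\partial_t$ are at most polynomial in $t$; these are harmless only if they do not actually appear with positive power in the final expression, so I would compute the derivatives explicitly enough to see that they do not. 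Alternatively, I would use the PDE $\partial_t\Gamma=\frac12\partial_{zz}\Gamma-c_0\,\mathrm{sign}(z)\partial_z\Gamma$ to reduce $\partial_t$ to the spatial derivatives, which sidesteps the issue entirely.

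The main obstacle is the undifferentiated integral term, which appears as a factor only in $\Gamma$ itself, not in $\partial_z\Gamma$ or $\partial_{zz}\Gamma$. Its size is
\[
\int_a^\infty e^{-(v-c_0t)^2/(2t)}\,dv\approx\sqrt{2\pi t}\quad\text{for }a\ll c_0t,
\]
so it does \emph{not} decay. This reflects the fact that $\Gamma_t$ converges to the invariant density $c_0e^{-2c_0|z'|}$ of the drifted process, which is consistent with the statement bounding only derivatives. I would therefore check carefully that the integral term disappears under $\partial_z$: its derivative is $-t^{-1/2}$ times the integrand at $a=z\pm z'$, which is Gaussian and hence $\lesssim e^{-c_0^2t/2}$. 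Under $\partial_t$, I would use the PDE reduction above, so that $\partial_t\Gamma$ is again controlled by $\partial_z\Gamma$ and $\partial_{zz}\Gamma$. Then $\partial_{zz}\Gamma$ contains only Gaussian terms times polynomials in $(z,z',1/t)$, and all three estimates follow uniformly in $z\in(0,M)$ and $z'\in(-M,M)\setminus\{0\}$.
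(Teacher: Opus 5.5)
Your proposal is correct and is essentially the paper's proof. Like the paper, you differentiate the explicit formula in $z$, so that the non-decaying integral term becomes a Gaussian, bound every Gaussian factor by $\lesssim_M e^{-c_0^2t/2}$ uniformly over $z\in(0,M)$, $z'\in(-M,M)\setminus\{0\}$, $t\ge1$, and get $\partial_t\Gamma$ from the backward equation $\partial_t\Gamma=\frac12\partial_{zz}\Gamma-c_0\,\mathrm{sign}(z)\,\partial_z\Gamma$. Your exact expansion $-\frac{(z-z'\mp c_0t)^2}{2t}=-\frac{c_0^2t}{2}\pm c_0(z-z')-\frac{(z-z')^2}{2t}$ is only a slightly cleaner way of extracting the rate than the paper's lower bound $|z-z'-c_0t|\ge c_0t-M$.
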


\begin{proof}
    We find it useful to write 
    \begin{align*}
        \Gamma_t(z,z') = \begin{cases}
            \ds \ov{\Gamma}_t(z,z' + c_0 t) + c_0 \exp\big( - 2c_0 z' \big) \int_{z + z'}^{\infty} \ov{\Gamma}_t(v, c_0 t) dv, & z > 0, \, z' > 0,
          \vspace{.2cm}  \\
            \ds \exp(2 c_0 z) \ov{\Gamma}_t(z,z' - c_0 t) + c_0 \exp(2c_0z') \int_{z-z'}^{\infty} \ov{\Gamma}_t(v,c_0 t) dv & z > 0, \, z' < 0, 
        \end{cases}
    \end{align*}
    where 
    \begin{align*}
        \ov{\Gamma}_t(z,z') = \frac{1}{\sqrt{2 \pi t}} \exp\big(- \frac{|z-z'|^2}{2t} \big)
    \end{align*}
    is the standard heat kernel on $\R$. Now, by explicit computation, 
    \begin{align*}
        \partial_z \Gamma_t(z,z') =  \partial_z \ov{\Gamma}_t(z,z' + c_0 t) - c_0 \exp(- 2c_0z') \ov{\Gamma}_t(z, - z' + c_0 t), \quad  z > 0, \, z' > 0, 
    \end{align*}
    and 
    \begin{align*}
          \partial_z \Gamma_t(z,z') &=    \ds 2c_0 \exp(2c_0 z) \ov{\Gamma}_t(z,z' -c_0 t) + \exp(2c_0 z  ) \partial_z \ov{\Gamma}_t(z,z' - c_0t) 
          \\
          &\qquad \qquad - c_0 \exp(2c_0 z') \ov{\Gamma}_t(z, z' + c_0 t), \quad  z > 0, \, z' < 0. 
    \end{align*}
    Thus, we see that for $t \geq 1$, $z > 0$, $z' > 0$ and $|z|, |z'| \leq M$, we have 
    \begin{align*}
        |\partial_z \Gamma_t(z,z')| &\leq |\partial_z \ov{\Gamma}_t(z,z' + c_0 t)| + c_0 \ov{\Gamma}_t(z,-z' + c_0 t)
        \\
        &= (2 \pi)^{-1/2} t^{-3/2} \big|z-z'-c_0 t\big| \exp\Big( - \frac{|z-z'- c_0 t|^2}{2t} \Big) + c_0 (2 \pi t)^{-1/2} \exp\Big( - \frac{|z + z'- c_0 t|^2}{2t} \Big)
        \\
        &\lesssim  \exp \Big( - \frac{|c_0 t - M|^2}{2t} \Big) \lesssim \exp\big(- (c_0^2/2) t\big). 
    \end{align*}
    The analogous bound for $z' < 0$ is similar and is omitted. 

    We now differentiate again, to find that 
    \begin{align*}
        \partial_{zz} \Gamma_t(z,z') = \partial_{zz} \ov{\Gamma}_t(z,z'  +c_0 t) - c_0 \exp( - 2c_0 z') \partial_z \ov{\Gamma}_t(z,-z'+ c_0 t), \quad z > 0, \, z' > 0, 
    \end{align*}
    and 
    \begin{align*}
        \partial_{zz} \Gamma_t(z,z') &= 4c_0^2 \exp(2c_0z) \ov{\Gamma}_t(z,z' - c_0 t) + 4 c_0 \exp(2c_0 z) \partial_z \ov{\Gamma}_t(z,z' - c_0 t)
        \\
        &\quad + \exp(2c_0z) \partial_{zz} \ov{\Gamma}_t(z,z' - c_0 t) - c_0 \exp(2c_0 z') \partial_z \ov{\Gamma}_t(z,z' + c_0 t), \quad z > 0, \, z' < 0. 
    \end{align*}
    In particular, uniformly over $t \geq 1$, $z > 0$, $z' > 0$ with $|z|,|z'| \leq M$, we argue as in the bound on $\partial_z \Gamma$ to obtain
    \begin{align*}
        \big|    \partial_{zz} \Gamma_t(z,z') \big| &\lesssim |\partial_{zz} \ov{\Gamma}_t(z,z' + c_0 t) | + |\partial_z \ov{\Gamma}_t(z,- z' + c_0 t)| \lesssim \exp\big(- (c_0^2/2) t\big).
    \end{align*}
    The corresponding bound for $z' < 0$ is similar, and finally the bound on $\partial_t \Gamma$ follows from the fact that
    \begin{align*}
        \partial_t \Gamma_t(z,z') = \frac{1}{2} \partial_{zz} \Gamma_t(z,z') - c_0  \text{sign}(z) \partial_z \Gamma_t(z,z'). 
    \end{align*}
    This completes the proof. 
\end{proof}

\begin{lem}\label{lem.Ptf}
    With the function $f$ as defined in the proof of Proposition \ref{prop.supersolexists}, for any $M > 0$, we have the bound
    \begin{align*}
     \sup_{\bz \in (0,M)^k} \Big\{ \big|  \partial_{z^i} \big( P_t^{\otimes k} f \big) \big| + \big| \partial_{z^iz^i} \big(P_t^{\otimes k} f \big) \big| \Big\} \lesssim \exp\big(- (c_0^2/2) t\big), \quad i = 1,...,k, \quad t \geq 1.
    \end{align*}
\end{lem}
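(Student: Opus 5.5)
The plan is to differentiate under the integral sign, which is legitimate because $f$ is integrable and compactly supported. Since $f$ is smooth away from $\bm 0$ and satisfies $|f(\bz)| \lesssim |\bz|^{-(k-1)}$, which is integrable near the origin in $\R^k$, we have $f \in L^1(\R^k)$ with $\supp f \subset [-\delta/2,\delta/2]^k$. Thus
\begin{align*}
\partial_{z^i} \big(P_t^{\otimes k} f\big)(\bz) = \int_{\R^k} \partial_{z^i}\Gamma_t(z^i,\ov z^i) \prod_{j \neq i} \Gamma_t(z^j,\ov z^j)\, f(\ov \bz)\, d\ov\bz,
\end{align*}
and similarly for $\partial_{z^iz^i}$, with $\partial_{zz}\Gamma_t$ in the $i$-th factor.

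Justifying the interchange is routine. For $t \geq 1$ and $z^i \in (0,M)$, the derivative bounds of Lemma \ref{lem.Gammaderivs} hold uniformly in $\ov z^i \in (-M,M)\setminus\{0\}$. Taking $M \geq \delta$ covers the support of $f$, and the set $\{\ov z^i = 0\}$ has measure zero. This gives an integrable dominating function, uniform for $\bz$ in a neighborhood of any point of $(0,M)^k$.

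We then bound each factor. By Lemma \ref{lem.Gammaderivs}, the differentiated factor is bounded by $C e^{-c_0^2 t/2}$ on the relevant range. For the remaining factors we need the bound $\sup_{z,z'} \Gamma_t(z,z') \lesssim 1$ for $t \geq 1$, which follows directly from the explicit formula \eqref{def.Gamma}. The Gaussian terms are at most $(2\pi t)^{-1/2}$. The integral term is bounded by $c_0 \int_\R \frac{1}{\sqrt{2\pi t}} e^{-(v-c_0t)^2/2t}\,dv = c_0$. The weights $e^{-2c_0 z'}$ for $z' > 0$ are at most $1$. On the other branch, $e^{2c_0 z}$ is combined with $e^{-(z-z'+c_0t)^2/2t}$, and these are bounded on bounded sets; more simply, the symmetry of $\Gamma$ reduces to the case of bounded arguments, since only $|z|,|z'| \le M$ enter. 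Putting these together,
\begin{align*}
\big|\partial_{z^i} P_t^{\otimes k} f(\bz)\big| + \big|\partial_{z^iz^i} P_t^{\otimes k} f(\bz)\big| \lesssim e^{-c_0^2 t/2} \, \|\Gamma_t\|_{L^\infty((-M,M)^2)}^{k-1} \, \|f\|_{L^1(\R^k)} \lesssim e^{-c_0^2t/2},
\end{align*}
uniformly in $\bz \in (0,M)^k$ and $t \ge 1$.

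The main obstacle is only bookkeeping: we must check that Lemma \ref{lem.Gammaderivs} applies with the second argument ranging over the support of $f$, including negative values, and that $\Gamma_t$ is uniformly bounded on $(0,M)\times(-M,M)$ for $t\ge1$. The point to watch is the factor $e^{2c_0 z}$ in the $z'<0$ branch. It is harmless because $z<M$, and the accompanying Gaussian decays in $t$.
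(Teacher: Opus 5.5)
Your proposal is correct and follows the paper's own argument. You differentiate under the integral, bound the differentiated factor by Lemma~\ref{lem.Gammaderivs} on the support of $f$, bound the remaining $k-1$ kernel factors in $L^\infty$, and pair the result with $\|f\|_{L^1(\mathbb{R}^k)}$. You additionally spell out two steps the paper leaves implicit: the dominated-convergence justification for the interchange, and the uniform bound on $\Gamma_t$ for $t \geq 1$.
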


\begin{proof}
    For $t \geq 1$ and $|\bz| \leq M$, we have
    \begin{align*}
        \partial_{z^i} P_t^{\otimes k} f &= \partial_{z^i} \int_{\R^k} f(\ov\bz) \prod_{j = 1}^k \Gamma_t(z^j,\ov{z}^j) d\ov{\bz} = \int_{\R^k} f(\ov \bz) \partial_z \Gamma_t(z^i,\ov{z}^i) \prod_{j \neq i} \Gamma_t(z^j,\ov{z}^j) d\ov{\bz} 
        \\
        & \leq \|f\|_{L^1(\R^k)} \norm{ \bz' \mapsto  \partial_z \Gamma_t(z^i,\ov{z}^i) \prod_{j \neq i} \Gamma_t(z^j,\ov{z}^j) }_{L^{\infty}\big( (-\delta/2,\delta/2)^k \big)} \lesssim \exp\big(\frac{-c_0^2 t}{2} \big), 
    \end{align*}
    where we used the fact that $f(\bz) = 0$ if $|z^j| > \delta/2$ for some $j$, and Lemma \ref{lem.Gammaderivs}. The estimate for $\partial_{z^iz^i} \big(P_t^{\otimes k} f \big)$ is similar. 
\end{proof}

\begin{lem}\label{lem.Ptf2}
    With the function $f$ and the domain $U$ as defined in the proof of Proposition \ref{prop.supersolexists}, there is a constant $C$ such that
    \begin{align*}
      \big|  \partial_{z^i} \big( P_t^{\otimes k} f \big) \big| + \big| \partial_{z^iz^i} \big(P_t^{\otimes k} f \big) \big| \lesssim C \text{ for } (t,\bz) \in U.
    \end{align*}
\end{lem}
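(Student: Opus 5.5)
The plan is to bound the derivatives of $P_t^{\otimes k} f$ on $U$ by splitting the time interval into $t \ge 1$ and $0<t<1$. On $U$ we have $\bz \in (0,M)^k$, and $z^j > \delta$ for some $j$. For $t \geq 1$ the claim is already contained in Lemma \ref{lem.Ptf}, which gives a bound $\lesssim 1$ on $(0,M)^k \supset U$. So the work is in the range $0<t<1$.

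For $0<t<1$ I will use the product structure. Write
\begin{align*}
    \partial_{z^i} P_t^{\otimes k} f(\bz) = \int_{\R^k} f(\ov \bz)\, \partial_z \Gamma_t(z^i,\ov z^i) \prod_{l\neq i}\Gamma_t(z^l,\ov z^l)\, d\ov\bz,
\end{align*}
and similarly for $\partial_{z^iz^i}$. The key observation is that $f(\ov\bz)=0$ unless $|\ov z^l|<\delta/2$ for all $l$. In particular, $|\ov z^j|<\delta/2$ while $z^j>\delta$, so $|z^j-\ov z^j|>\delta/2$. The argument then splits into two cases.

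\emph{Case $j\neq i$.} The factor $\Gamma_t(z^j,\ov z^j)$ is at most $C t^{-1/2}\exp(-\delta^2/(8\sigma^2 t))$, using the Gaussian upper bound $\Gamma_t\le C\ov\Gamma^\sigma_t$ noted in the proof of Property (3). This is bounded uniformly in $t\in(0,1)$ and decays faster than any power of $t$.

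The difficulty is the singular factor $f(\ov\bz)\sim|\ov\bz|^{-(k-1)}$ combined with the derivative kernels $\partial_z\Gamma_t$, $\partial_{zz}\Gamma_t$, which may be as large as $t^{-1}$ or $t^{-3/2}$ in $L^\infty$. To handle this, I use the explicit formula \eqref{def.Gamma} (as rewritten in the proof of Lemma \ref{lem.Gammaderivs}) to obtain Gaussian bounds for $0<t<1$ and $z>0$:
\begin{align*}
    |\partial_z\Gamma_t(z,z')|\lesssim t^{-1/2}\ov\Gamma^\sigma_t(z,z')+\ov\Gamma^\sigma_t(z,-z'), \qquad |\partial_{zz}\Gamma_t(z,z')|\lesssim t^{-1}\ov\Gamma^\sigma_t(z,z')+\ov\Gamma^\sigma_t(z,-z').
\end{align*}
These give at worst a factor $t^{-1}$ in front of a Gaussian kernel. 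In the $\ov z^i$ variable I then integrate against $t^{-1}\ov\Gamma^\sigma_t$. The other variables $l\ne i,j$ carry Gaussian kernels $\ov\Gamma^\sigma_t$. Since $|\ov\bz|^{-(k-1)}$ is locally integrable in $\R^k$, and also integrable along $(k-1)$-dimensional slices up to a harmless log, I estimate the whole integral by
\begin{align*}
    t^{-1}\cdot t^{-1/2}e^{-\delta^2/(8\sigma^2t)}\cdot \sup_{\ov z^j}\int_{\R^{k-1}}\prod_{l\ne j}\ov\Gamma^\sigma_t(z^l,\ov z^l)\,|\ov\bz|^{-(k-1)}\,d\ov\bz^{(j)}.
\end{align*}
To bound the last integral, I drop $\ov z^j$ from the norm, which gives $|\ov\bz|\ge|\ov\bz^{(j)}|$. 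The function $|\cdot|^{-(k-1)}$ on $\R^{k-1}$ is not integrable at the origin, so I will instead keep $\ov z^j$. Concretely, I first integrate $\ov z^j$ over $|\ov z^j|<\delta/2$ using $\sup\Gamma_t(z^j,\cdot)\lesssim e^{-c/t}t^{-1/2}$. This leaves $\int_{|\ov\bz|\lesssim\delta}|\ov\bz|^{-(k-1)}\prod_{l\ne j}\ov\Gamma^\sigma_t\,d\ov\bz$. Bounding the remaining $k-1$ kernels by $t^{-(k-1)/2}$ then gives a total of $\lesssim t^{-(k+2)/2}e^{-c/t}\|f\|_{L^1}\lesssim 1$.

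\emph{Case $j=i$.} Here the derivative falls on the far-away factor. I bound $|\partial_z\Gamma_t(z^i,\ov z^i)|$ and $|\partial_{zz}\Gamma_t(z^i,\ov z^i)|$ for $|z^i-\ov z^i|>\delta/2$. The reflected terms $\ov\Gamma^\sigma_t(z^i,-\ov z^i)$ also have separation greater than $\delta/2$, since $z^i>0$. This gives a bound $\lesssim t^{-5/2}e^{-c/t}$. All other kernels are bounded by $t^{-1/2}$, and integrating $f$ in $L^1$ again yields $\lesssim t^{-(k+4)/2}e^{-c/t}\lesssim1$.

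The main obstacle is justifying the Gaussian bounds for $\partial_z\Gamma_t$ and $\partial_{zz}\Gamma_t$ uniformly for small $t$ from the explicit formula. The terms with prefactors $e^{\pm 2c_0 z}$ and the tail integrals must be controlled for $|z|,|z'|\le M$, with $M$ finite. I expect this to be routine: $c_0t\le c_0$ shifts are absorbed by enlarging $\sigma$, and the exponential prefactors are bounded on $[-M,M]$.
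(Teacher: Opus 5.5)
Your proposal is correct and is essentially the paper's argument: you split according to whether the far coordinate $z^j>\delta$ is the differentiated one, bound the integral by $\|f\|_{L^1}$ times the supremum of the kernel product over the cube $(-\delta/2,\delta/2)^k$ where $f$ lives, and let the factor $t^{-1/2}e^{-c/t}$ from the separated coordinate absorb the polynomial blow-up in $t$ (the detour through slice integrability of $|\ov\bz|^{-(k-1)}$ is unnecessary, as you yourself conclude). The differences are cosmetic: you use the Gaussian domination $\Gamma_t\le C\ov\Gamma^\sigma_t$ on all of $(0,1)$ and Lemma \ref{lem.Ptf} for $t\ge 1$, whereas the paper reads the decay off the explicit formula for $t<\delta/(4c_0)$ and uses plain boundedness afterwards; also, the reflected term in your bound for $\partial_{zz}\Gamma_t$ should carry an extra factor $t^{-1/2}$, which is harmless.
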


\begin{proof}
    If $(t,\bz) \in U$, then by definition, there is at least one index $i' \in \{1,...,k\}$ such that $z^{i'} > \delta$. Let us first suppose that $i' \neq i$, and estimate $\partial_{z^i}\big(P_t^{\otimes k} f \big)$. As in the proof of Lemma \ref{lem.Ptf}, we start with the expression 
    \begin{align*}
        \partial_{z^i} P_t^{\otimes k} f(\bz) = \int_{\R^k} f(\ov \bz) \partial_z \Gamma_t(z^i, \ov z^i) \prod_{j \neq i} \Gamma_t\big(z^j, \ov z^j\big) d\ov{\bz}.
    \end{align*}
    Now, because $f \in L^1$ and $f(\ov{\bz}) = 0$ if $|\ov{z}^j| > \delta/2$ for any $j$, we see that for any $(t,\bz) \in U$, we have
    \begin{align*}
        \Big| \partial_{z^i} P_t^{\otimes k} f(\bz) \Big| \lesssim \norm{\partial_z \Gamma_t(z^i,\cdot)}_{L^{\infty}(\R)} \times  \prod_{j \neq i,i'} \norm{\Gamma_t(z^j,\cdot)}_{L^{\infty}(\R)} \times  \sup_{M> z > \delta, \, |z'| < \delta/2} \big| \Gamma_t(z,z') \big|. 
    \end{align*}
    Now, it is clear from the expressions for $\Gamma$ and $\partial_z \Gamma$ appearing in the proof of Lemma \ref{lem.Gammaderivs} that
    \begin{align*}
        &\norm{\Gamma_t(z^j, \cdot)}_{\infty} \lesssim t^{-1/2}, \quad 
        \norm{\partial_z \Gamma_t(z^i,\cdot)}_{L^{\infty}(\R)} \lesssim t^{-3/2}, \quad 0 < t < 1, 
    \end{align*}
    while 
    \begin{align*}
         \sup_{M> z > \delta, \, |z'| < \delta/2} \big| \Gamma_t(z,z') \big| &\lesssim  \frac{1}{\sqrt{t}} \exp\Big( - \frac{(z - z' - c_0 t)^2}{2t} \Big) \leq \frac{1}{\sqrt{t}} \exp\Big(- \frac{(\frac{\delta}{2} - c_0 t)^2}{2t} \Big) 
         \\
         & \leq \frac{1}{\sqrt{t}} \exp\Big( - \frac{\delta^2}{32t}\Big) \quad 0 < t < t_0 \coloneqq \frac{\delta}{4c_0}. 
    \end{align*}
    Thus, if $(t,\bz) \in U$ and $z^{i'} > \delta$ for some $i' \neq i$, then for $0 < t < t_0$, we have
    \begin{align} \label{partialziest}
        \Big| \partial_{z^i} P_t^{\otimes k} f(\bz) \Big| \lesssim t^{-k/2 - 1} \exp\Big( - \frac{\delta^2}{32t} \Big)\lesssim 1. 
    \end{align}
    Now, we suppose that $i = i'$. This time, we find that 
     \begin{align} \label{ieqi'comp}
        \Big| \partial_{z^i} P_t^{\otimes k} f(\bz) \Big| \lesssim  \prod_{j \neq i} \norm{\Gamma_t(z^j,\cdot)}_{\infty} \times  \sup_{M> z > \delta, \, |z'| < \delta/2} \big| \partial_{z} \Gamma_t(z,z') \big|. 
    \end{align}
    From the explicit expression for $\partial_z \Gamma_t$ appearing in the proof of Lemma \ref{lem.Ptf}, we see that for $M > z > \delta, 0 < z' < \delta/2$, and $0 < t < 1$, we have
    \begin{align*}
       \big|\partial_z \Gamma_t(z,z') \big| &\lesssim 
       t^{-3/2} \exp\Big( - \frac{|z - z' - c_0 t|^2}{2t} \Big) + t^{-1/2} \exp\Big(- \frac{|z + z' - c_0 t|^2}{2t} \Big)
       \\
       &\lesssim t^{-3/2} \exp\Big( - \frac{|\delta/2 - c_0 t|^2}{2t} \Big), 
    \end{align*}
    so that
    \begin{align} \label{shorttimedecay}
         \big|\partial_z \Gamma_t(z,z') \big| \lesssim t^{-3/2} \exp\Big( - \frac{|\delta/2 - c_0 t|^2}{2t} \Big) \leq t^{-3/2} \exp\Big( - \frac{\delta^2}{32t}\Big), \quad 0 < t < t_0 \coloneqq \frac{\delta}{4c_0}
    \end{align}
    holds for $M > z > \delta, 0 < z' < \delta/2$.
    Meanwhile, for $M > z > \delta, -\delta/2 < z' < 0$, and $0 < t < 1$, we instead have 
     \begin{align*}
       \big|\partial_z \Gamma_t(z,z') \big| &\lesssim 
       t^{-3/2} \exp\Big( - \frac{|z - z' + c_0 t|^2}{2t} \Big) + t^{-1/2} \exp\Big(- \frac{|z - z' + c_0 t|^2}{2t} \Big) 
       \\
       &\qquad \qquad \qquad \qquad + t^{-1/2} \exp\Big(- \frac{|z - z' - c_0 t|^2}{2t} \Big)
       \\
       &\lesssim t^{-3/2} \exp\Big( - \frac{|\delta - c_0 t|^2}{2t} \Big). 
    \end{align*}
    We thus deduce that \eqref{shorttimedecay} holds for any $M > z > \delta, |z'| < \delta/2$. Coming back to \eqref{ieqi'comp}, we see that in fact \eqref{partialziest} also holds in the case $i = i'$. 
    
    In other words, we have verified that for $0 < t < t_0 \coloneqq \delta/(4c_0)$, and any $(t,\bz) \in U$, the estimate \eqref{partialziest} holds. For $t > t_0$, the estimate follows easily from the boundedness of $\Gamma_t$ and $\partial_z \Gamma$. This completes the proof of the upper bound on $\partial_{z^i}\big(P_t^{\otimes k} f\big)$. The corresponding bound for $\partial_{z^iz^i} \big(P_t^{\otimes k} f \big)$ follows along similar lines, and is omitted. 
\end{proof}

\bibliographystyle{alpha}
\bibliography{fv}

\end{document}